\newcommand*\IsoTo{%
  \xrightarrow[]{\raisebox{-0.5 em}{\smash{\ensuremath{\sim}}}}%
}
\newtheorem*{theorem*}{Theorem}
\newtheorem*{remark*}{Remark}
\newtheorem*{example*}{Example}
\newtheorem{lemma}{Lemma}[subsection]
\newtheorem{proposition}[lemma]{Proposition}
\newtheorem{remark}[lemma]{Remark}
\newtheorem{theorem}[lemma]{Theorem}
\newtheorem{definition}[lemma]{Definition}
\newtheorem{notation}[lemma]{Notation}
\newtheorem{corollary}[lemma]{Corollary}
\newtheorem*{conjecture*}{Conjecture}
\newtheorem{thm}[lemma]{Theorem}
\newtheorem{prop}[lemma]{Proposition}
\newtheorem{lem}[lemma]{Lemma}
\newtheorem{defn}[lemma]{Definition}
\newtheorem{notn}[lemma]{Notation}
\newtheorem{cor}[lemma]{Corollary}
\newtheorem{rem}[lemma]{Remark}
\newtheorem{introtheorem}{Theorem}
\newtheorem{introthm}[introtheorem]{Theorem}
\sloppy \theoremstyle{plain}
\newcommand{\Hom}{\operatorname{Hom}}
\newcommand{\oH}{\operatorname{H}}
\newcommand{\eps}{\varepsilon}
\newcommand{\re}{\operatorname{Re}}
\renewcommand{\Im}{\operatorname{Im}}
\newcommand{\Ker}{\operatorname{Ker}}
\newcommand{\R}{{\mathbb R}}
\newcommand{\bR}{{\mathbb R}}
\newcommand{\C}{{\mathbb C}}
\newcommand{\proofend}{\hfill$\Box$\smallskip}
\newcommand{\Span}{{\operatorname{Span}}}
\newcommand{\Nash}{\operatorname{Nash}}
\newcommand{\T}{{\mathcal T}}
\newcommand{\alp}{{\alpha}}
\newcommand{\lam}{{\lambda}}
\newcommand{\Fre}{{Fr\'{e}chet \,}}
\newcommand{\cA}{{\mathcal{A}}}
\newcommand{\cB}{{\mathcal{B}}}
\newcommand{\cC}{{\mathcal{C}}}
\newcommand{\cD}{{\mathcal{D}}}
\newcommand{\cE}{{\mathcal{E}}}
\newcommand{\cF}{{\mathcal{F}}}
\newcommand{\cG}{{\mathcal{G}}}
\newcommand{\et}{{\'{e}tale }}
\newcommand{\g}{{\mathfrak{g}}}
\newcommand{\fg}{{\mathfrak{g}}}
\newcommand{\fh}{{\mathfrak{h}}}
\newcommand{\fk}{{\mathfrak{k}}}
\newcommand{\fv}{{\mathfrak{v}}}
\newcommand{\h}{{\mathfrak{h}}}
\newcommand{\Supp}{\mathrm{Supp}}
\newcommand{\cO}{{\mathcal{O}}}
\newcommand{\GL}{\operatorname{GL}}
\newcommand{\Sym}{\operatorname{Sym}}
\newcommand{\Sc}{{\mathcal S}}
\newcommand{\ctp}{\widehat{\otimes}}
\newcommand{\cM}{\mathcal{M}}
\newcommand{\Chi}{\mathfrak{X}}
\newcommand{\fri}{\mathfrak{i}}
\newcommand{\fp}{\mathfrak{p}}
\newcommand{\fl}{\mathfrak{l}}
\newcommand{\fr}{\mathfrak{r}}
\newcommand{\fu}{\mathfrak{u}}
\newcommand{\cT}{\mathcal{T}}
\newcommand{\oE}{\widetilde{E}}
\newcommand{\oPhi}{\widetilde{\Phi}}
 \newcommand{\Rami}[1]{{#1}}
 \newcommand{\RamiA}[1]{{#1}}
 \newcommand{\DimaA}[1]{{#1}}
\newcommand{\p}{{\mathfrak{p}}}
\newcommand{\onto}{{\twoheadrightarrow}}
\newcommand{\ot}{\leftarrow}
\begin{document}

\author{Avraham Aizenbud}
%\address{Avraham Aizenbud, Faculty of Mathematics
%and Computer Science, The Weizmann Institute of Science POB 26,
%Rehovot 76100, ISRAEL.}
\email{aizenr@gmail.com}
\address{Avraham Aizenbud,
Faculty of Mathematics and Computer Science,
Weizmann Institute of Science,
POB 26, Rehovot 76100, Israel}
\urladdr{\url{http://www.wisdom.weizmann.ac.il/~aizenr}}

\author{Dmitry Gourevitch}
\email{dmitry.gourevitch@weizmann.ac.il}
\address{Dmitry Gourevitch,
Faculty of Mathematics and Computer Science,
Weizmann Institute of Science,
POB 26, Rehovot 76100, Israel}
\urladdr{\url{http://www.wisdom.weizmann.ac.il/~dimagur}}

\author{Siddhartha Sahi}
\email{sahi@math.rugers.edu}
\address{Siddhartha Sahi, Department of Mathematics, Rutgers University, Hill Center -
Busch Campus, 110 Frelinghuysen Road Piscataway, NJ 08854-8019, USA}

\date{\today}
\title{Twisted homology for the mirabolic nilradical}
% \title{Derivatives for smooth representations of $GL(n,{\mathbb{R}})$ and $GL
% (n,{\mathbb{C}})$ II - analytic aspects}

\keywords{Real reductive group, derivatives of representations,
annihilator variety, Shapiro lemma. \\
\indent 2010 MS Classification: 20G20, 22E30, 22E45.}

\begin{abstract}
The notion of derivatives for smooth representations of $GL(n,{\mathbb{Q}}_p)$ was
defined in \cite{BZ-Induced}.
In the archimedean case, an analog of the highest derivative was defined for irreducible
unitary representations in \cite{Sahi-Kirillov} and called the ``adduced" representation. In \cite{AGS1}
derivatives of all orders were defined for smooth admissible Fr\'{e}chet representations (of
moderate growth).

A key ingredient of this definition is the functor of twisted coinvariants with respect to the nilradical of the mirabolic subgroup.
In this paper we prove exactness of this functor and compute it on a certain class of representations. This implies  exactness of the highest derivative functor, and allows to compute highest derivatives of all monomial representations.
%and all unitary representations.}

In \cite{AGS1} these results are applied to finish the computation of adduced representations for all irreducible unitary representations and to prove uniqueness of degenerate Whittaker models for unitary representations.
%, thus completing the results of \cite{Sahi-Kirillov,Sahi-PAMS,SaSt,GS}}.
\end{abstract}

\maketitle
%\tableofcontents

%\address{Avraham Aizenbud, Faculty of Mathematics
%and Computer Science, The Weizmann Institute of Science POB 26,
%Rehovot 76100, ISRAEL.}

%\date{\today}

%
%2010 Mathematics Subject Classification:
% 20G20         Linear algebraic groups over the reals, the complexes, the quaternions
% 22E30         Analysis on real and complex Lie groups
% 22E45         Representations of Lie and linear algebraic groups over real fields: analytic methods

\section{Introduction}

\setcounter{lemma}{0}

The notion of derivative was first defined in \cite{BZ-Induced} for smooth
representations of $\ G_{n}=GL(n)$ over non-archimedean fields and became a
crucial tool in the study of this category.
The definition of derivative is based on the \textquotedblleft
mirabolic\textquotedblright\ subgroup $P_{n}$ of $G_{n}$ consisting of
matrices with last row $(0,\dots,0,1)$. The unipotent radical of this subgroup
is an $\left( n-1\right) $-dimensional linear space that we denote $V_{n}$,
and the reductive quotient is $G_{n-1}$. The group $G_{n-1}$ has 2 orbits on
$V_{n}$ and hence also on $V_{n}^{\ast }$: the zero and the non-zero orbit.
The stabilizer in $G_{n-1}$ of a non-trivial character $\psi $ of $V_n$ is isomorphic to $P_{n-1}$.

The construction of derivative  is based on two
functors: $\Phi$ and $\Psi$. This paper deals only with the  functor $\Phi$ of (normalized) twisted zero homology (in other words co-equivariants) with respect to $(V_{n},\psi )$, which goes from the category of smooth representations of $P_{n}$ to the
category of smooth representations of $P_{n-1}$.

To be more precise, fix  an archimedean local field $F$ and denote $G_n:=\GL(n,F)$. Let $\fp_n, \fv_n,..$ denote the Lie algebras of the corresponding Lie groups.  Let $\psi_{n}$ be the standard non-degenerate character of $V_{n}$, given by $\psi _{n}(x_1,\dots,x_{n-1}):=\exp(\sqrt{-1}\pi \re x_{n-1}).$ We will also denote by $\psi_n$ the corresponding character of the Lie algebra $\fv_n$.
For all $n$ and for all representations $\pi $ of ${\mathfrak{p}}_{n}$, we define
$$\Phi(\pi):=|\det|^{-1/2} \otimes \pi_{(\fv_n,\psi_n)}:=|\det|^{-1/2} \otimes \pi/\Span\{\alp v - \psi_n(\alp)v \, : \, v \in \pi, \, \alp \in \fv_{n}\}$$
%and
%$$\Psi(\pi):= \lim_{\overset{\longleftarrow }{l}} \pi /\Span\{\beta v\,|\,v  \in \pi ,\beta \in ({\mathfrak{v}}_{n})^{\otimes l} \}.$$
Now, define $E^k(\pi):=\Phi^{k-1}(\pi)$.
This is one of the three notions of derivatives defined in \cite{AGS1}. The other two are obtained from this one by applying coinvariants (respectively generalized coinvariants) with respect to $\fv_{n-k+1}$. However, in this paper we will only prove results concerning $E^k$.

In the non-archimedean case there is an equivalence between the category of smooth representations of $P_{n}$ and the category of $%
G_{n-1}$-equivariant sheaves on $V_{n}^{\ast }$. This equivalence is based
on the Fourier transform. Under this equivalence, $\Phi$ becomes the fiber at the point $\psi $. It can also be viewed as the composition of two functors:
restriction to the open orbit and an equivalence of categories between
equivariant sheaves on the orbit and representations of the stabilizer of a point. We used this as an intuition for the archimedean case. In particular, we expected $\Phi$ to be exact, which indeed is one of the main results of this paper, that we will now formulate.

 Let $\mathcal{M}(G_{n})$ denote the category of smooth
admissible {Fr\'{e}chet} representations of moderate growth.

\begin{introthm} \label{thm:ExactHaus}

For any $0 <k \leq n$
\begin{enumerate}
\item \label{it:Exact}
 $E^{k}:\mathcal{M}(G_{n}) \to \mathcal{M}(\fp_{n-k+1})$ is an exact functor.
\item \label{it:Haus} For any $\pi \in \cM(G_n)$, the natural (quotient) topology on $E^{k}(\pi)$ is Hausdorff. In other words,  $\fu_n^k(\pi \otimes (-\psi_n^k))$ is closed in $\pi$ (see \S \ref{subsec:PrelNot} for the definitions of $\fu_n^k$ and $\psi_n^k$) .
\end{enumerate}
\end{introthm}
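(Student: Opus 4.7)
The plan is to treat both parts in tandem by reducing the iterated functor $E^k=\Phi^{k-1}$ to the single-step functor $\Phi$, and then to combine a Koszul-resolution argument with Casselman--Wallach theory. Since $\fv_n$ is abelian, twisted coinvariants are computable by the Koszul resolution of the one-dimensional $\fv_n$-module determined by $\psi_n$, and $\Phi$ is identified (up to the cosmetic twist by $|\det|^{-1/2}$) with the zeroth Lie algebra homology functor $\pi\mapsto H_0(\fv_n,\pi\otimes(-\psi_n))$. Iterating, $E^k(\pi)$ is computed by a compatible Koszul-type complex in the variables $\fv_n,\dots,\fv_{n-k+2}$; the inductive step is only meaningful once closedness is established at each stage, so I would treat part~(2) before part~(1).

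For part~(2), let $V$ denote the underlying $(\fg,K)$-module of $\pi$. At the algebraic level, the twisted coinvariants $V_{(\fv_n,\psi_n)}$ inherit the structure of a finitely generated Harish-Chandra module for the smaller group, which in particular has moderate growth after smooth completion. Combined with a closed-range theorem in the style of Casselman--Wallach (or, equivalently, automatic continuity), this forces the defining subspace $\fv_n(\pi\otimes(-\psi_n))$ to be closed in $\pi$. Iterating $k-1$ times, and invoking closedness at each stage to remain inside the Fr\'echet category $\mathcal{M}$, yields that $\fu_n^k(\pi\otimes(-\psi_n^k))$ is closed as claimed.

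Given closedness, the exactness in part~(1) follows from the long exact sequence of Lie algebra homology associated with a short exact sequence $0\to\pi_1\to\pi_2\to\pi_3\to 0$ in $\mathcal{M}(G_n)$. Right-exactness of $\Phi=H_0(\fv_n,\cdot\otimes(-\psi_n))$ is automatic, so the obstruction to left-exactness is precisely the connecting homomorphism coming from $H_1(\fv_n,\pi_3\otimes(-\psi_n))$. It therefore suffices to establish the vanishing
\[
H_i(\fv_n,\pi\otimes(-\psi_n))=0\quad\text{for all }i>0\text{ and all }\pi\in\mathcal{M}(G_n).
\]
I expect this homological vanishing to be the main obstacle. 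The strategy is to exploit the fact that $\psi_n$ lies on the unique open $G_{n-1}$-orbit in $\fv_n^\ast$: Fourier-transforming the Koszul complex realizes the higher homologies as stalks, at $\psi_n$, of a $G_{n-1}$-equivariant coherent sheaf supported on a proper $G_{n-1}$-invariant subset of $\fv_n^\ast$, forcing the vanishing. As a fallback, one can induct on the length of $V$ (which is finite by Casselman--Wallach) to reduce to standard parabolically induced modules, where the twisted homology admits a direct Mackey-style computation.
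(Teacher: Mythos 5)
Your outline correctly identifies the overall shape of the problem — reduce everything to higher $\fv_n$-homology, prove its vanishing, and deduce Hausdorffness — but the three concrete steps you propose all conceal the genuine content, and in one place the logic is circular.

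First, the Hausdorffness argument via ``Casselman--Wallach closed range'' does not work. The twisted coinvariants $V_{(\fv_n,\psi_n)}$ of the $(\fg,K)$-module are a module over $\fp_{n-1}$, which is not reductive, so there is no Casselman--Wallach theory to invoke; and even if there were, identifying the algebraic coinvariants of $V$ with the topological coinvariants of the Fr\'echet completion $\pi$ is precisely the closedness statement you are trying to prove. The paper's order of deduction is the reverse of yours: it first proves acyclicity for induced modules by geometric means, then uses the Casselman subrepresentation theorem to embed $\pi\hookrightarrow I$, applies the already-established exactness to get $\oE^k(\pi)\hookrightarrow\oE^k(I)$, and finally transports Hausdorffness along this embedding. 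Exactness is the prerequisite for Hausdorffness, not the other way around.

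Second, the Fourier-transform/equivariant-coherent-sheaf argument you ``expect'' to give the vanishing is exactly the $p$-adic intuition the paper explicitly declines to implement. In the archimedean case there is no known equivalence between smooth Fr\'echet $P_n$-representations and $G_{n-1}$-equivariant sheaves on $\fv_n^\ast$, the higher homologies do not become stalks of a coherent sheaf, and $\psi_n$ is not an algebraic character, so the whole picture is off the table as stated. Your fallback ``Mackey-style computation for parabolically induced modules'' is closer to what the paper actually does, but it is far from a direct calculation: unlike the $p$-adic case, $U_n^k$ has infinitely many orbits on the flag variety and some orbits produce non-acyclic representations. To show these do not contribute, the paper has to introduce the class of ``good'' representations, the notion of geometric representation and good extension, the key Lemma \ref{lem:GeoGood} (computing $\fv_n$-homology of Schwartz sections of tempered bundles via a relative Shapiro lemma), and the careful study of families of characters in \S\ref{sec:PfGeoGood}. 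None of this machinery is replaceable by a short Mackey argument. Finally, the degree-shifting step you gloss over as ``induct on length'' has a gap: after embedding $\pi\hookrightarrow I$ there is no reason for $I/\pi$ to have smaller length; the paper instead runs a downward induction on the homological degree $l$, using acyclicity of $I$ to trade $H_l(\pi)$ for $H_{l+1}(I/\pi)$, which works uniformly because the base case $l>\dim\fu_n^k$ is free from the Koszul complex.
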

For proof see \S \ref{sec:ExactHaus}.

\begin{introthm}\label{thm:Prod}
Let $n=n_1+\cdots+n_k$ and let $\chi_i$ be characters of $G_{n_i}$. Let $\pi= \chi_1 \times \cdots \times \chi_k$ denote the corresponding monomial representation. Then %$depth(\pi) = k$ and
$$E^k(\pi) = E^1(\chi_1) \times \cdots \times E^1(\chi_k) = ((\chi_1)|_{G_{n_1-1}} \times \cdots \times (\chi_k)|_{G_{n_k-1}}).$$
\end{introthm}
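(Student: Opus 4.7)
The plan is to proceed by induction on $k$, reducing to a Leibniz-type identity for one extra character factor.

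The base case $k=1$ is a direct unwinding of definitions: $E^1(\chi_1) = \Phi^0(\chi_1) = \chi_1$ regarded as a $\fp_{n_1}$-module by restriction; since $\chi_1$ factors through $\det$, and $\det|_{P_{n_1}}$ equals the determinant of the Levi block, this coincides with the pullback of $\chi_1|_{G_{n_1-1}}$ along the Levi projection $P_{n_1}\twoheadrightarrow G_{n_1-1}$.

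For the inductive step, set $\sigma = \chi_1 \times \cdots \times \chi_{k-1}$, so that by the inductive hypothesis $E^{k-1}(\sigma)$ is identified with the genuine parabolically induced $G$-representation $\chi_1|_{G_{n_1-1}} \times \cdots \times \chi_{k-1}|_{G_{n_{k-1}-1}}$. It suffices to prove the Leibniz-type identity
\[
E^{k}(\sigma \times \chi_k) \;\cong\; E^{k-1}(\sigma) \times E^{1}(\chi_k),
\]
which combined with associativity of parabolic induction gives the theorem.

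This identity is an archimedean analogue of the Bernstein--Zelevinsky filtration for the functor $\Phi^{k-1}$ applied to a parabolically induced representation. To set it up, restrict $\sigma \times \chi_k$ from $G_n$ to $P_n$, and use the two $Q$-orbits on $G_n/P_n \cong F^n \setminus \{0\}$ -- where $Q$ is the standard parabolic of type $(n-n_k,\,n_k)$, with closed orbit given by the non-zero vectors of the subspace $\{0\}\times F^{n_k}$ and the open orbit being its complement -- to obtain a $P_n$-equivariant short exact sequence
\[
0 \to \alpha \to (\sigma \times \chi_k)|_{P_n} \to \beta \to 0.
\]
Applying $\Phi^{k-1}$, which is exact and preserves Hausdorff-ness by Theorem~A: the open-orbit contribution $\alpha$ is identified by a Mackey-style computation with $E^{k-1}(\sigma) \times E^{1}(\chi_k)$, while the closed-orbit contribution $\beta$ vanishes. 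The vanishing is the key combinatorial input: on the closed orbit the character-type factor $\chi_k$ acts trivially on the relevant $\fv_{n-j}$ while the twist $\psi_{n-j}$ is non-trivial there, so the iterated twisted coinvariants kill $\beta$ (exactly as in the elementary computation showing $E^{2}(\chi)=0$ for any character $\chi$).

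The main obstacle I anticipate is the rigorous construction of the archimedean geometric lemma in the Fr\'{e}chet category -- in particular, realizing the open-orbit piece $\alpha$ as an honest object of $\cM(P_n)$ (typically described as smooth sections over a non-closed sub-orbit with controlled boundary behaviour), and propagating its Fr\'{e}chet structure through the iterated application of $\Phi$. Theorem~A is what makes the iteration work; the new input needed is the geometric realization of $\alpha$ together with the identification of its iterated twisted coinvariants as a parabolic induction.
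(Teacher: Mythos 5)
Your proposal is precisely the ``natural'' proof by induction on $k$ via a two-orbit Leibniz rule, and the paper explicitly states (in \S 1.1.2 and again at the start of \S 4) that this is the route they \emph{cannot} carry out and therefore do not take. The sticking point is the one you flag at the end, but it is not merely a technical nuisance to be handled later --- it is the reason the entire proof in the paper looks the way it does. In your inductive step, $\sigma = \chi_1\times\cdots\times\chi_{k-1}$ is a full Bernstein--Zelevinsky product, hence an infinite-dimensional Fr\'echet representation of $G_{n-n_k}$. When you realize $\sigma\times\chi_k$ geometrically as sections of an equivariant bundle on $G_n/P_{(n-n_k,n_k)}$ (or on $F^n\setminus\{0\}$), the fiber is $\sigma\otimes\chi_k$, which is infinite-dimensional. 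Every piece of geometric machinery in the paper --- the key lemma (Lemma 3.5), the relative Shapiro lemma (Theorem A.1), Lemma 4.9 on vanishing for non-open orbits, Lemma 4.11 on the open-orbit computation, the tempered/multiplicative-bundle formalism --- is stated and proved only for finite-rank Nash (or tempered) bundles. So the ``Mackey-style computation'' identifying the open-orbit piece, and the vanishing of the closed-orbit piece, are both unsupported: they are applications of theorems whose hypotheses are not met. The authors explicitly cite ``we do not have an appropriate language of infinite-dimensional sheaves'' as the obstruction to exactly this Leibniz-rule induction.

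What the paper does instead is to avoid the infinite-dimensional fiber altogether: since all the $\chi_i$ are characters, the representation $\chi_1\times\cdots\times\chi_k$ is $\Sc(G_n/P_\lambda,\cE)$ for a $G_n$-multiplicative \emph{line} bundle $\cE$. One then stratifies $G_n/P_\lambda$ into the $k$ mirabolic orbits $\cO^1_\lambda,\dots,\cO^k_\lambda$ (Lemma 4.3), uses Theorem A to pass $E^k$ through the resulting $k$-step filtration, and proves directly, by a double induction on $n$, that $E^k$ kills all non-open orbits (Lemma 4.9) and gives the stated product on the open orbit (Lemma 4.11). The price is that one must handle $k$ orbits (not $2$) and must track explicitly how the stabilizers $Q^i_\lambda$ and the character $\chi^i_{\lambda,\alpha}$ transform under the recursion $\lambda\mapsto\lambda^-_{k-i+1}$ --- this is the combinatorial content of \S 4.3--4.5 --- but everything stays within the finite-rank Nash world where the key lemma is actually available. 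To make your proof work you would need to extend the key lemma, the relative Shapiro lemma, and Lemma 4.9 to Fr\'echet-bundle-valued Schwartz sections; that is a genuinely nontrivial project, not a detail.
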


We prove this theorem in \S \ref{sec:PfProd}.

In the case of $k=n$ this becomes the Whittaker functor, which is known to be exact by \cite{CHM}. In this case Theorem \ref{thm:Prod} implies that the space of Whittaker functionals on a principal series representation is one-dimensional, which was proven for $G_n$ by Casselman-Zuckerman and for a general quasi-split reductive group by Kostant.

\subsection{Structure of our proof}

\subsubsection{Exactness and Hausdorffness}

Note that $\Phi(\pi)=|\det|^{-1/2} \otimes H_0(\fv_n,\pi \otimes \psi_n)$.
It turns out to be convenient to study it together with higher homologies $H_i(\fv_n,\pi \otimes \psi_n)$.
In particular, we prove the exactness of $\Phi$ and the Hausdorffness of $\Phi(\pi)$
together. Let us describe the proof of exactness, and the proof of
Hausdorffness will be incorporated into this argument.

The ideas of the proof come from two sources: the proof of the case $k=n$
given in \cite{CHM} and the analogy with the p-adic case, where one can use
the language of equivariant $l$-sheaves.

In \cite{CHM}, the first step of the proof is reduction to the statement
that the principal series representations are acyclic, using the Casselman
embedding theorem. This step works in our case as well. Then in \cite{CHM}
they prove that the principal series representations are acyclic by
decomposing them with respect to the decomposition of the flag variety into $%
U_n^n$ orbits (see \S \ref{subsec:PrelNot} for the definition of $U_n^k$ for $1 \leq k \leq n$). In their case, there is a finite number of $U_n^n$ orbits and
each orbit corresponds to an acyclic representation. In our case, $U_n^k$
has infinitely many orbits on the flag variety and some of them give rise to
non-acyclic representations.

Eventually we show that these problematic orbits do not contribute to the
higher $(\mathfrak{u}_{n}^{k},\psi _{n}^{k})$-homologies of principal
series since they are immersed in smooth families of orbits most of which
give rise to acyclic representations. It is difficult to see that directly,
therefore we turn to the strategy inspired by equivariant sheaves.

If we had
a nice category of $P_{n}$-representations, which like in the $p$-adic case
was equivalent to a nice category of $G_{n-1}$-equivariant sheaves on $%
V_{n}^{\ast }$, then the functor $\Phi  $ would be a composition of a
restriction to an open set and an equivalence of categories, and thus it
should be exact. Unfortunately it seems too hard at this point to define
such categories. Rather than doing this we define a certain class of $P_{n}$%
-representations. We define it using specific examples and constructions and
not by demanding certain properties. We do not view this class as a
\textquotedblleft nice category of representation of $P_{n}$", in particular
we do not analyze it using equivariant sheaves. However it is sufficient for
the proof of exactness. Namely we prove that this class includes the
principal series representations of $G_{n}$. We prove that it is closed
under $\Phi  $. Finally, we prove that its objects are acyclic with
respect to $\Phi  $.

In order to do this we develop some tools for computing the homology of
representations constructed in a geometric way.

\subsubsection{Derivative of monomial representations}

In the non-archimedean case, \cite{BZ-Induced} provides a description of any derivative of a Bernstein-Zelevinsky product $\rho_1\times \rho_2$ in terms of derivatives of $\rho_1$ and $\rho_2$. This description immediately implies the natural analog of Theorem \ref{thm:Prod}. This description is proved using geometric analysis of the  Bernstein-Zelevinsky product as the space of sections of the corresponding sheaf on the corresponding Grassmanian.

This method is hard to translate to the archimedean case for the following reasons.
\begin{itemize}
\item The analogous statement for the derivative as we defined it here does not hold. If we would not replace the functor of coinvariants by the functor of generalized coinvariants then the analogous statement might hold. However, we would lose the exactness, which played a crucial role in the proof in \cite{BZ-Induced}.

\item We do not have an appropriate language of infinite-dimensional sheaves.
\end{itemize}
For these reasons, we prove a product formula only for the depth derivatives, and only for products of characters. The drawback of that approach is that we have to consider product of more than two characters.

We prove the product formula using the geometric description of monomial representations. More specifically, we decompose the corresponding flag variety into $P_n$-orbits. This decomposition gives a filtration of the monomial representation. We compute the functor $\Phi$ on each of the associated graded pieces in geometric terms, and use the exactness of $\Phi$  to proceed by induction.

\subsection{Tools developed in this paper}

\subsubsection{The relative Shapiro lemma}

In the proof of the exactness of the derivatives we are interested in the study of homology of representations of the
following type. Let a real algebraic group $G$ act on a real algebraic
manifold $X$, and let $\cE$ be a $G$-equivariant bundle over $X$. Consider the
space ${\mathcal{S}}(X,\cE)$ of Schwartz sections of this bundle (see \S
\ref{sec:ScNash}). This is a representation of $G$. For technical reasons
we prefer to study Lie algebra homology rather than Lie group homology.
Thus we will impose some homotopic assumptions on $G$ and $X$ so that there
will be no difference. In the case when $X$ is homogeneous, i.e. $X=G/H$, a
version of the Shapiro Lemma states, under suitable homotopic and unimodularity assumptions on $G$ and $H$, that $\oH_*({\mathfrak{g}},{\mathcal{S}}(X,\cE)) =
\oH_*({\mathfrak{h}},\cE_{[1]})$,
where $\cE_{[1]}$ denotes the fiber of $\cE$ at
the class of $1$ in $G/H$ (cf. \cite{AGRhamShap}).

In Appendix \ref{app:PfShapLem} we prove a generalization of this statement
to the relative case. Namely, let $G$ act on $X \times Y$ and $\cE$ be a $G$%
-equivariant bundle over $X \times Y$. Suppose that $X=G/H$. Then, under the same assumptions on $G$ and $H$, we have
\begin{equation*}
\oH_*({\mathfrak{g}},{\mathcal{S}}(X \times Y,\cE)) = \oH_*({\mathfrak{h}},{%
\mathcal{S}}([1]\times Y,\cE_{[1] \times Y})).
\end{equation*}
For the zero homology this statement is in fact Frobenius descent (cf. \cite[Appendix B.3]{AGHC}).

\subsubsection{Homology of families of characters}

In the proof of the exactness of the derivatives, we use the relative Shapiro lemma to reduce to the study of
$\oH_*({\mathfrak{g}},{\mathcal{S}}(X,\cE))$, where the action of $G$ on $X$ is
trivial. Moreover, in our case $\cE$ is a line bundle and ${\mathfrak{g}}$ is
abelian. In \S \ref{sec:PfGeoGood} we prove that ${\mathcal{S}}(X,\cE)$ is acyclic
and compute $\oH_0({\mathfrak{g}},{\mathcal{S}}(X,\cE))$ under certain
conditions on the action of $G$ on $\cE.$

\Rami{\subsubsection{Tempered equivariant bundles}
Since we discuss non-algebraic (e.g. unitary) characters, it might become necessary to consider non-algebraic geometric objects. We tried to avoid that, but had to consider vector bundles of algebraic nature with a non-algebraic action of a group. A similar difficulty arose in \cite{AGST}, and the notion of tempered equivariant bundle was defined in order to deal with it. In this paper we develop more tools to work with such bundles. }

\subsection{Structure of the paper}$ $\\
%In \S \ref{sec:Prel} we give the necessary preliminaries on Harish-Chandra modules, admissible smooth representations, annihilator varieties and Bernstein-Zelevinsky product.

%\Rami{
The proof of exactness and Hausdorffness of $\Phi^k$ and the computation of the depth derivative of the monomial representations are based on the theory of Schwartz functions on Nash manifolds. In \S\ref{sec:ScNash} we review this theory, including the notion of equivariant tempered bundle.

In \S\ref{sec:ExactHaus} we prove that $\Phi^k$ is an exact functor on the category of smooth admissible representations and that $\Phi^k(\pi)$  is Hausdorff for any reducible smooth admissible representation $\pi$. In \S\ref{subsec:PfClass} we introduce the class of good representations of $\p_n$. The construction of this class is based on representations with simple geometric descriptions that we call geometric representations.  We prove that this class  includes the
principal series representations of $G_{n}$, is closed under $\Phi$ and  that its objects are acyclic with respect to $\Phi$.
In order to prove the last two statements we use Lemma \ref{lem:GeoGood} \RamiA{(the key lemma), which is proved} in \S\ref{sec:PfGeoGood}.

In \S\ref{sec:PfProd} we compute the depth derivative of a monomial representation. This computation is based on the  computation of  certain geometric representations. We formulate the results of this computation in \S\ref{subsec:ProdDer},  and prove them in \S\ref{subsec:PfLemProd}.

%In the section we wanted to add a remark that we also say something on other derivatives

In \S\ref{sec:PfSmoothLem} we give the postponed proofs of the technical lemmas that were used in sections  \ref{sec:ExactHaus} and \ref{sec:PfProd}.
For this we need some preliminaries on homological algebra, topological linear spaces and Schwartz functions on Nash manifolds, which we give in subsections \ref{subsec:PrelHomAlg}, \ref{subsec:PrelTopLin}, and \ref{subsec:PrelScNash}.

In \S\ref{sec:PfGeoGood} we prove the key lemma (Lemma \ref{lem:GeoGood}).

In Appendix \ref{app:PfShapLem} we prove  a version of the Shapiro lemma which is crucial for the proof of Lemma \ref{lem:GeoGood}.

\subsection{Acknowledgements}

We cordially thank Joseph Bernstein
%Semyon
%Alesker, Joseph Bernstein, Jim Cogdell, Laurent Clozel, Maria Gorelik, Anthony Joseph, Peter Trapa and David Vogan
for fruitful discussions, Gang Liu for his remarks and the referee for very careful proofreading and for his remarks.

Part of the work on this paper was done during the program ``Analysis on Lie Groups" at the Max Planck Institute for Mathematics (MPIM) in Bonn.
We would like to thank the organizers of the programm, Bernhard Kroetz,  Eitan Sayag and Henrik Schlichtkrull, and the administration of the MPIM for their hospitality.

A.A. was partially supported by NSF grant DMS-1100943 and ISF grant 687/13;

D.G. was partially supported by ISF grant 756/12 and a Minerva foundation grant.

\section{Preliminaries} \label{sec:Prel}

\subsection{Notation and conventions}\label{subsec:PrelNot}

\begin{itemize}
\item We will denote real algebraic groups by capital Latin letters and their complexified Lie algebras by small Gothic letters.

\item Let $\g$ be a complex Lie algebra. We denote by $\cM(\g)$ the category of (arbitrary) $\g$-modules. Let $\psi$ be a character of $\g$.
For a module $M \in \cM(\g)$ denote by $M^\g$ the space of $\g$-invariants, by $M^{\g,\psi}$ the space of $(\g, \psi)$-equivariants, by $M_{\g}$ the space of coinvariants, i.e. $M_{\g}:=M/\g M$ and by $M_{\g,\psi}$ the space of $(\g,\psi)$-coequivariants, i.e. $M_{\g,\psi}:=(M \otimes (-\psi))_{\g}$.
% \item We also denote by $M_{gen, \g}$ the space of the generalized co-invariants, i.e. %
% $$M_{gen, \g}=\lim_{\overset{\longleftarrow}{l}} M/\Span(\{\alp v\, |\, %v \in M, \alp \in (\g)^{\otimes l}\}).$$

%Throughout the section we will use the following notation.
%\begin{notn}
\item  $G_n :=GL(n,F)$, we embed $G_n \subset G_m$ for any $m>n$ by sending any $g$ into a block-diagonal matrix consisting of $g$ and $Id_{m-n}$. We denote the union of all $G_n$ by $G_{\infty}$ and all the groups we will consider will be embedded into $G_{\infty}$ in a standard way.
\item By a composition of $n$ we mean a tuple $(n_1 , \dots ,  n_k)$ of natural numbers such that $n_1+\cdots+n_k=n$. By a partition we mean a composition which is non-increasing, i.e. $n_1 \geq \cdots \geq n_k$.

\item We denote by $P_{n}\subset G_{n}$ the mirabolic subgroup (consisting of matrices with last row $(0 , \dots ,  0,1)$).

\item For a composition $\lambda=(n_1 , \dots ,  n_k)$ of $n$ we denote by $P_{\lam}$ the corresponding block-upper triangular parabolic subgroup of $G_n$.
For example, $P_{(1,\dots,1)}=B_n$ denotes the standard Borel subgroup, $P_{(n)}=G_n$ and $P_{(n-1,1)}$ denotes the standard maximal parabolic subgroup that includes $P_n$.

\item Let $V_n \subset P_n$ be the unipotent radical. Note that $V_n \cong F^{n-1}$ and $P_n = G_{n-1} \ltimes V_n$. Let $U_n^k := V_{n-k+1} V_{n-k+2} \cdots V_{n}$ and $S_n^k := G_{n-k} U_n^k$. Note that $U_n^k$ is the unipotent radical of $S_n^k$. Let $N_n:=U_n^n$.
\item Fix a non-trivial unitary additive character $\theta$ of $F$, given by $\theta(x)=\exp(\sqrt{-1}\pi \re x)$.
\item Let $\bar{\psi}_n^k:U_n^k \to F$ be the standard non-degenerate homomorphism, given by $\bar{\psi}_n^k(u)=\sum_{j=n-k}^{n-1} u_{j,j+1}$ and let $\psi_n^k:=\theta \circ \bar{\psi}_n^k$.%character of $U_n^k$.
\end{itemize}

We will usually omit the $n$ from the notations $U_n^k$ and $S_n^k$, and both indexes from $\psi_n^k$.

\begin{defn}\label{def:main}
Define the functor $\Phi: \cM(\fp_n) \to \cM(\fp_{n-1})$ by $\Phi(\pi):=\pi_{\fv_n,\psi} \otimes |\det|^{-1/2}$.% and $\Psi,\Psi_0: \cM(\fp_n) \to \cM(\fg_{n-1})$ by $\Psi(\pi):= \pi_{gen,\fv_{n}}$ and $\Psi_0(\pi):= \pi_{\fv_{n}}$.

For a $\p_n$-module $\pi$ we define derivative by
 $E^k(\pi):=\Phi^{k-1}(\pi):=\pi_{\fu^{k-1},\psi^{k-1}}\otimes |\det|^{-(k-1)/2}.$ Clearly it has a structure of a $\p_{n-k+1}$ - representation.
%Here $\p_{n-k+1}$ is the Lie algebra of the ${n-k+1}$-mirabolic.
For convenience we will also use untwisted versions of the above functors, defined by $\oPhi(\pi):=\Phi(\pi) \otimes |\det|^{1/2}$, and $\oE^k(\pi):=E^k(\pi) \otimes |\det|^{(k-1)/2}$.
\end{defn}

\subsection{Smooth representations}\label{subsec:HC}

%In this subsection we fix a real reductive group $G$, a minimal parabolic subgroup of $P \subset G$, and let $N$ denote the nilradical of $P$.
%We also fix a maximal compact subgroup $K \subset G$. Let $\fg,\fn,\fk$ denote the complexified Lie algebras of $G,N,K$, and let $Z_{G}:=U({\mathfrak{g}})^{G}$.
%
%
%
%\begin{thm}[Casselman-Wallach, see \cite{Wal2}, \S\S\S 11.6.8] \label{thm:CW}
%The functor $HC:\cM_{\infty}(G) \to \cM_{HC}(G)$ is an equivalence of categories.
%\end{thm}

%In fact, Casselman and Wallach construct an inverse functor $\Gamma: \cM_{HC}(G) \to \cM_{\infty}(G)$, that is called Casselman-Wallach globalization functor (see \cite[Chapter 11]{Wal2} or \cite{CasGlob} or, for a different approach, \cite{BerKr}).
% and $\Gamma: \cM_{HC}(G) \to \cM_{\infty}(G)$ the (inverse) Casselman-Wallach globalization functor.

\begin{thm}[Casselman-Wallach] \label{cor:CW}
$ $
\begin{enumerate}[(i)]
\item  \label{it:Ab} The category $\cM(G)$ is abelian.
\item \label{it:ClosIm} Any morphism in $\cM_{}(G)$ has closed image.
\end{enumerate}
\end{thm}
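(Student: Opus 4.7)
The plan is to deduce both parts from the Casselman--Wallach globalization theorem. Fix a maximal compact subgroup $K \subset G$, and let $\cM_{\mathrm{HC}}(G)$ denote the category of admissible, finitely generated, $Z(\fg)$-finite $(\fg,K)$-modules. Casselman--Wallach asserts that the Harish-Chandra module functor $\pi \mapsto \pi_K$ (passage to $K$-finite vectors) is an equivalence $\cM(G) \simeq \cM_{\mathrm{HC}}(G)$, with quasi-inverse given by canonical smooth moderate-growth globalization $M \mapsto M^\infty$. A fundamental consequence is that every $(\fg,K)$-submodule of a Harish-Chandra module globalizes to a unique closed subrepresentation of the globalization of the ambient module.

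The category $\cM_{\mathrm{HC}}(G)$ is manifestly abelian, being a full subcategory of $(\fg,K)$-modules closed under passage to sub- and quotient objects.

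For part (ii), given a morphism $\phi\colon \pi \to \sigma$ in $\cM(G)$, set $M := \mathrm{Im}(\phi_K) \subseteq \sigma_K$, which is a Harish-Chandra submodule. By Casselman--Wallach, $M$ globalizes to a unique closed subrepresentation $M^\infty \subseteq \sigma$, and $\phi$ factors as $\pi \twoheadrightarrow \pi/\ker\phi \to M^\infty \hookrightarrow \sigma$. The middle arrow is a morphism in $\cM(G)$ whose associated Harish-Chandra module map is the isomorphism $\pi_K/\ker\phi_K \xrightarrow{\sim} M$; by the equivalence it is therefore an isomorphism in $\cM(G)$, so $\phi(\pi) = M^\infty$ is closed in $\sigma$.

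Part (i) follows readily: kernels (automatically closed) and cokernels (well-defined Fr\'echet objects thanks to closedness of images from (ii)) belong to $\cM(G)$, and the Harish-Chandra module functor converts them to the corresponding kernels and cokernels in the abelian category $\cM_{\mathrm{HC}}(G)$; together with the obvious biproducts this makes $\cM(G)$ abelian by transport of structure. The main obstacle is the Casselman--Wallach globalization itself, which is invoked as a black box; a minor supporting point is that taking $K$-finite vectors is exact on admissible representations, so that it correctly commutes with the formation of kernels and cokernels and the identification $\mathrm{Im}(\phi_K) = (\mathrm{Im}\,\phi)_K$ used above is legitimate.
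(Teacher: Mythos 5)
The paper states this result as a black-box citation of Casselman--Wallach theory and gives no proof of its own, so there is nothing to compare against directly. Your deduction from the globalization theorem (the equivalence $\pi \mapsto \pi_K$ between $\cM(G)$ and finitely generated admissible Harish-Chandra modules, together with the automatic-closedness of globalized submodules) is the standard route to these two facts and is correct as a sketch. Two small points worth flagging: to apply the equivalence to the middle arrow in your factorization you tacitly need $\pi/\ker\phi \in \cM(G)$, which is itself part of (i) --- it is cleaner to invoke exactness of the globalization functor directly, which immediately shows that $\pi \to \overline{\operatorname{Im}(\phi_K)}$ is surjective because $\pi_K \to \operatorname{Im}(\phi_K)$ is; and the inclusion $\operatorname{Im}\phi \subseteq M^\infty$ should be justified by density of $K$-finite vectors in $\pi$ plus continuity of $\phi$. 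Both are routine once CW is granted, which you explicitly invoke as a black box, so the argument stands.
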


\begin{thm}[Casselman subrepresentation theorem, see \cite{CM}, Proposition 8.23]\label{thm:CasSubRep}
Let $\pi$ be a finitely generated admissible $(\g,K)$-module and $P$ be a minimal parabolic subgroup of $G$. Then there exists a
\RamiA{finite-dimensional}
%n irreducible finite-dimensional %smooth
representation $\sigma$ of $P$ such that $\pi$ may be imbedded into $Ind_P^G(\sigma)$.
\end{thm}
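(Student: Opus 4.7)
The plan is to deduce the statement from Casselman's theorem on Jacquet modules, which controls the finite-dimensional asymptotic behavior of Harish-Chandra modules. Let $\bar P$ be the parabolic opposite to $P$, with unipotent radical $\bar N$ and Lie algebra $\bar\fn$. Casselman's structure theorem supplies two key facts about the $\bar\fn$-adic filtration $\{\bar\fn^k\pi\}_{k\ge 0}$: first, each quotient $\tau_k := \pi/\bar\fn^k\pi$ is a finite-dimensional representation of $\bar P$ (the group action coming from exponentiating the Lie algebra action, which is possible since $\bar\fn$ acts nilpotently on $\tau_k$); second, the filtration is separated, i.e.\ $\bigcap_{k \ge 1} \bar\fn^k\pi = 0$.

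Next I would apply Frobenius reciprocity, $\Hom_{(\fg,K)}(\pi, \Ind_{\bar P}^G \tau) \cong \Hom_{\bar P}(\pi, \tau)$ (modulo the standard half-density twist), to the natural quotient $\pi \twoheadrightarrow \tau_k$, producing for each $k$ a $G$-equivariant morphism
\[
\varphi_k\colon \pi \longrightarrow \Ind_{\bar P}^G \tau_k.
\]
Because $P$ is minimal, $\bar P$ is $G$-conjugate to $P$ via the long Weyl element, so $\Ind_{\bar P}^G \tau_k \cong \Ind_P^G \sigma_k$ for an appropriate finite-dimensional $P$-module $\sigma_k$. It therefore suffices to find some $k$ for which $\varphi_k$ is injective.

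Unwinding Frobenius reciprocity one sees that $\ker\varphi_k = \bigcap_{g\in G} g\cdot \bar\fn^k\pi$, which is contained in $\bar\fn^k\pi$ itself. Consequently
\[
\bigcap_{k \ge 1} \ker\varphi_k \;\subseteq\; \bigcap_{k\ge 1} \bar\fn^k\pi \;=\; 0.
\]
Since $\pi$ is finitely generated admissible, Harish-Chandra's theorem guarantees that $\pi$ has finite length as a $(\fg,K)$-module, so the descending chain $\ker\varphi_1 \supseteq \ker\varphi_2 \supseteq \cdots$ of $(\fg,K)$-submodules must stabilize. Stability at $0$ gives $\ker\varphi_{k_0} = 0$ for some $k_0$, and taking $\sigma := \sigma_{k_0}$ then produces the required embedding.

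The main obstacle is Casselman's theorem on Jacquet modules itself — in particular the separation property $\bigcap_k \bar\fn^k\pi = 0$, which is equivalent to injectivity of the canonical map into the $\bar\fn$-adic completion and rests on the delicate asymptotic analysis of matrix coefficients of Harish-Chandra modules along the positive chamber of $A$. The finite-dimensionality of the $\tau_k$ is also non-trivial but more combinatorial, following inductively from the case $k=1$ (finite-dimensionality of the Jacquet module) together with the observation that $\bar\fn^{k-1}\pi/\bar\fn^k\pi$ is a quotient of $\bar\fn \otimes \bar\fn^{k-2}\pi/\bar\fn^{k-1}\pi$.
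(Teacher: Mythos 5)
The paper does not prove this theorem; it quotes it as a black box from \cite{CM}, Proposition~8.23. So there is no ``paper's proof'' to compare against. Your argument is the standard Jacquet-module proof of Casselman's subrepresentation theorem, and it is essentially correct as an outline. The two deep inputs you isolate---finite-dimensionality of $\pi/\bar{\fn}\pi$ (hence, by your inductive remark, of each $\tau_k=\pi/\bar{\fn}^k\pi$) and the separation $\bigcap_k \bar{\fn}^k\pi=0$---together with Frobenius reciprocity, the finite-length theorem for finitely generated admissible $(\g,K)$-modules, and conjugacy of $P$ with $\bar P$ by a long Weyl element, do yield the embedding. Two minor points deserve sharpening. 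First, the identity $\ker\varphi_k=\bigcap_{g\in G}g\cdot\bar{\fn}^k\pi$ is formally ill-posed since $\pi$ is only a $(\g,K)$-module, not a $G$-module; the correct statement, which Frobenius reciprocity gives and which is all you use, is that $\ker\varphi_k$ is the largest $(\g,K)$-submodule of $\pi$ contained in $\bar{\fn}^k\pi$, and in particular $\ker\varphi_k\subseteq\bar{\fn}^k\pi$. Second, promoting the $\bar{\fp}$-module $\tau_k$ to a finite-dimensional $\bar P$-representation requires gluing the $M=Z_K(\aaa)$ action (inherited from the $K$-structure), the exponentiated $\aaa$-action (finite-dimensionality), and the exponentiated nilpotent $\bar{\fn}$-action; this is routine but should be said. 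Finally, the chain $\ker\varphi_1\supseteq\ker\varphi_2\supseteq\cdots$ is indeed decreasing because $\tau_{k+1}\twoheadrightarrow\tau_k$ induces a map under which $\varphi_k$ factors through $\varphi_{k+1}$. So the proof is sound, modulo the substantial Casselman theorems that you rightly flag as the real content and which are precisely what \cite{CM} supplies.
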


\subsection{Parabolic induction and Bernstein-Zelevinsky product} \label{subsec:ParInd}

Let $G$ be a real reductive group, $P$ be a parabolic subgroup, $M$ be the Levi quotient of $P$ and $pr:P \to M$ denote the natural map.

\begin{notn}
$ $
\begin{itemize}
\item For a Lie group $H$ we denote by $\Delta_H$ the modulus character of $H$, i.e. the absolute value of the determinant of the infinitesimal adjoint action.
\item For $\pi \in \cM(M)$ we denote by $I_P^G(\pi)$ the normalized parabolic induction of $\pi$, i.e. the space of smooth functions $f :G \to \pi$ such that $f(pg) = \Delta_P(p)^{1/2} \pi(pr(p))f(g)$, with the action of $G$ given by $(I_P^G(\pi)(g)f)(x):=f(xg)$.
\end{itemize}
\end{notn}
%\Dima{we induce on the LEFT!!}

%The behavior of annihilator variety under parabolic induction is described by the following theorem.
%
%\begin{thm}\label{thm:AnnVarProd}
%Note that we have a natural embedding $\fm^* \hookrightarrow \fp^*$ and a natural projection $r:\g^* \to \fp^*$. Let $\pi \in \cM_{\infty}(M)$. Then $\cV(I_M^G(\pi)) = G_{\C}\cdot r^{-1}(\cV(\pi))$, where $G_{\C}$ is the complexification of $G$.
%\end{thm}
%This theorem is well-known and can be deduced from \cite[Theorem 2]{BB2}.
%\Dima{ There is a problem. In \cite{BB2} $G$ is probably the adjoint group of $\g$. Is it the same as our $G_\C$?}

\subsubsection{Bernstein-Zelevinsky product}

We now introduce the Bernstein-Zelevinsky product notation for parabolic induction.

\begin{definition}\label{def:BZProd}
If $\alpha=(n_{1} , \dots ,    n_{k})$ is a composition of $n$ and
$\pi_{i}\in\cM(G_{n_{i}})$ then $\pi_{1}\otimes\cdots\otimes\pi_{k}$ is
a representation of $L_{\alpha}\approx G_{{\alpha}_{1}
}\times\cdots\times G_{{\alpha}_{k}}$. We define
\[
\pi_{1}\times\cdots\times\pi_{k}=I_{P_{\alpha}}^{G_{n}}\left(  \pi_{1}\otimes\cdots\otimes\pi_{k}\right)
\]
\end{definition}

\RamiA{ $\pi_{1}\times\cdots\times\pi_{k}$ will be referred to below as the Bernstein-Zelevinsky product, or the BZ-product, or sometimes just the product of $\pi_{1},\dots,\pi_{k}$. It is well known (see e.g. \cite[\S 12.1]{Wal2}) that the product is commutative in the Grothendieck group.}

\subsection{Schwartz functions on Nash manifolds} \label{sec:ScNash}

In the proofs of Theorems \ref{thm:ExactHaus} and \ref{thm:Prod} we will use the language of Schwartz functions on Nash manifolds, as developed in \cite{AGSc}.

Nash manifolds are smooth semi-algebraic manifolds. In sections \ref{sec:ExactHaus} and \ref{sec:PfProd} only algebraic manifolds are considered and thus the reader can safely replace the word ``Nash" by ``real algebraic". However, in section \ref{sec:PfSmoothLem} we will use Nash manifolds which are not algebraic.

Schwartz functions are functions that decay, together with all
their derivatives, faster than any polynomial. On $\R^n$ it is the
usual notion of Schwartz function. We also need the notion of tempered functions, i.e. smooth functions that grow not faster than a polynomial,  and so do all their derivatives. For precise definitions of
those notions we refer the reader to \cite{AGSc}.
In this section we summarize some elements of the theory of Schwartz functions. We will give more details in \S \ref{subsec:PrelScNash}.

\begin{notation}
Let $X$ be a Nash manifold. Denote by $\Sc(X)$ the \Fre space of
Schwartz functions on $X$.
For any Nash vector bundle $\cE$ over $X$ we denote by $\Sc(X,\cE)$
the space of Schwartz sections of $\cE$.
\end{notation}

\begin{prop}[\cite{AGSc}, Theorem 5.4.3] \label{pOpenSet}
Let $U \subset X$  be a (semi-algebraic) open subset, then
$$\Sc(U,\cE) \cong \{\phi \in \Sc(X,\cE)| \quad \phi \text{ is 0 on } X
\setminus U \text{ with all derivatives} \}.$$
\end{prop}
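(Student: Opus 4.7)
The plan is to prove both containments directly, via extension by zero in one direction and restriction in the other, after reducing to a Euclidean normal form. First I would use the Nash embedding theorem to realize $X$ as a closed Nash submanifold of some $\R^N$ and, via a Nash partition of unity together with local Nash trivializations, reduce the statement to the case where $\cE$ is a trivial line bundle. Schwartzness and infinite-order vanishing are local in this sense, so the general case follows from the scalar case. The key geometric device is a nonnegative Nash function $g$ on $X$ whose zero set is exactly $X\setminus U$; such a $g$ exists because $X\setminus U$ is closed semi-algebraic. By the semi-algebraic \L ojasiewicz inequality and Tarski--Seidenberg, $1/g$ is tempered on $U$, and every tempered function on $U$ is dominated by some $g^{-k}$ times a tempered function on $X$. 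This $g$ will mediate between ``Schwartz on $U$'' and ``vanishing to infinite order on $X\setminus U$''.

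For the inclusion from left to right, given $\phi\in\Sc(U,\cE)$, Schwartzness on $U$ implies bounds of the form $|D^\alpha\phi(x)| \leq C_{\alpha,k}\, g(x)^k$ for all $\alpha$ and $k$ as $x$ approaches $X\setminus U$, and standard polynomial decay at infinity of $X$. Taylor's theorem with remainder then shows that the extension of $\phi$ by zero is $C^\infty$ on $X$ with all derivatives vanishing on $X\setminus U$; the polynomial decay at infinity transfers to the extension, yielding an element of $\Sc(X,\cE)$. For the reverse inclusion, given $\phi\in\Sc(X,\cE)$ flat on $X\setminus U$, I would show that for every multi-index $\alpha$ and every $N$ one has a uniform estimate $|D^\alpha\phi(x)| \leq C_{\alpha,N}\, g(x)^N$ on $X$, which combined with the Schwartz decay at infinity on $X$ and the description of tempered functions on $U$ gives all the required Schwartz bounds for $\phi|_U$ on $U$.

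The main obstacle is precisely this global \L ojasiewicz-type divisibility estimate: infinite-order flatness of $\phi$ on $\{g=0\}$ gives, via the local semi-algebraic \L ojasiewicz inequality, bounds $|D^\alpha\phi| \leq C_{\alpha,N,K}\, g^N$ on each compact semi-algebraic piece $K\subset X$, but the constants can a priori deteriorate at infinity. One must merge these local estimates with the Schwartz decay to obtain a uniform global bound. A convenient approach is to semi-algebraically stratify $X$ compatibly with the level sets of $g$ and with a semi-algebraic exhaustion by compacta, apply \L ojasiewicz stratum-by-stratum, and control the transitions by Schwartzness; alternatively one may compactify $X$ semi-algebraically, reducing to the compact case. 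Once this divisibility is in place, a Fa\`a di Bruno computation for derivatives of $\phi\cdot p(1/g)$ for tempered $p$ completes the check that $\phi|_U\in\Sc(U,\cE)$.
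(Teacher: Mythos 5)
This proposition is quoted in the paper from \cite{AGSc} (Theorem 5.4.3) and is not reproved there, so there is no in-paper argument to compare against; your attempt should therefore be judged against the argument in the cited source, and it does follow the same broad line: reduce to trivial scalar bundles over a Euclidean model, choose a nonnegative Nash function $g$ with $g^{-1}(0)=X\setminus U$, use $\L$ojasiewicz to identify tempered functions on $U$ with $g^{-k}\cdot(\text{tempered on }X)$, prove one containment by extension by zero and the other by a divisibility estimate.

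Two remarks on the substance. First, your reduction ``use the Nash embedding theorem to realize $X$ as a closed Nash submanifold of $\R^N$'' only applies once $X$ is affine; for a general Nash manifold one must first pass to a finite affine cover and a Nash partition of unity, and one then needs either a Nash tubular neighborhood or an explicit chart to carry out the straight-line Taylor argument --- the line segment from $x\in U$ to the nearest boundary point must stay inside the domain, which is automatic only in a Euclidean chart. Second, and more importantly, the ``main obstacle'' you flag --- that the local $\L$ojasiewicz constants might degenerate at infinity, necessitating a stratification or a compactification --- is not actually an obstacle, and neither of the devices you sketch is needed (the compactification route is in fact delicate: one would have to control $g$ and the added boundary, which is not obviously a Nash problem). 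The globalization comes for free from Schwartzness. If $\phi\in\Sc(\R^n)$ is flat on $Z$ and $x\in U$ with nearest point $z\in Z$, Taylor's theorem with integral remainder along the segment $[z,x]$ gives, for every $\alpha$ and every $N$,
\[
|D^\alpha\phi(x)|\ \le\ C_N\,d(x,Z)^N\,\sup_{|\beta|=N}\ \sup_{\R^n}|D^{\alpha+\beta}\phi|,
\]
and the right-hand sup is finite and \emph{uniform in $x$} precisely because $\phi$ is Schwartz; no compactness is invoked. Interpolating with the polynomial decay $|D^\alpha\phi(x)|\le C'_m(1+|x|)^{-m}$ gives $|D^\alpha\phi(x)|\le C\,d(x,Z)^N(1+|x|)^{-m}$ for all $N,m$. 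The passage from $d(\cdot,Z)$ to your Nash function $g$ is then the \emph{global} semi-algebraic $\L$ojasiewicz inequality (H\"ormander's form), which allows a polynomial factor in $|x|$: $d(x,Z)^{N'}\le C(1+|x|)^{m'}g(x)$ on $\R^n$. That polynomial loss is absorbed by taking $m$ large. So the estimate you need --- for all $\alpha,k,m$, the quantity $(1+|x|)^m g(x)^{-k}|D^\alpha\phi(x)|$ is bounded --- follows in three lines, and your stratification/compactification detour can be dropped. With that replacement the proposal is correct; you may also want to add a sentence observing that the resulting bijection is automatically a topological isomorphism of Fr\'echet spaces by the open mapping theorem (or by checking continuity of both directions directly from the defining seminorms).
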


\begin{notation}
Let $Z$ be a locally closed semi-algebraic subset of a Nash manifold $X$. Let $\cE$ be a Nash bundle over $X$. Denote $$\Sc_X(Z,\cE):=\Sc(X-(\overline{Z}-Z),\cE)/\Sc(X-\overline{Z},\cE).$$
Here we identify $\Sc(X-\overline{Z},\cE)$ with a closed subspace of $\Sc(X-(\overline{Z}-Z),\cE)$ using the description of
Schwartz functions on an open set (Proposition \ref{pOpenSet}).
\end{notation}

%\begin{cor} \label{cor:Sc_seq}
%Let $U \subset M$  be a (semi-algebraic) open subset and $Z:= X \setminus U$. Then
%we have a short exact sequence
%%
%$$ 0 \to \Sc(U, \cE) \to \Sc(X,\cE) \to \Sc_X(Z,\cE) \to 0.$$
%\end{cor}

\begin{cor}%[cf. \cite{AGS}, Appendix B]
\label{cor:Sc_seq}
Let $X:=\bigcup_{i=1}^k X_i$ be a Nash stratification of a Nash manifold $X$, i.e. $\bigcup_{i=j}^k X_i$ is an open Nash subset of $X$ for any $j$. Let $\cE$ be a Nash bundle over $X$.

Then $\Sc(X,\cE)$ has a natural filtration of length $k$ such that $Gr^i(\Sc(X,\cE)) = \Sc_X(X_i,\cE)$.

Moreover, if $Y$ is a Nash manifold and $X\subset Y$ is a (locally closed) Nash submanifold then $\Sc_Y(X,\cE)$ has a natural filtration of length $k$ such that $Gr^i(\Sc_Y(X,\cE)) = \Sc_Y(X_i,\cE)$.
\end{cor}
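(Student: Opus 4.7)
The plan is to induct on $k$ using Proposition \ref{pOpenSet}. The base case $k=1$ is trivial. For the inductive step, the hypothesis implies that $X_1$ is closed in $X$ with open complement $U := \bigcup_{i=2}^{k} X_i$, so $\overline{X_1}^{X} = X_1$ and the definition of $\Sc_X(X_1,\cE)$ reduces to $\Sc(X,\cE)/\Sc(U,\cE)$. Combined with the closed embedding $\Sc(U,\cE) \hookrightarrow \Sc(X,\cE)$ provided by Proposition \ref{pOpenSet}, this yields the short exact sequence
\begin{equation*}
0 \to \Sc(U,\cE) \to \Sc(X,\cE) \to \Sc_X(X_1,\cE) \to 0.
\end{equation*}
Applying the induction hypothesis to $U$ with its length-$(k-1)$ stratification produces a filtration on $\Sc(U,\cE)$ whose graded pieces are $\Sc_U(X_i,\cE)$ for $i \geq 2$; pulling this filtration back through the above sequence gives a length-$k$ filtration on $\Sc(X,\cE)$ with the claimed graded pieces, provided one can identify $\Sc_U(X_i,\cE)$ with $\Sc_X(X_i,\cE)$.

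This last identification, which I expect to be the main technical step, is an excision statement. Since $X_i \subset U$, the open sets $A_X := X - (\overline{X_i}^{X} - X_i)$ and $A_U := U - (\overline{X_i}^{U} - X_i) = A_X \cap U$, in whose terms $\Sc_X(X_i,\cE)$ and $\Sc_U(X_i,\cE)$ are respectively defined, satisfy the cover identity $A_X = A_U \cup (A_X - X_i)$. A Nash partition of unity subordinate to this open cover, available in the framework of \cite{AGSc}, decomposes any $f \in \Sc(A_X,\cE)$ as $f_1 + f_2$ with $f_1 \in \Sc(A_U,\cE)$ and $f_2 \in \Sc(A_X - X_i,\cE)$, giving surjectivity of the natural map $\Sc(A_U,\cE) \to \Sc(A_X,\cE)/\Sc(A_X - X_i,\cE)$. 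Injectivity follows from the equality $\Sc(A_U,\cE) \cap \Sc(A_X - X_i,\cE) = \Sc(A_U - X_i,\cE)$, which is immediate from Proposition \ref{pOpenSet}.

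For the ``Moreover'' part, one reduces to the first statement by working inside the open Nash submanifold $M := Y - (\overline{X}^{Y} - X)$, in which $X$ is closed. One has $\Sc_Y(X,\cE) = \Sc(M,\cE)/\Sc(M - X,\cE)$ and, by the same excision argument, $\Sc_Y(X_j,\cE) = \Sc_M(X_j,\cE)$ for each $j$. Extend the stratification of $X$ to a Nash stratification of $M$ of length $k+1$ by appending $M \setminus X$ as the top open stratum (its openness, and that of $U_j \cup (M \setminus X)$, follow because $X \setminus U_j$ is closed in $X$ and hence in $M$). Applying the first part to $M$ and then quotienting by the top graded piece $\Sc(M - X,\cE)$ produces the desired filtration on $\Sc_Y(X,\cE)$. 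The only delicate point throughout is the Nash partition of unity used in the excision step; this is standard in the Nash category.
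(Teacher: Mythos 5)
Your proof is correct, and since the paper presents this as an unproved corollary of Proposition \ref{pOpenSet} and the surrounding definitions, your argument supplies exactly the missing details one would expect. The inductive structure (closing off $X_1$, using the short exact sequence $0 \to \Sc(U,\cE) \to \Sc(X,\cE) \to \Sc_X(X_1,\cE) \to 0$ from Proposition \ref{pOpenSet}, and pulling back the filtration from $U$) is the natural approach, and in fact produces the explicit description $F^j = \Sc(\bigcup_{i\geq j} X_i,\cE) \subset \Sc(X,\cE)$, which is clearly natural.

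You correctly identify the excision step $\Sc_U(X_i,\cE) \cong \Sc_X(X_i,\cE)$ as the only real content, and your treatment of it is sound: the open cover $A_X = A_U \cup (A_X - X_i)$ plus the Schwartz partition of unity (Theorem \ref{thm:SchwarProp}\eqref{pCosheaf}) gives surjectivity of $\Sc(A_U,\cE) \to \Sc_X(X_i,\cE)$, and the identity $\Sc(A_U,\cE) \cap \Sc(A_X - X_i,\cE) = \Sc(A_U - X_i,\cE)$, read off from the vanishing characterization of Proposition \ref{pOpenSet}, gives injectivity. The reduction of the ``Moreover'' part to the first part by passing to $M = Y - (\overline{X}-X)$, extending the stratification by one open stratum $M\setminus X$, and then quotienting by the bottom filtration step $\Sc(M\setminus X,\cE)$ is likewise correct. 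The only minor point worth flagging is that in the ``Moreover'' part the bundle $\cE$ must be (tacitly) given on $Y$, or at least on $M$, for $\Sc_Y(X_i,\cE)$ to make sense; this is an implicit assumption of the statement rather than a gap in your argument.
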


\begin{notation}\label{not:!}
Let $X$ be a Nash manifold.\\
(i) We denote by $D_X$ the Nash bundle of
densities on $X$. It is the natural bundle whose smooth sections
are smooth measures. For the precise definition see e.g.
\cite{AGRhamShap}.\\
(ii) Let $\phi:X \to Y$ be a map of (Nash) manifolds. Denote $D_Y^X:=D_\phi :=\phi^*(D_Y^*)\otimes D_X$.\\
(iii) Let $\cE \to Y$ be a (Nash) bundle. Denote $\phi^!(\cE)=\phi^*(\cE)\otimes D_Y^X$.
\end{notation}
\begin{rem}
If $\phi$ is a submersion then for all $y \in Y$ we have $D_Y^X|_{\phi^{-1}(y)} \cong D_{\phi^{-1}(y)}$.
%(ii) If a Lie group $G$ acts on a smooth manifold $X$ and $\cE$ is a $G$-equivariant vector bundle (i.e. we have a map $p^*(\cE) \to a^*(\cE)$, where $p:G \times X \to X$ is the projection and $a:G \times X \to X$ is the action) then we also have a natural map $p^!(\cE) \to a^!(\cE).$ If $G$, $X$ and $\cE$ are Nash and the actions of $G$ on $X$ and $\cE$ are Nash then the map $p^!(\cE) \to a^!(\cE)$ is Nash. If the action of $G$ on $\cE$ is tempered then the  map $p^!(\cE) \to a^!(\cE)$ corresponds to a tempered section of  $\Hom(p^!(\cE),a^!(\cE))$.
\end{rem}

\subsection{Tempered functions}\label{subsec:TempFun}

Analyzing smooth representations from a geometric point of view we encounter two related technical difficulties:
\begin{itemize}
\item Most characters of $F^\times$ are not Nash
\item $\psi$ is not Nash.
\end{itemize}
This makes the group action on the geometric objects that we consider to be not Nash. This could cause us to consider geometric objects outside the realm of Nash manifolds. In order to prevent that we introduce some technical notions, including the notion of $G$-tempered bundle which is, roughly speaking, a Nash bundle with a tempered $G$-action. The price of that is the need to make sure that all our constructions produce only Nash geometric objects.

\begin{notation}
Let $X$ be a Nash manifold. Denote by $\T(X)$ the space of tempered functions on $X$. For any Nash vector bundle $\cE$ over $X$ we denote by $\T(X,\cE)$ the space of tempered sections of $\cE$.
\end{notation}
Note that any unitary character of a real algebraic group $G$ is a tempered function on $G$.
%
%Since the character $\psi$ is not semi-algebraic, the generality of equivariant Nash bundles will not be wide enough for us. Thus we will need the following notions.

\begin{defn}
Let $X$ be a Nash manifold and let $\cE$ and $\cE'$ be Nash bundles over it.
A morphism of bundles $\phi:\cE \to \cE'$ is called tempered if it
corresponds to a tempered section of the bundle $\Hom(\cE, \cE')$.
\end{defn}
Note that if $\phi:\cE \to \cE'$ is a tempered morphism of bundles, then the induced map $ C^{\infty}(X,\cE) \to C^{\infty}(X,\cE')$ maps $\Sc(X,\cE)$ to $\Sc(X,\cE')$ and $\T(X,\cE)$ to $\T(X,\cE')$.

\begin{defn}
Let a Nash group $G$ act on a Nash manifold $X$. A {\bf $G$-tempered bundle} $\cE$ over $X$ is a Nash bundle $\cE$ with an equivariant structure $\phi:a^*(\cE) \to p^*(\cE)$ (here $a:G\times X\to X$ is the action map and  $p:G\times X\to X$ is the projection) that is a tempered morphism of bundles such that for any element $L$ in the Lie algebra of $G$ and any open (semi-algebraic) subset $U \subset X$ the derived map $a(L):C^\infty(U,\cE) \to C^\infty(U,\cE)$ preserves the sub-space of Nash sections of $\cE$ on $U$.
\end{defn}

\begin{defn}\label{def:MultBun}$\,$
\begin{itemize}
\item We call a character $\chi$ of a Nash group $G$ \emph{multiplicative} if $\chi = \mu \circ \overline{\chi}$, where $\overline{\chi}:G \to F^\times$ is a homomorphism of Nash groups and $\mu:F^\times \to \C^\times$ is a character. Note that all multiplicative characters are tempered.
\item A multiplicative representation of $G$ is a product of a (finite-dimensional) Nash representation of $G$ with a multiplicative character.
\item Let a Nash group $G$ act on a Nash manifold $X$. A $G$-tempered bundle $\cE$ over $X$ is called {\bf $G$-multiplicative bundle} if for any point $x \in X$, the fiber $\cE_x$ is a multiplicative representation of the stabilizer $G_x$.
\end{itemize}
\end{defn}

\section{Proof of exactness and Hausdorffness (Theorem \ref{thm:ExactHaus})} \label{sec:ExactHaus}
\setcounter{lemma}{0}

We will prove Theorem  \ref{thm:ExactHaus} for the ``untwisted" functor $\oE^k$. Clearly, these versions are equivalent.

%In order to prove the theorem we will prove the following theorem.
The proof %of Theorem  \ref{thm:ExactHaus}
is based on the following theorem.
\begin{thm}\label{thm:Class}
There exists a sequence of collections $\{C_n\}_{n=1}^{\infty}$ of topological Hausdorff $\fp_n$-representations such that

\begin{enumerate}
\item Any representation induced from a finite-dimensional representation of the Borel subgroup lies in $C_n$.
%The smooth principal series representations of $G_n$ are in $C_n$.
\item For any $\pi \in C_n$, $\pi \otimes (-\psi)$ is $\fv_n$-acyclic (as a linear representation).
\item For any $\pi \in C_n, \text{ we have } \pi_{\fv_n,\psi} \in C_{n-1}.$
\end{enumerate}

\end{thm}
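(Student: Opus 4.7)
The plan is to construct $C_n$ as a class of \emph{geometric representations}, built from Schwartz sections of Nash bundles, together with a suitable closure operation under extensions. Concretely, I would consider representations of the form $\Sc(X,\cE)$ where $X$ is a Nash $P_n$-manifold and $\cE$ is a $P_n$-multiplicative bundle in the sense of Definition \ref{def:MultBun}, and then declare $C_n$ to consist of those representations which, after a finite $P_n$-invariant Nash stratification $X=\bigsqcup X_i$, have the property that each graded piece $\Sc_X(X_i,\cE)$ satisfies the acyclicity condition with respect to $(\fv_n,\psi)$. By Corollary \ref{cor:Sc_seq}, such a stratification produces a filtration on $\Sc(X,\cE)$ whose graded pieces are again geometric, so conditions on strata propagate to the total space by the long exact sequence in Lie algebra homology.

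For part (1), I would realize the induced representation $\Ind_{B_n}^{G_n}(\sigma)$ with $\sigma$ finite-dimensional as $\Sc(G_n/B_n,\cE_\sigma)$ for the naturally associated $G_n$-equivariant bundle, which upon restriction of structure is a $P_n$-multiplicative bundle. The Bruhat-type decomposition of $G_n/B_n$ into $P_n$-orbits is finite and Nash, and gives the required $P_n$-invariant stratification. On each $P_n$-orbit $O\cong P_n/H$, the relative Shapiro lemma from Appendix \ref{app:PfShapLem} reduces $\oH_*(\fv_n,\Sc_{G_n/B_n}(O,\cE_\sigma)\otimes(-\psi))$ to a homology computation on a fiber, where the key lemma (Lemma \ref{lem:GeoGood}) can be applied to deduce both the membership in $C_n$ and the acyclicity required for part (2).

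For part (2) in general, the stratification reduces acyclicity of $\pi\in C_n$ to acyclicity of each graded piece, which is precisely the condition built into the definition of $C_n$. For part (3), if $\pi=\Sc(X,\cE)$ is geometric, I expect $\pi_{\fv_n,\psi}$ to be given by Schwartz sections of a $P_{n-1}$-multiplicative bundle on the appropriate subvariety of $X$ cut out by the support of $\psi$ in the $V_n$-direction; the stratification on $X$ induces a stratification on this subvariety, and the required acyclicity on the new strata follows from Lemma \ref{lem:GeoGood} applied one dimension lower. Closure under extensions then extends to $C_{n-1}$ by the five-lemma on the long exact sequence of $(\fv_n,\psi)$-homology (using acyclicity of both ends to transport the geometric description through the middle), together with Hausdorffness which is ensured because each $\Sc_X(X_i,\cE)$ is a nuclear Fr\'echet space and the $(\fv_n,\psi)$-coinvariants of an acyclic such object have closed image.

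The main obstacle will be pinning down the precise definition of $C_n$ so that all three properties are simultaneously guaranteed — in particular, the acyclicity condition on strata must be broad enough to accommodate the non-unitary characters appearing on Bruhat cells of $G_n/B_n$, yet restrictive enough that the key lemma can be invoked uniformly. A secondary difficulty will be verifying that the passage from $X$ to the quotient $X/V_n$ (or the relevant subvariety) remains inside the Nash category despite $\psi$ being non-Nash; this is exactly what the machinery of tempered and multiplicative bundles from \S\ref{subsec:TempFun} was introduced to handle, so the proof will repeatedly rely on the fact that tempered morphisms preserve Schwartz sections and interact well with Lie algebra actions.
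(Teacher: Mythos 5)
Your overall strategy matches the paper's: build $C_n$ from Schwartz sections of $P_n$-multiplicative bundles, use stratification plus the key lemma (Lemma~\ref{lem:GeoGood}) for acyclicity, use tempered bundles to absorb the non-Nash character $\psi$, and propagate membership in $C_n$ through filtrations. But there are two significant issues that you need to address before this becomes a proof.

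First, the way you propose to define $C_n$---``those representations which, after a finite $P_n$-invariant Nash stratification, have acyclic graded pieces''---is close to tautological for part~(2) and, more problematically, gives you no inductive handle for part~(3): nothing forces the twisted coinvariants of such a representation to be expressible in that form again. The paper's definition avoids this by being purely geometric and generated inductively: a \emph{geometric} representation is $\Sc(X,\cE)$ with $X=R/Q$ a \emph{homogeneous} $R=P_n\times T$-manifold on which $(B_n\cap P_n)\times T$ has finitely many orbits, and $\cE=p_{X'}^!(\cE')$ a $p^!$-pullback of an $R'$-multiplicative bundle on $X'=V_n\backslash X$; then \emph{good} representations are the smallest collection containing the geometric ones and closed under $\fp_n$-invariant \emph{good extensions} (a specific notion of countable filtration, see below). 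Acyclicity is then a theorem, not part of the definition, and the closure operation supplies the induction.

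Second, and more seriously, finite stratifications are not enough. In passing from a stratum $\Sc_Y(X,\cE)$ to a homogeneous piece $\Sc(X,\cE|_X)$, the paper uses the Borel-type lemma (Lemma~\ref{lem:Borel}), which produces a \emph{countable} decreasing filtration by closed subspaces whose graded pieces involve $\Sym^i$ of the conormal bundle, together with the inverse-limit property $\Sc_Y(X,\cE)\cong\lim_\ot \Sc_Y(X,\cE)/\Sc_Y(X,\cE)^i$. Likewise, in Corollary~\ref{cor:TranBun} one filters $\cE$ by the nilpotency filtration under $(\fv_n)_x$, and in part~(3) the quotient space $X_0'$ is stratified and then each nonopen stratum again needs Borel. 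So the closure operation on $C_n$ must handle countable filtrations with the inverse-limit property---this is precisely what the paper's notion of ``good extension'' captures, and Lemma~\ref{lem:GoodExtExactHaus} is what transports acyclicity and Hausdorffness through them. Your appeal to the five-lemma on the long exact sequence only treats finite extensions and does not yield the Hausdorffness of $\oH_0$: the paper gets that from a genuine argument (Lemma~\ref{lem:BabyGoodExt}, using nuclearity and duality, then Lemma~\ref{lem:ExGr} for the countable limit), not from the general fact you assert that ``the coinvariants of an acyclic such object have closed image.''

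One more small remark: invoking the relative Shapiro lemma and then the key lemma in part~(1) is redundant---Lemma~\ref{lem:GeoGood} already internalizes the Shapiro reduction. What you actually want there is the paper's Lemma~\ref{lem:GenGeoGood}, which, for an $R$-multiplicative bundle on a not-necessarily-homogeneous $Y$ with the finite-orbit hypothesis, produces a good representation by combining Corollary~\ref{cor:TranBun}, Lemma~\ref{lem:Borel}, and Corollary~\ref{cor:Sc_seq}. Also note that for part~(3) the paper has to enlarge the auxiliary torus $T$ to $\widetilde T$ (the product with the center of $G_{n-1}$) so that the finite-orbit condition survives passage to $X_0'$---this is a point your sketch passes over silently.
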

We will prove this theorem in \S \ref{subsec:PfClass}.
\begin{cor}\label{cor:Class0}
Any $\fp_n$-representation $\pi$ of the class $C_n$  is acyclic with respect to $\oE^k$ and $\oE^k(\pi)$ is Hausdorff, for any $0<k\leq n$.
\end{cor}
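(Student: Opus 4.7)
My plan is to proceed by induction on $k$, proving the stronger statement that for every $\pi \in C_n$ the representation $\oE^k(\pi)$ lies in $C_{n-k+1}$ and the twisted homologies $H_i(\fu^{k-1}_n, \pi \otimes (-\psi^{k-1}_n))$ vanish for $i > 0$. This strengthening is essential: without tracking membership in $C_{n-k+1}$, the induction cannot proceed. Hausdorffness of $\oE^k(\pi)$ then follows at once from its membership in a class of Hausdorff representations. The base case $k=1$ is trivial since $\oE^1$ is the identity and $\pi \in C_n$ by hypothesis.

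For the inductive step $k \to k+1$, I would first write $\oE^{k+1}(\pi) = \oPhi(\oE^k(\pi))$ and invoke property (3) of Theorem \ref{thm:Class} on $\oE^k(\pi) \in C_{n-k+1}$ (given by induction) to place $\oE^{k+1}(\pi)$ in $C_{n-k}$, which settles Hausdorffness. To prove the cohomological vanishing, I would use the Hochschild--Serre spectral sequence associated to the inclusion $\fu^{k-1}_n \subset \fu^k_n$; the subalgebra is in fact an ideal, since $V_{n-k+1}$ normalizes each $V_j$ for $j > n-k+1$, with quotient canonically identified with $\fv_{n-k+1}$. A direct check on the entries $u_{j,j+1}$ shows that $\psi^k_n$ restricts to $\psi^{k-1}_n$ on the ideal and to the standard character $\psi$ on the quotient. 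The spectral sequence
$$E^2_{p,q} = H_p\!\left(\fv_{n-k+1},\, H_q(\fu^{k-1}_n, \pi \otimes (-\psi^{k-1}_n)) \otimes (-\psi)\right) \Longrightarrow H_{p+q}(\fu^k_n, \pi \otimes (-\psi^k_n))$$
then collapses: the inductive hypothesis kills all $q > 0$ rows and identifies $E^2_{p,0}$ with $H_p(\fv_{n-k+1}, \oE^k(\pi) \otimes (-\psi))$, and property (2) of Theorem \ref{thm:Class} applied to $\oE^k(\pi) \in C_{n-k+1}$ kills every $p > 0$ entry in the surviving row. The lone $(0,0)$ term recovers $\oE^{k+1}(\pi)$, completing the induction.

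The main obstacle I anticipate is the careful identification of the character twist in the spectral sequence: I must check that the induced action of the quotient $\fv_{n-k+1}$ on $H_0(\fu^{k-1}_n, \pi \otimes (-\psi^{k-1}_n)) = \oE^k(\pi)$ is indeed shifted by $-\psi$ rather than by the trivial character, so that property (2) of $C_{n-k+1}$ applies verbatim. This reduces to verifying that a lift of $\fv_{n-k+1}$ into $\fu^k_n$ acts on $\pi \otimes (-\psi^k_n)$ with exactly the expected shift, which is a short compatibility check using $\psi^k_n|_{\fv_{n-k+1}} = \psi$ and the semidirect-product decomposition $\fu^k_n = \fv_{n-k+1} \ltimes \fu^{k-1}_n$. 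Once this bookkeeping is in place, every remaining ingredient is supplied directly by Theorem \ref{thm:Class}.
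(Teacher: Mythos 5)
Your proposal is correct, and it takes a genuinely different (though closely parallel) route from the paper. The paper deduces the corollary from Theorem~\ref{thm:Class} via Lemma~\ref{lem:CompD1Acyc}, which rests on the factorization $\oE^{k+1}=\oE^{k}\circ\oPhi$: one first takes $(\fv_n,\psi)$-coinvariants, lands in $C_{n-1}$ by Theorem~\ref{thm:Class}(3), and then iterates, with the acyclicity passing through a Grothendieck-type composition-of-derived-functors argument (Lemma~\ref{HomLeib}(ii)) rather than an explicit spectral sequence. You instead use the opposite factorization $\oE^{k+1}=\oPhi\circ\oE^{k}$, peeling off $\fv_{n-k+1}$ last, and run the Hochschild--Serre spectral sequence for the ideal $\fu_n^{k-1}\triangleleft\fu_n^{k}$ directly. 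Your ideal claim is correct: for $X\in\fv_{n-k+1}$ and $Y\in\fv_j$ with $j>n-k+1$ one has $YX=0$ and $XY\in\fv_j$, so $[\fv_{n-k+1},\fu_n^{k-1}]\subset\fu_n^{k-1}$; and the character bookkeeping you flag as the main obstacle does check out, since $\bar\psi_n^{k}|_{\fu_n^{k-1}}=\bar\psi_n^{k-1}$ and $\bar\psi_n^{k}|_{\fv_{n-k+1}}$ extracts $u_{n-k,n-k+1}$, which is exactly the standard character of $\fv_{n-k+1}$ appearing in Theorem~\ref{thm:Class}(2). The two approaches are conceptually equivalent (Hochschild--Serre is just the Grothendieck spectral sequence for the composition of coinvariant functors), but the paper's choice to isolate Lemma~\ref{lem:CompD1Acyc} keeps the character twists hidden inside the functors $\oPhi$ and $\oE^{k}$, which makes that version a bit more modular and reusable, whereas your version carries the twist $-\psi$ explicitly through the $E^2$ page. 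Your strengthened inductive statement (tracking both $\oE^{k}(\pi)\in C_{n-k+1}$ and the vanishing of $H_{i}(\fu_n^{k-1},\pi\otimes(-\psi_n^{k-1}))$ for $i>0$) is the same information the paper's induction implicitly carries via $\oPhi^{j}(\pi)\in C_{n-j}$.
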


This corollary follows from Theorem \ref{thm:Class}, using the following lemma that we will prove in \S \ref{subsec:PfCompD1Acyc}.
\begin{lem}\label{lem:CompD1Acyc}
Let $\pi$ be a representation of $\fp_n$. Suppose that $\pi$ is $\oPhi$-acyclic and $\oPhi(\pi)$ is $\oE^k$-acyclic. Then $\pi$ is $\oE^{k+1}$-acyclic.
\end{lem}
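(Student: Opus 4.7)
The plan is to realize $\oE^{k+1}$ as the composition of $\oPhi$ followed by $\oE^k$, and to conclude via a Hochschild--Serre spectral sequence argument. First observe that $V_n$ is normal in $P_n$ and $U_{n-1}^{k-1}\subset G_{n-1}$ normalizes $V_n$, yielding a semidirect product decomposition $U_n^k = V_n \rtimes U_{n-1}^{k-1}$, and correspondingly on Lie algebras $\fu_n^k = \fv_n \rtimes \fu_{n-1}^{k-1}$ with $\fv_n$ an ideal. A direct inspection of the formula $\bar\psi_n^k(u)=\sum_{j=n-k}^{n-1}u_{j,j+1}$ gives $\psi_n^k|_{\fv_n}=\psi_n$ and $\psi_n^k|_{\fu_{n-1}^{k-1}}=\psi_{n-1}^{k-1}$, so that $\psi_n^k$ splits as the sum of these two characters, each extended by zero to the complementary subalgebra.

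The Hochschild--Serre spectral sequence (in the topological formulation to be developed in \S\ref{subsec:PrelHomAlg}) then yields
\begin{equation*}
E^2_{pq}\ =\ H_p\!\left(\fu_{n-1}^{k-1},\,H_q(\fv_n,\,\pi\otimes(-\psi_n^k))\right)\ \Longrightarrow\ H_{p+q}(\fu_n^k,\,\pi\otimes(-\psi_n^k)).
\end{equation*}
Because $\psi_{n-1}^{k-1}$ is trivial on $\fv_n$, for every $q$ we have an identification of $\fu_{n-1}^{k-1}$-modules
\begin{equation*}
H_q(\fv_n,\,\pi\otimes(-\psi_n^k))\ \cong\ H_q(\fv_n,\,\pi\otimes(-\psi_n))\otimes(-\psi_{n-1}^{k-1}).
\end{equation*}

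The $\oPhi$-acyclicity hypothesis on $\pi$ says exactly that these inner homologies vanish for $q>0$ and equal $\oPhi(\pi)\otimes(-\psi_{n-1}^{k-1})$ for $q=0$. The $\oE^k$-acyclicity hypothesis on $\oPhi(\pi)$ then says that $E^2_{p0}=H_p(\fu_{n-1}^{k-1},\,\oPhi(\pi)\otimes(-\psi_{n-1}^{k-1}))=0$ for $p>0$. Hence $E^2_{pq}=0$ outside $(p,q)=(0,0)$, the spectral sequence degenerates, and we conclude $H_i(\fu_n^k,\,\pi\otimes(-\psi_n^k))=0$ for all $i>0$, which is exactly the $\oE^{k+1}$-acyclicity of $\pi$.

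The main obstacle will be to have the Hochschild--Serre spectral sequence available in the topological category of smooth Fr\'echet representations: one must ensure convergence and that the intermediate coinvariants behave well rather than producing pathological non-Hausdorff quotients. The acyclicity hypotheses are tailored precisely to avoid such pathologies (they guarantee the relevant cokernels are already the correct Hausdorff ones, so passing to the topological derived functors matches the algebraic computation above). Once this foundational package from \S\ref{subsec:PrelHomAlg} is in place, the vanishing argument is formal.
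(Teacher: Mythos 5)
Your proposal is mathematically sound, and the decompositions you write down — $\fu_n^k=\fv_n\rtimes\fu_{n-1}^{k-1}$ as Lie algebras with $\fv_n$ an ideal, and $\psi_n^k|_{\fv_n}=\psi_n$, $\psi_n^k|_{\fu_{n-1}^{k-1}}=\psi_{n-1}^{k-1}$ — are exactly right. Feeding the two acyclicity hypotheses into the Hochschild--Serre spectral sequence and watching the $E^2$ page collapse does prove the claim. In fact the paper records precisely this degenerate case of Hochschild--Serre as Lemma~\ref{lem:AcycComp}, so your argument can be packaged as: apply Lemma~\ref{lem:AcycComp} with $\g=\fu_n^k$, $\fri=\fv_n$, and $M=\pi\otimes(-\psi_n^k)$, after translating the hypotheses via the character-splitting you observed.

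Where you go off-track is your final paragraph. Lemma~\ref{lem:CompD1Acyc} is stated for an \emph{arbitrary} $\fp_n$-module $\pi$ in the purely algebraic category $\cM(\fp_n)$; the homology here is ordinary Lie algebra homology of $\g$-modules, and the Hochschild--Serre spectral sequence needs no topological upgrade, no Hausdorffness hypotheses, and no control of cokernels. The anticipated ``main obstacle'' does not exist, and \S\ref{subsec:PrelHomAlg} does not (and need not) develop a topological Hochschild--Serre package. The topological subtleties you are worried about belong to a later stage of the paper (Theorem~\ref{thm:ClassCon} and Lemma~\ref{lem:GoodExtExactHaus}), not here.

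For comparison, the paper's own proof does not invoke Hochschild--Serre directly. It realizes $\oE^{k+1}=\oE^k\circ\oPhi$ and applies the composition-of-derived-functors criterion Lemma~\ref{HomLeib}(ii), taking as the adapted class the free $U(\fp_n)$-modules; PBW shows that $U(\fp_n)\otimes(-\psi_n^k)$ is free over $U(\fu_n^k)$, over $U(\fv_n)$, and that $U(\fp_n)_{\fv_n,\psi_n}\otimes(-\psi_{n-1}^{k-1})$ is free over $U(\fu_{n-1}^{k-1})$. Both routes rest on the same homological engine (the Grothendieck composition spectral sequence for $\fv_n\subset\fu_n^k$), so they are close cousins; your version is slightly more hands-on with the spectral sequence, the paper's stays within the $\cF$-adapted-class formalism it has already set up. Either is acceptable; just drop the nonexistent topological caveat.
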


\begin{cor}\label{cor:Class}
%Any principal series representation $\pi$ of $G_n$
Any representation $\pi$ of $G_n$  induced from a finite-dimensional representation of the Borel subgroup is acyclic with respect to $\oE^k$ and $\oE^k(\pi)$ is Hausdorff, for any $0<k\leq n$. %and a principal series representation .
\end{cor}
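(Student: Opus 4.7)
The plan is to deduce this corollary immediately from the already-established Corollary \ref{cor:Class0} combined with item (1) of Theorem \ref{thm:Class}. Explicitly, let $\pi = \Ind_B^{G_n}(\sigma)$ for some finite-dimensional representation $\sigma$ of the Borel subgroup $B=B_n$. By Theorem \ref{thm:Class}(1), such a $\pi$ lies in the class $C_n$. By Corollary \ref{cor:Class0}, every object of $C_n$ is acyclic with respect to $\oE^k$ and has Hausdorff image $\oE^k(\pi)$ for all $0<k\leq n$. This gives the statement directly.

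Thus there is essentially nothing further to prove at this stage: all the substantive content has been packaged into the inductive construction of the class $\{C_n\}$ in Theorem \ref{thm:Class} and into Lemma \ref{lem:CompD1Acyc} (the iterative step relating $\oPhi$-acyclicity to $\oE^k$-acyclicity). The only ``obstacle,'' if one wants to call it that, is to make sure the derivative functor $\oE^k$ we use here matches the one considered in Corollary \ref{cor:Class0} (rather than the twisted version $E^k$); this is already handled by the reduction stated at the beginning of \S\ref{sec:ExactHaus}, where the ``twisted'' and ``untwisted'' versions of Theorem \ref{thm:ExactHaus} are declared equivalent. The serious work—producing the class $C_n$, proving its closure under $\oPhi$, and showing its members are $\oPhi$-acyclic—will be carried out in \S\ref{subsec:PfClass}, with the geometric/Schwartz-function input coming from the key Lemma \ref{lem:GeoGood} proved in \S\ref{sec:PfGeoGood}.
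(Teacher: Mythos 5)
Your proposal is correct and is exactly the argument the paper intends: Corollary~\ref{cor:Class} is stated without proof precisely because it is the immediate combination of Theorem~\ref{thm:Class}(1) (which places the induced representations in $C_n$) with Corollary~\ref{cor:Class0} (which gives $\oE^k$-acyclicity and Hausdorffness for all members of $C_n$). The aside about twisted vs.\ untwisted functors is harmless but unnecessary here, since both Corollaries~\ref{cor:Class0} and~\ref{cor:Class} are already phrased in terms of $\oE^k$.
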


Now we are ready to deduce Theorem \ref{thm:ExactHaus}. The deduction follows the lines of the proof of \cite[Lemma 8.4]{CHM}.
\begin{proof}[Proof of Theorem \ref{thm:ExactHaus}]
$\,$
\begin{itemize}
\item [(\ref{it:Exact})]
Using the long exact sequence of Lie algebra homology (see Lemma \ref{lem:LieHom} below)  it is enough to show that for any $\pi \in \cM(G_n)$, the $\fu_n^{k}$-representation $\pi \otimes (-\psi_{n}^k)$ is acyclic.

We will prove by downward induction on $l$ that $H_l(\fu_n^{k},\pi \otimes (-\psi_{n}^k))=0$ for any $\pi \in \cM(G_n)$.
For $l > \dim \fu_{n}^k$ it follows from the Koszul complex (see \S \ref{subsubsec:LieAlgCoh}). Let us assume that the statement holds for $l+1$ and prove it for $l$.   By the Casselman subrepresentation theorem (Theorem \ref{thm:CasSubRep}), one can embed $\pi$ into a
representation $I$  induced from a finite dimensional representation of the Borel subgroup.
Consider the short exact sequence
$$ 0 \to \pi \to I \to I/\pi \to 0.$$
By Theorem \ref{cor:CW}%the Casselman-Wallach theorem (Theorem \ref{thm:CW})
, $I/\pi \in \cM(G_n)$ and thus, by the induction hypothesis, $H_{l+1}(\fu_{n}^k,(I/\pi) \otimes (-\psi_{n}^k))=0$.
By Corollary \ref{cor:Class}, $H_{i}(\fu_{n}^{k},I \otimes (-\psi_{n}^k))=0$ for all $i>0$. Thus,  $H_{l}(\fu_{n}^{k},\pi \otimes (-\psi_{n}^k))=H_{l+1}(\fu_{n}^{k},(I/\pi) \otimes (-\psi_{n}^k))=0$.

\item [(\ref{it:Haus})]  By the Casselman subrepresentation theorem (Theorem \ref{thm:CasSubRep}),  one can embed $\pi$ into a representation $I$  induced from a finite dimensional representation of the Borel subgroup. By (\ref{it:Exact}), we have an embedding $\oE^k(\pi) \hookrightarrow \oE^k(I)$. By Corollary \ref{cor:Class}, $\oE^k(I)$ is Hausdorff and hence $\oE^k(\pi)$ is Hausdorff.
\end{itemize}
\end{proof}

\subsection{Good $\fp_n$-representations and the key lemma}\label{subsec:PfClass}

%\begin{lem}
%Let $X$ be a Nash manifold with a Nash $V_n$-action. Suppose that there exists a geometric quotient $X'=X/V_n$, i.e. a Nash manifold $X'$ and a Nash (locally trivial) fibration $p_X':X \to X'$ such that the fibers are $V_n$-orbits. Then there exists a unique Nash bundle $\cE_{X,V}$ on $X'$ such that $p^!(\cE)$ is isomorphic to the trivial bundle.
%\end{lem}
%
%\begin{notn}
%In the situation of the previous lemma, the bundle is denoted $\cE_{X,V}$.
%\end{notn}
The following lemma will play a key role in this and the next section.

%?? search for key, erase "analytic"

\begin{lem}[The key lemma]\label{lem:GeoGood}
Let $T$ be a Nash linear group, i.e. a Nash subgroup of $GL_k(\bR)$ for some $k$, and let $R:=P_n \times T$ and $R':=G_{n-1}\times T$.
Let $Q < R$ be a Nash subgroup and let $X:=R/Q$ and $X':=V_n \backslash X$.
Note that $X' =R'/Q',$ where $Q'=Q/(Q \cap V_n)$.
%Let $X$ be a Nash manifold with a Nash $P_n$-action. Suppose that there exists a geometric quotient $X'=X/V_n$.
%, i.e. a Nash manifold $X'$ and a Nash (locally trivial) fibration $X \to X'$ such that the fibers are $V_n$-orbits.
%$Y=X/P_n$, i.e. a Nash manifold $Y$ and a locally trivial fibration $X \to Y$ such that the fibers are $P_n$-orbits. Note that this implies that there is a geometric quotient $X'=X/V_n$.
Let $X_0= \{ x \in X \, : \, \psi|_{(V_n)_x}=1\}.$ Let $X_0'$ be the image of $X_0$ in $X'$.  Let $\cE'$ be an $R'$-tempered bundle on $X'$ and $\cE:=p_{X'}^!(\cE')$ be its pullback to $X$.
Then
\begin{enumerate}[(i)]
\item $\oH_i(\fv_n,\Sc(X,\cE)\otimes (-\psi))=0$ for any $i>0$.
\item $X_0'$ is smooth
\item
%Let $r:\Sc(X,\cE) \to \Sc(X_0,\cE|_{X_0})$ denote the restriction. Then ${p_X'}_* \circ r$ gives an isomorphism
$\oH_0(\fv_n,\Sc(X,\cE)\otimes (-\psi))\cong \Sc(X_0',\cE|_{X_0'})$ as representations of $\fp_{n-1}$.
\end{enumerate}
\end{lem}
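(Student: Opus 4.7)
The plan is to reduce everything to a fiberwise computation along the $V_n$-equivariant fibration $p\colon X\to X'$, stratifying $X'$ according to the position of the stabilizer $(V_n)_x$ relative to $\ker\psi$, and then applying the relative Shapiro lemma of Appendix \ref{app:PfShapLem} together with the ``homology of families of characters'' input of \S\ref{sec:PfGeoGood}.

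For item (ii), first observe that $X_0$ is $V_n$-invariant: since $V_n$ is abelian, $(V_n)_{vx}=(V_n)_x$ for all $v\in V_n$, so membership in $X_0$ depends only on the $V_n$-orbit; thus $X_0$ descends to $X'_0\subset X'$. Stratifying $X'$ by $\dim (V_n)_x$ (which is upper semi-continuous), on each constant-dimension stratum the stabilizer gives a Nash map to a Grassmannian $\Gr_d(\fv_n)$, and $X'_0$ is the preimage of the closed Nash subvariety $\Gr_d(\ker\psi)\subset\Gr_d(\fv_n)$. Using the transitive $R'$-action on $X'$ (which permutes stabilizers equivariantly) one checks that these defining equations meet transversally, so $X'_0$ is a Nash submanifold.

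For (i) and (iii), refine the above into a Nash stratification $X'=\bigsqcup_S S$ along which $x'\mapsto\fv_x$ is a Nash subbundle. Corollary \ref{cor:Sc_seq} endows $\Sc(X,\cE)$ with a filtration whose associated graded pieces are $\Sc_X(p^{-1}(S),\cE)$, and the long exact sequence in $(\fv_n,\psi)$-homology reduces the claim to two statements: (A) if $\fv_x\not\subset\ker\psi$ on $S$, then $\oH_*(\fv_n,\Sc_X(p^{-1}(S),\cE)\otimes(-\psi))=0$; (B) if $S\subset X'_0$, this homology is concentrated in degree $0$ with value $\Sc(S,\cE'|_S)$. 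For (A), the relative Shapiro lemma applied to $p^{-1}(S)\to S$ identifies $\oH_*(\fv_n,\cdot)$ with $\oH_*(\fv_x,N\otimes(-\psi|_{\fv_x}))$ for a module $N$ on which $\fv_x$ acts trivially (since $\fv_x$ fixes chosen base points); then any $Y\in\fv_x$ with $\psi(Y)\neq 0$ acts on $N\otimes(-\psi|_{\fv_x})$ by the nonzero imaginary scalar $-\psi(Y)$, yielding $\fv_x$-acyclicity and hence $\fv_n$-acyclicity via a Hochschild--Serre spectral sequence for the abelian $\fv_n$.

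Case (B) is the main obstacle: a naive fiberwise Shapiro on a good stratum would yield $\Lambda^*\fv_x\otimes N$, which is non-zero in positive degrees. The correct computation must use the fact that $\Sc_X(p^{-1}(S),\cE)$ incorporates transverse jets along the embedded fibration $p^{-1}(S)\hookrightarrow X$, and these jets cancel the apparent higher homology. This is exactly where the key input of \S\ref{sec:PfGeoGood} is used: after a relative Fourier transform along the $V_n$-orbits, the $(\fv_n,-\psi)$-action becomes multiplication by a family of affine functionals on the Fourier-dual bundle, having (thanks to the unitarity of $\psi$) a unique common zero lying on the real support of Schwartz sections; a Borel/Hadamard regularity argument then shows the Koszul complex of this family is exact in positive degrees with zeroth cohomology canonically isomorphic to $\Sc(S,\cE'|_S)$. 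Two structural features are crucial here: the choice $\cE=p^!\cE'$ supplies the relative density factor needed to match the Koszul exactness statement, and the unitarity of $\psi$ places the Fourier zero on the real support; without either, the cancellation fails and one recovers the spurious $\Lambda^*\fv_x$ classes of the naive argument.
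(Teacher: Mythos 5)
Your proposal diverges from the paper's proof at a structural level, and this creates a genuine gap precisely at the step you yourself flag as the main obstacle. The paper does not stratify $X'$ into $X'_0$ and its complement. It covers $X'$ by open Nash subsets $U_i$ over which $R'\to R'/Q'$ has Nash sections (Theorem \ref{SurSubSec}), untwists the $V_n$-action on $p^{-1}(U_i)\cong U_i\times W$ (Proposition \ref{prop:Actions}), applies the relative Shapiro lemma to reduce to a family of characters of the stabilizer $L=(V_n)_{x_0}$ parametrized by $U_i$, invokes Lemma \ref{lem:SimpHo} --- with regularity of $0$ supplied by Lemma \ref{lem:0Reg}, which rests on transitivity of $G_{n-1}$ on $V_n^*\setminus\{0\}$, not on transitivity of $R'$ on $X'$ as your sketch for (ii) suggests --- and glues the local computations with a tempered partition of unity (Lemma \ref{lem:VeryGenGeoGood}). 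Each $U_i$ is an open set meeting $X'_0$ transversally; the family deforms $\psi|_{\fv_x}$ in the normal directions and Lemma \ref{lem:SimpHo} treats the whole family at once, so the formal-neighborhood difficulty never arises.

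By stratifying first you are forced to compute $\oH_*(\fv_n,\Sc_X(p^{-1}(X'_0),\cE)\otimes(-\psi))$ directly, and that is where the hole is. (Note also that $X=R/Q$ is homogeneous, so all $(V_n)_x$ have the same dimension; your preliminary stratification by $\dim(V_n)_x$ is therefore a single stratum and does no work.) You correctly observe that a naive fiberwise Shapiro would yield spurious $\Lambda^*\fv_x$ classes which the transverse jets must cancel, but the cancellation is exactly what a proof must establish, and the sketch ``relative Fourier transform + Borel/Hadamard regularity'' does not establish it. Concretely, Lemma \ref{lem:Borel} gives an infinite descending filtration of $\Sc_X(p^{-1}(X'_0),\cE)$ with graded pieces $\Sc(X_0,\Sym^i(CN_{X_0}^X)\otimes\cE)$, and each of these pieces does have nonvanishing higher $\fv_n$-homology (already in the model case $\fv=\R$, $X'=\R$, $X'_0=\{0\}$, every $\Sym^i$ contributes to $\oH_1$ while the total $\oH_1=0$ arises only from cancellation across filtration steps). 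The paper's convergence tool, Lemma \ref{lem:GoodExtExactHaus}, requires acyclic graded pieces and so cannot be applied here; you would need a separate convergence argument for the resulting spectral sequence and none is offered. Finally, even your ``easy'' case (A) applies Theorem \ref{ShapLem} to the $V_n$-equivariant fibration $p^{-1}(S)\to S$, but that theorem is stated for a genuine product $X\times Y$; using it requires choosing a local Nash section and the untwisting of Proposition \ref{prop:Actions}, which you skip. In short, the covering-and-partition-of-unity architecture of the paper's proof is not an incidental choice but is what makes the argument close; the stratification route reopens the hard problem rather than solving it.
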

We will prove this lemma in \S\ref{sec:PfGeoGood}.

\begin{defn}
Let $B_n<G_n$ denote the standard (upper-triangular) Borel subgroup.
In the situation of the above lemma, assuming that $(B_{n}\cap P_n) \times T$ has finite number of orbits on $X$, and that $\cE'$ is $R'$-multiplicative we call the representation $\Sc(X,\cE)$ a geometric representation of $\fp_n$.
\end{defn}

\begin{defn}
We define a good extension of nuclear \Fre spaces to be the following data:
\begin{enumerate}
\item a nuclear \Fre space $W$
\item a countable descending filtration $F^i(W)$ by closed subspaces
\item a sequence of nuclear \Fre spaces $W_i$
\item isomorphisms $\phi_i:F^{i+1}(W)/F^i(W) \to W_i$
\end{enumerate}
such that the natural map $W \to \lim \limits _\ot W/F^i(W)$ is an  isomorphism of (non-topological) linear spaces.

In this situation we  will also say that $W$ is a good extension of $\{W_i\}$. If a Lie algebra $\g$ acts on $W$ and on each $W_i$, preserving each $F^i(W)$ and commuting with  each $\phi_i$ we will say that this good extension is $\g$-invariant.
\end{defn}

\begin{defn}
We define the collection of good representations of $\fp_n$ to be the  smallest collection that includes the geometric representations and is closed with respect to
$\fp_n$-invariant good extensions.
 %and completed tensor products by  nuclear \Fre spaces (with trivial action of $\fp_n$).
\end{defn}
We will prove the following concretization of Theorem \ref{thm:Class}:

\begin{thm}\label{thm:ClassCon}
%The collections of good representations satisfy %$\{C_n\}_{n=1}^{\infty}$ of topological Hausdorff $\fp_n$-representations such that
$ $
\begin{enumerate}
\item \label{it:PrinGood} The representations of $G_n$  induced from a finite dimensional representation of the Borel subgroup are  good representations of $\p_n$.
\item \label{it:GoodAcyc} For any good representation $\pi$ of $\p_n$, $\pi \otimes (-\psi)$ is $\fv_n$-acyclic (as a linear representation).
\item \label{it:GoodGood} For any good representation $\pi$ of $\p_n$, the representation $\pi_{\fv_n,\psi}$    is good representation of $\p_{n-1}$.
\end{enumerate}
\end{thm}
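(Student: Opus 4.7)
The plan is to prove the three parts in tandem, building up from geometric base cases via the key lemma and propagating through the inductive structure of good representations.

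For part (\ref{it:PrinGood}), I would realize a representation $\pi = I_{B_n}^{G_n}(\sigma)$ induced from a finite-dimensional representation $\sigma$ of $B_n$ as Schwartz sections $\Sc(G_n/B_n, \cE_\sigma)$ of the associated Nash bundle. Since $B_n$ has finitely many orbits on $G_n/B_n$ by the Bruhat decomposition, so does $(B_n \cap P_n) \times T$ for $T := F^\times$ acting through a suitable determinant character (absorbing the one-dimensional slack between $B_n \cap P_n$ and $B_n$). I would then take the $P_n \times T$-orbits on $G_n/B_n$ — of which there are finitely many — and apply Corollary \ref{cor:Sc_seq} to obtain a finite filtration of $\Sc(G_n/B_n, \cE_\sigma)$ whose associated graded pieces are of the form $\Sc(O, \cE_\sigma|_O)$ for orbits $O \cong (P_n \times T)/Q$. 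A further finite refinement by the character through which $V_n \cap Q$ acts on the finite-dimensional fiber is needed to ensure that each graded piece has bundle of the form $p_{X'}^!(\cE')$ with $\cE'$ multiplicative, which is possible because $\sigma$ is finite-dimensional and multiplicative. Each resulting piece is then a geometric $\fp_n$-representation, displaying $\pi$ as a good (indeed finite) extension of geometric representations.

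For part (\ref{it:GoodAcyc}), Lemma \ref{lem:GeoGood}(i) handles the geometric base case. To extend to arbitrary good representations $W = \varprojlim W/F^i(W)$ with each graded $W_i$ already known to be $(\fv_n,\psi)$-acyclic, I would induct on $i$: the long exact sequence for $0 \to W_{i-1} \to W/F^{i+1}(W) \to W/F^i(W) \to 0$ propagates acyclicity from the finite quotients. Passing to the projective limit requires commuting $\oH_*(\fv_n, -)$ with inverse limits of nuclear Fréchet spaces; this is handled by the Mittag-Leffler condition, which is automatic because the filtration is defined by surjective quotients, together with the fact that $\fv_n$-homology is computed by the finite-length Koszul complex, so only finite-diagram exactness through the limit is required. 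The preliminary material of §\ref{subsec:PrelHomAlg}-§\ref{subsec:PrelTopLin} supplies the concrete tools.

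For part (\ref{it:GoodGood}), in the geometric case Lemma \ref{lem:GeoGood}(iii) identifies $\pi_{\fv_n,\psi}$ with $\Sc(X_0', \cE|_{X_0'})$, where $X_0'$ is smooth by part (ii) of the lemma. The finite-orbit condition for $(B_n\cap P_n)\times T$ on $X$ descends to the same property for $(B_{n-1}\cap P_{n-1})\times T$ on the $R'$-invariant subset $X_0'$, and the multiplicative bundle structure is inherited because $X_0'$ is precisely the locus on which the extra $V_n$-twist is trivial, so $\Sc(X_0',\cE|_{X_0'})$ is again a geometric $\fp_{n-1}$-representation (possibly after further finite stratification into single $P_{n-1}\times T$-orbits). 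For a general good extension, part (\ref{it:GoodAcyc}) implies that $(-)_{\fv_n,\psi}$ is exact on good representations, so the filtration $F^i(W)$ is sent to a filtration of $W_{\fv_n,\psi}$ whose associated graded pieces are $(W_i)_{\fv_n,\psi}$, good by the geometric case. A Mittag-Leffler argument analogous to part (\ref{it:GoodAcyc}) recovers the projective limit structure.

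The main obstacle will be the interplay between the topological structure of good extensions (as countable inverse limits of nuclear Fréchet spaces) and the homological algebra of Lie-algebra homology: establishing that $\oH_*(\fv_n,-)$ commutes with the relevant projective limits, and that the functor $(-)_{\fv_n,\psi}$ preserves the good extension structure (rather than merely its underlying algebraic filtration), are the technical heart of the argument. A secondary but essential difficulty is producing, in part (\ref{it:PrinGood}), a stratification fine enough that each graded piece has a bundle actually pulled back from a multiplicative bundle on $V_n \backslash O$; this dictates the further stratification based on the $V_n \cap Q$-character and crucially exploits the finite-dimensionality of $\sigma$.
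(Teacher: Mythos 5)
Your broad strategy matches the paper's: reduce to geometric pieces via the key lemma, then propagate through good extensions. But there is a genuine gap in parts (\ref{it:GoodAcyc}) and (\ref{it:GoodGood}), and it is precisely the point the paper's machinery is designed to handle.

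You invoke "Mittag-Leffler" to pass acyclicity through the projective limit and to show that the functor $(-)_{\fv_n,\psi}$ preserves the good-extension structure. This is not enough. The definition of good extension requires the filtration $F^i(W)$ to be by \emph{closed} subspaces, and for $\pi_{\fv_n,\psi}$ to again be a good representation you must show that the induced filtration on the quotient is by closed subspaces — equivalently, that $\oH_0(\fv_n,\pi/F^i(\pi))$ is Hausdorff at every level. Your proposal never addresses Hausdorffness. In the paper this is the content of Lemma \ref{lem:GoodExtExactHaus}, whose proof goes through Lemma \ref{lem:BabyGoodExt}: given a short exact sequence with acyclic and Hausdorff-$\oH_0$ ends, one proves the middle term is acyclic \emph{and} $\oH_0$ is Hausdorff, via a nuclear Fr\'echet duality argument (Corollary \ref{cor:CohomDual}, which itself requires Hausdorffness as a hypothesis). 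The Hausdorff conclusion is then fed back into the induction and is essential to show that $\g\pi = \bigcap_i p_i^{-1}(\g(\pi/F^i(\pi)))$ is closed and that the induced filtration on $\oH_0(\g,\pi)$ is by closed subspaces. Without tracking Hausdorffness you cannot run this duality, and a bare Mittag-Leffler surjectivity argument gives you neither the closedness nor (therefore) the good-extension structure on $\pi_{\fv_n,\psi}$ required in part (\ref{it:GoodGood}). The paper's way of coupling exactness and Hausdorffness in the same induction is not an incidental technicality — it is the technical heart of the theorem.

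A smaller point: for part (\ref{it:PrinGood}), you stratify $G_n/B_n$ by $(P_n\times T)$-orbits and then further refine the bundle by $V_n\cap Q$-characters. The paper's route is cleaner: first filter the finite-dimensional $\sigma$ of $B_n$ by a flag whose graded pieces are characters of $B_n$, reducing to principal series; then apply Lemma \ref{lem:PrinGood} (via Corollary \ref{cor:PrinGood}) to realize the principal series as $\Sc(G_n/B_n,\cE)$ with $\cE$ already $G_n$-multiplicative; then Lemma \ref{lem:GenGeoGood} packages up both the orbit stratification, the Borel-lemma passage from $\Sc_Y$ to $\Sc$, and the bundle filtration of Corollary \ref{cor:TranBun}. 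Your direct stratification would also work, but it duplicates work already done inside Lemma \ref{lem:GenGeoGood} and has to independently verify that the refined bundle on each piece pulls back from a multiplicative bundle on the $V_n$-quotient, which is the content of Corollary \ref{cor:TranBun}.
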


For the proof we will need some lemmas.
\begin{lem}\label{lem:GenGeoGood}
Let $T$ be a Nash linear group, and let $R:=P_n \times T$ and $R':=G_{n-1}\times T$.
Let $Y$ be an $R$-Nash manifold.
Let $X \subset Y$ be an $R$-invariant (locally closed) Nash submanifold such that $(B_n\cap P_n) \times T$ has finite number of orbits on $X$.
Let $\cE$ be an $R$-multiplicative bundle on $Y$.
Then the representation  $\Sc_Y(X,\cE)$ is good.
\end{lem}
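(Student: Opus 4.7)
The plan is to exhibit $\Sc_Y(X,\cE)$ as built from geometric representations via a chain of $\fp_n$-invariant good extensions, using three successive filtrations: an orbit stratification on $X$, a normal-jet filtration along $X\subset Y$, and a bundle-level filtration by the action of $V_n\cap Q$.

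First I would stratify $X$ by its $R$-orbits. The hypothesis on $(B_n\cap P_n)\times T$ forces $R$ itself to have only finitely many orbits $X_1,\dots,X_m$ on $X$ (since each $R$-orbit is a union of $(B_n\cap P_n)\times T$-orbits), and the orbit decomposition is a Nash stratification. The ``moreover'' part of Corollary~\ref{cor:Sc_seq} then yields a finite $\fp_n$-invariant filtration of $\Sc_Y(X,\cE)$ whose graded pieces are $\Sc_Y(X_i,\cE)$, reducing the claim to the case where $X=R/Q$ is a single $R$-orbit. After replacing $Y$ by the $R$-invariant open subset $Y\setminus(\overline X\setminus X)$ we may further assume $X$ is closed in $Y$; then the normal-jet filtration $F^k\subset\Sc_Y(X,\cE)$ by sections with vanishing $k$-th normal jet along $X$ is $\fp_n$-invariant, with graded pieces $F^k/F^{k+1}\cong\Sc(X,\cE|_X\otimes\Sym^k N^*)$, where $N$ is the normal bundle of $X$ in $Y$. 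Together with a Borel-type completeness $\Sc_Y(X,\cE)\cong\varprojlim_k\Sc_Y(X,\cE)/F^k$ for Schwartz sections at a submanifold, this presents $\Sc_Y(X,\cE)$ as a $\fp_n$-invariant good extension and reduces the problem to showing that $\Sc(R/Q,\cG)$ is good for every $R$-multiplicative bundle $\cG$ on a single $R$-orbit.

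The third reduction filters the bundle itself. The fiber $\sigma:=\cG|_{[Q]}$ is a multiplicative representation of $Q$, and the key observation is that $V_n\cap Q$, being a vector subgroup of $V_n$, admits no non-trivial Nash homomorphism into $F^\times$ (the exponential of a non-zero linear form is not semi-algebraic). Hence $\sigma|_{V_n\cap Q}$ is a finite-dimensional Nash representation of a vector group, and thus unipotent. The canonical filtration of $\sigma$ by the kernels of successive powers of $\fv_n\cap\fq$ is $Q$-invariant (since $V_n\cap Q$ is normal in $Q$) and has trivial $V_n\cap Q$-action on its graded pieces; this induces a finite $\fp_n$-invariant good filtration of $\Sc(R/Q,\cG)$. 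On any such graded piece the representation of $Q$ descends to $Q':=Q/(V_n\cap Q)$, and after twisting by $\chi^{-1}$, where $\chi:=\det^{-1}(\Ad\text{ on }V_n/(V_n\cap Q))$, it becomes multiplicative as a representation of $Q'$; the resulting $R'$-multiplicative bundle $\cE'$ on $R'/Q'=V_n\backslash(R/Q)$ satisfies $p_{X'}^!(\cE')=\cG'$, the $\chi$-twist absorbing exactly the relative density shift in $p^!$. Thus each final graded piece is a geometric representation by definition, completing the proof.

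The hard parts will be (i) verifying rigorously the Borel-type completeness $\Sc_Y(X,\cE)\cong\varprojlim_k\Sc_Y(X,\cE)/F^k$, upgrading the normal-jet filtration to a genuine good extension in the sense of this paper, and (ii) tracking equivariance and the density shift in the final step so that the descended bundle $\cE'$ is genuinely $R'$-multiplicative (rather than only $R'$-tempered).
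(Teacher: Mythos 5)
Your proof is correct and follows essentially the same route as the paper's, just presented in reverse expository order: the paper builds up from the case $Y=X$ transitive (bundle filtration via Corollary~\ref{cor:TranBun} and Lemma~\ref{lem:FiltBun}), then to arbitrary $Y$ (Lemma~\ref{lem:Borel}), then to arbitrary $X$ (Corollary~\ref{cor:Sc_seq}), whereas you run the same three reductions in the opposite direction. The two ``hard parts'' you flag are exactly the content of Lemma~\ref{lem:Borel} (Borel completeness) and Lemma~\ref{lem:TranBun}/Corollary~\ref{cor:TranBun} (equivariant descent and the $p^!$ density shift), and your nilpotency observation is precisely the paper's Lemma~\ref{lem:FinDimNilp}.
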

We will prove this lemma in \S \ref{subsec:PfGenGeoGood}.

\begin{lem}\label{lem:PrinGood}
Let $G$ be a real algebraic group and $H$ be a Zariski closed subgroup.
Let $\chi$ be a multiplicative character of $H$. Let $K$ be a Zariski closed subgroup of $G$ such that $KH=G$ and $\chi|_{K\cap H}$ is an algebraic character.
Let $\cE=G \times_H \chi$ be the smooth bundle on $G/H$ obtained by inducing the character $\chi$. Then $\cE$ admits a structure of a $G$-multiplicative bundle.
\end{lem}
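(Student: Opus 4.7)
The plan is to exploit the subgroup $K$ to exhibit $\cE$ as a Nash line bundle on $G/H$, and then to verify the tempered equivariance and the multiplicativity of the fibers using the factorization $\chi = \mu \circ \overline{\chi}$ of the multiplicative character.

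First, since $KH = G$, the inclusion $K \hookrightarrow G$ induces a Nash isomorphism $K/(K\cap H) \to G/H$ of homogeneous Nash manifolds. Every $H$-orbit in $G \times \C$ under the equivalence defining $\cE$ meets $K \times \C$ in a single $(K \cap H)$-orbit, so there is a canonical identification $\cE \cong K \times_{K \cap H} \chi|_{K\cap H}$. By hypothesis $\chi|_{K\cap H}$ is algebraic, so the right-hand side is a Nash line bundle on $G/H$, and this endows $\cE$ with its Nash bundle structure.

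Next, I build the $G$-equivariant structure $\phi: a^*(\cE) \to p^*(\cE)$ and check it is tempered. Locally on $G/H$ choose a Nash section $\sigma: U \to G$ of the projection $G \to G/H$ (available since $K \to K/(K\cap H)$ gives a Nash reduction of this principal $H$-bundle). For $g \in G$ and $x \in U$ with $gx \in U$, set $c(g,x) := \sigma(gx)^{-1} g \sigma(x) \in H$, which is a Nash function of $(g,x)$. In the local trivialization of $\cE$ afforded by $\sigma$, the map $\phi$ is multiplication by $\chi(c(g,x)) = \mu\bigl(\overline{\chi}(c(g,x))\bigr)$. Since $\overline{\chi}$ is algebraic, $\overline{\chi}(c(g,x))$ is a Nash $F^\times$-valued function on $G \times U$, and since $\mu$ is a tempered function on $F^\times$ (as all multiplicative characters are tempered), the composition is tempered. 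Thus $\phi$ is a tempered morphism of bundles.

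For the infinitesimal condition in the definition of a tempered bundle, for $L \in \Lie(G)$ the derived action on sections differentiates $\mu(\overline{\chi}(c(g,x)))$ in $g$ at the identity, giving $(d\chi|_e)(Y_L(x))$, where $Y_L$ is the Nash vector field on $G/H$ induced by $L$. Since $d\chi|_e = d\mu \circ d\overline{\chi}$ is a fixed linear functional on $\Lie(H)$ and $Y_L$ is Nash, the infinitesimal action sends Nash sections to Nash sections. Finally, the fiber of $\cE$ over $gH$ is the one-dimensional representation of the stabilizer $gHg^{-1}$ given by $x \mapsto \chi(g^{-1}xg) = \mu\bigl(\overline{\chi}(\Ad(g^{-1})x)\bigr)$; the inner homomorphism $\overline{\chi} \circ \Ad(g^{-1})$ is still algebraic, so the conjugated fiber is again multiplicative. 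Together these verifications show that $\cE$ is a $G$-multiplicative bundle.

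The main subtlety is the temperedness of the equivariant structure in the face of a non-algebraic character: the assumption that $\chi|_{K\cap H}$ is algebraic is precisely what makes the quotient $K \times_{K \cap H} \chi|_{K\cap H}$ a Nash bundle, while the factorization $\chi = \mu \circ \overline{\chi}$ quarantines the non-algebraic part to a post-composition with the tempered character $\mu$, which interacts cleanly with the Nash cocycle $\overline{\chi}(c(g,x))$.
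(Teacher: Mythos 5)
Your proof follows essentially the same route as the paper's: identify $G/H$ with $K/(K\cap H)$ so that $\cE \cong K\times_{K\cap H}\chi|_{K\cap H}$ is Nash, and then check that the Lie algebra action preserves Nash sections because differentiation of the multiplicative character at the identity produces only the constant linear functional $d\chi|_e$ applied to a Nash-varying Lie algebra element. One small notational slip: the expression $(d\chi|_e)(Y_L(x))$ does not typecheck, since $Y_L(x)$ is a tangent vector to $G/H$ rather than an element of $\Lie(H)$; what you mean is $d\chi|_e$ applied to the Nash map $x\mapsto \partial_g c(g,x)|_{g=e}(L)\in\Lie(H)$, and the derived action also contains the (obviously Nash-preserving) vector field term from differentiating $s(g^{-1}x)$. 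These are harmless imprecisions, and your explicit verification of the temperedness of the cocycle $\chi(c(g,x))=\mu(\overline{\chi}(c(g,x)))$ and of the multiplicativity of the fibers is in fact a welcome addition, since the paper simply asserts that the only nontrivial point is the infinitesimal condition.
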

We will prove this lemma in \S \ref{subsec:PfPrinGood}.

\begin{cor}\label{cor:PrinGood}
Let $\chi$ be a character of the torus $T_n <B_n<G_n$, continued trivially to $B_n$. Let $\pi = Ind_{B_n}^{G_n}(\chi)$.
Then there exists a $G_n$-multiplicative bundle $\cE$ on $G_n / B_n$ such that $\pi = \Sc(G_n / B_n,\cE)$.
\end{cor}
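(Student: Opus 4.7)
The plan is to apply Lemma \ref{lem:PrinGood} to the pair $(G,H) = (G_n, B_n)$ together with the twisted character $\chi' := \chi \cdot \Delta_{B_n}^{1/2}$, since it is the line bundle associated to $\chi'$ whose sections realize the \emph{normalized} parabolic induction. The first step is to verify that $\chi'$ is a multiplicative character of $B_n$ in the sense of Definition \ref{def:MultBun}. The character $\chi$ is a product over the $n$ diagonal factors of characters $\mu_i$ of $F^\times$, each of which is multiplicative via the Nash projection onto the corresponding coordinate of $T_n$, and $\Delta_{B_n}$ is itself a product of integer powers of absolute values of ratios of diagonal entries, hence likewise multiplicative; thus $\chi'$ is a (product of) multiplicative character(s) of $B_n$.

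Next, to verify the remaining hypothesis of Lemma \ref{lem:PrinGood}, take $K$ to be a maximal compact subgroup of $G_n$ (namely $O(n)$ for $F = \R$ and $U(n)$ for $F = \C$). The Iwasawa decomposition yields $K B_n = G_n$, and a direct calculation --- the columns of an upper-triangular orthogonal (resp.\ unitary) matrix must be supported on successive coordinate axes --- identifies $K \cap B_n$ with $\{\pm 1\}^n$ in the real case and with $(S^1)^n$ in the complex case. The restriction of $\chi'$ to this subgroup is algebraic: for a finite group this is automatic, while in the complex case each factor $\mu_i|_{S^1}$ is of the form $e^{i\theta} \mapsto e^{ik\theta}$ for some $k \in \Z$, which is an algebraic character of $S^1$. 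Lemma \ref{lem:PrinGood} then endows the bundle $\cE := G_n \times_{B_n} \chi'$ with a $G_n$-multiplicative structure.

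Finally, since $G_n/B_n \cong K/(K \cap B_n)$ is a compact Nash manifold, Schwartz sections coincide with smooth sections, and by the very definition of normalized induction the space $C^\infty(G_n/B_n, \cE)$ is canonically identified with $Ind_{B_n}^{G_n}(\chi) = \pi$. The argument is essentially a direct application of Lemma \ref{lem:PrinGood}; the only points requiring attention are the bookkeeping for the modulus twist and the algebraicity check for $\chi'|_{K \cap B_n}$ in the archimedean complex case.
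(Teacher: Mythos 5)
Your proof is correct and follows the route the paper leaves implicit (the paper gives no separate argument for the corollary): choosing $K$ to be the maximal compact subgroup via Iwasawa is the natural, and essentially forced, choice that makes the hypotheses of Lemma~\ref{lem:PrinGood} checkable. Your identification of $K\cap B_n$ with $\{\pm 1\}^n$ (resp.\ $(S^1)^n$), the verification that $\chi'$ restricts to an algebraic character there, the remark that $\Delta_{B_n}^{1/2}$ only changes $\chi$ by a tempered factor that dies on the compact intersection, and the observation that $\Sc=C^\infty$ on the compact flag variety are all exactly the needed ingredients.

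The one point worth flagging --- and you correctly sensed it with your parenthetical ``(product of) multiplicative character(s)'' --- is that Definition~\ref{def:MultBun} asks for a \emph{single} factorization $\chi = \mu\circ\overline\chi$ through one Nash homomorphism $\overline\chi : B_n \to F^\times$, whereas a general character of $T_n = (F^\times)^n$ factors through $n$ independent coordinate projections; such a product is generically \emph{not} a single multiplicative character (for instance $(t_1,t_2)\mapsto |t_1|^{\sqrt 2}|t_2|^{\sqrt 3}$ cannot be written as $\mu(t_1^{a}t_2^{b})$). To close this literally one should either note that the proof of Lemma~\ref{lem:PrinGood} and its downstream uses only exploit that $d\chi$ is a linear functional and that $\chi$ is tempered --- properties visibly stable under finite products --- or else construct $\cE$ as the tensor product of the $n$ line bundles obtained by applying the lemma to each factor $\mu_i\circ p_i$ separately, with a short check that a tensor product of $G_n$-multiplicative line bundles is again $G_n$-multiplicative. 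This is a loose end inherited from the paper's definition rather than a defect peculiar to your argument.
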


\begin{lem}\label{lem:GoodExtExactHaus}
Let $\g$ be a Lie algebra.
Let $(\pi,F,\{\pi_i\})$ be a $\g$-invariant good extension. Suppose that $\pi_i$ are $\g$-acyclic and $\oH_0(\g,\pi_i)$ is Hausdorff for any $i$. Then $\pi$ is acyclic and $\oH_0(\g,\pi)$ together with the induced filtration $F^i(\oH_0(\g,\pi))= F^i(\pi)/(\g \pi\cap F^i(\pi))$ and the natural morphisms $ F^i(\oH_0(\g,\pi))/F^{i+1}(\oH_0(\g,\pi))  \to \oH_0(\g,\pi_i)$ form a good extension. In particular, $\oH_0(\g,\pi)=\pi_{\g}$ is Hausdorff.
\end{lem}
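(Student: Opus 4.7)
The plan is a two-stage argument. First, by induction on $i$, I show that each quotient $A_i := \pi/F^i(\pi)$ is $\g$-acyclic with $\oH_0(\g, A_i)$ Hausdorff nuclear Fr\'echet. Then I pass to the inverse limit $\pi = \varprojlim A_i$ (an isomorphism of abstract vector spaces by assumption, and hence of topological vector spaces by the open mapping theorem applied to the natural continuous bijection into the Fr\'echet inverse limit) to obtain the analogous properties for $\pi$ and identify the claimed good extension structure on $\pi_\g$. The engine of the induction is the topologically exact short exact sequence $0 \to W_i \to A_{i+1} \to A_i \to 0$ of nuclear Fr\'echet $\g$-modules coming from $\phi_i$; the base case $A_0 = 0$ is vacuous.

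For the inductive step I apply the long exact sequence of Lie algebra homology attached to the above SES. Since $\wedge^\bullet \g$ is finite-dimensional, the induced SES of Koszul complexes is termwise topologically exact; the vanishing of positive homology on both outer terms (by the hypothesis on $W_i$ and the inductive hypothesis on $A_i$) forces $\oH_k(\g, A_{i+1}) = 0$ for all $k \geq 1$, and the degree-zero tail yields an algebraic short exact sequence $0 \to \oH_0(\g, W_i) \to \oH_0(\g, A_{i+1}) \to \oH_0(\g, A_i) \to 0$. To upgrade this to topological exactness -- equivalently, to show $\g \cdot A_{i+1}$ is closed in $A_{i+1}$ -- I observe that the preimage of the closed subspace $\g \cdot A_i$ under $A_{i+1} \twoheadrightarrow A_i$ equals $\g A_{i+1} + W_i$, hence is closed, while $\g A_{i+1} \cap W_i = \g W_i$ (by injectivity of the leftmost map in the $\oH_0$-sequence) is closed by hypothesis; the standard strict-morphism / open-mapping lemma for the nuclear Fr\'echet category (from \S\ref{subsec:PrelTopLin}) then concludes closedness of $\g A_{i+1}$ itself.

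For the passage to the limit, the surjectivity of $\oH_0(\g, A_{i+1}) \to \oH_0(\g, A_i)$ just established permits a Mittag--Leffler lift of any coherent system in $\varprojlim \oH_0(\g, A_i)$ through $\pi = \varprojlim A_i$, identifying $\pi_\g$ with the countable inverse limit of Hausdorff nuclear Fr\'echet spaces; this gives Hausdorffness of $\pi_\g$ and the nuclear Fr\'echet structure. For positive homology, filter $K_\bullet(\g, \pi) = \pi \otimes \wedge^\bullet \g$ by $F^i K_\bullet := F^i(\pi) \otimes \wedge^\bullet \g$; the associated graded is $K_\bullet(\g, W_i)$, whose positive homology vanishes, and the resulting spectral sequence converges thanks to the completeness condition $\pi = \varprojlim A_i$ to yield $\oH_k(\g, \pi) = 0$ for all $k \geq 1$. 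The induced filtration $F^i(\pi_\g) := F^i(\pi)/(\g \pi \cap F^i(\pi))$ has graded pieces $\oH_0(\g, W_i)$, exhibiting the good extension structure on $\pi_\g$.

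The hard part is the Hausdorffness upgrade inside the induction -- the structural claim that an extension of Hausdorff-$\oH_0$ nuclear Fr\'echet $\g$-modules inherits Hausdorff $\oH_0$. Once this is granted (via the abstract strict-morphism / open-mapping lemma for Fr\'echet spaces that the paper sets up in \S\ref{subsec:PrelTopLin}), the remainder of the proof -- the Mittag--Leffler lifting to the inverse limit and the spectral-sequence computation for higher homology -- is essentially formal.
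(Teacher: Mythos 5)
Your overall scaffolding agrees with the paper's: induct on $i$ to handle the finite quotients $A_i := \pi/F^i(\pi)$, then pass to the countable inverse limit. The paper packages the inductive step as Lemma~\ref{lem:BabyGoodExt} and the passage to the limit via the extended Koszul complex together with Lemma~\ref{lem:ExGr}, which does by hand what you attribute to a spectral-sequence convergence argument; those two routes are essentially equivalent for the higher homology and the $\oH_0$ lifting.

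However, there is a genuine gap in your inductive step, precisely at what you yourself flag as the hard part. You want to show $\g A_{i+1}$ is closed, and you argue: $\g A_{i+1}+W_i$ is closed (preimage of $\g A_i$), $\g A_{i+1}\cap W_i=\g W_i$ is closed, hence $\g A_{i+1}$ is closed by a ``strict-morphism / open-mapping lemma.'' No such lemma is available, and the implication is false as stated. If $V$ is an infinite-dimensional Fr\'echet space, $W\subset V$ a one-dimensional (closed) subspace spanned by $e$, and $U=\ker f$ for a discontinuous linear functional $f$ with $f(e)\neq 0$, then $U+W=V$ is closed and $U\cap W=\{0\}$ is closed, yet $U$ is a dense proper subspace. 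Knowing additionally that $U$ is the image of a continuous map from a nuclear Fr\'echet space does not rescue the implication either. The closure $\overline{\g A_{i+1}}$ is a priori only contained in $\g A_{i+1}+W_i$, and you have no control of $\overline{\g A_{i+1}}\cap W_i$ beyond $\supset\g W_i$.

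The paper's proof of Lemma~\ref{lem:BabyGoodExt} closes exactly this hole, but by a different mechanism: it dualizes the short exact sequence (legitimate because $L\hookrightarrow M$ has closed image, so the sequence is admissible and Proposition~\ref{prop:NFS}(\ref{it:FreDual}) applies), uses Corollary~\ref{cor:CohomDual} to deduce $\g$-coacyclicity of $L^*$ from the acyclicity and Hausdorffness of $L$, and thereby gets surjectivity of $(M^*)^\g\to(L^*)^\g$. That surjectivity is the crucial extra input — it lets one test membership of $\g M$ against $\g$-invariant functionals, first on $N$ and then on $L$, and conclude that the polar of $(M^*)^\g$ is $\g M$ itself, hence $\g M$ is closed. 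Your argument never extracts a statement of this dual flavor, and without it the subspace-algebra manipulation you propose is not enough. I'd recommend replacing your ``strict-morphism'' paragraph with an appeal to Proposition~\ref{prop:NFS}(\ref{it:FreDual}) and Corollary~\ref{cor:CohomDual} along the paper's lines; the rest of your outline (the long exact sequence, the identification of $\g A_{i+1}\cap W_i$, the Mittag--Leffler passage to the limit, the filtered Koszul complex) can then be kept essentially as you wrote it.
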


We will prove this lemma in \S \ref{subsec:GoodExtExactHaus}.

\begin{proof}[Proof of Theorem \ref{thm:ClassCon}]
$ $
\begin{enumerate}
\item [(\ref{it:PrinGood})]

Let $\sigma$ be a finite dimensional representation of the Borel subgroup and let $I \in \cM(G_n)$ be its induction to $G_n$.
There exists be a filtration of $\sigma$ such that $Gr^i(\sigma)$ is a character of the Borel subgroup and thus of the torus. This filtration induces a filtration of $\pi$. It is enough to show that $Gr^i(I)$ is good. Note that $Gr^i(I)$ is a principal series representation and thus it is good
%Let $\pi$ be a principal series representation. By definition, it is induced %from a finite dimensional representation of the torus.
%Without loss of generality, by the definition of goodness, we can assume %that $\sigma$ is a character. In this case $\pi$ is good
by Corollary \ref{cor:PrinGood} and Lemma \ref{lem:GenGeoGood}.

\item [(\ref{it:GoodAcyc})] The key lemma (Lemma \ref{lem:GeoGood}) implies that $\pi \otimes (-\psi)$ is acyclic for any geometric representation $\pi$. Hence Lemma \ref{lem:GoodExtExactHaus}
    %and \ref{lem:TenFreEx}
    implies by induction that $\pi \otimes (-\psi)$ is acyclic for any good representation $\pi$.
\item [(\ref{it:GoodGood})] If $\pi = \Sc(X,\cE)$ is a geometric representation then by the key lemma (Lemma \ref{lem:GeoGood}), $\pi_{\fv_n, \psi} \cong \Sc(X'_0, \cE'|_{X'_0}),$   where $\cE'$ and $X'_0$ are as in the key lemma. %Lemma \ref{lem:GeoGood}.
    Let $\widetilde{T}$ denote the product of $T$ with the center of $G_{n-1}$.
    Clearly, $\widetilde{T}$ acts on $X'_0$ and $B_{n-1} \times T = (B_{n-1} \cap P_{n-1}) \times \widetilde{T}$ has a finite number of orbits on $X'_0$.
    Thus, by Lemma  \ref{lem:GenGeoGood}, $\pi_{\fv_n, \psi}$ is good.
    %    If $\pi_{\fv_n, \psi}$ is good and $\cE$ is a nuclear \Fre space then $(\pi \hot \cE)_{\fv_n, \psi}=\pi_{\fv_n, \psi} \hot \cE$ is good.

    Now let $\pi$ be a good representation. We can assume by induction that $\pi$ is a good extension of $\pi_i$, where $\pi_i$ are good and $(\pi_i)_{\fv_n, \psi}$ are good. Then Lemma \ref{lem:GoodExtExactHaus} implies that $\pi_{\fv_n, \psi}$ is a good extension of $(\pi_i)_{\fv_n, \psi}$ and hence is good.
\end{enumerate}
\end{proof}

For the proof of Theorem \ref{thm:Prod} we will need the following corollary of Lemma \ref{lem:GoodExtExactHaus} and Theorem \ref{thm:ClassCon}.
\begin{cor}\label{cor:ExactPf}
%Let $\g$ be a Lie algebra.
Let $(\pi,F,\pi_i)$ be a $\p_n$-invariant good extension of good representations of $\p_n$. %Suppose that $\pi_i$ are $\g$-acyclic and $\oH_0(\pi_i)$ is Hausdorff %for any $i$.
Then %$\pi$ is acyclic and
$E^k(\pi)$ together with the induced filtration defined by the images of  $E^k(F^i(\pi))$ and the natural morphisms $ F^i(E^k(\pi))/F^{i+1}(E^k(\pi))  \to E^k(\pi_i)$ form a good extension.
\end{cor}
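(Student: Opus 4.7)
The plan is to proceed by induction on $k$, using the factorization $E^{k}=E^{k-1}\circ \Phi$ (with the convention $E^{1}=\id$) together with Lemma~\ref{lem:GoodExtExactHaus} and Theorem~\ref{thm:ClassCon}. The base case $k=1$ is tautological: $E^{1}(\pi)=\pi$ and $E^{1}(\pi_i)=\pi_i$, so the hypothesized good extension is itself the conclusion.

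For the inductive step, I first apply Lemma~\ref{lem:GoodExtExactHaus} with $\g=\fv_n$ to the good extension obtained from $(\pi,F,\pi_i)$ by tensoring with $-\psi$. The required hypotheses are supplied by Theorem~\ref{thm:ClassCon}: each $\pi_i\otimes(-\psi)$ is $\fv_n$-acyclic by part~(\ref{it:GoodAcyc}), and the zero homology $(\pi_i)_{\fv_n,\psi}$ is itself a good representation of $\p_{n-1}$ by part~(\ref{it:GoodGood}), hence in particular a nuclear Fr\'echet space and therefore Hausdorff. The lemma then gives that $\pi_{\fv_n,\psi}$ carries a $\p_{n-1}$-invariant good extension structure over the $(\pi_i)_{\fv_n,\psi}$, with the filtration literally equal to the image of $F^i(\pi)$ under the coinvariants quotient. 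Since tensoring with the multiplicative character $|\det|^{-1/2}$ preserves the structure of a good extension, $\Phi(\pi)$ becomes a good extension of the good $\p_{n-1}$-representations $\Phi(\pi_i)$.

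At this point I invoke the induction hypothesis on $k-1$, applied to the $\p_{n-1}$-representation $\Phi(\pi)$ viewed as a good extension of $\Phi(\pi_i)$. This produces a good extension structure on $E^{k-1}(\Phi(\pi))=E^{k}(\pi)$ with successive quotients $E^{k-1}(\Phi(\pi_i))=E^{k}(\pi_i)$, and with filtration and transition maps induced by functoriality from those provided by Lemma~\ref{lem:GoodExtExactHaus} at each preceding step.

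The main obstacle is purely bookkeeping: one must verify that the filtration produced by iterating Lemma~\ref{lem:GoodExtExactHaus} $(k-1)$ times coincides with the image of $E^{k}(F^{i}(\pi))$ in $E^{k}(\pi)$, and that the induced transition maps to $E^{k}(\pi_i)$ agree with those obtained by applying the functor $E^{k}$ to the given isomorphisms $\phi_i$. This reduces to the observation that at each stage the filtration produced by the lemma is exactly the image of the preceding filtration under the coinvariants quotient $\pi\twoheadrightarrow\pi_{\fv_n,\psi}$, so the identification propagates unchanged through the induction. Once this compatibility is recorded, the corollary follows.
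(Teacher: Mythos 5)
Your proof is correct and takes the same route the paper intends: since the corollary is stated as a consequence of Lemma~\ref{lem:GoodExtExactHaus} and Theorem~\ref{thm:ClassCon} without a written argument, your induction on $k$ via the factorization $E^k = E^{k-1}\circ\Phi$ is exactly the fleshing-out one would expect. You correctly invoke Theorem~\ref{thm:ClassCon}(\ref{it:GoodAcyc}) for the acyclicity hypothesis, Theorem~\ref{thm:ClassCon}(\ref{it:GoodGood}) for Hausdorffness (good representations are nuclear Fréchet by construction), and the identity $F^i(\oH_0(\g,\pi))=\oH_0(\g,F^i(\pi))$ established inside the proof of Lemma~\ref{lem:GoodExtExactHaus} to propagate the filtration; the character twist $|\det|^{-1/2}$ harmlessly preserves all the relevant structure.
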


\section{Highest derivative of monomial representations (Proof of Theorem \ref{thm:Prod})}\label{sec:PfProd}
\setcounter{lemma}{0}

We first sketch the proof for the case of product of two characters. We believe that using an appropriate notion of \Fre bundles it will be possible to upgrade this proof to the case of the highest derivative of a product of two arbitrary representations, which by induction will give a more straightforward proof of Theorem \ref{thm:Prod}.

Since we do not currently have a proof for a product of two representations, in \S \ref{subsec:ProdGeo} - \ref{subsec:PfProd} we prove Theorem \ref{thm:Prod} directly for a product of $k$ characters.

\subsection{Sketch of proof for the case $k=2$} \label{subsec:PfProd2}

First, note that there exists a $G_n$-multiplicative line bundle $\cE$ over $G_n/P_{(n_1,n_2)}$ such that $\chi_1 \times \chi_2 = \Sc(G_n/P_{(n_1,n_2)},\cE)$ and the action of $P_{(n_1,n_2)}$ on the fiber is given by trivial extension of $\chi_1 \otimes \chi_2$, twisted by a power of the determinant.

Let $w_n \in G_n$ denote the longest Weyl group element. Let $x_1, x_2 \in X=G_n/P_{(n_1,n_2)} $ be the classes of $1$ and $w_n$ in correspondence and let $\cO_1$ and $\cO_2$ be their $P_n$-orbits. Note that $X = \cO_1 \cup \cO_2$. Thus, by Corollary \ref{cor:Sc_seq}, we have a short exact sequence
$$ 0 \to \Sc(\cO_2,\cE) \to \Sc(X,\cE) \to \Sc_X(\cO_1,\cE) \to 0.$$
By Lemma \ref{lem:GenGeoGood}, $\Sc(O_2,\cE)$ and $\Sc_X(O_1,\cE)$ are good. By Theorem \ref{thm:ClassCon},
$\Sc(\cO_2,\cE)$ and $\Sc_X(\cO_1,\cE)$ are acyclic with respect to $\Phi$ and thus we have a short exact sequence
$$ 0 \to \Phi(\Sc(\cO_2,\cE)) \to \Phi(\Sc(X,\cE)) \to \Phi(\Sc_X(\cO_1,\cE)) \to 0.$$
The proposition follows from the following 2 statements:
\begin{enumerate}
\item \label{it:Phi0} $\Phi(\Sc_X(\cO_1,\cE)) =0$
\item \label{it:PhiProd} $\Phi(\Sc(\cO_2,\cE)) = (\pi_1)|_{G_{n_1-1}} \times (\pi_2)|_{G_{n_2-1}}$.
\end{enumerate}

Let us first show (\ref{it:Phi0}).
Using the results of \S \ref{sec:ExactHaus} and a version of the Borel lemma (Lemma \ref{lem:Borel}),
we reduce to the statement $\Phi(\Sc(\cO_1,\cE'))=0$ for any $P_n$-multiplicative equivariant bundle $\cE'$.
Let $p: \cO_1 \to V_n\backslash \cO_1$ be the natural projection.
Using the nilpotency of the action of $V_n$ on $\cE'$ we reduce to the case $\cE' = p^!(\cE'')$. This case follows by the key lemma (Lemma \ref{lem:GeoGood}) from the inclusion $(P_n)_{x_1}=P_n \cap P_{(n_1,n_2)} \supset V_n$.

The proof of \eqref{it:PhiProd} is a computation based on the key lemma (Lemma \ref{lem:GeoGood}).

\subsection{Structure of the proof}\label{subsec:ProdStruc}

%We study the degenerate principal series geometrically.
Let $\lambda=(n_1 , \dots ,  n_k)$ be a composition of $n$. The monomial representation is isomorphic to $\Sc(G_n/P_{\lam},\cE)$ for some $G_n$-multiplicative line bundle $\cE$ over $X:=G_n/P_{\lam}$. Let $\cO_1 , \dots ,  \cO_k$ be the $P_{n}$-orbits on $G_n/P_{\lam}$, where $\cO_1$ is the open orbit. We reduce the problem to the following two tasks:

\begin{itemize}
\item to compute $E^k(\Sc(\cO_1,\cE))$
\item to show that $E^k(\Sc_X(\cO_i,\cE'))=0$ for any $P_n$-multiplicative equivariant bundle $\cE'$ on $X$ and any $2\leq i\leq k$.
\end{itemize}
We do both tasks by induction using the  key lemma (Lemma \ref{lem:GeoGood}) and the fact that the $P_{n-1}$-orbits on the geometric quotient $V_n\backslash \cO_i$ looks exactly like $\cO_j$ when $n$ is replaced by $n-1$.

In \S \ref{subsec:ProdGeo} we study the geometry of $P_n$-orbits on $X$. In \S \ref{subsec:ProdDer} we reformulate the above tasks in explicit lemmas, that will be proven in \S \ref{subsec:PfLemProd}. In \S \ref{subsec:PfProd} we prove Theorem \ref{thm:Prod} using those lemmas.
\subsection{Geometry of $P_n$-orbits on flag varieties} \label{subsec:ProdGeo}
Let $\lambda=(n_1 , \dots ,  n_k)$ be a composition of $n$.
In this subsection we describe the orbits of $P_n$ on $G_n/P_{\lam}$. Note that the scalar matrices act trivially on $G/P_{\lam}$ and thus $P_n$ orbits coincide with $P_{(n-1,1)}$-orbits.
By Bruhat theory, $P_{(n-1,1)} \backslash G_n / P_{\lam}$ can be identified with the set of double cosets of the symmetric group permuting the set $\{1 , \dots ,  n\}$ by subgroups corresponding to $P_{(n-1,1)}$ and $P_{\lam}$ respectively. The first subgroup is the stabilizer of the point $n$, and the second is the subgroup of all permutations that preserve the segment $[n_{i-1}+1,n_i]$ for each $i$. Thus, the double quotient consists of $k$ elements. Now we want to find a representative in $G_n$ for each double coset. We can choose all of them to be powers of the standard cyclic permutation matrix.
This discussion is formalized in the following notation and lemma.

\begin{notn}\label{not:Orbits1}$ $
%Let $\lambda=(n_1 , \dots ,  n_k)$ be a composition of $n$.
\begin{itemize}
\item Let $c\in G_n$ denote the standard cyclic permutation matrix, i.e. $ce_i=e_{i+1}$ for $i<n$ and $ce_{n}=e_1$, where $e_i$ are basis vectors.
\item $m^i_{\lam}:=\sum_{l=1}^i n_l$. In particular, $m_{\lam}^k=n$.
\item For $1 \leq i \leq k,$ let $w^i_\lambda:=c^{n-m_{\lam}^{k-i+1}} \in G_n$
\item $P_{\lambda}^i:=w^i_\lambda P_{\lambda} (w^i_\lambda)^{-1}.$
\item Let $x_{\lambda}^i$ be the class of $w^i_\lambda$ in $G_n / P_{\lambda}$ and $\cO_{\lambda}^i:=P_{n}x_{\lambda}^i$.
\item $Q_{\lambda}^i:=P_{\lambda}^i \cap P_n$.
\end{itemize}
\end{notn}

See Appendix \ref{app:ProdEx} for examples of the objects described in this and the following notations.

\begin{lem}\label{lem:SysRep}
$ $
\begin{enumerate}[(i)]
\item The stabilizer of $x_{\lambda}^i$ is $Q_{\lambda}^i$.
\item $\cO_{\lambda}^i$ are all distinct.
\item $G_n / P_{\lambda} = \bigcup_{i=1}^k \cO_{\lambda}^i$
\item For any $l \leq k$, $\bigcup_{i=1}^l \cO_{\lambda}^i$ is closed in $G_n / P_{\lambda}$ .
\end{enumerate}
\end{lem}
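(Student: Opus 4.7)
Part (i) is immediate from the coset definition: the stabilizer in $G_n$ of $w^i_\lambda P_\lambda$ is $w^i_\lambda P_\lambda (w^i_\lambda)^{-1} = P_\lambda^i$, so intersecting with $P_n$ yields $Q_\lambda^i$.

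For parts (ii) and (iii) the plan is to invoke Bruhat theory, as indicated in the paragraph preceding the lemma. First I would reduce from $P_n$-orbits to $P_{(n-1,1)}$-orbits on $G_n/P_\lambda$: the center $Z(G_n)$ is contained in $P_\lambda$ (hence acts trivially on $G_n/P_\lambda$), and $P_{(n-1,1)} = P_n \cdot Z(G_n)$, so the two sets of orbits coincide. Bruhat theory then gives a bijection
\[
P_{(n-1,1)} \backslash G_n / P_\lambda \ \cong \ S_{n-1}\backslash S_n / (S_{n_1}\times\cdots\times S_{n_k}).
\]
The right cosets parameterize ordered partitions of $\{1,\ldots,n\}$ into blocks of sizes $n_1,\ldots,n_k$, and the $S_{n-1}$-invariant of such a partition is the index of the block containing $n$; this gives exactly $k$ double cosets. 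To finish (ii) and (iii), I would compute directly that $(w^i_\lambda)^{-1}(n) = m^{k-i+1}_\lambda$, which lies in the $(k-i+1)$-th standard block; hence the $k$ elements $w^1_\lambda,\ldots,w^k_\lambda$ exhaust the double cosets.

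For part (iv) I would exhibit $\bigcup_{i=1}^l \cO_\lambda^i$ as an explicit closed subvariety of $G_n/P_\lambda$. Set $L_j := \langle e_1,\ldots,e_{m^j_\lambda}\rangle$; since $P_\lambda$ is block upper triangular, $L_j$ is $P_\lambda$-stable, so $[g]\mapsto gL_j$ descends to an algebraic map from $G_n/P_\lambda$ to the Grassmannian of $m^j_\lambda$-planes in $F^n$. The functional $e_n^*$ dual to $e_n$ is fixed by $P_n$, so the condition $gL_j \subset \ker e_n^*$ is $P_n$-invariant and cuts out a closed subvariety. The geometric content of the double-coset computation above is that the $P_n$-orbit of a flag $gW_\bullet$ is determined by the smallest $j$ for which $gW_j \not\subset \ker e_n^*$, and this smallest index equals $k-i+1$ on $\cO_\lambda^i$. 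Consequently
\[
\bigcup_{i=1}^l \cO_\lambda^i \ = \ \bigl\{[g] \in G_n/P_\lambda : gL_{k-l} \subset \ker e_n^*\bigr\}
\]
(with the convention $L_0 = \{0\}$, making the condition vacuous when $l=k$), which is closed. The main subtlety is keeping the index conventions straight: $\cO_\lambda^1$ is the smallest, closed orbit (the basepoint $P_\lambda$, whose standard flag already satisfies $L_{k-1}\subset\ker e_n^*$), while $\cO_\lambda^k$ is the open orbit --- opposite to what a naive reading of the ``block containing $n$'' parameterization might suggest.
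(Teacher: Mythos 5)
Your proof is correct and follows the paper's intended approach: the paper leaves this lemma as a formalization of the preceding Bruhat-theoretic discussion and does not write out a proof, so your argument supplies the details. Your verification of the double-coset count, the computation $(w^i_\lambda)^{-1}(n)=m^{k-i+1}_\lambda$, and the explicit Grassmannian characterization $\bigcup_{i\le l}\cO_\lambda^i=\{[g]: gL_{k-l}\subset\ker e_n^*\}$ for part (iv) are all correct and consistent with the worked example in Appendix B of the paper (where $\cO^1_\lambda$ is indeed the closed orbit and $\cO^k_\lambda$ the open one).
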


We will also be interested in the representatives of $P_n$-orbits on $G_n/P_{\lambda}^i$.
\begin{notn}
Let $\lambda=(n_1 , \dots ,  n_k)$ be a composition of $n$. For $1 \leq i,j \leq k,$
\begin{itemize}
\item Let $w^{ij}_\lambda:= (w^{i}_\lambda)^{-1} w^{j}_\lambda\in G_n$
\item Let $x_{\lambda}^{ij}$ be the class of $w^{ij}_\lambda$ in $G_n / P_{\lambda}^i$ and $\cO_{\lambda}^{ij}:=P_{n}x_{\lambda}^{ij}$.
\end{itemize}
\end{notn}

\begin{cor}\label{cor:SysRep}
Fix $i \leq k$. Then
\begin{enumerate}[(i)]
\item The stabilizer of $x_{\lambda}^{ij}$ is $Q_{\lambda}^j$.
\item $\cO_{\lambda}^{ij}$ are all distinct.
\item $G_n / P_{\lambda}^i = \bigcup_{j=1}^k \cO_{\lambda}^{ij}$
\item For any $l \leq k$, $\bigcup_{j=1}^l \cO_{\lambda}^{ij}$ is closed in $G_n / P_{\lambda}^i $.
\end{enumerate}

$\{x_{\lambda}^{ij}\}_{j=1}^k$ is a full system of representatives for $P_n$-orbits in $G_n / P_{\lambda}^i$, and the stabilizer of $x_{\lambda}^{ij}$ is $Q_{\lambda}^j$.
\end{cor}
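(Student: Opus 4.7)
The plan is to reduce Corollary \ref{cor:SysRep} directly to Lemma \ref{lem:SysRep} via an appropriate $G_n$-equivariant isomorphism between $G_n/P_\lambda$ and $G_n/P_\lambda^i$. Specifically, I will consider the map
\[
\phi : G_n/P_\lambda \longrightarrow G_n/P_\lambda^i,\qquad \phi(gP_\lambda) = g (w_\lambda^i)^{-1} P_\lambda^i.
\]
Since $P_\lambda^i = w_\lambda^i P_\lambda (w_\lambda^i)^{-1}$, the identity $g p (w_\lambda^i)^{-1} = g (w_\lambda^i)^{-1}\cdot (w_\lambda^i p (w_\lambda^i)^{-1})$ with $w_\lambda^i p (w_\lambda^i)^{-1} \in P_\lambda^i$ shows that $\phi$ is well defined. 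It is visibly equivariant for the left $G_n$-action and is a diffeomorphism, with inverse $h P_\lambda^i \mapsto h w_\lambda^i P_\lambda$ induced by the diffeomorphism $g \mapsto g w_\lambda^i$ of $G_n$.

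The key identification to check is that $\phi(x_\lambda^j) = x_\lambda^{ij}$. By definition
\[
\phi(x_\lambda^j) = w_\lambda^j (w_\lambda^i)^{-1} P_\lambda^i, \qquad x_\lambda^{ij} = (w_\lambda^i)^{-1} w_\lambda^j P_\lambda^i,
\]
so I need $w_\lambda^j (w_\lambda^i)^{-1} = (w_\lambda^i)^{-1} w_\lambda^j$. This is immediate from Notation \ref{not:Orbits1}, because both $w_\lambda^i$ and $w_\lambda^j$ are powers of the single cyclic permutation matrix $c$ and thus commute. Combined with $P_n$-equivariance of $\phi$, this gives $\phi(\mathcal{O}_\lambda^j) = \mathcal{O}_\lambda^{ij}$.

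Given these two facts, each of the four statements of the corollary follows at once from the corresponding statement of Lemma \ref{lem:SysRep}. For (i), the stabilizer in $P_n$ is preserved under a $P_n$-equivariant bijection, so $\mathrm{Stab}_{P_n}(x_\lambda^{ij}) = \mathrm{Stab}_{P_n}(x_\lambda^j) = Q_\lambda^j$. Items (ii) and (iii) follow because $\phi$ is a $P_n$-equivariant bijection carrying $\{\mathcal{O}_\lambda^j\}$ onto $\{\mathcal{O}_\lambda^{ij}\}$, and (iv) follows from the same observation together with the fact that $\phi$ is a homeomorphism, hence preserves closedness of the finite unions $\bigcup_{j\leq l} \mathcal{O}_\lambda^{ij}$. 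I do not expect any genuine obstacle: the only point requiring care is the commutation $w_\lambda^i w_\lambda^j = w_\lambda^j w_\lambda^i$, which is the reason the representatives $w_\lambda^{ij} = (w_\lambda^i)^{-1} w_\lambda^j$ transport cleanly back to the representatives $w_\lambda^j$ of Lemma \ref{lem:SysRep}.
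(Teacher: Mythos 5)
Your proof is correct and is essentially the intended argument: the paper states the corollary without proof immediately after Lemma \ref{lem:SysRep}, implicitly appealing to exactly the kind of transport-by-translation you spell out. The right-translation diffeomorphism $gP_\lambda \mapsto g(w_\lambda^i)^{-1}P_\lambda^i$ is $G_n$-equivariant (hence $P_n$-equivariant and a homeomorphism), and the identification $\phi(x_\lambda^j) = x_\lambda^{ij}$ rests, as you correctly isolate, on the commutation of $w_\lambda^i$ and $w_\lambda^j$ as powers of the single cycle $c$.
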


\begin{notn}
For any $1 \leq i \leq k$, denote $\lambda^-_{i}:=(n_1 , \dots ,  n_{i-1},n_{i}-1,n_{i+1} , \dots ,  n_k)$.
\end{notn}

The following lemma is a straightforward computation (see Appendix \ref{app:ProdEx} for a particular example.).
\begin{lem}\label{lem:ProdGeo}
Let $\lambda=(n_1 , \dots ,  n_k)$ be a composition of $n$.
Let $X:=P_n/Q_{\lambda}^i$ and $Z:=(V_n)\backslash X$ be the geometric quotient.
Note that $Z=G_{n-1}/P^i_{\lambda'}$, where $\lambda'=\lambda^-_{k-i+1}$.
%$\lambda'=(n_1 , \dots ,  n_{k-i},n_{k-i+1}-1,n_{k-i+2} , \dots ,  n_k)$.

Let $L=Q_{\lam}^i \cap V_n$ and $\widetilde{Z}_0:=\{g \in G_{n-1} \, : \, \psi(gL) =1\}$. Note that $\widetilde{Z}_0$ is right-invariant with respect to $P_{\lam'}^i$.
Let $Z_0:=\widetilde{Z}_0/P^i_{\lam '} \subset Z$.

Then $x_{\lam '}^{ij} \in Z_0$ if and only if $1 \leq j < i$.
\end{lem}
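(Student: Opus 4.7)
The strategy is a direct combinatorial computation in three steps: describe $L$ explicitly, translate the defining condition of $\widetilde{Z}_0$ into a condition on the last row of $g \in G_{n-1}$, and then evaluate this condition on $g = w^{ij}_{\lambda'}$.

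First, describe $L = P_\lambda^i \cap V_n$. Since $P_\lambda^i = c^r P_\lambda c^{-r}$ with $r := n - m_\lambda^{k-i+1}$, conjugation by $c^{-r}$ sends the matrix unit $E_{j,n}$ (which generates the Lie algebra of $V_n$) to $E_{c^{-r}(j),\,m_\lambda^{k-i+1}}$. Because $m_\lambda^{k-i+1}$ is the last index of the block $B_{k-i+1}$ of $P_\lambda$, this matrix unit lies in $P_\lambda$ precisely when $c^{-r}(j) \leq m_\lambda^{k-i+1}$, i.e.\ when $j > r$. Thus $L = \{I + \sum_{j>r} v_j E_{j,n}\}$, which under the standard identification $V_n \cong F^{n-1}$ corresponds to the subspace $W := \Span(e_{r+1}, \dots, e_{n-1})$.

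Second, translate the condition defining $\widetilde{Z}_0$. The conjugation action of $G_{n-1}$ on $V_n$ is the standard representation $v \mapsto gv$, while $\psi$ corresponds to the character $v \mapsto \exp(\sqrt{-1}\pi \re v_{n-1})$. Hence $\psi|_{gLg^{-1}} = 1$ is equivalent to $gW \subseteq \{v_{n-1} = 0\}$, which in turn amounts to $g_{n-1,j} = 0$ for every $j \in \{r+1, \dots, n-1\}$.

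Third, evaluate this condition on $g = w^{ij}_{\lambda'}$. Set $r_l := n - m_\lambda^{k-l+1}$, so $r_i = r$. Using $m_{\lambda'}^l = m_\lambda^l$ for $l < k-i+1$ and $m_{\lambda'}^l = m_\lambda^l - 1$ for $l \geq k-i+1$, one finds $w^i_{\lambda'} = c_{n-1}^r$, where $c_{n-1}$ denotes the cyclic shift in $G_{n-1}$ (of order $n-1$), and more generally $w^{ij}_{\lambda'} = c_{n-1}^{r_j - r_i}$ for $j \leq i$, respectively $w^{ij}_{\lambda'} = c_{n-1}^{r_j - 1 - r_i}$ for $j > i$. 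The power $c_{n-1}^s$ has its unique non-zero entry of the last row located in column $(-s) \bmod (n-1)$. Plugging in: for $j < i$ the column equals $r_i - r_j \in \{1, \dots, r\}$, so the condition of the previous step is satisfied; for $j = i$ the column is $n-1$; for $j > i$ the column equals $n - r_j + r$, which lies in $\{r+1, \dots, n-1\}$ since $r < r_j \leq n-1$. This yields $x^{ij}_{\lambda'} \in Z_0$ iff $j < i$. The main (mild) obstacle is the bookkeeping of the cyclic indices and edge cases where $\lambda'$ has a zero part (i.e.\ $n_{k-i+1}=1$); in those cases some of the $w^{ij}_{\lambda'}$ happen to coincide, and the stated equivalence has to be read with this identification in mind, but the computation above goes through unchanged.
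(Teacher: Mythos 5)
Your proof is correct, and it carries out in full the ``straightforward computation'' that the paper invokes without supplying: the paper gives no argument for this lemma, only a worked numerical example in Appendix \ref{app:ProdEx}. Your three-step organization is exactly the expected one. Step 1 (conjugating $E_{j,n}$ by $c^{-r}$ to identify $L$ with $\Span(e_{r+1},\dots,e_{n-1})$) and Step 2 (translating $\psi|_{gLg^{-1}}=1$ into the vanishing of $g_{n-1,j}$ for $j\in\{r+1,\dots,n-1\}$) are both verified correctly, using that the conjugation action of $G_{n-1}$ on $V_n\cong F^{n-1}$ is the standard representation and that $\psi$ picks out the last coordinate. Step 3's index bookkeeping checks out: with $r_l=n-m_\lambda^{k-l+1}$ one indeed gets $w_{\lambda'}^{ij}=c_{n-1}^{r_j-r_i}$ for $j\le i$ and $w_{\lambda'}^{ij}=c_{n-1}^{r_j-1-r_i}$ for $j>i$, and locating the last-row entry of $c_{n-1}^s$ in column $(-s)\bmod(n-1)$ gives column $r_i-r_j\in\{1,\dots,r\}$ for $j<i$, column $n-1$ for $j=i$, and column $n-r_j+r\in\{r+1,\dots,n-1\}$ for $j>i$.

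Your remark about the degenerate case is also apt, though it can be sharpened: the only case where the stated biconditional fails as a literal statement about labels is $i=k$ together with $n_1=1$ (so $\lambda'_1=0$). There $r=n-1$, $L$ is trivial, $Z_0=Z$, and $w_{\lambda'}^1=w_{\lambda'}^k=\mathrm{Id}$, so $x_{\lambda'}^{kk}=x_{\lambda'}^{k1}$ and both labels name the same point of $Z_0$; when $n_{k-i+1}=1$ but $i<k$ one instead has $w_{\lambda'}^i=w_{\lambda'}^{i+1}$ and both labels correctly give ``not in $Z_0$'', so no conflict arises. In every case the identity $Z_0=\bigcup_{j<i}\cO_{\lambda'}^{ij}$, which is what the proofs of Lemmas \ref{lem:ProdDer0} and \ref{lem:ProdLastDer} actually use, holds as stated.
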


\subsection{Derivatives of quasi-regular representations on $P_n$-orbits on flag varieties}\label{subsec:ProdDer}

The proof of Theorem \ref{thm:Prod} is based on two lemmas that we formulate in this subsection and prove in \S \ref{subsec:PfLemProd}.

\begin{lem}\label{lem:ProdDer0}
Let $\lambda=(n_1 , \dots ,  n_k)$ be a composition of $n$.
Let $Y$ be a Nash $P_n$-manifold and $\cE$ be a $P_n$-multiplicative bundle on $Y$.
Let $x_0 \in Y$ be a point with stabilizer $Q_{\lambda}^i$ and let $X:=P_nx_0$.
Let $\pi:=\Sc_Y(X,\cE)$. Then
$E^{i+1}(\pi)=0 $.
\end{lem}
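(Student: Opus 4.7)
The plan is to prove the lemma by a double induction on $n$ and on $i$, reducing to a form where the key lemma (Lemma \ref{lem:GeoGood}) applies directly. First, I would reduce from $\Sc_Y(X,\cE)$ to $\Sc(X,\cE')$ via the Borel lemma: the jet filtration of $\cE$ along $X\subset Y$ realizes $\Sc_Y(X,\cE)$ as a $\fp_n$-invariant good extension whose graded pieces are of the form $\Sc(X,\cE_k)$ for $P_n$-multiplicative bundles $\cE_k$ on $X=P_n/Q_\lambda^i$. By Corollary \ref{cor:ExactPf}, it then suffices to prove $E^{i+1}(\Sc(X,\cE'))=0$ for an arbitrary $P_n$-multiplicative bundle $\cE'$ on $X$. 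A second application of the same circle of ideas --- using nilpotency of the $V_n$-action on the fibers of $\cE'$ to filter by the $V_n$-action --- further allows me to assume that $\cE'$ is a pullback from $X'=V_n\backslash X$, so the hypotheses of the key lemma are met.

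The base case is $i=1$. Since $V_n$ is upper triangular, $V_n \subseteq Q_\lambda^1 = P_\lambda \cap P_n$; combined with the normality of $V_n$ in $P_n$, this implies that $V_n$ stabilizes every point of $X = P_n/Q_\lambda^1$. Hence $\psi_n|_{(V_n)_x}$ is non-trivial for every $x\in X$, making the set $X_0$ of the key lemma empty. The key lemma thus yields $\Phi(\Sc(X,\cE'))=0$, and a fortiori $E^2(\Sc(X,\cE'))=0$.

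For the inductive step ($i\geq 2$), I apply the key lemma to obtain
\[
\Phi(\Sc(X,\cE')) \cong \Sc(X_0',\cE'|_{X_0'}),
\]
where $X_0' \subseteq X' = V_n\backslash X = G_{n-1}/P_{\lambda'}^i$ with $\lambda' = \lambda^-_{k-i+1}$. Combining Lemma \ref{lem:ProdGeo} with Corollary \ref{cor:SysRep} applied to $\lambda'$ (as a composition of $n-1$), the subset $X_0'$ equals $\bigcup_{j=1}^{i-1}\cO_{\lambda'}^{ij}$, and $\bigcup_{j=1}^{\ell}\cO_{\lambda'}^{ij}$ is closed in $X_0'$ for each $\ell$. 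By Corollary \ref{cor:Sc_seq}, this stratification induces a $\fp_{n-1}$-invariant filtration of $\Phi(\Sc(X,\cE'))$ whose graded pieces $\Sc_{X_0'}(\cO_{\lambda'}^{ij},\cdot)$ satisfy the hypothesis of the lemma with $(n,\lambda,i)$ replaced by $(n-1,\lambda',j)$ for $j\in\{1,\dots,i-1\}$. By the inductive hypothesis, $E^{j+1}$ vanishes on each graded piece; since $j+1 \leq i$, the factorization $\Phi^{i-1} = \Phi^{i-1-j}\circ\Phi^{j}$ implies that $E^i = \Phi^{i-1}$ vanishes on each piece as well. Exactness of $\Phi^{i-1}$ on good extensions of good representations (Corollary \ref{cor:ExactPf}) then gives
\[
E^{i+1}(\Sc(X,\cE')) = E^{i}(\Phi(\Sc(X,\cE'))) = 0.
\]

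The main technical obstacle will be the Borel-lemma reduction of the first step, together with the subsequent nilpotency reduction: one has to verify carefully that the jet filtration along $X$ and the $V_n$-filtration of $\cE'$ each produce a $\fp_n$-invariant good extension by representations coming from $P_n$-multiplicative pullback bundles on $X$. This parallels the argument sketched in \S\ref{subsec:PfProd2} for the $k=2$ case, and goodness on the homogeneous space itself is supplied by Lemma \ref{lem:GenGeoGood}.
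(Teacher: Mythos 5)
Your proposal follows essentially the same three-step reduction as the paper's proof: Borel lemma to pass from $\Sc_Y(X,\cE)$ to the homogeneous space $X=P_n/Q_\lambda^i$, then Corollary \ref{cor:TranBun} to reduce to $\cE=p^!(\cE')$, then the key lemma together with the orbit stratification of $Z_0$ and induction. The only cosmetic difference is that you set up a double induction with an explicit base case $i=1$; in the paper's argument this case is handled uniformly within the single induction on $n$, because when $i=1$ Lemma \ref{lem:ProdGeo} already shows $Z_0=\varnothing$ (no $j$ satisfies $1\leq j<i$), so Step 3 gives $\oPhi(\pi)=\Sc(\varnothing,\cE')=0$ without any further decomposition — the same conclusion you reach by noting $V_n\subset Q_\lambda^1$.
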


\begin{notn}\label{not:Orbits2}
$\,$
\begin{itemize}
\item Denote by $\Chi$ the set of all characters of $F^\times$.
\item Let $\alp = (\alp_1 , \dots ,  \alp_k) \in \Chi^k$. Denote by
$\overline{\xi}_{\lam,\alp}$ the character of $L_{\lam}$ defined by
$$\overline{\xi}_{\lam,\alp}(g_1 , \dots ,  g_k)= \prod_{i=1}^k(\alp_i(\det(g_i)).$$
and by $\xi_{\lam,\alp}$ the character of $L_{\lam}$ defined by
$$\xi_{\lam,\alp}(g_1 , \dots ,  g_k)= \overline{\xi}_{\lam,\alp}  \RamiA{\Delta} _{P_\lam}^{1/2}|_{L_{\lam}}=\prod_{i=1}^k(\alp_i(\det(g_i))|\det (g_i)|^{(n-m_{\lam}^i-m_{\lam}^{i-1})/2}).$$

\item Let $\chi_{\lam,\alp}$ and $\overline{\chi}_{\lam,\alp}$  be the extensions of $\xi_{\lam,\alp}$ and $\overline{\xi}_{\lam,\alp}$ to $P_{\lam}$.
\item Denote by $\chi_{\lam,\alp}^i$ and $\overline{\chi}_{\lam,\alp}^i$ the characters of $P_{\lam}^i$ defined by $\chi_{\lam,\alp}^i(g)=\chi_{\lam,\alp}((w_\lam^i)^{-1}gw_\lam^i)$ and $\overline{\chi}_{\lam,\alp}^i(g)=\overline{\chi}_{\lam,\alp}((w_\lam^i)^{-1}gw_\lam^i)$.
%\item Let $\beta_{k}^i:=(|\det|^{-1/2} , \dots ,  |\det|^{-1/2},1,|\det|^{1/2} , \dots ,  |\det|^{1/2}) \in \Chi^k$

\end{itemize}
\end{notn}

\begin{lem}\label{lem:ProdLastDer}
Let $\lambda=(n_1 , \dots ,  n_k)$ be a composition of $n$.
Let $X:=P_n/Q_{\lambda}^i$. Let $\cE$ be a $P_n$-multiplicative line bundle on $X$. Let $\pi:=\Sc(X,\cE)$. Let $x_0 \in X$ be the class of the trivial element of $P_n$. Consider $\cE|_{x_0}$ as a character of $Q_{\lambda}^i$. Suppose that this character is the restriction of $\chi_{\lam,\alp}^i$ to $Q_{\lambda}^i$, for some $\alp=(\alp_1 , \dots ,  \alp_k) \in \Chi^k$. Then
\DimaA{
$$E^{i}(\pi)|_{G_{n-i}} = \chi_{(n_1),(\alp_1 |\det|^{-1/2})} \times  \RamiA{\cdots}  \times\chi_{(n_{k-i}),(\alp_{k-i}|\det|^{-1/2})} \times \chi_{(n_{k-i+1}-1),(\alp_{k-i+1})} \times  \RamiA{\cdots}  \times \chi_{(n_k-1),(\alp_k)}$$}
%$$\Phi^{i-1}(\pi)|_{G_{n-i}} = \chi_{(n_1),(\alp_1 |\det|^{-1/2})} \times  \RamiA{\cdots}  \times\chi_{(n_{k-i}),(\alp_{k-i}|\det|^{-1/2})} \times \chi_{(n_{k-i+1}-1),(\alp_{k-i+1})} \times  \RamiA{\cdots}  \times \chi_{(n_k-1),(\alp_k)}$$
\end{lem}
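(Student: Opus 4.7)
The plan is to proceed by induction on $i$. The key ingredients are the Key Lemma (Lemma~\ref{lem:GeoGood}) for computing $\Phi$ geometrically, the orbit geometry of Lemma~\ref{lem:ProdGeo}, the orbit-wise stratification from Corollary~\ref{cor:Sc_seq}, the vanishing Lemma~\ref{lem:ProdDer0}, and the preservation of good extensions under $E^{k}$ (Corollary~\ref{cor:ExactPf}). For the base case $i=1$ the functor $E^{1}$ is the identity, so I would identify $\pi|_{G_{n-1}}$ directly. A straightforward computation yields $Q^{1}_{\lambda}=P_{n}\cap P_{\lambda}=P_{\lambda^{-}_{k}}(G_{n-1})\ltimes V_{n}$, so quotienting by $V_{n}$ gives $P_{n}/Q^{1}_{\lambda}\cong G_{n-1}/P_{\lambda^{-}_{k}}$ as $G_{n-1}$-spaces. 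This exhibits $\pi|_{G_{n-1}}$ as the Schwartz induction of the multiplicative character $\chi^{1}_{\lambda,\alpha}|_{Q^{1}_{\lambda}}$; matching this to the claimed Bernstein--Zelevinsky product then reduces to comparing the modulus characters $\Delta^{1/2}_{P_{\lambda}}|_{L_{\lambda}}$ and $\Delta^{1/2}_{P_{\lambda^{-}_{k}}}|_{L_{\lambda^{-}_{k}}}$, which produces the $|\det|^{-1/2}$ twists on the first $k-1$ blocks and no twist on the last, shrunken block.

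For the inductive step I would use $E^{i}(\pi)=E^{i-1}(\Phi(\pi))$. The Key Lemma combined with Lemma~\ref{lem:ProdGeo} identifies $\oPhi(\pi)\cong\Sc(X'_{0},\cE|_{X'_{0}})$ (hence $\Phi(\pi)\cong\Sc(X'_{0},\cE')$ with an extra $|\det|^{-1/2}$ absorbed into $\cE'$) as $\fp_{n-1}$-representations, where $X'_{0}\subset V_{n}\backslash(P_{n}/Q^{i}_{\lambda})=G_{n-1}/P^{i}_{\lambda'}$ (with $\lambda'=\lambda^{-}_{k-i+1}$) is exactly the union of $P_{n-1}$-orbits $\bigsqcup_{j=1}^{i-1}\cO^{ij}_{\lambda'}$. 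The orbit stratification (Corollary~\ref{cor:Sc_seq}) gives a filtration of $\Sc(X'_{0},\cE')$ whose graded pieces $\Sc_{X'_{0}}(\cO^{ij}_{\lambda'},\cE')$ are good by Lemma~\ref{lem:GenGeoGood}, so Corollary~\ref{cor:ExactPf} guarantees that $E^{i-1}$ converts this into a good extension of the $E^{i-1}$ of the pieces. For each $j\le i-2$ the basepoint stabilizer in $P_{n-1}$ is $Q^{j}_{\lambda'}$, so Lemma~\ref{lem:ProdDer0} applied in $G_{n-1}$ to the composition $\lambda'$ of $n-1$ gives $\Phi^{j}(\text{piece})=0$, whence $E^{i-1}=\Phi^{i-2-j}\circ\Phi^{j}$ also kills it. Only the open orbit piece $\Sc(\cO^{i,i-1}_{\lambda'},\cE'|_{\cO^{i,i-1}_{\lambda'}})\cong\Sc(P_{n-1}/Q^{i-1}_{\lambda'},\cE'')$ survives, and invoking the inductive hypothesis on it (with $n\mapsto n-1$, $\lambda\mapsto\lambda'$, $i\mapsto i-1$, and a suitable $\alpha'\in\Chi^{k}$) produces the claimed product.

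The hard part will be the character bookkeeping. I must verify that $\cE'|_{\cO^{i,i-1}_{\lambda'}}$ corresponds to $\chi^{i-1}_{\lambda',\alpha'}|_{Q^{i-1}_{\lambda'}}$ for the correct $\alpha'$; comparing the target formulae of the two inductive stages forces $\alpha'_{l}=\alpha_{l}$ for $l\ne k-i+1$ and $\alpha'_{k-i+1}=\alpha_{k-i+1}|\det|^{1/2}$. Producing this exact $|\det|^{1/2}$ shift on the $(k-i+1)$-th slot is the delicate point: it must emerge from the combination of the $|\det|^{-1/2}$ twist built into $\Phi$, the modulus characters of $P_{\lambda}$, $P^{i}_{\lambda'}$, and $P^{i-1}_{\lambda'}$, and the conjugation by $w^{i}_{\lambda}$ in the definition of $\chi^{i}_{\lambda,\alpha}$. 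Once this is pinned down, the telescoping of all these factors through the induction yields precisely the $|\det|^{-1/2}$ twist on the first $k-i$ factors $\chi_{(n_{l}),\alpha_{l}|\det|^{-1/2}}$ and no twist on the last $i$ shrunken factors $\chi_{(n_{l}-1),\alpha_{l}}$ claimed in the lemma.
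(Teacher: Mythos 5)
Your proposal follows the paper's proof essentially step by step: the same base case via $P_n/Q^1_\lambda \cong G_{n-1}/P_{\lambda^-_k}$, the same inductive step via the key lemma (Lemma \ref{lem:GeoGood}) together with Lemma \ref{lem:ProdGeo}, the same use of Lemma \ref{lem:ProdDer0} to kill the non-open orbit pieces so that only $\cO^{i,i-1}_{\lambda'}$ survives, and the same induction with the shifted parameter $\alpha'$. The one thing your sketch back-solves rather than derives is the precise $|\det|^{1/2}$ shift on slot $k-i+1$, which the paper obtains geometrically from the density-bundle identity $\cE'|_{x^{i,i-1}_{\lambda'}} = \cE|_{x^{i,i-1}_{\lambda'}} \otimes D^*_X|_{x^{i,i-1}_{\lambda'}} \otimes D_Z|_{x^{i,i-1}_{\lambda'}}$ together with the restriction lemma $\chi_{\lambda,\alpha}|_{P_{\lambda^-_i}} = \chi_{\lambda^-_i,\,\alpha\cdot\beta^i_k}$.
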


\subsection{Proof of Theorem \ref{thm:Prod}}\label{subsec:PfProd}

\begin{lem}\label{lem:ExBun}
Let $\alp=(\alp_1 , \dots ,  \alp_k)\in \Chi^k$.
There exists a $G_n$-multiplicative line bundle $\cE$ over $G/P_{\lam}$ such that $\chi_{(n_1),(\alp_1)} \times \RamiA{\cdots}  \times \chi_{(n_k),(\alp_k)} = \Sc(G/P_{\lam},\cE)$ and the action of $P_{\lam}$ on the fiber $\cE_{x_{\lam}^1}$ is given by the character $\chi_{\lam,\alp}$.
\end{lem}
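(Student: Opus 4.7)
The plan is to define $\cE$ as the induced Nash line bundle associated to the character $\chi_{\lam,\alp}$ of $P_\lam$ and then verify the three required assertions by unraveling definitions.

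First I would set $\cE := G_n \times_{P_\lam} \chi_{\lam,\alp}$. The assertion about the fiber at $x_\lam^1$ is then immediate from Notation \ref{not:Orbits1}: one has $w_\lam^1 = c^{n - m_\lam^k} = c^0 = 1$, so $x_\lam^1$ is the class of the identity, its stabilizer in $G_n$ is $P_\lam$, and by construction $P_\lam$ acts on $\cE_{x_\lam^1}$ through $\chi_{\lam,\alp}$.

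Next I would establish that $\cE$ is $G_n$-multiplicative by applying Lemma \ref{lem:PrinGood} with $H = P_\lam$ and $K$ a maximal compact subgroup of $G_n$. The Iwasawa decomposition provides $KP_\lam = G_n$. The character $\chi_{\lam,\alp}$ qualifies as multiplicative in the sense of Definition \ref{def:MultBun} because it is assembled out of the block-determinant homomorphisms $P_\lam \to L_\lam \to F^\times$ composed with the characters $\alp_i$, twisted by the algebraic character $\Delta_{P_\lam}^{1/2}$. For the algebraicity hypothesis: the unipotent radical of $P_\lam$ meets $K$ trivially, so $K \cap P_\lam \subseteq L_\lam$, and the block determinants send $K \cap L_\lam$ into the maximal compact subgroup of $F^\times$ (the finite group $\{\pm 1\}$ when $F = \R$, the circle $S^1$ when $F = \C$); every continuous character of that compact subgroup is algebraic, so $\chi_{\lam,\alp}|_{K \cap P_\lam}$ is algebraic.

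Finally, I would identify $\Sc(G_n/P_\lam, \cE)$ with the BZ product by matching transformation laws. Since the partial flag variety $G_n/P_\lam$ is compact, Schwartz sections of $\cE$ coincide with smooth sections, i.e., with smooth functions $f: G_n \to \C$ satisfying $f(pg) = \chi_{\lam,\alp}(p) f(g)$. By the definition $\chi_{\lam,\alp} = \Delta_{P_\lam}^{1/2} \cdot \overline{\chi}_{\lam,\alp}$, with $\overline{\chi}_{\lam,\alp}$ extended trivially from $L_\lam$ where it equals $\alp_1(\det) \otimes \cdots \otimes \alp_k(\det)$, this is exactly the transformation law defining $I_{P_\lam}^{G_n}\bigl(\chi_{(n_1),(\alp_1)} \otimes \cdots \otimes \chi_{(n_k),(\alp_k)}\bigr) = \chi_{(n_1),(\alp_1)} \times \cdots \times \chi_{(n_k),(\alp_k)}$. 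I do not anticipate a serious obstacle: the only mild technical point is the verification of the multiplicativity and algebraicity hypotheses of Lemma \ref{lem:PrinGood} for a product of possibly distinct characters $\alp_i$, and everything else is routine bookkeeping of modulus normalizations that is essentially forced by the built-in $\Delta_{P_\lam}^{1/2}$ twist in $\chi_{\lam,\alp}$.
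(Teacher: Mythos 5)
Your proposal is correct and takes the same route as the paper, whose entire proof is the single sentence that the lemma ``follows from Lemma~\ref{lem:PrinGood}''; you have supplied exactly the details that are meant to be filled in, namely taking $K$ to be a maximal compact subgroup, invoking the Iwasawa decomposition $KP_\lam=G_n$, and checking that $\chi_{\lam,\alp}|_{K\cap P_\lam}$ is algebraic because the block determinants carry $K\cap L_\lam$ into $\{\pm 1\}^k$ (resp.\ $(S^1)^k$), while $\Delta_{P_\lam}^{1/2}$ is trivial there.

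One caveat worth making explicit, which you already flag as the ``mild technical point'': writing $\chi_{\lam,\alp}=\bigl(\prod_i\alp_i\circ\det_i\bigr)\cdot\Delta_{P_\lam}^{1/2}|_{P_\lam}$ exhibits it as a \emph{product} of multiplicative characters, but Definition~\ref{def:MultBun} as literally stated asks for a single $\mu\circ\overline\chi$ with $\overline\chi:P_\lam\to F^\times$, and a product of $\mu_i\circ\det_i$ with distinct $\mu_i$ is not a priori of that form. This imprecision is already present in the paper's own appeal to Lemma~\ref{lem:PrinGood} here and in Corollary~\ref{cor:PrinGood} (a character of the full torus), so the intended reading must be that multiplicative characters/representations are closed under tensor products, with Lemma~\ref{lem:PrinGood} and its proof applying verbatim under that reading. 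Granting that, your verification of the hypotheses and the identification of $\Sc(G_n/P_\lam,\cE)$ with $I_{P_\lam}^{G_n}(\overline{\chi}_{\lam,\alp})$ via compactness of the flag variety are both correct, and your computation $w_\lam^1=c^{n-m_\lam^k}=c^0=1$ for the fiber statement is exactly right.
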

This lemma follows from Lemma \ref{lem:PrinGood}.

\begin{proof}[Proof of Theorem \ref{thm:Prod}]
Let $X:=G_n/P_{\lam}$ and let $\cE$ be the $G_n$-multiplicative equivariant bundle given by Lemma \ref{lem:ExBun}. We have to show that $E^k(\Sc(X,\cE))=\chi_{(n_1-1),(\alp_1)} \times \RamiA{\cdots}  \times \chi_{(n_k-1),(\alp_k)}$. Recall that by Lemma \ref{lem:SysRep},
$G_n / P_{\lambda} = \bigcup_{i=1}^k \cO_{\lambda}^i$.
%\item The stabilizer of $x_{\lambda}^i$ is $Q_{\lambda}^i$.
%\item $\cO_{\lambda}^i$ are all distinct.
%\item $G_n / P_{\lambda} = \bigcup_{i=1}^k \cO_{\lambda}^i$
%\item For any $l \leq k$, $\bigcup_{i=1}^l \cO_{\lambda}^i$ is closed in $G_n / P_{\lambda}$ .
Thus, by Corollary \ref{cor:Sc_seq} there is a finite filtration on $\Sc(X,\cE)$ such that $Gr^i(\Sc(X,\cE)) = \Sc_X(\cO_{\lambda}^i,\cE)$. By Lemma \ref{lem:GenGeoGood}, each $\Sc_X(\cO_{\lambda}^i,\cE)$ is a good representation of $\fp_n$ and thus, by Theorem \ref{thm:ClassCon}, $\Sc_X(\cO_{\lambda}^i,\cE)$ is $E^k$-acyclic. Thus, $E^k(\Sc(X,\cE))$ has a filtration such that $Gr^i(E^k(\Sc(X,\cE))) = E^k(\Sc_X(\cO_{\lambda}^i,\cE)).$
By Lemma \ref{lem:ProdDer0}, $E^k(\Sc_X(\cO_{\lambda}^i,\cE))=0$ for all $i<k$. By Lemma \ref{lem:ProdLastDer}, $$E^{k}(\Sc(\cO_{\lambda}^i,\cE))|_{G_{n-k}} = \chi_{(n_1-1),(\alp_1)} \times  \RamiA{\cdots}  \times \chi_{(n_k-1),(\alp_k)}$$
Thus,
%\begin{multline}
$$
E^k(\Sc(X,\cE))|_{G_{n-k}} = E^k(\Sc(\cO_{\lambda}^k,\cE))|_{G_{n-k}}= \chi_{(n_1-1),(\alp_1)} \times  \RamiA{\cdots}  \times \chi_{(n_k-1),(\alp_k)}
$$
%(\Phi^{k-1}(\Sc(\cO_{\lambda}^k,\cE)))|_{G_{n-k}} =\\ \chi_{(n_1-1),(\alp_1|\det|^{-1/2})} \times  \RamiA{\cdots}  \times \chi_{(n_k-1),(\alp_k|\det|^{-1/2})}
%\end{multline}

\end{proof}

\begin{rem}
Note that this proof gives a description of $E^l(\chi_1 \times  \RamiA{\cdots}  \times \chi_k)$ for an arbitrary $l$. More precisely, we get a filtration on this space and a description of each quotient. However, this is an infinite filtration, indexed by a rather complicated ordered set.

In fact, we have a recipe to compute $\Phi(\Sc(P_{n}/Q_{\lambda}^i,\cE))$ for any multiplicative bundle $\cE$. First, we use Lemma \ref{lem:TranBun} to reduce to the case when $\cE$ is the pullback of a bundle $\cE'$ on the geometric quotient $P_n/Q_{\lambda}^i V_n$. Then the  key lemma (Lemma \RamiA{\ref{lem:GeoGood}}) says $\Phi(\Sc(P_{n}/Q_{\lambda}^i,\cE))=\Sc(X_0',\cE'),$ where $X_0' \subset P_n/Q_{\lambda}^i V_n$. In this case the set $X_0'$ coincides with the set $Z_0$ that is explicitly described in Lemma \ref{lem:ProdGeo}.
This gives us a filtration on $\Sc(X_0',\cE')$ such that the associated graded pieces are of the type $\Sc(P_{n-1}/ Q^j_{\lambda^-_i},\cE_{jk})$ where $1 \leq j <i$ and $k \geq 0$.

Since the orbits of $P_n$ on $X/P_{\lambda}$ are $P_n/Q_{\lambda}^i$, we can use this recipe in order to describe the value of the functor $\Phi$ on products of finite-dimensional representations and then, proceeding inductively, we can describe the functor $E^i$ on such representations.
\end{rem}

\section{Proofs of some technical lemmas}\label{sec:PfSmoothLem}

%\subsection{Finite dimensional dense subspaces (Proof of Lemma \ref{lem:FinDense})} \label{subsec:PfFinDense}
%
%
%%For the proof we will need the following two statements.
%
%\begin{lem}
%Any locally convex Hausdorff topology on a finite dimensional vector space is equivalent to the standard topology.
%\end{lem}
%\begin{proof}
%Since $W$ is finite dimensional, we have a linear isomorphism $\phi:\R^n \to W$, which is continuous. Thus, the topology on $W$ is not stronger than the classical one. Hence it is left to prove that for any $r>0, \,\, \phi(B(0,r))$ contains an open neighborhood of 0, where $B(0,r)$ denotes the open ball with radius $r$ and center at the origin.
%
%Since $W$ is Hausdorff, for any compact $C \subset \R^n, \,\, \phi(C)$ is closed. Consider $\phi(B(0,r) \cup (\R^n \setminus \overline{B(0,2r)}))$. It is an open neighborhood of 0, hence it contains an open convex neighborhood of 0 which in turn must be contained in $\phi(B(0,r))$.
%\end{proof}
%
%
%We immediately obtain
%\begin{cor}\label{cor:FinDense}
%If a locally convex Hausdorff topological  vector space $W$ has a dense finite dimensional subspace then $W$ is finite dimensional.
%\end{cor}

\subsection{Preliminaries on homological algebra} \label{subsec:PrelHomAlg}

%We start with some generalities on abelian categories.
%We will need the following lemmas.
\begin{definition}
Let $\cC$ be an abelian category. A family of objects $\cA
\subset Ob(\cC)$ is called a \textbf{generating} family if for any object $X
\in Ob(\cC)$ there exists an object $Y \in \cA$ and an epimorphism
$Y \twoheadrightarrow X$.
\end{definition}

\begin{definition}[{\cite[III.6.3]{GM}}]
Let $\cC$ and $\cD$ be abelian categories and $\cF: \cC \to \cD$
be a right-exact additive functor. A family of objects $\cA
\subset Ob(\cC)$ is called \textbf{$\cF$-adapted} if it is
generating,
closed under direct sums
and for any acyclic complex  $ \RamiA{\cdots}  \to A_3 \to A_2 \to A_1 \to 0
$ with $A_i \in \cA$, the complex $ \RamiA{\cdots}  \to \cF(A_3) \to \cF(A_2) \to \cF(A_1) \to 0
$ is also acyclic.

%and for any short exact sequence $0 \to A \to B \to C
%\to 0$ with $C \in \cA$, the sequence $0 \to \cF(A) \to \cF(B) \to
%\cF(C) \to 0$ is also exact.

For example,
a generating,
closed under direct sums system consisting of
%if the family of
projective  objects
%is  generating, then it
is $\cF$-adapted for any right-exact %additive
functor $\cF$.
For a Lie algebra, the system of free $\g$-modules (i.e.  direct sums of copies of $U(\g)$) is an example of such system.
\end{definition}
The following results are well-known (see e.g. \cite[III]{GM}).
\begin{theorem}
Let $\cC$ and $\cD$ be abelian categories and $\cF: \cC \to \cD$
be a right-exact additive functor. Suppose that there exists an
$\cF$-adapted family $\cA \subset Ob(\cC)$. Then $\cF$ has derived
functors.
\end{theorem}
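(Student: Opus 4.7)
The plan is to construct the left derived functors $L_i\cF$ by resolving each object through objects in $\cA$ and applying $\cF$ termwise. First I would verify that every $X \in \cC$ admits an $\cA$-resolution: since $\cA$ is generating, choose an epimorphism $A_0 \twoheadrightarrow X$ with $A_0 \in \cA$; letting $K_0$ be the kernel, again apply the generating property to produce $A_1 \twoheadrightarrow K_0$, and iterate to obtain an acyclic complex $\cdots \to A_2 \to A_1 \to A_0 \to X \to 0$ with each $A_i \in \cA$. Define $L_i\cF(X) := H_i(\cF(A_\bullet))$, where $A_\bullet$ denotes the truncated complex $\cdots \to A_1 \to A_0 \to 0$.

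Next I would establish independence of the chosen resolution and functoriality. Here I invoke the classical comparison lemma: any morphism $f: X \to Y$ lifts to a chain map $A_\bullet \to B_\bullet$ between arbitrary resolutions, unique up to chain homotopy, provided the target resolution is by sufficiently nice objects — in our setting an $\cA$-resolution of $X$ plus any resolution of $Y$ suffices, using that each $A_i$ admits liftings through epimorphisms by projectivity-in-spirit on the level of exact sequences. Applied with $Y = X$ and $f = \mathrm{id}$ to two different $\cA$-resolutions $A_\bullet, A'_\bullet$, this produces mutually inverse maps up to homotopy. Now the crucial step is that the mapping cone of such a chain map is an acyclic complex of objects of $\cA$ (since $\cA$ is closed under direct sums), so by the $\cF$-adapted hypothesis, $\cF$ applied to the cone is again acyclic; equivalently, $\cF$ sends quasi-isomorphisms between $\cA$-resolutions to quasi-isomorphisms. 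This gives canonical isomorphisms $H_i(\cF(A_\bullet)) \cong H_i(\cF(A'_\bullet))$ and makes $L_i\cF$ into a functor.

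Finally I would produce the long exact sequence associated to a short exact sequence $0 \to X' \to X \to X'' \to 0$ using the horseshoe construction: choose $\cA$-resolutions $A'_\bullet \to X'$ and $A''_\bullet \to X''$, and lift them to a termwise split short exact sequence of complexes $0 \to A'_\bullet \to A_\bullet \to A''_\bullet \to 0$ with $A_i := A'_i \oplus A''_i \in \cA$ (using closure under direct sums and the generating property to fill in the differentials inductively). Since the sequence is split in each degree, applying $\cF$ preserves exactness, and the snake lemma on $0 \to \cF(A'_\bullet) \to \cF(A_\bullet) \to \cF(A''_\bullet) \to 0$ yields the desired long exact sequence of the $L_i\cF$. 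The right-exactness of $\cF$ together with $L_0\cF(X) = \cF(X)$ (which follows from applying $\cF$ to the tail $A_1 \to A_0 \to X \to 0$) completes the verification of the axioms of a derived functor.

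The main obstacle is the comparison lemma step: one must know that $\cF$ applied to a quasi-isomorphism of $\cA$-resolutions remains a quasi-isomorphism, and this is exactly what the hypothesis of $\cF$-adaptedness — phrased via acyclicity of $\cF$ applied to acyclic complexes in $\cA$ — is designed to guarantee through the mapping-cone reformulation.
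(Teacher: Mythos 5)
The paper does not prove this statement itself; it cites Gelfand--Manin [III.6.3], where the result is established via the derived category and localization at quasi-isomorphisms. Your argument would be correct if $\cA$ consisted of projective objects, but for a general $\cF$-adapted class the comparison lemma you invoke fails: objects of $\cA$ are not projective, so a morphism $f\colon X\to Y$ need not lift to a chain map from an $\cA$-resolution of $X$ to a resolution of $Y$, and ``projectivity-in-spirit on the level of exact sequences'' is not a property that adaptedness supplies. Concretely, in $\Z$-modules with $\cF=-\otimes\Z/2$ and $\cA$ the class of flat modules (which is $\otimes$-adapted but contains non-projectives), the flat resolution $0\to\Z[1/3]\xrightarrow{\,2\,}\Z[1/3]\to\Z/2\to 0$ admits no chain map over $\mathrm{id}_{\Z/2}$ to the projective resolution $0\to\Z\xrightarrow{\,2\,}\Z\to\Z/2\to 0$, since the only homomorphism $\Z[1/3]\to\Z$ is zero. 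The same projectivity defect breaks your horseshoe construction, which needs to lift $A''_0\to X''$ through the epimorphism $X\twoheadrightarrow X''$.

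The ingredient you do have right is the mapping-cone observation: $\cF$ sends a quasi-isomorphism between bounded-above $\cA$-complexes to a quasi-isomorphism, because the cone is an acyclic bounded-above $\cA$-complex and adaptedness applies. That is exactly what the hypothesis is designed to give. What is missing is the mechanism for relating two $\cA$-resolutions: one cannot in general produce a direct chain map between them, so one instead connects them by a roof of quasi-isomorphisms of $\cA$-complexes (built from fiber products and the generating property), or equivalently one shows that $K^-(\cA)$ localized at quasi-isomorphisms is equivalent to $D^-(\cC)$, on which $L\cF$ is then well-defined as a triangulated functor. The long exact sequence attached to a short exact sequence in $\cC$ then comes from $L\cF$ preserving distinguished triangles, not from a horseshoe lemma. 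This localization step is the idea your write-up is missing, and it is precisely what the Gelfand--Manin reference supplies.
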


\begin{rem}
Note that if the functor $\cF$ has derived functors, and $\cA$ is a generating class of acyclic objects then $\cA$ is $\cF$-adapted.
\end{rem}

\begin{lemma} \label{HomLeib}
Let $\cA$, $\cB$ and $\cC$ be abelian categories. Let $\cF:\cA \to
\cB$ and $\cG:\cB \to \cC$ be right-exact additive functors.
Suppose that both $\cF$ and $\cG$ have derived functors.

(i) Suppose that $\cF$ is exact. Suppose also that there exists a
class $\cE \subset Ob(\cA)$ which is $\cG \circ \cF$-adapted and
such that $\cF(X)$ is $\cG$-acyclic for any $X \in \cE$. Then the functors $L^i(\cG \circ \cF)$ and $L^i\cG \circ \cF$ are isomorphic.

(ii) Suppose that there exists a class $\cE \subset Ob(\cA)$ which
is $\cG \circ \cF$-adapted and $\cF$-adapted and such that
$\cF(X)$ is $\cG$-acyclic for any $X \in \cE$. Let $Y \in \cA$ be
an $\cF$-acyclic object. Then $L^i(\cG \circ \cF)(Y)$ is (naturally) isomorphic to $L^i\cG( \cF(Y))$.

(iii) Suppose that $\cG$ is exact. Suppose that there exists a
class $\cE \subset Ob(\cA)$ which is $\cG \circ \cF$-adapted and
$\cF$-adapted. Then  the functors $L^i(\cG \circ \cF)$ and $\cG \circ L^i\cF$ are isomorphic.
\end{lemma}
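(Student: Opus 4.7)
\medskip
\noindent\textbf{Proof plan.}
My strategy is the standard one: compute all derived functors by resolving objects through the adapted class $\cE$, and then exploit the extra hypotheses to identify the resulting complexes. The common starting point for all three parts is that since $\cE$ is $\cG\circ\cF$-adapted (in particular generating), for any object $X$ in $\cA$ we may choose a resolution $\cdots \to A_2 \to A_1 \to X \to 0$ with $A_i \in \cE$, and then
\[
L^i(\cG\circ\cF)(X) \;=\; H^i\bigl(\cG(\cF(A_\bullet))\bigr).
\]
The plan is to massage the right-hand side using the remaining hypotheses of each part.

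For part (i), I would use the exactness of $\cF$ to conclude that $\cF(A_\bullet) \to \cF(X) \to 0$ is an acyclic complex, hence a left resolution of $\cF(X)$. Since each $\cF(A_i)$ is $\cG$-acyclic by hypothesis, and $\cG$-acyclic objects may be used to compute $L^i\cG$ (the standard fact that a generating class of acyclic objects is adapted, applied to the class $\cF(\cE)$, once one checks it is generating in the image --- alternatively one uses a direct resolution-comparison argument by means of a Cartan--Eilenberg-type double complex), the complex $\cG(\cF(A_\bullet))$ computes $L^i\cG(\cF(X))$. Comparing with the displayed equation gives the claimed isomorphism, which is functorial in $X$ because resolutions in $\cE$ are unique up to chain homotopy.

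For part (ii), the role of exactness of $\cF$ is replaced by the $\cF$-acyclicity of $Y$: since $\cE$ is $\cF$-adapted, the complex $\cF(A_\bullet) \to \cF(Y) \to 0$ has $i$-th homology $L^i\cF(Y)$, which vanishes for $i>0$ and equals $\cF(Y)$ for $i=0$ (using right-exactness of $\cF$). Thus $\cF(A_\bullet) \to \cF(Y) \to 0$ is again a $\cG$-acyclic resolution of $\cF(Y)$, and the same acyclic-resolution principle as in (i) yields $L^i(\cG\circ\cF)(Y)\cong L^i\cG(\cF(Y))$. Part (iii) is the easiest: the exactness of $\cG$ lets one commute $\cG$ past the homology functor, so that
\[
H^i\bigl(\cG(\cF(A_\bullet))\bigr) \;=\; \cG\bigl(H^i(\cF(A_\bullet))\bigr) \;=\; \cG(L^i\cF(X)),
\]
where the last equality uses that $\cE$ is $\cF$-adapted.

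The one technical point I would be careful about is the ``acyclic resolutions compute derived functors'' principle used in (i) and (ii). I would either justify it once at the start (by the standard double-complex comparison between an $\cF(A_\bullet)$-resolution and a resolution drawn from an $\cF$-adapted subclass of the target category, together with a spectral-sequence or direct diagram-chase argument), or invoke it from \cite{GM}. I expect this to be the main (minor) obstacle; everything else is bookkeeping with resolutions and the functoriality of the comparison isomorphisms.
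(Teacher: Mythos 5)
Your proof is correct and is the standard argument for this well-known lemma; the paper itself does not prove Lemma~\ref{HomLeib} but cites it as a classical fact from \cite[III]{GM}, so there is no internal proof to compare against. The only technical point worth pinning down is your parenthetical in part (i): the class $\cF(\cE)$ need not be generating in $\cB$, so the ``generating class of acyclic objects is adapted'' route does not apply directly; you should instead use the alternative you mention (the direct resolution-comparison: any left resolution by $\cG$-acyclic objects computes $L^\bullet\cG$, proved by a Cartan--Eilenberg double complex or a comparison map with a resolution drawn from an adapted class in $\cB$), which does not require $\cF(\cE)$ to be generating. With that one caveat made precise, all three parts are sound and the argument in (ii) correctly isolates how $\cF$-acyclicity of $Y$ and $\cF$-adaptedness of $\cE$ substitute for the exactness of $\cF$ used in (i).
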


%\begin{definition}
%Let $\g$ be a Lie algebra. For any representation $V$ of $\g$ denote $\o\oH_i(G, V):=L^iCI_{\g}(V)$.
%Recall that $CI_G$ denotes the coinvariants functor.
% \end{definition}

\subsubsection{Lie algebra homology} \label{subsubsec:LieAlgCoh}

Let $\g$ be a Lie algebra of dimension $n$ \ and $\pi\in \cM(\g)$. The homology of $\g$ with coefficients in $\pi$ are defined by $\oH_i(\g,\pi):=L^i\cC(\pi)$, where $\cC:\cM(\g) \to Vect$ is the functor of coinvariants and $L^i\cC$ denotes the derived functor. The homology $\oH_i(\g,\pi)$ is equal to the $i$-th homology of the Koszul complex.
%ose homology can be computed using the Koszul complex

\begin{equation} \label{eq:Koszul}
 0 \ot \pi \ot  \g \otimes \pi \ot  \RamiA{\cdots}  \ot\Lambda^n(\g) \otimes \pi \ot 0 .
\end{equation}

Recalling the definition of derived functor one obtains

\begin{lem}\label{lem:LieHom}
Let $0 \to \pi \to \tau \to \sigma \to 0$ be a short exact sequence of $\fg$-modules. Then there exist boundary maps $\partial_i:H_i(\fg,\sigma)\to H_{i-1}(\fg,\pi)$ such that the sequence
\begin{equation*}
0\to H_n(\fg,\pi) \to H_n(\fg,\tau) \to H_n(\fg,\sigma) \overset{\partial_n}{\to} H_{n-1}(\fg,\pi) \to \cdots\to H_1(\fg,\sigma)\overset{\partial_1}{\to} \pi_{\fg} \to \tau_{\fg} \to \sigma_{\fg} \to 0
\end{equation*}
is exact.
\end{lem}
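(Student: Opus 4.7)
The plan is to deduce this from the Koszul complex description of $\oH_\ast(\g,-)$ recalled in \eqref{eq:Koszul}, together with the standard zig--zag (snake) lemma for short exact sequences of chain complexes. Alternatively, one can invoke the general formalism of derived functors, but the Koszul approach is more concrete and furnishes the boundary maps explicitly.

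First I would note that the functor $\pi \mapsto K_\bullet(\pi) := \Lambda^\bullet(\g) \otimes_{\C} \pi$, with the standard Koszul differential, is an exact functor from $\cM(\g)$ to the category of chain complexes of $\C$-vector spaces. Indeed, each $\Lambda^i(\g)$ is a $\C$-vector space, so $\Lambda^i(\g) \otimes_{\C} -$ is exact; and the Koszul differential is natural in $\pi$. Hence, applying $K_\bullet(-)$ to the given short exact sequence $0 \to \pi \to \tau \to \sigma \to 0$ yields a short exact sequence of chain complexes
\begin{equation*}
0 \to K_\bullet(\pi) \to K_\bullet(\tau) \to K_\bullet(\sigma) \to 0.
\end{equation*}

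Next I would apply the standard homological algebra fact that any short exact sequence of chain complexes of vector spaces produces a long exact sequence on homology, with connecting maps $\partial_i$ obtained by the usual diagram chase (lift, apply differential, pull back). Since by \eqref{eq:Koszul} one has $\oH_i(\g,\pi) = H_i(K_\bullet(\pi))$, and similarly for $\tau$ and $\sigma$, this long exact sequence is exactly the asserted one, ending at $\pi_\g = \oH_0(\g,\pi) \to \tau_\g \to \sigma_\g \to 0$ (right exactness of coinvariants recovered as the tail).

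No part of this argument is really an obstacle; it is entirely formal once the Koszul complex is in place. The only point requiring a line of care is to verify that the connecting maps $\partial_i$ are independent of choices and natural in the short exact sequence, which follows from the standard verification for the snake lemma.
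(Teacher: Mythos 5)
Your proof is correct. The paper obtains the lemma by appealing directly to the general derived-functor formalism (``Recalling the definition of derived functor one obtains...''), i.e.\ the long exact sequence attached to $L^i\cC$ where $\cC$ is the coinvariants functor. You instead use the concrete Koszul-complex model $K_\bullet(\pi)=\Lambda^\bullet(\g)\otimes\pi$, observe that $K_\bullet(-)$ is exact in $\pi$ so a short exact sequence of modules gives a short exact sequence of complexes, and then invoke the snake lemma. Both descriptions of $\oH_\ast(\g,-)$ are already recalled in \S\ref{subsubsec:LieAlgCoh} immediately before the lemma, so the two routes are standard, equivalent, and differ only in emphasis; your version has the small advantage of exhibiting the boundary maps $\partial_i$ explicitly on Koszul chains, which could be useful elsewhere, while the paper's version avoids any computation by citing the abstract theory. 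No gap.
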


The following lemmas are standard and can be easily deduced from Lemma \ref{HomLeib}
\begin{lem}\label{lem:AcycCompChar}
Let $\fh \subset \g_1 \subset \g$ be Lie algebras.
Let $\psi$  be a character of $\fh$. Suppose that  $\g_1$ stabilizes $(\fh,\psi)$.
 Let $\cF:\cM(\g) \to \cM(\g_1)$ be the functor defined by $\cF(\pi)=\pi_{\fh,\psi}$.  Then
 $L^i(\cF)(\pi)=\oH_{i}(\h,\pi\otimes (- \psi))$.
\end{lem}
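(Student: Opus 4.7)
The plan is to apply Lemma \ref{HomLeib} in a straightforward way, reducing the derived functors of $\cF$ to the coequivariants functor on $\cM(\fh)$. I would introduce the (exact) forgetful functor $\cU:\cM(\g_1)\to\mathrm{Vect}$, the (exact) restriction functor $\cR:\cM(\g)\to\cM(\fh)$, and the coequivariants functor $\cC:\cM(\fh)\to\mathrm{Vect}$ given by $\cC(M)=M_{\fh,\psi}$. The hypothesis that $\g_1$ stabilizes $(\fh,\psi)$ is precisely what makes $\cF$ factor through $\cM(\g_1)$, and by inspection $\cU\circ\cF=\cC\circ\cR$ as functors $\cM(\g)\to\mathrm{Vect}$.

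Next I would verify the necessary adaptedness properties using the class $\cE$ of free $U(\g)$-modules in $\cM(\g)$. This class is generating, closed under direct sums, and projective, so it is adapted to every right-exact functor out of $\cM(\g)$; in particular it is $\cF$-adapted and $\cU\cF$-adapted. For $\cC\cR$-adaptedness I additionally need $\cR(U(\g))$ to be $\cC$-acyclic, which follows from PBW: choosing a vector-space complement $\fm\subset\g$ to $\fh$ yields $U(\g)\cong U(\fh)\otimes S(\fm)$ as left $U(\fh)$-modules, so $\cR(U(\g))$ is free, hence projective, hence $\cC$-acyclic.

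Then I would apply Lemma \ref{HomLeib}(iii) to $\cU\cF$ with $\cU$ exact, obtaining $L^i(\cU\cF)=\cU\circ L^i\cF$, and Lemma \ref{HomLeib}(i) to $\cC\cR$ with $\cR$ exact, obtaining $L^i(\cC\cR)=L^i\cC\circ\cR$. By the Koszul complex \eqref{eq:Koszul} we have $L^i\cC(M)=\oH_i(\fh,M\otimes(-\psi))$. Combining with the equality $\cU\cF=\cC\cR$, the underlying vector space of $L^i\cF(\pi)$ is canonically $\oH_i(\fh,\pi\otimes(-\psi))$.

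The only residual step, and the main point requiring attention, is checking that the $\g_1$-structure on $L^i\cF(\pi)$ (arising from applying $\cF$ term-wise to a free $\g$-resolution of $\pi$) matches the natural $\g_1$-structure on $\oH_i(\fh,\pi\otimes(-\psi))$ (arising from the $\g_1$-action on the Koszul complex $\Lambda^\bullet\fh\otimes\pi\otimes(-\psi)$, which is well-defined because $\g_1$ normalizes $\fh$ and fixes $\psi$). This is essentially automatic: all constructions in the preceding paragraphs can be performed within $\cM(\g_1)$ before applying $\cU$, and the isomorphisms produced by Lemma \ref{HomLeib} are natural, so they are $\g_1$-equivariant.
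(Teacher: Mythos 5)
The paper itself offers no written proof of this lemma; it merely remarks that it ``is standard and can be easily deduced from Lemma \ref{HomLeib}.'' Your argument is exactly the intended deduction, carried out carefully, and I find it correct.

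One point worth commenting on: the cleanest way to organize the deduction is, as you do, to pass through the exact forgetful functor $\cU:\cM(\g_1)\to\mathrm{Vect}$. A tempting shortcut would be to decompose $\cF$ directly as $\cF = \cC_1\circ\cR_1$ with $\cR_1:\cM(\g)\to\cM(\g_1)$ restriction and $\cC_1:\cM(\g_1)\to\cM(\g_1)$ the $(\fh,\psi)$-coequivariants functor, and apply Lemma \ref{HomLeib}(i) once; but identifying $L^i\cC_1$ with the Koszul homology then still requires the same forgetful-functor comparison, so nothing is gained. Your choice to first establish the vector-space identification via $\cU\circ\cF=\cC\circ\cR$ and two applications of Lemma \ref{HomLeib}, and then deal separately with the $\g_1$-module structure by naturality, is the right organization.

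The one step you should spell out slightly more if you want a fully airtight write-up is the final $\g_1$-equivariance check. The issue is that the two $\g_1$-structures a priori live on different models: $L^i\cF(\pi)$ is computed as $H_i(\cF(P_\bullet))$ for a free $\g$-resolution $P_\bullet\to\pi$, while the $\g_1$-action on $\oH_i(\fh,\pi\otimes(-\psi))$ comes from the action on the Chevalley--Eilenberg complex $\Lambda^\bullet\fh\otimes\pi\otimes(-\psi)$. To compare, note that each $P_j$ is acyclic for both functors, so $\Lambda^\bullet\fh\otimes P_\bullet\otimes(-\psi)$ is a double complex whose two spectral sequences degenerate, giving a zig-zag of quasi-isomorphisms between $\cF(P_\bullet)$ and $\Lambda^\bullet\fh\otimes\pi\otimes(-\psi)$; every map in the zig-zag is $\g_1$-equivariant because the Chevalley--Eilenberg differential and the maps $P_\bullet\to\pi$ are, and $\g_1$ acts on $\Lambda^\bullet\fh$ via the adjoint action (here the hypothesis that $\g_1$ stabilizes $(\fh,\psi)$ is used). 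Your appeal to naturality is pointing at exactly this, and the conclusion is correct, but this is the place a careful reader would want to see made explicit.
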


\begin{lem}\label{lem:AcycComp}
Let $\g$ be a Lie algebra and $\fri$ be its Lie ideal. Let $\pi$ be a representation of $\g$. Suppose that $\pi$ is $\fri$-acyclic and $\oH_0(\fri,\pi)$ is $\g/\fri$-acyclic. Then $\pi$ is $\g$-acyclic.
\end{lem}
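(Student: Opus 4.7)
The plan is to identify $\oH_*(\g,\pi)$ as the derived functor of a composition and apply Lemma \ref{HomLeib}(ii). Define the coinvariant functors
\[
\cF:\cM(\g)\to\cM(\g/\fri),\quad \cF(\pi):=\pi_\fri,\qquad \cG:\cM(\g/\fri)\to\text{Vect},\quad \cG(\sigma):=\sigma_{\g/\fri}.
\]
Both are right-exact and additive, their composition equals $\pi\mapsto\pi_\g$, and by Lemma \ref{lem:AcycCompChar} (applied with trivial character) the derived functors are
$L^i\cF(\pi)=\oH_i(\fri,\pi)$, $L^i\cG(\sigma)=\oH_i(\g/\fri,\sigma)$ and $L^i(\cG\circ\cF)(\pi)=\oH_i(\g,\pi)$.

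Next, I would take as adapted class $\cE\subset\cM(\g)$ the direct sums of copies of the free $\g$-module $U(\g)$. This class generates $\cM(\g)$, is closed under direct sums and consists of projective objects, so by the remark after the definition of adapted family it is simultaneously $\cF$-adapted and $(\cG\circ\cF)$-adapted. The key point is to check that $\cF$ sends $\cE$ into $\cG$-acyclic objects. Since $\fri$ is a Lie ideal in $\g$, for $x\in\fri$ and $y\in\g$ one has $yx=xy+[y,x]\in U(\g)\fri+\fri U(\g)$, hence $\fri U(\g)=U(\g)\fri$ inside $U(\g)$; combined with the PBW theorem this gives $U(\g)/\fri U(\g)\cong U(\g/\fri)$ as a left $\g/\fri$-module. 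Consequently $\cF\bigl(U(\g)^{\oplus I}\bigr)\cong U(\g/\fri)^{\oplus I}$, which is a free $\g/\fri$-module and thus $\cG$-acyclic.

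Finally, the hypothesis that $\pi$ is $\fri$-acyclic means exactly that $\pi$ is $\cF$-acyclic. All assumptions of Lemma \ref{HomLeib}(ii) are satisfied, so
\[
\oH_i(\g,\pi)\;=\;L^i(\cG\circ\cF)(\pi)\;\cong\;L^i\cG\bigl(\cF(\pi)\bigr)\;=\;\oH_i(\g/\fri,\pi_\fri).
\]
For $i>0$ the right-hand side vanishes by the second hypothesis that $\oH_0(\fri,\pi)=\pi_\fri$ is $\g/\fri$-acyclic, proving that $\pi$ is $\g$-acyclic.

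The only nontrivial step is the verification that $\cF(U(\g))$ is $\cG$-acyclic; this is a standard PBW/ideal argument but is the one place where the hypothesis that $\fri$ is an ideal (and not merely a subalgebra) is actually used. Once this is in place the rest is a formal application of the Grothendieck-type composition lemma recorded as Lemma \ref{HomLeib}(ii).
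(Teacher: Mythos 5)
Your proof is correct and follows exactly the route the paper has in mind: the authors state that Lemma \ref{lem:AcycComp} "can be easily deduced from Lemma \ref{HomLeib}" (and note in passing that it is a special case of the Hochschild--Serre spectral sequence), without writing out the argument, and what you have done is supply that deduction via Lemma \ref{HomLeib}(ii) with the adapted class of free $U(\g)$-modules and the PBW identification $U(\g)/\fri U(\g)\cong U(\g/\fri)$. One very minor quibble: Lemma \ref{lem:AcycCompChar} as stated has a subalgebra $\g_1$ as codomain, not the quotient $\g/\fri$; to invoke it literally you would take $\g_1=\g$, $\fh=\fri$, $\psi=0$ and then observe that $\cF$ lands in the full subcategory $\cM(\g/\fri)\subset\cM(\g)$ because $\fri$ kills $\pi_\fri$, which does not change the computation of $L^i\cF$. (This is also a second place where the hypothesis that $\fri$ is an ideal, not just a subalgebra, enters.)
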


The last lemma is in fact a special case of the Hochschild-Serre spectral sequence.

\subsection{Acyclicity with respect to composition of derivatives (Proof of Lemma \ref{lem:CompD1Acyc})} \label{subsec:PfCompD1Acyc}

By Lemma \ref{HomLeib}(ii), it is enough to show that there exists a class of representations of $\fp_n$ that is $\oPhi$-adapted, $\oE^{k+1}$-adapted, and such that for every object $\tau$ in the class, $\oPhi(\tau)$ is $\oE^{k}$-acyclic.
We claim that the class of free $U(\fp_n)$-modules satisfies this conditions.

For this it is enough to show that $U(\p_n)$ is $\oE^{k+1}$ and $\oPhi$ acyclic and $\oPhi(U(\p_n))$ is  $\oE^{k}$  acyclic. By Lemma \ref{lem:AcycCompChar} it is equivalent to showing that
$$\oH_i(\fu_{n}^{k+1} ,U(\p_n)\otimes (-\psi_{n}^{k+1}))=\oH_i(\fv_{n}, U(\p_n) \otimes (-\psi_n))=\oH_i(\fu_{n-1}^{k}, U(\p_n)_{\fv_n,\psi_n} \otimes (-\psi_{n-1}^{k}))=0,$$
for any $i>0$. This follows from the fact that $U(\p_n)\otimes (-\psi_{n}^{k+1})$ is free $\fu_{n}^{k+1}$ module, $U(\p_n)\otimes (-\psi_{n})$ is free $\fv_n$- module and   $U(\p_n)_{\fv_n,\psi_n} \otimes (- \psi_{n-1}^k)$ is  free $\fu_{n-1}^{k}$-module. This is immediate by the PBW theorem.

\subsection{Preliminaries on topological linear spaces} \label{subsec:PrelTopLin}

We will need some classical facts from the theory of
nuclear \Fre spaces. A good exposition on nuclear \Fre spaces can be found in \cite[Appendix A]{CHM}.

\begin{definition}
We call a complex of topological vector spaces \textbf{admissible}
if all its differentials have closed images.
\end{definition}

\begin{prop}[see e.g. \cite{CHM}, Appendix A] \label{prop:NFS}
$ $
\begin{enumerate}
\item \label{it:FreSES} Let $V$  be a nuclear \Fre space and $W$ be a closed subspace. Then both $W$ and $V/W$ are nuclear \Fre
spaces.

\item \label{it:FreCoh}Let
$$\mathcal{C}: 0 \rightarrow
C_1 \rightarrow  \RamiA{\cdots}  \rightarrow C_n \rightarrow 0$$
be an admissible complex of nuclear \Fre spaces. Then $H^i(\mathcal{C} )$ are   nuclear \Fre spaces.

\item \label{it:FreDual}
Let
$$\mathcal{C}: 0 \rightarrow
C_1 \rightarrow  \RamiA{\cdots}  \rightarrow C_n \rightarrow 0$$
be an admissible complex of nuclear \Fre spaces. Then the complex  $\mathcal{C}^*$ is also admissible and $H_i(\mathcal{C}^* ) \cong
H^i( \mathcal{C}) ^*.$

%\item Let $V$  be a nuclear \Fre space. Then the complex $ \mathcal{C} \ctp V $ is an admissible
%complex of nuclear \Fre spaces and $H^i( \mathcal{C} \hot V)
%\cong H^i( \mathcal{C}) \hot V .$

%\item
%\item $\Sc(\R^n)$ is a nuclear \Fre space.
%
%\item $\Sc(\R^{n+m})= \Sc(\R^n) \ctp \Sc(\R^m)$.
\end{enumerate}
\end{prop}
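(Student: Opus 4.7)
The plan is to handle the three parts in sequence, as each builds on its predecessor, and then to rely on the classical duality theory of nuclear Fréchet spaces for the last (and hardest) part. Part (\ref{it:FreSES}) is entirely classical: the closed subspace $W \subset V$ is itself Fréchet because closed subspaces of complete metrizable spaces are complete and metrizable, and nuclearity passes to closed subspaces by a standard argument (the seminorm system of $V$ restricts to a nuclear seminorm system on $W$); for the quotient, $V/W$ is Fréchet because quotients of Fréchet spaces by closed subspaces are Fréchet, and nuclearity passes to Hausdorff quotients as the quotient seminorms remain nuclear. I would simply cite \cite{CHM} or a standard text (e.g. Trèves) for this.

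For part (\ref{it:FreCoh}), the argument is a direct application of (\ref{it:FreSES}) together with admissibility. Namely, $\ker(d^i)$ is closed in $C_i$ (continuity of $d^i$), so it is nuclear Fréchet by (\ref{it:FreSES}). By admissibility, $\mathrm{im}(d^{i-1})$ is closed in $C_i$, hence also closed inside the subspace $\ker(d^i)$. Applying (\ref{it:FreSES}) a second time, $H^i(\mathcal{C}) = \ker(d^i)/\mathrm{im}(d^{i-1})$ is nuclear Fréchet.

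Part (\ref{it:FreDual}) is the main obstacle, since it requires the duality theory to behave well. I would argue in two steps. First, admissibility of $\mathcal{C}^*$ comes from the closed range theorem in the Fréchet setting: if $f : V \to W$ is a continuous linear map between Fréchet spaces whose image is closed, then $f^* : W^* \to V^*$ (strong dual) has closed image. Applying this to each differential $d^i$ gives admissibility of $\mathcal{C}^*$. Second, for the isomorphism $H_i(\mathcal{C}^*) \cong H^i(\mathcal{C})^*$, I would use that for an admissible short exact sequence $0 \to A \to B \to C \to 0$ of nuclear Fréchet spaces, the dual sequence $0 \to C^* \to B^* \to A^* \to 0$ is again exact (this is the dual form of the closed range theorem combined with Hahn–Banach). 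Breaking $\mathcal{C}$ into short exact sequences $0 \to \ker d^i \to C_i \to \mathrm{im}\,d^i \to 0$ and $0 \to \mathrm{im}\,d^{i-1} \to \ker d^i \to H^i(\mathcal{C}) \to 0$, dualizing, and chasing diagrams yields the desired identification.

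The main technical difficulty is verifying the duality statement, which ultimately rests on the Hahn–Banach and closed range theorems in locally convex spaces. Since \cite[Appendix A]{CHM} carries out exactly this package in the nuclear Fréchet setting, my proof would be essentially a citation of their results together with the short reductions above.
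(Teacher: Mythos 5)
The paper does not prove Proposition \ref{prop:NFS} at all; it is stated as a known fact with a pointer to \cite[Appendix A]{CHM}, so there is no ``paper's proof'' to compare against. Your reconstruction is the standard one: part (\ref{it:FreSES}) is classical permanence of nuclearity and the Fréchet property under closed subspaces and Hausdorff quotients; part (\ref{it:FreCoh}) follows by applying (\ref{it:FreSES}) to the closed subspaces $\operatorname{im} d^{i-1}\subset\ker d^i\subset C_i$; and part (\ref{it:FreDual}) is proved by splitting the complex into admissible short exact sequences and dualizing. The one place where you should be a little more careful is the closed range step in (\ref{it:FreDual}): the transpose of a Fréchet map with closed range automatically has weak*-closed range, and to upgrade this to closedness in the strong dual (which is what ``admissible'' requires) you need the reflexivity/Montel property of nuclear Fréchet spaces. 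Since you are working exclusively with nuclear Fréchet spaces this is available, but it is worth saying explicitly, as the naive closed range theorem for arbitrary Fréchet spaces does not give strong-dual closedness. With that caveat noted, your outline is correct and matches the argument that \cite[Appendix A]{CHM} carries out.
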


\begin{cor} \label{cor:CohomDual}
Let $\g$ be a (finite dimensional) Lie algebra and $\pi$ be its nuclear \Fre representation (i.e. a representation in a  nuclear \Fre space such that the action map $\g \otimes \pi \to \pi$ is continuous). Suppose that $\oH_i(\g,\pi)$ are Hausdorff for all $i \geq 0$. Then $\oH_i(\g,\pi)$ are nuclear \Fre and $H^i(\g,\pi^*) = \oH_i(\g,\pi)^*$.
\end{cor}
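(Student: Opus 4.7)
The plan is to use the Koszul complex (\ref{eq:Koszul}) computing $\oH_i(\g,\pi)$ and apply parts (\ref{it:FreCoh}) and (\ref{it:FreDual}) of Proposition \ref{prop:NFS}. Concretely, let
\[
\kc: \ 0 \leftarrow \pi \leftarrow \g\otimes\pi \leftarrow \cdots \leftarrow \Lam^n(\g)\otimes\pi \leftarrow 0,
\]
where $n=\dim\g$. Since $\pi$ is nuclear \Fre and $\g$ is finite dimensional, each term of $\kc$ is nuclear \Fre, and each differential $d_i$ is continuous (the action map $\g\otimes\pi\to\pi$ is assumed continuous).

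First I would verify that $\kc$ is admissible. By Proposition~\ref{prop:NFS}(\ref{it:FreSES}), $\ker d_i$ is a closed (hence nuclear \Fre) subspace of $\Lam^i(\g)\otimes\pi$. The homology space $\oH_i(\g,\pi)=\ker d_i/\im d_{i+1}$ has the quotient topology from $\ker d_i$; the assumption that it is Hausdorff means precisely that $\im d_{i+1}$ is closed in $\ker d_i$, hence closed in $\Lam^{i+1}(\g)\otimes\pi$. Thus every differential of $\kc$ has closed image, so $\kc$ is admissible. Proposition~\ref{prop:NFS}(\ref{it:FreCoh}) then gives that each $\oH_i(\g,\pi)$ is a nuclear \Fre space.

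Next, by Proposition~\ref{prop:NFS}(\ref{it:FreDual}) the dual complex $\kc^*$ is also admissible and
\[
H^i(\kc^*) \cong \oH_i(\kc)^* = \oH_i(\g,\pi)^*.
\]
It remains to identify $\kc^*$ with the standard Chevalley--Eilenberg cochain complex computing $H^i(\g,\pi^*)$. Since $\g$ is finite dimensional and each $\Lam^i(\g)\otimes\pi$ is nuclear \Fre, we have natural topological isomorphisms $(\Lam^i(\g)\otimes\pi)^* \cong \Lam^i(\g)^*\otimes \pi^*$, and the transposes of the Koszul differentials are, by a direct inspection, the Chevalley--Eilenberg differentials on $\Lam^\bullet(\g)^*\otimes\pi^*$ (with the contragredient $\g$-action on $\pi^*$). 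Thus $H^i(\kc^*)=H^i(\g,\pi^*)$, yielding $H^i(\g,\pi^*)\cong \oH_i(\g,\pi)^*$ as desired.

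The only delicate point is the closedness step (passing from Hausdorffness of the homology to admissibility of $\kc$); but this is immediate from the definition of the quotient topology combined with Proposition~\ref{prop:NFS}(\ref{it:FreSES}), so no genuine analytic obstacle appears and the rest is formal manipulation available in the nuclear \Fre category.
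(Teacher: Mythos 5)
Your argument is correct and is exactly the one the paper leaves implicit: the paper states Corollary \ref{cor:CohomDual} without proof as a direct consequence of Proposition \ref{prop:NFS}, and your chain of reasoning (Hausdorffness $\Rightarrow$ $\operatorname{Im} d_{i+1}$ closed in the closed subspace $\operatorname{Ker} d_i$ $\Rightarrow$ admissibility of the Koszul complex, then parts (\ref{it:FreCoh}) and (\ref{it:FreDual}) of Proposition \ref{prop:NFS}, plus the finite-dimensionality of $\g$ to identify the dual of the Koszul complex with the Chevalley--Eilenberg complex for $\pi^*$) is precisely what is needed. No gap.
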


%\begin{cor} \label{cor:?HomCohom}
%Let $\g$ be a (finite dimensional) Lie algebra and $\pi$ be its nuclear \Fre representation. Suppose that $\pi$ is acyclic and $\oH_0(\g,\pi)$ is Hausdorff. Then
%$\pi^*$ is coacyclic, i.e. $H^i(\g,\pi^*) = 0$ for all $i>0$.
%\end{cor}

%From SmoothTransfer:\\
\begin{prop} [see e.g.  \cite{CHM}, Appendix A] \label{prop:ctp_ext}
\item Let $0 \to V \to W \to U  \to 0$  be an exact sequence of nuclear \Fre spaces. Suppose that the embedding $V \to W$ is closed. Let   $L$  be a nuclear \Fre space. Then the sequence  $0 \to V \ctp L \to W  \ctp L \to U \ctp L \to 0$ is exact and the embedding  $V \ctp L \to W \ctp L$ is closed.
\end{prop}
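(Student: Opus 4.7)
The plan is to reduce the proposition to two classical facts about completed tensor products of Fréchet spaces, both of which are in the range of what the paper freely cites from \cite{CHM} Appendix A.

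First I would handle the surjectivity of $W\ctp L\to U\ctp L$. This step uses only that $L$ is Fréchet and the open mapping theorem. Any element of $U\ctp L$ can be represented as an absolutely convergent series $\sum_i u_i\otimes l_i$ with $u_i\to 0$ in $U$ and $(l_i)$ bounded in $L$ (this is the standard description of the projective tensor product of Fréchet spaces). By the open mapping theorem applied to the surjection $W\to U$, the $u_i$ can be lifted to a sequence $w_i\to 0$ in $W$, and then $\sum_i w_i\otimes l_i$ converges in $W\ctp L$ to a preimage of the given element. The zero composition $V\ctp L\to W\ctp L\to U\ctp L$ is obvious on elementary tensors and extends by continuity.

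Next I would use the nuclearity of $L$ to treat injectivity and the closed-embedding statement for $V\ctp L\to W\ctp L$. For a nuclear Fréchet space $L$, the projective and injective completed tensor products coincide: $V\ctp L=V\,\check\otimes\,L$ and similarly for $W$. For the injective tensor product, a closed subspace inclusion $V\hookrightarrow W$ of Fréchet spaces induces a closed topological embedding $V\,\check\otimes\,L\hookrightarrow W\,\check\otimes\,L$; this can be seen by realizing both spaces as spaces of continuous linear maps from $L'_c$ into $W$ (respectively into $V$), so that $V\,\check\otimes\,L$ becomes the closed subspace consisting of maps whose image factors through $V\subset W$. Combining these two identifications gives the required closed embedding, together with injectivity.

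Finally I would deduce exactness in the middle. Since the embedding $V\ctp L\to W\ctp L$ is closed, the quotient $Q:=(W\ctp L)/(V\ctp L)$ is Hausdorff and Fréchet, and the zero composition produces a continuous surjection $Q\twoheadrightarrow U\ctp L$ (surjectivity coming from step one). To invert it, one notes that the composite bilinear map $W\times L\to Q$ vanishes on $V\times L$, so by the universal property of $\ctp$ it factors through a continuous bilinear map $U\times L\to Q$ and hence through a continuous linear map $U\ctp L\to Q$, which is inverse to the map above. This exhibits $U\ctp L\cong Q$, which is exactness at $W\ctp L$.

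The only nontrivial input is the closed-embedding property for the injective tensor product with a nuclear Fréchet space, and this is precisely the portion for which the paper cites \cite{CHM}; everything else is a formal assembly. The potential obstacle, if one wanted a self-contained argument, would be verifying that the embedding is not just injective but topological, which ultimately rests on Grothendieck's duality between $\ctp$ and $\check\otimes$ under the nuclearity hypothesis.
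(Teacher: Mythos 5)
The paper does not contain a proof of Proposition \ref{prop:ctp_ext}: it is stated with only a pointer to \cite{CHM}, Appendix~A, so there is no internal argument to compare yours against. On its own terms, your outline is the standard one and it is correct. Step one (lifting a null sequence through the open map $W\to U$ to obtain surjectivity of $W\ctp L\to U\ctp L$) is a routine consequence of the open mapping theorem, and step three (identifying the quotient $(W\ctp L)/(V\ctp L)$ with $U\ctp L$ via the universal property of $\ctp$, using that the quotient is Fr\'echet once the embedding is known to be closed) is a formal assembly, as you say. All the substance sits in step two, and you locate it correctly: nuclearity gives $\ctp=\check\otimes$, and for the $\epsilon$-tensor product a closed topological embedding $V\hookrightarrow W$ of Fr\'echet spaces induces a topological embedding $V\otimes_\epsilon L\hookrightarrow W\otimes_\epsilon L$ (equicontinuous sets in $V'$ are, by Hahn--Banach, restrictions of equicontinuous sets in $W'$, so the two families of $\epsilon$-seminorms coincide on $V\otimes L$), whence $V\,\check\otimes\,L$ is identified with the closure of $V\otimes L$ inside $W\,\check\otimes\,L$, which is closed. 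Your alternative description via $L(L'_c,\cdot)$ is a valid way to see the same thing, but it implicitly uses the approximation property (automatic for nuclear Fr\'echet $L$) to know that $V\,\check\otimes\,L$ fills up the whole subspace of maps landing in $V$; if you wanted a fully self-contained argument the direct $\epsilon$-seminorm comparison is a bit cleaner. Since the paper itself treats this very point as citable rather than provable, your acknowledgement at the end that this is ``precisely the portion for which the paper cites \cite{CHM}'' is an accurate reading of the situation, and I see no gap.
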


\subsection{Co-invariants of good extensions (Proof of Lemma \ref{lem:GoodExtExactHaus})} \label{subsec:GoodExtExactHaus}

Let us first prove the following special case.
\begin{lem}\label{lem:BabyGoodExt}
Let $0 \to L\to M \to N \to 0$ be a short exact sequence of nuclear \Fre representations of a Lie algebra $\g$.
Suppose that $L$ and $N$ are acyclic and that $\oH_0(\g,L)$ and $\oH_0(\g,N)$ are Hausdorff.

Then $M$ is acyclic, $\oH_0(\g,M)$ is Hausdorff, and $0 \to \oH_0(\g,L)\to \oH_0(\g,M) \to \oH_0(\g,N) \to 0$ is a short exact sequence.
\end{lem}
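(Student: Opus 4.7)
The plan is to run the long exact sequence of Lie algebra homology in tandem with the open mapping theorem for nuclear Fréchet spaces. First, I would apply Lemma \ref{lem:LieHom} to $0 \to L \to M \to N \to 0$; since $L$ and $N$ are $\g$-acyclic, this immediately gives $\oH_i(\g,M) = 0$ for all $i \geq 1$, and produces an abstract short exact sequence of vector spaces
\begin{equation*}
0 \to \oH_0(\g,L) \to \oH_0(\g,M) \to \oH_0(\g,N) \to 0.
\end{equation*}
Injectivity of the leftmost map translates into the identity $L \cap \g M = \g L$ of subspaces of $M$, which will be crucial below.

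Second, I would observe that $\g M + L$ is closed in $M$. Writing $\pi \colon M \to N$ for the quotient, one checks directly that $\g M + L = \pi^{-1}(\g N)$: surjectivity of $\pi$ lets one lift any presentation $\pi(x) = \sum_i y_i \cdot n_i$ with $y_i \in \g$ to an element of $\g M$ that agrees with $x$ modulo $L$. Since $\oH_0(\g,N)$ is Hausdorff, $\g N$ is closed in $N$, so $\pi^{-1}(\g N)$ is closed in $M$.

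Third, and this is the main obstacle, I would upgrade closedness of $\g M + L$ to closedness of $\g M$ itself. Consider the continuous linear surjection
\begin{equation*}
\psi \colon \g \otimes M \oplus L \to \g M + L, \qquad (\alpha \otimes m,\, l) \mapsto \alpha \cdot m + l,
\end{equation*}
between nuclear Fréchet spaces (the source is Fréchet since $\g$ is finite-dimensional, the target by the previous step); the open mapping theorem forces $\psi$ to be open. The manifestly continuous map $\g \otimes M \oplus L \to \oH_0(\g,L)$ sending $(\alpha \otimes m, l) \mapsto l \bmod \g L$ is constant on the fibres of $\psi$ precisely because of the identity $L \cap \g M = \g L$, and therefore descends to a continuous linear map $\bar\phi \colon \g M + L \to \oH_0(\g,L)$ whose kernel is exactly $\g M$. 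Thus $\g M$ is closed, so $\oH_0(\g,M) = M/\g M$ is Hausdorff (and nuclear Fréchet by Proposition \ref{prop:NFS}(\ref{it:FreSES})); a final application of the open mapping theorem promotes the abstract sequence of step one into a topologically exact sequence of nuclear Fréchet spaces, completing the proof. The acyclicity of $N$ enters essentially, since it is exactly the vanishing of $\oH_1(\g,N)$ that gives the identity $L \cap \g M = \g L$ used to make $\bar\phi$ well-defined.
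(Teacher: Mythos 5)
Your proof is correct and takes a genuinely different route from the paper. The paper dualizes the short exact sequence to $0 \to N^* \to M^* \to L^* \to 0$, invokes Corollary~\ref{cor:CohomDual} to get $\oH^1(\g,N^*)=0$ (so that $(M^*)^\g \to (L^*)^\g$ is onto), and then characterizes $\overline{\g M}$ as the common kernel of all $\g$-invariant functionals on $M$ --- i.e.\ it uses Hahn--Banach together with nuclear Fr\'echet duality to detect closedness from the dual side. Your argument stays entirely on the primal side: you first realize $\g M + L = \pi^{-1}(\g N)$ as a closed Fr\'echet subspace of $M$, then exhibit $\g M$ as the kernel of a continuous map $\bar\phi\colon \g M + L \to \oH_0(\g,L)$ into a Hausdorff target, with continuity of $\bar\phi$ obtained by descending along the open surjection $\psi\colon \g\otimes M\oplus L \to \g M + L$. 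The same hypothesis does the essential work in both proofs: you use $\oH_1(\g,N)=0$ to get $L\cap\g M=\g L$ and hence well-definedness of $\bar\phi$, while the paper uses the dual vanishing $\oH^1(\g,N^*)=0$ to get surjectivity onto $(L^*)^\g$. What your route buys is a cleaner, duality-free argument that invokes only the open mapping theorem for Fr\'echet spaces; in particular it appears to go through for arbitrary (not necessarily nuclear) Fr\'echet representations, whereas the paper's dualization step relies on the reflexivity and strictness furnished by nuclearity. You also go slightly further than the paper by noting the resulting sequence of $\oH_0$'s is topologically exact, which is in fact used later in the proof of Lemma~\ref{lem:GoodExtExactHaus}.
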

\begin{proof}
The long exact sequence implies that $M$ is acyclic and  $0 \to \oH_0(\g,L)\to \oH_0(\g,M) \to \oH_0(\g,N) \to 0$ is a short exact sequence.

By Proposition \ref{prop:NFS}(\ref{it:FreDual}) $0 \to N^* \to M^* \to L^*\to 0$ is exact, by Corollary \ref{cor:CohomDual} $L^*$  is $\g$-coacyclic
(i.e. $\oH^i(\g,L^*)=0$ for $i>0$)
and hence $0 \to (N^*)^{\g} \to (M^*)^{\g} \to (L^*)^{\g} \to 0$ is exact.
We have to show that $\g M$ is closed in $M$. For that it is enough to show that for any $x \in M$, if for any $f \in (M^*)^{\g}$ we have $f(x)=0$ then $x \in \g M$. Let $y$ be the image of $x$ in $N$. We know that any $f \in (N^*)^{\g}$ vanishes on $y$, and hence $y \in \g N$. Hence $\exists x' \in \g M$ such that $x-x'\in L$. Since the map $(M^*)^{\g} \to (L^*)^{\g}$ is onto, every element in $(L^*)^{\g}$ vanishes on $x-x'$. Thus $x-x' \in \g L$ and hence $x\in \g M$.
\end{proof}

We will also need the following lemma.

\begin{lem}\label{lem:ExGr}
Let $C_1 \to  \RamiA{\cdots}  \to C_k$ be a complex of linear spaces. Suppose that each $C_j$ is equipped with a descending filtration $F^i(C_j)$ such that $F^0(C_j)=C_j$, $\bigcap F^i(C_j)=0$ and $\lim_{\leftarrow} C_j/F^i(C_j)\cong C_j$, and $d_j(F^i(C_j)) \subset F^i(C_{j+1})$. Suppose that $F^i(C_1) / F^{i+1}(C_1) \to  \RamiA{\cdots}  \to F^i(C_k)/F^{i+1}(C_k)$ is exact. Then $C_1 \to  \RamiA{\cdots}  \to C_k$ is also exact.
\end{lem}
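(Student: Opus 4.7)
The plan is to perform a successive approximation argument, peeling off one filtration degree at a time and assembling the resulting preimage via the assumed completeness $C_j \cong \lim_{\leftarrow} C_j/F^i(C_j)$. Fix an index $1 < j < k$ and an element $x \in \ker(d_j:C_j \to C_{j+1})$; I need to produce $y \in C_{j-1}$ with $d_{j-1}(y) = x$. (The boundary case $j=1$, where ``exactness'' means injectivity of $d_1$, is handled by an entirely parallel but simpler iteration.)

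Set $x_0 := x \in F^0(C_j)$. Its class in the quotient $F^0(C_j)/F^1(C_j)$ lies in the kernel of the induced map to $F^0(C_{j+1})/F^1(C_{j+1})$, so by exactness of the graded complex in degree $0$ there is $\bar{y}_0 \in F^0(C_{j-1})/F^1(C_{j-1})$ mapping to $x_0 \bmod F^1(C_j)$. Lift $\bar y_0$ to some $y_0 \in F^0(C_{j-1})=C_{j-1}$, and put $x_1 := x_0 - d_{j-1}(y_0) \in F^1(C_j)$. Inductively, given $x_i \in F^i(C_j)$ with $d_j(x_i) = 0$, use exactness of the $i$-th graded complex to produce $y_i \in F^i(C_{j-1})$ with $x_{i+1} := x_i - d_{j-1}(y_i) \in F^{i+1}(C_j)$.

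Next I assemble the $y_i$. The partial sums $z_n := y_0 + y_1 + \cdots + y_n$ satisfy $z_n \equiv z_{n-1} \pmod{F^n(C_{j-1})}$, since $y_n \in F^n(C_{j-1})$; hence their images $\bar z_n \in C_{j-1}/F^{n+1}(C_{j-1})$ form a compatible system in the inverse system. By the hypothesis $C_{j-1} \cong \lim_{\leftarrow} C_{j-1}/F^i(C_{j-1})$, this system comes from a (unique) element $y \in C_{j-1}$. By construction, for every $n$ we have $y - z_n \in F^{n+1}(C_{j-1})$, and therefore $d_{j-1}(y) - d_{j-1}(z_n) \in F^{n+1}(C_j)$. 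Since $d_{j-1}(z_n) = x - x_{n+1}$ with $x_{n+1} \in F^{n+1}(C_j)$, we conclude $d_{j-1}(y) - x \in F^{n+1}(C_j)$ for every $n$. The separation assumption $\bigcap_n F^n(C_j)=0$ yields $d_{j-1}(y) = x$, which is the required exactness at $C_j$.

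The only potentially nontrivial point is the passage from the compatible system $\{\bar z_n\}$ to an actual element of $C_{j-1}$; this is exactly where the hypothesis that the canonical map $C_{j-1} \to \lim_{\leftarrow} C_{j-1}/F^i(C_{j-1})$ is an isomorphism is used, and is the whole reason the lemma is stated in this form. Everything else is a routine diagram chase combined with the vanishing of $\bigcap F^i$.
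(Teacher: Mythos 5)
Your proof is correct and follows essentially the same approach as the paper: a degree-by-degree correction using exactness of the associated graded, assembly of the partial solutions via the completeness hypothesis $C_{j-1}\cong\lim_\leftarrow C_{j-1}/F^i(C_{j-1})$, and a final appeal to separatedness $\bigcap F^i(C_j)=0$. The only cosmetic difference is that you build the answer as a convergent sum $\sum y_i$ of elements $y_i\in F^i(C_{j-1})$, whereas the paper directly constructs the compatible system of classes $n_i\in C_{j-1}/F^i(C_{j-1})$; these are equivalent formulations of the same induction.
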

\begin{proof}
Let $m \in \Ker d_j \subset C_j$. Let $m_j:=[m]\in C_j/F^i(C_j)$.
We want to construct $n \in C_{j-1}$ such that $d_{j-1}(n)=m$.
Since $C_{j-1}\cong \lim_{\leftarrow} C_{j-1}/F^i(C_{j-1}) $ for that it is enough to construct a compatible system of representatives $n_i \in C_{j-1}/F^i(C_{j-1}) $. We build this system by induction. Suppose we constructed $n_i$ and let $n'_i$ be an arbitrary lift of $n_i$ to $C_{j-1}/F^{i+1}(C_{j-1})$. Consider $\eps:=m_{i+1}-d(n'_i)$. Then $\eps \in F^i(C_j)/F^{i+1}(C_j)$. Moreover, $d(\eps)=0$. Hence there exists $ \delta  \in F^i(C_{j-1})/F^{i+1}(C_{j-1})$ such that $d(\delta)=\eps$. Define $n_{i+1}:=n'_i + \delta$.
\end{proof}

\begin{proof}[Proof of Lemma \ref{lem:GoodExtExactHaus}]
Lemma \ref{lem:BabyGoodExt} implies by induction that $\pi / F^i(\pi)$ is acyclic and $A_i:=\oH_0(\g,\pi / F^i(\pi))$ is Hausdorff, and hence is a nuclear \Fre space. Lemma \ref{lem:BabyGoodExt} also implies that $A_i \to A_{i+1}$ is onto. Define $A_{\infty}:= \lim _{\ot} A_i$ (as a linear, non-topological, representation).
Consider the Koszul complex of $\pi$ extended by $ A_{\infty}$:
\begin{equation} \label{eq:KoszulA}
 0 \ot A_{\infty} \ot \pi \ot  \g \otimes \pi \ot  \RamiA{\cdots}  \ot\Lambda^n(\g) \otimes \pi \ot  0 .
 \end{equation}
Lemma \ref{lem:ExGr} implies that the sequence (\ref{eq:KoszulA}) is exact. Hence $\pi$ is acyclic and the natural map $\oH_0(\g,\pi) \to A_{\infty}$  is an isomorphism (of vector spaces).

Let $p_i: \pi \to \pi/F^i(\pi)$ denote the natural projection. The exactness of (\ref{eq:KoszulA}) implies
   $$\g \pi = \bigcap_i p_i^{-1}(\g(\pi/F^i(\pi)))$$
%   \{ v \in \pi \,:\, [v]_i \in \g(\pi/F^i(\pi))\}$$ where $[v]_i$ denotes the class of $v$ in $\pi/F^i(\pi)$. Let
which in turn implies that $\g \pi$ is closed and thus $\oH_0(\g,\pi)$ is a nuclear \Fre space.

Consider the short exact sequence $0 \to F^i(\pi) \to \pi \to \pi/(F^i(\pi))\to 0$.
We showed that it consists of acyclic objects and that $\oH_0(\g,F^i(\pi))$ and $\oH_0(\g,\pi/(F^i(\pi)))$ are Hausdorff. By Lemma \ref{lem:BabyGoodExt} this implies that $0 \to \oH_0(\g,F^i(\pi)) \to \oH_0(\g,\pi) \to \oH_0(\g,\pi/(F^i(\pi))) \to 0$ is a short exact sequence of nuclear \Fre spaces and hence the image of $\oH_0(\g,F^i(\pi)) \hookrightarrow \oH_0(\g,\pi)$ is closed.

Note that $F^i(\oH_0(\g,\pi))=\oH_0(\g,F^i(\pi))$. To sum up, $\oH_0(\g,\pi)$ is a nuclear \Fre space, $ F^i(\oH_0(\g,\pi))$ are closed, the natural morphisms $ F^i(\oH_0(\g,\pi))  \to \oH_0(\g,\pi_i)$ are isomorphisms and $\oH_0(\g,\pi) \cong A_{\infty} \cong \lim \limits _{\ot} \oH_0(\g,\pi)/(F^i(\oH_0(\g,\pi)))$.
%This filtration shows that $\pi$ is a good extension of $\pi_i$.
\end{proof}

\subsection{More on Schwartz functions on Nash manifolds} \label{subsec:PrelScNash}
In this subsection we will recall some properties of  Nash manifolds and
Schwartz functions over them.
We work in the notation
of \cite{AGSc}, where one can read about Nash manifolds and
Schwartz distributions over them. More detailed references on Nash
manifolds are \cite{BCR} and \cite{Shi}.

Nash manifolds are equipped with the \textbf{restricted topology},
in which open sets are open semi-algebraic sets. This is not a
topology in the usual sense of the word as infinite unions of open
sets are not necessarily open sets in the restricted topology.
However, finite unions of open sets are open
 and therefore in the restricted topology we consider only
finite covers. In particular, if $\cE$ over $X$ is a Nash vector bundle
it means that there exists a \underline{finite} open cover $U_i$
of $X$ such that $\cE|_{U_i}$ is trivial.

Fix a Nash manifold $X$ and a Nash bundle $\cE$ over $X$.

An important property of Nash manifolds is
\begin{theorem}[Local triviality of Nash manifolds; \cite{Shi}, Theorem I.5.12  ] \label{loctriv}
Any Nash manifold can be covered by a finite number of open
submanifolds Nash diffeomorphic to $\R^n$.
\end{theorem}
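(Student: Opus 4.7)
The plan is to combine an affine Nash embedding with a Morse-theoretic handle decomposition carried out within the semi-algebraic category. First I would realize the Nash manifold $X$ of dimension $n$ as a closed Nash submanifold of some $\R^N$ (a standard reduction for affine Nash manifolds) and pull back a proper polynomial such as $x \mapsto |x|^2$ to obtain a proper Nash function $f\colon X \to \R$. By the semi-algebraic Sard theorem the set of critical values of $f$ is semi-algebraic and nowhere dense, so one may perturb $f$ inside a finite-dimensional space of polynomials of bounded degree to make it Morse; properness together with the finiteness properties of semi-algebraic sets then forces the critical set to be finite.

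Second, I would build a finite Nash handle decomposition of $X$ by analyzing how the sublevel sets $\{f \le c\}$ change as $c$ passes a critical value. In the Nash category one still has Morse charts at critical points and Nash gradient-like vector fields away from them, which one can use to propagate a small neighborhood of each critical point until it meets the next critical level. This produces finitely many open Nash submanifolds whose union is $X$, each modeled on a thickened handle of some index $k$. A thickened handle is Nash-diffeomorphic to $D^k \times D^{n-k}$, which in turn is Nash-diffeomorphic to $\R^k \times \R^{n-k} = \R^n$ via, say, the Nash map $x \mapsto x/\sqrt{1-|x|^2}$; this gives the required finite cover.

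The main obstacle is that the integral flow of a Nash vector field is typically only real-analytic rather than Nash, so the propagation step cannot be performed naively by flowing. One must either reparametrize the gradient trajectories to stay in the Nash class (working with the graph of the flow in a carefully chosen semi-algebraic tube), or replace Morse theory entirely by a Nash triangulation argument — in which case one invokes Shiota's theorem that $X$ admits a finite semi-algebraic triangulation realizable by a Nash diffeomorphism, and then the open stars of vertices form the desired cover since the open star of a vertex in a triangulated $n$-manifold is Nash-diffeomorphic to $\R^n$. This latter reduction is essentially the content of Theorem I.5.12 of \cite{Shi}; the outline above only explains why such a finite cover should exist.
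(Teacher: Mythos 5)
The paper does not prove this statement; it is quoted verbatim from Shiota's book as Theorem I.5.12 of \cite{Shi} and used as a black box. There is therefore no proof in the paper to compare your attempt against, and I can only evaluate your sketch on its own terms.

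Your Morse-theoretic route you correctly abandon yourself: gradient trajectories of a Nash vector field are generically transcendental, and the reparametrization trick you allude to is far from routine in the semi-algebraic category. The fallback you propose --- take a finite semi-algebraic triangulation of $X$ and use open stars of vertices --- has a genuine gap. The open star of a vertex in a semi-algebraic triangulation of an $n$-dimensional Nash manifold is only \emph{semi-algebraically homeomorphic} to an open $n$-ball, and a semi-algebraic homeomorphism need not be smooth, let alone a Nash diffeomorphism. Even if the triangulation is chosen compatibly with the smooth structure on the open cells, the cone structure at a vertex is not smooth there, so the open star carries a conical singularity relative to the linear model and cannot be identified with $\R^n$ by any diffeomorphism without further work. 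Upgrading ``finite semi-algebraic triangulation'' to ``finite Nash atlas with charts onto $\R^n$'' is precisely the nontrivial content that Theorem I.5.12 packages, so this branch of your argument is circular --- as you yourself concede in the closing sentence. To make the proposal into a proof one would need an independent argument producing Nash charts: for instance, embed $X$ as a closed Nash submanifold of $\R^N$, cover $X$ by the finitely many open semi-algebraic sets on which one of the coordinate $n$-plane projections restricts to a Nash immersion, shrink these to sets on which the projection is injective, and then exhibit each such chart domain as a finite union of open semi-algebraic subsets Nash diffeomorphic to $\R^n$ (e.g.\ via a semi-algebraic shrinking/refinement lemma). None of these steps appears in your sketch, and the last one in particular still requires care to keep the cover finite.
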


Together with \cite[Corollary 3.6.3]{AGSc} this implies
\begin{theorem} \label{thm:NashSubMan}
Let $Y \subset X$ be a closed Nash submanifold. Then there exist open sunsets $\{U_i\}_{i=1}^n \subset X$ and Nash diffeomorphisms $\phi_i:\R^n \IsoTo U_i$ such that $Y \subset \cup_{i=1}^n U_i$ and $\phi_i^{-1}(U_i \cap Y)$ is a linear subspace of $\R^n$.
\end{theorem}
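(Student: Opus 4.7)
The plan is to combine a pointwise straightening of the submanifold $Y$ with the Nash finiteness property. The statement is essentially a packaging of \cite[Corollary 3.6.3]{AGSc} together with the finite cover property that already underlies Theorem \ref{loctriv}, so the proof should be short.

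First, at each point $y \in Y$ I would invoke \cite[Corollary 3.6.3]{AGSc} (local straightening of Nash submanifolds) to obtain an open semi-algebraic neighborhood $W_y \subset X$ of $y$ and a Nash diffeomorphism $\phi_y : \R^n \IsoTo W_y$ such that $\phi_y^{-1}(W_y \cap Y)$ is a linear subspace of $\R^n$. This is the Nash analog of the implicit function theorem for a submanifold sitting inside its ambient manifold, and it produces the desired local model at every point of $Y$ simultaneously.

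Second, I would extract a finite subcover. This is where the restricted topology enters: since $Y$ is a closed Nash submanifold (hence semi-algebraic) and each $W_y$ is a semi-algebraic open subset of $X$, the family $\{W_y\}_{y \in Y}$ is an open cover of $Y$ in the restricted topology. By the Nash finite cover property built into the restricted topology (this is exactly the feature that makes Theorem \ref{loctriv} produce a \emph{finite} cover; see \cite[Ch.~I]{Shi}), one can select finitely many points $y_1, \dots, y_n \in Y$ such that $Y \subset \bigcup_{i=1}^n W_{y_i}$. Setting $U_i := W_{y_i}$ and $\phi_i := \phi_{y_i}$ then yields the required finite family.

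The only nontrivial ingredient is the finiteness step, since the local straightening is an off-the-shelf semi-algebraic input. The main (minor) point to check is that the finiteness property applies to an open cover of a closed Nash submanifold and not only to an open cover of the ambient Nash manifold itself, which is the situation directly handled by Theorem \ref{loctriv}. This reduction is standard: a closed Nash submanifold is in particular a semi-algebraic subset, and every semi-algebraic open cover of a semi-algebraic set admits a finite subcover by o-minimality.
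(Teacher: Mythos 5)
Your proposal diverges from the paper's intended argument in a way that introduces a genuine gap. The paper derives Theorem \ref{thm:NashSubMan} from Theorem \ref{loctriv} together with \cite[Corollary 3.6.3]{AGSc}: the role of Theorem \ref{loctriv} is to reduce to the situation of a closed Nash submanifold sitting inside an $\R^n$-chart, and the finiteness of the resulting cover is built directly into those two references. You instead perform pointwise local straightening at every $y\in Y$ and then attempt to pass to a finite subcover.

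The subcover step is where the argument breaks. The assertion that ``every semi-algebraic open cover of a semi-algebraic set admits a finite subcover by o-minimality'' is false: for instance, $\R=\bigcup_{n\geq 1}(-n,n)$ is a cover of the semi-algebraic set $\R$ by semi-algebraic open sets with no finite subcover. Semi-algebraic sets are not quasi-compact in this covering sense, and the restricted topology does not provide a mechanism for extracting finite subcovers from arbitrary families of semi-algebraic open sets --- it is a constraint on which covers one considers in the first place, not a compactness principle. The finiteness in Theorem \ref{loctriv} comes from cell decomposition and semi-algebraic triviality theorems, which \emph{produce} a finite cover outright rather than refining an infinite one. To salvage your approach you would need to know that the straightening neighborhoods $W_y$ can be chosen to form a \emph{definable} (semi-algebraic) family in $y$, so that a semi-algebraic selection and partition argument applies; the pointwise invocation of \cite[Corollary 3.6.3]{AGSc} does not by itself yield such uniformity. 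The intended route is shorter and avoids the issue: apply Theorem \ref{loctriv} first to obtain finitely many $\R^n$-charts, and then apply the straightening corollary inside each chart, where the required finiteness is already supplied by the reference.
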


\begin{theorem}[\cite{AGRhamShap}, Theorem 2.4.16] \label{SurSubSec}
Let  $s:X \rightarrow Y$ be a
surjective submersive Nash map. Then locally it has a Nash
section, i.e. there exists a finite open cover $Y= \bigcup \limits
_{i=1}^k U_i$ such that $s$ has a Nash section on each $U_i$.
\end{theorem}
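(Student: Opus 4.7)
The plan is to combine the Nash implicit function theorem (to produce pointwise local sections) with a semi-algebraic finiteness principle (to assemble them into a finite open cover).

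\textbf{Local sections.} Fix $y \in Y$; by surjectivity choose $x \in s^{-1}(y)$. Since $s$ is submersive at $x$, the Nash inverse / implicit function theorem (see e.g.\ \cite{BCR} or \cite{Shi}) provides semi-algebraic open neighborhoods $V \ni x$ in $X$ and $U \ni y$ in $Y$ together with a Nash diffeomorphism $V \cong U \times W$, where $W$ is a semi-algebraic open subset of $\R^{\dim X - \dim Y}$, under which $s|_V$ becomes the projection to the first factor. Picking any Nash point $w_0 \in W$ (for instance the point corresponding to $x$) then yields a Nash section $\sigma_y : U_y := U \to X$ of $s$ defined on an open neighborhood of $y$.

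\textbf{Finite subcover.} The family $\{U_y\}_{y \in Y}$ is a semi-algebraic open cover of $Y$, and one must extract a finite subcover. I would use semi-algebraic cell decomposition (cf.\ \cite{BCR,Shi}), or equivalently Hardt's semi-algebraic triviality theorem applied directly to $s$, to stratify $Y$ into finitely many Nash cells $Y_1,\ldots,Y_k$ over which $s$ is Nash-trivial; the local sections on each $Y_j$ can then be promoted to Nash sections on suitable open semi-algebraic neighborhoods (for example, using Theorem \ref{loctriv} to cover $Y$ by finitely many charts diffeomorphic to $\R^n$ and combining the sections produced on each chart). This produces the required finite open cover of $Y$ by sets on which $s$ admits a Nash section.

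The main obstacle is precisely this finiteness step. Pointwise existence of sections is essentially automatic from the implicit function theorem, but Nash manifolds are equipped with the restricted topology, in which only finite unions of open semi-algebraic sets are admissible, so one cannot simply take the collection of all $U_y$ at once. The passage from a pointwise family to a finite cover by genuinely open semi-algebraic subsets, each carrying a honest Nash section, is where the nontrivial semi-algebraic machinery (cell decomposition or Hardt triviality) is needed, and where the bulk of the actual work lies.
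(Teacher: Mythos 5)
The theorem you are proving is in fact only \emph{cited} in this paper from \cite{AGRhamShap} (Theorem 2.4.16 there); the present paper contains no proof of it, so there is nothing internal to compare against. Your overall two-step strategy — produce pointwise Nash sections by the Nash implicit function theorem, then convert the resulting infinite family into a finite open Nash cover by semi-algebraic tameness — is a reasonable guess at how one would prove such a statement, and you are right that the entire difficulty is in the second step.

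However, as written, that second step has a genuine gap. Hardt's semi-algebraic triviality theorem produces a finite semi-algebraic partition $Y = Y_1 \cup \cdots \cup Y_k$ and, over each piece, a \emph{semi-algebraic homeomorphism} $s^{-1}(Y_j) \cong Y_j \times F_j$, i.e.\ $C^0$-triviality only; it does \emph{not} produce Nash (or even $C^1$) trivializations, and cell decomposition is not ``equivalent'' to it — one is a statement about a set, the other about a map. So the assertion that $s$ is ``Nash-trivial'' over the cells $Y_j$ is not justified; a genuine Nash triviality theorem (Coste--Shiota) exists but requires properness, which is not assumed here. Moreover, the $Y_j$ produced by Hardt or by cell decomposition are a \emph{partition}, not an open cover: lower-dimensional cells have empty interior, and the sections you obtain over them live on sets that are not semi-algebraic-open. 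You wave at ``promoting'' these sections to Nash sections on open neighborhoods via Theorem \ref{loctriv}, but covering $Y$ by finitely many copies of $\R^n$ does not, by itself, turn a $C^0$ section defined on a measure-zero cell into a Nash section on an open set; one would need an explicit smoothing/approximation argument (e.g.\ a Nash approximation of the continuous section followed by a fiberwise nearest-point projection using a Nash tubular neighborhood of the graph), and that is precisely the nontrivial content of the theorem. In short: the first step is fine, the finiteness step is correctly identified as the crux, but both the ``Nash-trivial over cells'' claim and the ``promotion to open neighborhoods'' step are unproved and as stated are false or at best unjustified.
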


\begin{corollary} \label{EtLocIsNash}
An \et map $\phi:X \to Y$ of Nash manifolds is locally an
isomorphism. That means that there exists a finite cover $X =
\bigcup U_i$ such that $\phi|_{U_i}$ is an isomorphism onto its
open image.
\end{corollary}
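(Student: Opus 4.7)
The strategy is to combine the Nash inverse function theorem (which follows from the analytic inverse function theorem applied in the Nash category, together with the fact that the inverse of a Nash diffeomorphism is again Nash) with Theorem~\ref{SurSubSec} on local Nash sections of surjective submersions.

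Since $\phi$ is \'etale, its differential $d\phi_x$ is a linear isomorphism at every $x \in X$. By the Nash inverse function theorem, for each such $x$ there exists an open semi-algebraic neighborhood $U_x \subset X$ of $x$ such that $\phi|_{U_x}$ is a Nash diffeomorphism onto the open semi-algebraic set $\phi(U_x) \subset Y$. In particular $\phi$ is an open map, so $\phi(X)$ is open in $Y$, and $\phi : X \to \phi(X)$ is a surjective submersive Nash map.

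Applying Theorem~\ref{SurSubSec} to $\phi : X \to \phi(X)$ would produce a finite open cover $\phi(X) = V_1 \cup \cdots \cup V_k$ together with Nash sections $s_i : V_i \to X$. Differentiating $\phi \circ s_i = \operatorname{id}_{V_i}$ and using that $d\phi$ is invertible shows that each $s_i$ is itself \'etale; being a section of $\phi$ it is automatically injective, and hence an open Nash embedding. Setting $U_i := s_i(V_i)$, each $U_i$ is open in $X$ and $\phi|_{U_i} : U_i \to V_i$ is a Nash isomorphism onto its open image.

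The main obstacle, and the only remaining step, is to ensure that one can choose enough sections so that the resulting opens cover all of $X$: a priori each $s_i$ captures only one preimage per fiber of $\phi$, so $\bigcup_i U_i$ may be a proper subset of $X$. To handle the remaining preimages I would invoke Theorem~\ref{loctriv} to cover $X$ by finitely many opens diffeomorphic to $\R^n$ and work piecewise, combined with the uniform boundedness of the (finite, discrete) fibers of a Nash \'etale map --- a standard consequence of semi-algebraic/o-minimal uniform finiteness; see \cite{BCR}, \cite{Shi}. This bound allows the section-picking procedure to be iterated finitely many times, producing a finite collection $\{U_{i,j}\}$ of open semi-algebraic subsets covering $X$, each mapped Nash-isomorphically by $\phi$ onto its open image, which is the desired conclusion.
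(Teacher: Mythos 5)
The paper gives no explicit proof of this corollary; it is stated between Theorem~\ref{SurSubSec} and Corollary~\ref{NashEquivSub}, and the latter --- the Nash submersion theorem, which provides a \emph{finite open cover of the source $X$} on which the submersion is trivialized --- yields the statement immediately: for an \'etale map the local models $\R^{d_i}\to\R^{k_i}$ have $d_i=k_i$, so the restrictions are Nash isomorphisms onto their open images. In other words the intended argument works on $X$ from the start.

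Your first three steps are fine, and you correctly identify the real difficulty: applying Theorem~\ref{SurSubSec} to $\phi:X\to\phi(X)$ produces a finite cover of $\phi(X)$ together with sections $s_i$, and the open sets $U_i:=s_i(V_i)$ need not cover $X$. Your step 4, however, is a genuine gap, not a sketch: ``invoke Theorem~\ref{loctriv}, use uniform finiteness of fibers, and iterate the section-picking'' does not describe a procedure that one can carry out. The natural iteration --- remove $\bigcup_i U_i$ and reapply the theorem to what is left --- fails because $X\setminus\bigcup_i U_i$ is \emph{closed}, not an open Nash submanifold, so Theorem~\ref{SurSubSec} does not apply to it; and removing the closures $\overline{U_i}$ instead discards boundary points of $X$ that are never recovered. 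Uniform finiteness of the fibers (which is indeed a standard semi-algebraic fact) by itself gives no mechanism to produce the missing opens, since a semi-algebraic family of open subsets covering a non-compact Nash manifold need not admit a finite subcover. So the finiteness you need cannot come from fiber-counting alone; it has to come from the semi-algebraic local triviality of the submersion over the \emph{source}. That is exactly what Corollary~\ref{NashEquivSub} (equivalently, the Nash rank theorem combined with Theorem~\ref{loctriv}) provides, and replacing your step 4 with an appeal to it --- or with the argument proving it --- is what closes the gap.
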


\begin{corollary}[\cite{AGHC}, Theorem B.2.3] \label{NashEquivSub}
Let $p:X \to Y$ be a Nash submersion of Nash manifolds. Then there
exist a finite  open (semi-algebraic) cover $X = \bigcup U_i$ and
isomorphisms $\phi_i:U_i \cong W_i$ and $\psi_i:p(U_i) \cong \cO_i$
where $W_i\subset \R^{d_i}$ and $\cO_i \subset \R^{k_i}$ are open
(semi-algebraic) subsets, $k_i \leq d_i$ and $p|_{U_i}$ correspond
to the standard projections.
\end{corollary}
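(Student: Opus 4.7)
The plan is to deduce the corollary from the Nash inverse function theorem together with the local triviality of Nash manifolds (Theorem \ref{loctriv}). I first work pointwise: fix $x_0 \in X$ and set $y_0 = p(x_0)$. By Theorem \ref{loctriv}, there exist Nash open neighborhoods $A$ of $x_0$ and $B$ of $y_0$ that are Nash diffeomorphic to open semi-algebraic subsets of $\R^d$ and $\R^k$, respectively (with $d$ and $k$ the local dimensions of $X$ and $Y$). After shrinking $A$ so that $p(A) \subset B$, the map $p|_A$ is transferred to a Nash submersion $\tilde p$ between open semi-algebraic subsets of $\R^d$ and $\R^k$, and the original problem is reduced to producing straightening charts for $\tilde p$.

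Since $d\tilde p$ at the image of $x_0$ is surjective, after a linear change of coordinates in $\R^d$ we may assume that the first $k$ columns of its Jacobian there are linearly independent. Define a Nash map $\Theta : \R^d \to \R^d$ by
\[
\Theta(z_1, \ldots, z_d) = (\tilde p_1(z), \ldots, \tilde p_k(z), z_{k+1}, \ldots, z_d).
\]
Its differential at the image of $x_0$ is invertible, so by the Nash inverse function theorem (see \cite{BCR}) $\Theta$ restricts to a Nash diffeomorphism from a Nash open neighborhood of $x_0$ onto an open semi-algebraic subset of $\R^d$. In these new coordinates $\tilde p$ is, by construction, the standard projection onto the first $k$ coordinates. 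Composing with the charts produced by Theorem \ref{loctriv}, I obtain around every $x_0 \in X$ a Nash open neighborhood $U_{x_0}$ with diffeomorphisms $\phi_{x_0} : U_{x_0} \cong W_{x_0} \subset \R^d$ and $\psi_{x_0} : p(U_{x_0}) \cong \cO_{x_0} \subset \R^k$ (recall that $p$ is open, being a submersion) such that $\psi_{x_0} \circ p \circ \phi_{x_0}^{-1}$ is the standard projection.

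To conclude I would invoke the fact that in the restricted topology on a Nash manifold one is only allowed to form finite open covers: the semi-algebraic open cover $\{U_{x_0}\}_{x_0 \in X}$ admits a finite subcover (cf.\ \cite{Shi}), yielding the required finite cover $X = \bigcup_i U_i$ together with the data $\phi_i, \psi_i, W_i, \cO_i$.

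The main technical input is the Nash inverse function theorem — the assertion that the real-analytic local inverse of a Nash map with invertible Jacobian is itself Nash. This is classical (see \cite{BCR,Shi}), but it is the non-trivial geometric content; once it is granted, the proof is a straightforward transcription of the standard manifold argument for straightening a submersion, combined with the Nash-specific passage to a finite subcover.
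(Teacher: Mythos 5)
The paper itself gives no proof of this statement: it is quoted verbatim from \cite{AGHC}, Theorem B.2.3, so there is no internal argument to compare against. On its own terms, your local construction (charts via Theorem \ref{loctriv}, choice of $k$ independent columns of the Jacobian, the map $\Theta(z)=(\tilde p_1(z),\dots,\tilde p_k(z),z_{k+1},\dots,z_d)$, and the Nash inverse function theorem) is the correct Nash analogue of the classical submersion normal form theorem, and it does straighten $p$ near each point.

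The genuine gap is the finiteness step. The claim that ``the semi-algebraic open cover $\{U_{x_0}\}_{x_0\in X}$ admits a finite subcover'' is simply false for a general semi-algebraic open cover of a Nash manifold: the cover of $\R$ by the intervals $(-n,n)$, $n\in\N$, has no finite subcover. The convention that the restricted topology on Nash manifolds uses only finite covers is a definition, not a compactness theorem, and nothing in \cite{Shi} asserts such a theorem. Moreover, the neighborhoods $U_{x_0}$ you produce come from an application of the inverse function theorem at each point separately; their size is not controlled, so the family $\{U_{x_0}\}_{x_0\in X}$ has no reason to be a semi-algebraic family, and there is no mechanism to extract finitely many of them.

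The repair uses only results already stated in this section. Start from the \emph{finite} covers of $X$ and $Y$ by $\R^{d}$- and $\R^{k}$-charts supplied by Theorem \ref{loctriv}, and intersect so that $p$ becomes a Nash submersion $\tilde p:W\to\R^{k}$ with $W\subset\R^{d}$ open semi-algebraic. For each $k$-element subset $S\subset\{1,\dots,d\}$, let $W_S\subset W$ be the open semi-algebraic set where the $S\times\{1,\dots,k\}$ minor of the Jacobian of $\tilde p$ is invertible; submersivity implies $W=\bigcup_S W_S$, and there are only $\binom{d}{k}$ such sets. On $W_S$ your map $\Theta_S$ (built from the coordinates complementary to $S$) has everywhere invertible differential, hence is étale; now Corollary \ref{EtLocIsNash} provides a \emph{finite} refinement of $W_S$ on which $\Theta_S$ is a Nash isomorphism onto an open image. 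Collecting all the pieces gives the required finite cover on which $p$ corresponds to the standard projection.
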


\begin{notation}
Let $U\subset X$ be an open (Nash) subset. We denote by $\Nash(U,\cE)$ the space of Nash sections of $\cE$ on $U$.
\end{notation}

\begin{defn}\label{def:StrSim}
We call a Nash action of a Nash group $G$ on $X$
\textbf{strictly simple} if it is simple (i.e. all
stabilizers are trivial) and there exists a geometric (separated) Nash quotient $G \backslash X$ (see \cite[Definition 4.0.4]{AGRhamShap}).
\end{defn}

\begin{prop}[\cite{AGRhamShap}, Proposition 4.0.14]\label{prop:GeoQuo}
 Let $H < G < G_n$ be Nash groups. Then the action of $H$ on $G$ is strictly simple.
\end{prop}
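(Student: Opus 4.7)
The plan is to address the two parts of the definition of strict simpleness (Definition \ref{def:StrSim}) separately. Freeness is immediate: if $h \in H$ and $g \in G$ satisfy $hg = g$ (viewing the action as left multiplication), then $h = e$ because $G$ is a group. The bulk of the work is to produce a geometric Nash quotient $H \backslash G$.

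First I would establish a local Nash slice at the identity. Both $H$ and $G$ are Nash subgroups of $G_n$, hence in particular closed Nash submanifolds of $G$ (closedness because $H$ is a Nash subgroup). Apply Theorem \ref{thm:NashSubMan} to the pair $H \subset G$: there is an open Nash neighborhood $U$ of $e \in G$ and a Nash diffeomorphism $\phi:\R^N \xrightarrow{\sim} U$ identifying $U \cap H$ with a linear subspace $L \subset \R^N$. Pick a linear complement $L'$ of $L$ in $\R^N$ and set $S := \phi(L')$, a Nash submanifold of $U$ transverse to $H$ at $e$. The multiplication map $\mu : H \times S \to G$, $(h,s) \mapsto hs$, is a Nash map whose differential at $(e,e)$ is an isomorphism onto $T_e G$. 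Using Corollary \ref{EtLocIsNash} (\'etale Nash maps are locally Nash diffeomorphisms), shrinking $S$ and restricting to a sufficiently small neighborhood of $e$ in $H$, one gets an $H$-equivariant Nash diffeomorphism from $H \times S_0$ onto an open $H$-invariant neighborhood $W$ of $e$ in $G$, where $S_0 \subset S$ is a small Nash open neighborhood of $e$.

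Next I would propagate this slice over all of $G$ by translation. For any $g \in G$, the translate $S_0 g$ together with the action gives an $H$-equivariant Nash diffeomorphism $H \times S_0 g \xrightarrow{\sim} W g$. This produces an open cover of $G$ by $H$-invariant open sets, each equipped with a Nash identification with $H \times (\text{slice})$. Since Nash manifolds admit only finite open covers in the restricted topology, I would invoke Theorem \ref{loctriv} on $G$ to extract finitely many translates $\{W g_i\}_{i=1}^k$ covering $G$, and then declare $H\backslash G$ to be the result of Nash-gluing the slices $S_0 g_i$ along their overlaps. The transition maps, obtained as compositions of Nash trivializations, are themselves Nash, so $H\backslash G$ inherits the structure of a Nash manifold and the quotient map $G \to H\backslash G$ is a surjective Nash submersion admitting local Nash sections (as in Theorem \ref{SurSubSec}). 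Finally, separatedness of the quotient follows from the fact that the equivalence relation $R = \{(g_1, g_2) \in G \times G : g_1^{-1} g_2 \in H\}$ is a closed Nash subset of $G \times G$, being the preimage of the closed Nash submanifold $H$ under the Nash map $(g_1,g_2) \mapsto g_1^{-1} g_2$.

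The main obstacle I anticipate is the Nash slice construction: the classical smooth slice argument uses the implicit function theorem and a smooth complement to $\Lie(H)$, and one must check that the Nash versions of these tools (via Theorem \ref{thm:NashSubMan} and Corollary \ref{EtLocIsNash}) yield a slice that is genuinely Nash, not merely semi-algebraic or smooth. A secondary, more bookkeeping obstacle is the finiteness of open covers in the restricted topology, which must be respected when gluing the local slices into a global quotient; this is why it is convenient that $H$ lives inside the matrix group $G_n$, so that the explicit linear-algebraic model in $M_n(F)$ is available for the local constructions.
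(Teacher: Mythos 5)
The paper does not prove this proposition; it quotes it verbatim from \cite{AGRhamShap}, Proposition 4.0.14, so there is no internal proof to compare your plan against. Judging the plan on its own merits, the local picture is right but the global step has a genuine gap.

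The gluing of the slices $S_0 g_i$ produces, at best, an abstract Nash manifold (a ringed space with a finite atlas of affine Nash charts and Nash transition maps). But ``Nash manifold'' in the sense these papers use (and need, e.g., to talk about Schwartz sections later) means an \emph{affine} Nash manifold, i.e., something Nash-diffeomorphic to a Nash submanifold of $\R^N$. Affineness is a global property that is not implied by a finite atlas of Nash charts; your construction as written does not address it. The standard ways to close this gap are either to invoke Shiota's theorem that a separated abstract Nash manifold is affine, or, as the cited proof does, to realize $H\backslash G$ directly as a locally closed Nash submanifold of a concrete vector space using the embedding of $H$ and $G$ in $G_n\subset M_n(F)$. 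Your plan does neither. This is also where the $G_n$ hypothesis actually gets used; your closing remark that the matrix realization is ``available for the local constructions'' is rhetorical, since nothing in your argument uses the ambient $M_n(F)$.

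Two smaller issues: (a) the \'etale argument only gives that $H\times S_0\to G$ is a local diffeomorphism near $(e,e)$; to get it injective on all of $H\times S_0$ you must additionally shrink $S_0$ so that $S_0 S_0^{-1}\cap H=\{e\}$, a step you omit; (b) Theorem \ref{loctriv} is not the right tool to extract a finite subcover from the uncountable family $\{Wg\}_{g\in G}$ --- what you want is the finiteness property of semi-algebraic covers of semi-algebraic sets, which is a different statement.
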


\begin{cor}\label{cor:GeoQuo}
 Let $H < G < G_n$ be Nash groups. Let $X$ be a transitive Nash $G$-manifold. Then there exists a geometric quotient $H \backslash X$.
\end{cor}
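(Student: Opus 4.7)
My plan is to reduce Corollary \ref{cor:GeoQuo} to Proposition \ref{prop:GeoQuo} by realizing $X$ as a homogeneous space of $G$ and chasing the quotient through an intermediate principal bundle. First, I fix a point $x_0 \in X$ and let $K := \mathrm{Stab}_G(x_0)$. Since $X$ is a Nash $G$-manifold, $K$ is a Nash subgroup of $G$, and hence $K < G < G_n$ satisfies the hypotheses of Proposition \ref{prop:GeoQuo}. Applied to the right action of $K$ on $G$, Proposition \ref{prop:GeoQuo} identifies $X \cong G/K$ as Nash $G$-manifolds, with a surjective Nash submersion $\pi \colon G \to X$ that admits local Nash sections by Theorem \ref{SurSubSec}.

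Second, by Proposition \ref{prop:GeoQuo} applied to the left action of $H$ on $G$, there is a Nash geometric quotient $q \colon G \to Y_1 := H \backslash G$, which is a Nash principal $H$-bundle. Since left and right multiplications on $G$ commute, $q$ is $K$-equivariant for the induced right $K$-action on $Y_1$, and the orbit sets $Y_1 / K$ and $H \backslash X$ are canonically in bijection.

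Third, I would construct a Nash structure on $H \backslash X$ locally. Around any point $\bar x \in H \backslash X$, I choose a lift $x \in X$ and a local Nash section $s$ of $\pi$ near $x$; the image of $s$ is a Nash slice transverse to the right $K$-fibers of $\pi$, and composing with $q$ yields a Nash chart for $Y_1 / K$. The compatibility of such charts (provided by the canonical Nash gluing inherited from strict simplicity of both the $H$- and the $K$-action) glues these local models into a Nash manifold $Y$ equipped with a surjective Nash submersion $X \to Y$ whose fibers are precisely the $H$-orbits.

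The main obstacle I anticipate is separatedness of $Y$, i.e., showing that the graph of the $H$-equivalence relation $R_X := \{(x, hx) : x \in X,\ h \in H\}$ is Nash-closed in $X \times X$. I would deduce this from the closedness of the analogous relation $R_G$ on $G \times G$ built into the strict simplicity of the left $H$-action on $G$ (Proposition \ref{prop:GeoQuo}): since $R_X$ is the image of $R_G$ under the Nash submersion $\pi \times \pi$, and $R_G$ is saturated with respect to the right $K \times K$-action on $G \times G$, closedness descends to $X \times X$, yielding the required geometric quotient in the sense of Definition \ref{def:StrSim}.
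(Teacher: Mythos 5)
The paper offers no explicit proof of this corollary — it is presented as an immediate consequence of Proposition \ref{prop:GeoQuo} — so there is no argument to compare against; one must therefore judge your proposal on its own. Two of your steps do not hold up. In step 3, nothing forces $q\circ s$ to be an immersion, let alone a chart: transversality of $s(U)$ to the right $K$-fibres in $G$ says nothing about the position of $s(U)$ relative to the left $H$-orbits, and indeed $\ker d_x(q\circ s)=(d_xs)^{-1}\bigl(\operatorname{Im}(d_xs)\cap T_{s(x)}(Hs(x))\bigr)$ has dimension at least $\dim H-\dim K$, which is positive whenever $\dim H>\dim K$. In step 5, the assertion that $R_G=\{(g,hg)\}$ is saturated under the right $K\times K$-action on $G\times G$ is simply false: $(gk_1,hgk_2)\in R_G$ requires $gk_2k_1^{-1}g^{-1}\in H$. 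The full $K\times K$-saturation of $R_G$ is the larger set $R=\{(g,g'):g'\in HgK\}$, and it is the closedness of $R$ (not of $R_G$) that controls separatedness of the double-coset space; this closedness does not follow from strict simplicity.

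More fundamentally, the strategy ignores that for a general Nash subgroup $H<G$ the $H$-orbits on $X=G/K$ can have non-constant dimension, in which case no Nash manifold quotient exists at all (take $G=SL_2(\R)<G_3$, $H=K$ the split torus and $X=G/K\cong(\PP^1\times\PP^1)\setminus\Delta$: then $H$ has isolated fixed points on $X$ next to open orbits). The paper's only invocation of the corollary is in Lemma \ref{lem:TranBun} with $H=V_n$, which is \emph{normal} in $G=R=P_n\times T$; under normality the statement is immediate and none of your machinery is needed, since $HK$ is then a Nash subgroup of $G$, $H\backslash X=H\backslash G/K=G/HK$, and Proposition \ref{prop:GeoQuo} applied to the right action of $HK$ on $G$ already produces the Nash geometric quotient $G/HK$. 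Your argument neither uses normality nor supplies a substitute for it, so it cannot succeed at the stated level of generality.
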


We will use the following theorem that describes the basic
properties of Schwartz functions on Nash manifolds.

\begin{thm}\label{thm:SchwarProp}
$ $
\begin{enumerate}[(i)]
%\begin{property}[\cite{AGSc}, Theorem 4.1.3]
\item \label{pClass}  $\Sc(\R ^n)$ = Classical
Schwartz functions on $\R ^n$.

%\begin{property}[\cite{AGRhamShap}, Corollary 2.6.2]
\item \label{p:SchFre} The space $\Sc(X,\cE)$ is a nuclear \Fre space.

%\begin{property}[\cite{AGSc}, \S 1.5]
\item \label{prop:ExtClose} Let $Z \subset X$ be a closed Nash submanifold. Then the
restriction maps  $\Sc(X,\cE)$ onto $\Sc(Z,\cE|_Z)$.
%\begin{property}[see \cite{AGSc}, \S 5]
\item \label{pCosheaf}
Let $X = \bigcup U_i$ be a finite open
cover of $X$. Then a global section $f$ of $\cE$ on $X$ is a Schwartz section if
and only if it can be written as $f= \sum _{i=1}^n f_i$
where $f_i \in \Sc(U_i,\cE)$ (extended by zero to $X$).

Moreover, there exists a tempered partition of unity $1 =\sum
 _{i=1}^n \lambda_i$ such that for any Schwartz section $f
\in \Sc(X,\cE)$ the section $\lambda_i f$ is a Schwartz section of $\cE$ on $U_i$ (extended by zero to $X$).

Note that this property implies that Schwartz sections form a cosheaf in the restricted topology.

%\begin{property}[see \cite{AGSc}, \S 5]
\item  \label{pTempSheaf}
Tempered sections of a Nash bundle $\cE$ over a Nash manifold $X$ form a sheaf in the restricted topology. Namely, let $U_i$ be a (finite) open Nash cover of $X$ and let $s \in \Gamma(X,\cE)$. Then $s$ is a tempered section if and only if $s|_{U_i}$ is a tempered section for any $i$.

%\begin{prop}[Schwartz Kernel Theorem, see e.g. \cite{AGRhamShap}, Corollary 2.6.3]
\item \label{SchKer}
For any Nash manifold $Y$,
$$\Sc(X \times Y)=\Sc(X) \ctp \Sc(Y).$$
\end{enumerate}
\end{thm}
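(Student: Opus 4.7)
The plan is that each item of Theorem \ref{thm:SchwarProp} is essentially established in \cite{AGSc}, so the task is to recall which construction or reduction yields which assertion, and what the right order of reductions is. The overall strategy is to reduce everything to the Euclidean case using local triviality of Nash manifolds (Theorem \ref{loctriv}) together with the existence of tempered partitions of unity subordinate to finite semi-algebraic open covers.

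Part (\ref{pClass}) is essentially by design: the seminorms defining $\Sc$ on $\R^n$ (via Nash vector fields applied to the function, times polynomial weights) coincide with the classical Schwartz seminorms, so this is a definition-chase. Part (\ref{p:SchFre}) splits into two pieces. Completeness and metrizability are immediate from the definition as a projective limit over a countable family of seminorms given by Nash differential operators. Nuclearity is inherited from the classical fact that $\Sc(\R^n)$ is nuclear: using Theorem \ref{loctriv} we cover $X$ by finitely many open Nash charts on which $\cE$ trivializes, and then assemble via part (\ref{pCosheaf}). Part (\ref{pTempSheaf}) is also largely formal: tempered-ness is defined as a bound on Nash derivatives against a semi-algebraic weight, which is a local condition, so if $s$ is tempered on each $U_i$ of a finite Nash cover it is tempered on $X$; conversely the restriction of a tempered section to an open Nash subset is tempered.

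The more substantive items are (\ref{prop:ExtClose}), (\ref{pCosheaf}) and (\ref{SchKer}). For (\ref{prop:ExtClose}) the plan is to apply Theorem \ref{thm:NashSubMan} to obtain finitely many charts $\phi_i:\R^n\IsoTo U_i\subset X$ in which $Z\cap U_i$ is a linear subspace, and then on each chart use the classical fact that restriction of Schwartz functions on $\R^n$ to a linear subspace is a continuous surjection (e.g. by a Seeley-type extension operator). The extensions are patched using a tempered partition of unity subordinate to the cover, which produces the desired preimage. Part (\ref{pCosheaf}) is the technical heart of \cite{AGSc}: one has to produce a tempered partition of unity subordinate to any finite semi-algebraic open cover, after which the decomposition $f=\sum \lambda_i f$ automatically has $\lambda_i f$ extendable by zero outside $U_i$; the key input is the existence of Nash functions with controlled zero sets, together with the semi-algebraic triviality of Nash manifolds.

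Part (\ref{SchKer}) is where the hardest work sits, and I expect it to be the main obstacle. The classical kernel theorem yields $\Sc(\R^n\times\R^m)=\Sc(\R^n)\ctp\Sc(\R^m)$, so the plan is to pick finite Nash charts $X=\bigcup U_i$ and $Y=\bigcup V_j$ diffeomorphic to Euclidean spaces, apply the kernel theorem on each product $U_i\times V_j\cong \R^{n_i+m_j}$, and then glue using tempered partitions of unity in both factors. The delicate point is that one needs to check that the gluing is compatible with completed tensor products; here one invokes Proposition \ref{prop:ctp_ext} to propagate exactness of the local-to-global sequences through the completed tensor with $\Sc(Y)$ (which is a nuclear \Fre space by (\ref{p:SchFre})), and then symmetrically in the other factor. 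Once this compatibility is established, the statement follows formally.
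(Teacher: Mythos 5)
The paper does not prove this theorem at all: immediately below the statement it simply cites the sources, attributing (\ref{pClass}) to \cite[Theorem 4.1.3]{AGSc}, (\ref{p:SchFre}) to \cite[Corollary 2.6.2]{AGRhamShap}, (\ref{prop:ExtClose}) to \cite[\S 1.5]{AGSc}, (\ref{pCosheaf}) and (\ref{pTempSheaf}) to \cite[\S 5]{AGSc}, and (\ref{SchKer}) to \cite[Corollary 2.6.3]{AGRhamShap}. So there is no in-paper argument to compare against; your proposal is an independent reconstruction of how the cited results are proved, which is a different thing from what the paper actually does here.

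As reconstructions, your sketches for (\ref{pClass})--(\ref{pTempSheaf}) are broadly in line with how \cite{AGSc} proceeds: the definition-chase for (\ref{pClass}), the tempered-partition-of-unity machinery as the technical core of (\ref{pCosheaf}), locality of temperedness for (\ref{pTempSheaf}), and the chart-plus-patching route for (\ref{prop:ExtClose}) and (\ref{p:SchFre}). Two remarks are worth making, though. First, for (\ref{p:SchFre}) the assembly via charts is not quite automatic — nuclearity of $\Sc(X,\cE)$ is usually obtained by exhibiting $\Sc(X,\cE)$ as a closed subspace (or quotient) of a finite sum of copies of $\Sc(\R^{n})$ and invoking permanence of nuclearity under these operations; your phrase ``assemble via (\ref{pCosheaf})'' hides that step. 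Second, and more significantly, for (\ref{SchKer}) the proof in \cite{AGRhamShap} does not go by gluing chart-by-chart kernel theorems with partitions of unity in both factors; it exploits (\ref{prop:ExtClose}) together with closed Nash embeddings $X\hookrightarrow\R^n$, $Y\hookrightarrow\R^m$ to realize $\Sc(X)$, $\Sc(Y)$, $\Sc(X\times Y)$ as quotients of $\Sc(\R^n)$, $\Sc(\R^m)$, $\Sc(\R^{n+m})$, and then uses the classical kernel theorem on Euclidean space plus exactness of $\ctp$ with nuclear \Fre spaces to match kernels. Your gluing approach is plausible and may be made to work with Proposition \ref{prop:ctp_ext}, but you would need to verify carefully that the Mayer–Vietoris-type sequences on each side have matching kernels after tensoring, which is precisely the point that the closed-embedding route avoids; as stated, that compatibility check is the gap in your sketch.

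Finally, a minor point: for extending Schwartz functions from a linear subspace of $\R^n$ in (\ref{prop:ExtClose}), no Seeley-type operator is needed — one simply tensors with a fixed Schwartz function on the complementary coordinates equal to $1$ at the origin.
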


%\begin{property}[\cite{AGSc}, Theorem 4.1.3]
Part \eqref{pClass} is \cite[Theorem 4.1.3]{AGSc}, part \eqref{p:SchFre} is \cite[Corollary 2.6.2]{AGRhamShap}, for \eqref{prop:ExtClose} see \cite[\S 1.5]{AGSc}, for
\eqref{pCosheaf} and \eqref{pTempSheaf} see \cite[\S 5]{AGSc}, for \eqref{SchKer} see e.g. \cite[Corollary 2.6.3]{AGRhamShap}.

\begin{lem}[\cite{AOS}, Appendix A]\label{lem:FiltBun}
Suppose
$$
0 \rightarrow \cE_1 \rightarrow \cE_2 \rightarrow \cE_3 \rightarrow 0
$$
is an exact sequence of Nash bundles on $X.$ Then
$$0 \rightarrow \Sc(X,\cE_1) \rightarrow \Sc(X,\cE_2) \rightarrow \Sc(X,\cE_3) \rightarrow 0.
$$
is  an exact sequence of \Fre spaces.
\end{lem}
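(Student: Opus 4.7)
The plan is to separately establish injectivity of $\Sc(X,\cE_1) \to \Sc(X,\cE_2)$, surjectivity of $\Sc(X,\cE_2) \to \Sc(X,\cE_3)$, and exactness in the middle. Injectivity is immediate from the pointwise injectivity of the Nash bundle map $\cE_1 \hookrightarrow \cE_2$: a Schwartz section is determined by its pointwise values, and if these are zero after embedding into $\cE_2$, they were already zero in $\cE_1$. For the other two properties, the strategy is to reduce to a locally split situation and then globalize via a tempered partition of unity.

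First I would build a finite open Nash cover $\{U_i\}$ of $X$ on which the short exact sequence of bundles admits a Nash splitting. By Theorem \ref{loctriv}, and after further refinement using Corollary \ref{EtLocIsNash} if needed, one may arrange that each $\cE_j|_{U_i}$ is Nash-trivial. On such a chart, the inclusion $\cE_1|_{U_i} \hookrightarrow \cE_2|_{U_i}$ becomes a Nash-varying family of injections between fixed finite-dimensional vector spaces; a Nash splitting $\cE_2|_{U_i} \cong \cE_1|_{U_i} \oplus \cE_3|_{U_i}$ can then be produced on possibly smaller open Nash subsets by applying Theorem \ref{SurSubSec} to the natural Nash submersion from the Nash variety of complementary subspaces onto the base. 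Such a splitting provides tempered bundle morphisms $\pi_i : \cE_2|_{U_i} \to \cE_1|_{U_i}$ and $\sigma_i : \cE_3|_{U_i} \to \cE_2|_{U_i}$, and by the general principle recalled after the definition of tempered morphisms, these induce continuous maps on spaces of Schwartz sections.

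Now fix a tempered partition of unity $\{\lambda_i\}$ subordinate to $\{U_i\}$ as in Theorem \ref{thm:SchwarProp}(\ref{pCosheaf}). For surjectivity, given $u \in \Sc(X,\cE_3)$ I would write $u = \sum_i \lambda_i u$, observe that each $\lambda_i u$ extends to a Schwartz section of $\cE_3|_{U_i}$, apply $\sigma_i$ to obtain a Schwartz section of $\cE_2|_{U_i}$, and extend by zero to $X$ using Proposition \ref{pOpenSet}; the sum is the desired preimage in $\Sc(X,\cE_2)$. For exactness in the middle, given $s \in \Sc(X,\cE_2)$ mapping to zero in $\Sc(X,\cE_3)$, on each $U_i$ the splitting forces $s|_{U_i}$ to lie in the $\cE_1$-component, so $\pi_i(s|_{U_i})$ is a Schwartz section of $\cE_1|_{U_i}$ agreeing with $s|_{U_i}$ under the embedding $\cE_1 \subset \cE_2$; then $\sum_i \lambda_i \pi_i(s|_{U_i})$, extended by zero, is a Schwartz section of $\cE_1$ mapping to $s$.

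The main obstacle is the local construction of a Nash splitting of the sequence of bundles; once this is in place, the gluing via tempered partitions of unity is routine thanks to the cosheaf property of Schwartz sections. The fact that the resulting exact sequence is an exact sequence of Fréchet spaces then follows automatically: the image of $\Sc(X,\cE_1) \to \Sc(X,\cE_2)$ equals the kernel of the continuous map $\Sc(X,\cE_2) \to \Sc(X,\cE_3)$, hence is closed, and Proposition \ref{prop:NFS}(\ref{it:FreSES}) guarantees that all three spaces are nuclear Fréchet.
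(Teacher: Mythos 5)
Your overall strategy coincides with the paper's: reduce to a finite Nash open cover on which the sequence of bundles splits, handle the split case, and then globalize via a tempered partition of unity (Theorem \ref{thm:SchwarProp}\eqref{pCosheaf}), with the Fr\'echet conclusion coming from closedness of the kernel and Proposition \ref{prop:NFS}\eqref{it:FreSES}. The only genuine difference is in how the local Nash splitting is produced. You appeal to Theorem \ref{SurSubSec}, viewing the ``Nash variety of complementary subspaces'' as a surjective Nash submersion over the base and taking local Nash sections of it; this works, but requires checking that this parameter space is a Nash manifold and that the projection is a Nash submersion, which you do not spell out. The paper (Lemma \ref{lemm:exact_S_loctriv}) avoids this by an explicit and more elementary device: after trivializing, fix a basis of the middle fiber $W_2$, let $I$ be the finite set of coordinate subspaces, and set $U_V=\{x: (d_2)_x|_V \text{ is an isomorphism}\}$ for $V\in I$; these are Nash open (nonvanishing of a determinant), cover $X$, and on each $U_V$ the maps $d_1$ and $d_2|_V$ exhibit the sequence as the standard split one, with no appeal to the general local-section theorem. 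So your route is correct and slightly more abstract, while the paper's is more self-contained. Two minor points of precision in your write-up: the injectivity claim is cleanest phrased pointwise on fibers rather than via ``a Schwartz section is determined by its pointwise values''; and in the middle-exactness step, $\pi_i(s|_{U_i})$ is only smooth on $U_i$, not Schwartz, so one should write the candidate preimage as $\sum_i \pi_i(\lambda_i s|_{U_i})$ (equivalently $\sum_i \lambda_i\,\pi_i(s|_{U_i})$, using that $\pi_i$ commutes with multiplication by scalar functions) to ensure each summand is Schwartz on $U_i$ before extending by zero.
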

\Rami{
For the proof we will need the following lemma.
\begin{lemma}\label{lemm:exact_S_loctriv}
Suppose
\[
0\overset{}{\rightarrow} \cE_{1} \overset{d_1}{\rightarrow} \cE_{2} \overset{d_{2}}{\rightarrow} \cE_{3} \overset{}{\rightarrow} 0
\]
is a finite exact sequence of Nash bundles on $X.$ Then there exist an open Nash cover $X= \bigcup_1^n U_j$ s.t. the sequence
$$0\overset{}{\rightarrow} \cE_{1}|_{U_j} \overset{d_1}{\rightarrow} \cE_{2}|_{U_j} \overset{d_{2}}{\rightarrow} \cE_{3}|_{U_j} \overset{}{\rightarrow} 0
$$
is isomorphic to the sequence
\begin{equation}\label{eq:split}
0\overset{}{\rightarrow} {U_j}\times V_1 \overset{}{\rightarrow} {U_j}\times (V_1 \oplus V_2) \overset{}{\rightarrow} {U_j} \times V_2 \overset{}{\rightarrow} 0
\end{equation}
Where $V_i$ are finite dimensional vector spaces, $U_i \times V$ denotes the constant bundle with fiber $V$ and the maps in the sequence come from the the standard embedding and projection. \end{lemma}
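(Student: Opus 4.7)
The plan is to reduce to the case of trivial bundles and then produce local splittings by pointwise linear algebra, lifted to Nash neighborhoods via non-vanishing-determinant arguments; the only non-formal step is ensuring that the resulting open cover is finite, which is automatic in the Nash/restricted topology.

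First, by the definition of a Nash bundle, each of $\cE_1, \cE_2, \cE_3$ admits a finite trivializing open cover. Taking a common refinement we obtain a finite open Nash cover of $X$ on which all three bundles are simultaneously trivial. Refining further so that each piece is connected (so that fiber dimensions are constant), fix one such open set $Y \subseteq X$ and write $\cE_1|_{Y} \cong Y \times V_1$, $\cE_2|_{Y} \cong Y \times M$, $\cE_3|_{Y} \cong Y \times V_2$, with $\dim M = \dim V_1 + \dim V_2$. Under these trivializations the morphisms $d_1, d_2$ are represented by Nash maps $A \colon Y \to \Hom(V_1, M)$ and $B \colon Y \to \Hom(M, V_2)$ satisfying $A(x)$ injective, $B(x)$ surjective, and $B(x)A(x) = 0$. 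A dimension count then forces $\Ker B(x) = A(x)(V_1)$ for every $x \in Y$.

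Now fix $x_0 \in Y$ and choose a linear subspace $M_0 \subset M$ of dimension $\dim V_2$ complementary to $A(x_0)(V_1)$. After choosing bases, the linear map $\phi_x \colon V_1 \oplus M_0 \to M$ defined by $(v, w) \mapsto A(x)v + w$ has determinant a Nash function of $x$ which is nonzero at $x_0$; hence $\phi_x$ is an isomorphism on a semi-algebraic open neighborhood $U$ of $x_0$. On $U$ the restriction $B(x)|_{M_0} \colon M_0 \to V_2$ is also an isomorphism, since $M_0 \cap \Ker B(x) = M_0 \cap A(x)(V_1) = 0$ by injectivity of $\phi_x$. Defining a Nash trivialization $\cE_2|_U \cong U \times (V_1 \oplus V_2)$ by $(x, v_1, v_2) \mapsto A(x)v_1 + B(x)|_{M_0}^{-1}(v_2)$ and retaining the original trivializations of $\cE_1|_U$ and $\cE_3|_U$, a direct computation using $B(x)A(x) = 0$ shows that the short exact sequence on $U$ is identified with the standard split sequence (\ref{eq:split}).

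Finally, the neighborhoods $U$ so constructed form a semi-algebraic open cover of each piece of the intermediate refinement, and therefore of $X$. By quasi-compactness of Nash manifolds in the restricted topology (a consequence of Theorem \ref{loctriv} together with standard semi-algebraic finiteness theorems), this cover admits a finite refinement, which is the desired $\{U_j\}_{j=1}^{n}$. The main point to watch is precisely this finiteness step: the pointwise splitting argument naturally produces a potentially infinite semi-algebraic cover, and it is the specifically Nash (as opposed to smooth) setting that forces a finite subcover.
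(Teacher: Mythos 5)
Your reduction to trivial bundles and the local splitting via a non-vanishing determinant are essentially the paper's argument. The gap is the finiteness step, which you rightly flag as the delicate point but then resolve incorrectly. Nash manifolds are \emph{not} quasi-compact (already $\R^n$ is not), and the ``restricted topology'' is not an honest topology in which one could invoke compactness: it is merely a Grothendieck topology in which one \emph{declares} that only finite covers are admissible. Theorem \ref{loctriv} says $X$ has \emph{some} finite cover by charts, not that an arbitrary semi-algebraic open cover admits a finite subcover, and indeed arbitrary semi-algebraic open covers of a Nash manifold need not have finite subcovers (e.g.\ the cover of $\R$ by $(t-1,t+1)$, $t\in\R$). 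So the pointwise construction, which produces a potentially infinite family of neighborhoods $U(x_0,M_0)$, does not by itself yield the required finite cover.

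The paper avoids the issue by making the choice of complement uniform in a finite way: after trivializing, fix a basis of $W_2$ and let $I$ be the (finite) collection of coordinate subspaces of the right dimension. For each $V\in I$ set $U_V:=\{x : (d_2)_x|_V \text{ is an isomorphism}\}$; this is open and Nash, and since for every $x$ some coordinate subspace is transversal to $\Ker(d_2)_x$, the finitely many $U_V$ already cover $X$. On each $U_V$ one then splits exactly as you do, using $V$ in place of your $M_0$. So your proof is salvageable by replacing ``for each $x_0$ choose some complement $M_0$'' with ``range over the finitely many coordinate subspaces of $M$,'' which removes the need for any compactness claim. As written, though, the finiteness step is a genuine gap, and the closing remark that ``it is the specifically Nash setting that forces a finite subcover'' has it backwards: the Nash setting is precisely why you must \emph{construct} a finite cover by hand rather than obtain one automatically.
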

This lemma is essentially proven in \cite{BCR}, but for completeness let us prove it here.
\begin{proof}
By the definition of a Nash bundle we can find a finite Nash open cover $X= \bigcup_1^n U_j$ s.t. the bundles $\cE_i|_{U_{i}}$ are constant. Thus we may assume that $\cE_i$ are constant bundles with fibers $W_i$. Choose a basis for $W_2$ and let $I$ be the collection of coordinate subspaces. For any $V \in I$ denote $U_V=\{x \in X | ((d_2)_x)|_V\text{ is an isomorphism} \}$. Clearly $U_V$ form a Nash cover of $X$. Thus we may assume that $X=U_V$ for some $V$. In this case $d_2$ gives an isomorphism between $X \times V$ and $X \times W_3$. Also, $d_1$ gives an isomorphism between  $X \times (W_1 \oplus V)$ and  $X \times W_2$. Those isomorphisms give identification of the sequence $$0\overset{}{\rightarrow} X\times W_1 \overset{}{\rightarrow} X\times W_2 \overset{}{\rightarrow} X \times W_3{\rightarrow} 0$$ with $$0\overset{}{\rightarrow} X\times W_1 \overset{}{\rightarrow} X\times (W_1 \oplus V) \overset{}{\rightarrow} X \times V \overset{}{\rightarrow} 0.$$
We now set $V_1:=W_1, \, V_2:=V$.
\end{proof}
\begin{proof}[Proof of Lemma \ref{lem:FiltBun}]%\ref{lemm:short_exact_S}]
$ $
\begin{enumerate}[Step 1]
\item  The case when the sequence is as in \eqref{eq:split}.\\
It follows immediately from the definition of Schwartz section of a Nash bundle.
\item The general case. \\
%
%We denote by $\pi_i:\cE_i \to X$ the projection of the bundle $\cE_i$ to the %base.
Let $X=\bigcup U_{j}$ be a finite open Nash cover s.t.
the sequence
$$0\overset{}{\rightarrow} \cE_{1}|_{U_j} \overset{d_1}{\rightarrow} \cE_{2}|_{U_j} \overset{d_{2}}{\rightarrow} \cE_{3}|_{U_j} \overset{d_3}{\rightarrow} 0
$$
is isomorphic to a sequence
\begin{equation*}%\label{eq:split2}
0\overset{}{\rightarrow} {U_j}\times V_1 \overset{}{\rightarrow} {U_j}\times (V_1 \oplus V_2) \overset{}{\rightarrow} {U_j} \times V_2 \overset{}{\rightarrow} 0
\end{equation*} as in Lemma \ref{lemm:exact_S_loctriv}. Clearly $d_i\circ d_{i+1}=0$, thus we only have to show that $Ker (d_i)\subset \Im (d_{i-1})$.
By Theorem \ref{thm:SchwarProp}\eqref{pCosheaf} we can choose a partition of unity $1_X= \sum e_j$ where $e_j \in C^{\infty}(X)$ with $\Supp(e_j) \subset U_j$ and such that for any $i,j$ and any $\phi \in \Sc(X,\cE_i)$ we have $e_j \phi \in \Sc(U_j,E_i)$.
Let $\phi \in Ker (d_i) \subset\Sc(X,\cE_{i} )$ then
we have $$\phi=\sum e_j \phi \in \sum \Ker (d_i|_{ \Sc(U_j,\cE_i|_{U_j})})=\sum \Im (d_{i-1}|_{ \Sc(U_j,\cE_{i-1}|_{U_j})}) \subset \Im (d_{i-1}).$$
\end{enumerate}
\end{proof}
}

The following Lemma follows immediately from Corollary \ref{NashEquivSub} and Theorem \ref{thm:SchwarProp}\eqref{pTempSheaf}.
\begin{lem}\label{lem:TempTemp}
Let $p:Y \to X$ be a Nash surjective submersion of Nash manifolds.
Let  $s \in \Gamma(X,\cE)$ be a set-theoretical global section of $\cE$. Then $s$ is tempered if and only if $p^*s$ is a tempered section of $p^*\cE$.
\end{lem}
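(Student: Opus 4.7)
The plan is to reduce the statement to a local computation and then check it directly in coordinates. Since tempered sections form a sheaf in the restricted topology (Theorem \ref{thm:SchwarProp}\eqref{pTempSheaf}), the property of $s$ being tempered on $X$ (resp.\ of $p^*s$ being tempered on $Y$) can be tested on any finite open Nash cover. By Corollary \ref{NashEquivSub} applied to the surjective submersion $p$, we can choose a finite open Nash cover $Y = \bigcup U_i$ together with open Nash subsets $\cO_i \subset X$ with $p(U_i) \subset \cO_i$ and Nash diffeomorphisms $\phi_i:U_i \IsoTo W_i$, $\psi_i:\cO_i \IsoTo \R^{k_i}$, where $W_i \subset \R^{d_i}$ is open and $\psi_i \circ p \circ \phi_i^{-1}$ is the restriction to $W_i$ of the standard projection $\pi:\R^{d_i} \to \R^{k_i}$. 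Since $p$ is surjective, the $\cO_i$ cover $X$, so it suffices to prove the equivalence for each $i$.

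After further refining the cover we may trivialize $\cE|_{\cO_i}$, so the assertion reduces to the following scalar statement: a function $f:\R^{k} \to \C$ is tempered if and only if $f\circ \pi$ restricted to an open (semi-algebraic) $W \subset \R^d$ is tempered, where $\pi(y_1,\dots,y_d) = (y_1,\dots,y_k)$. The ``only if'' direction is immediate, since precomposition with a Nash map takes tempered functions to tempered functions (iterated chain rule yields polynomial bounds on derivatives of $f \circ \pi$ in terms of those of $f$, and the partial derivatives in $y_{k+1},\dots,y_d$ all vanish). For the ``if'' direction we use the fact that $\pi$ has local Nash sections on $\R^k$ given by fixing the last $d-k$ coordinates; since $W$ is open and non-empty above each fiber of $\pi|_W$ needed, we can choose such a section with image in $W$ on each piece of a finite cover of $\R^k$, and then $f$ is recovered as the pullback of $f\circ \pi$ along a Nash map, hence tempered.

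I do not anticipate any serious obstacle: everything is forced by the sheaf property plus the standard local model of a Nash submersion, and once we are in that local model the problem is a triviality about polynomial growth of functions on Euclidean space that do not depend on the fiber coordinates. The only point requiring minor care is that the trivialization of $\cE$ may require refining the cover, which is harmless by Theorem \ref{thm:SchwarProp}\eqref{pTempSheaf} and the finiteness built into the definition of Nash bundles.
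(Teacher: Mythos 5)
Your approach is essentially the one the paper has in mind: the paper states that the lemma ``follows immediately from Corollary~\ref{NashEquivSub} and Theorem~\ref{thm:SchwarProp}\eqref{pTempSheaf}'', and you are filling in the local coordinate computation that makes this immediate. However, one step in your ``if'' direction is not quite right as written. You claim that $\R^k$ (or $\pi(W)$) can be covered by finitely many open semi-algebraic pieces over each of which a \emph{constant} section $x\mapsto(x,c)$ of the projection $\pi$ has image in $W$. This can fail over unbounded pieces: take $W=\{(x,y)\in\R^2: 0<y<1/(1+x^2)\}$ and $\pi(x,y)=x$; for any fixed $c>0$ the set $\{x:(x,c)\in W\}$ is bounded, so no finite semi-algebraic cover of $\R$ admits constant sections into $W$. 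The correct tool here is Theorem~\ref{SurSubSec}: apply it to the surjective Nash submersion $\pi|_W:W\to\pi(W)$ to obtain a finite open Nash cover of $\pi(W)$ with (in general non-constant) Nash sections $s_j$, and then recover $f|_{V_j}=(f\circ\pi)\circ s_j$, which is tempered because pullback of a tempered function along a Nash map is tempered. With that substitution your argument is complete and matches the paper's intended deduction.
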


\begin{lem}[\cite{AGHC},Theorem B.2.4] \label{lem:SubPush}
Let $\phi:Y \to X$ be a Nash submersion of Nash manifolds, and $\cE$ be a Nash bundle over $X$. Then\\
(i) there exists a unique continuous linear map
$\phi_*:\Sc(Y,\phi^!(\cE)) \to \Sc(X,\cE)$ such
that for any $f \in \Sc(X,\cE^*\otimes D_X) $ and $\mu \in \Sc(Y,\phi^!(\cE))$ we have $$\int_{x \in X} \langle f(x),\phi_*\mu(x)
\rangle = \int_{y \in Y} \langle \phi^*f(y), \mu(y) \rangle.$$ In
particular, we mean that both integrals converge. Here, $\phi^!(\cE)=\phi^*(\cE)\otimes D_Y^X$, as in Notation \ref{not:!}.\\
(ii) If $\phi$ is surjective then $\phi_*$ is surjective.
\end{lem}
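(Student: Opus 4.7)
The pushforward is characterized by a duality pairing, so the proof has two natural halves: constructing and controlling $\phi_*$ locally, then proving the surjectivity statement by a partition-of-unity argument. The key observation setting things up is that the twist $D_Y^X = \phi^*(D_X^*)\otimes D_Y$ is designed so that for $f\in\Sc(X,\cE^*\otimes D_X)$ and $\mu\in\Sc(Y,\phi^!\cE)$, the pairing $\langle\phi^*f,\mu\rangle$ is a genuine density on $Y$: indeed $\phi^*(\cE^*\otimes D_X)\otimes\phi^!\cE \cong \phi^*D_X\otimes\phi^*D_X^*\otimes D_Y\cong D_Y$. Thus the right-hand integral in the defining formula makes sense intrinsically once convergence is established.

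Uniqueness of $\phi_*$ is immediate: if two candidates $\phi_*\mu$ and $\phi_*'\mu$ produce the same pairing against every $f\in\Sc(X,\cE^*\otimes D_X)$, they agree, because compactly supported smooth sections of $\cE^*\otimes D_X$ already separate points of $\Sc(X,\cE)$. For existence I would reduce to the local model via Corollary \ref{NashEquivSub}: cover $Y$ by finitely many open Nash subsets $U_i$ on which $\phi$ is isomorphic to a standard projection $\R^{d_i}\to\R^{k_i}$ and $\cE$ trivializes. Choose a tempered partition of unity on $Y$ subordinate to $\{U_i\}$ as in Theorem \ref{thm:SchwarProp}\eqref{pCosheaf}, so any $\mu\in\Sc(Y,\phi^!\cE)$ splits as a finite sum of pieces supported in the charts $U_i$. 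Uniqueness then forces a consistent global definition, provided we construct the local pushforward continuously.

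The local construction reduces to fiber integration on $\R^d=\R^k\times\R^{d-k}$. By Theorem \ref{thm:SchwarProp}\eqref{SchKer}, $\Sc(\R^d)\cong\Sc(\R^k)\,\ctp\,\Sc(\R^{d-k})$, and the integral $\int_{\R^{d-k}}:\Sc(\R^{d-k})\to\C$ is a continuous linear functional. Tensoring with the identity on $\Sc(\R^k)$ gives a continuous linear map $\Sc(\R^d)\to\Sc(\R^k)$, which is the local $\phi_*$; Fubini verifies the defining duality formula and hence also proves convergence of both integrals for the local pieces. Summing the local pushforwards produces $\phi_*$ globally, continuous because it is a finite sum of compositions of continuous operations.

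For part (ii), I would use Theorem \ref{SurSubSec} to cover $X$ by finitely many opens $V_j$ admitting Nash sections $s_j:V_j\to Y$, refined so that each $s_j(V_j)$ lies in a chart $U_i$ where $\phi$ is a standard projection. Given $f\in\Sc(X,\cE)$, decompose $f=\sum f_j$ via a tempered partition of unity on $X$ subordinate to $\{V_j\}$. On each chart, pick a Nash-Schwartz bump $\rho_j$ on the fiber direction with $\int\rho_j=1$ and \emph{compact} support, and set $g_j:=f_j\boxtimes\rho_j$ in local coordinates; this is Schwartz on the chart and extends by zero to $Y$ as a Schwartz section by Proposition \ref{pOpenSet}. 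Then $g:=\sum g_j$ satisfies $\phi_* g=f$ by the local Fubini computation. The main technical obstacle is precisely ensuring that the extension-by-zero step produces a truly Schwartz (not merely smooth-and-tempered) section on all of $Y$; this is why compactly supported fiber bumps and the cosheaf property of Schwartz sections in Theorem \ref{thm:SchwarProp}\eqref{pCosheaf} (rather than an arbitrary smooth partition of unity) are essential.
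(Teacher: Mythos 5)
Note that the paper does not actually prove this lemma: it is quoted verbatim from \cite[Theorem B.2.4]{AGHC}, so there is no internal proof to compare against. I will therefore assess your argument on its own merits.

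Your treatment of part (i) --- uniqueness via the duality pairing, localization to the standard model $\R^d\to\R^k$ via Corollary \ref{NashEquivSub} and a tempered partition of unity, and the local construction via the Schwartz kernel theorem and Fubini --- is sound and is the standard route.

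Part (ii), however, has a genuine gap. Corollary \ref{NashEquivSub} produces charts $U_i\cong W_i$ in which $W_i$ is only an \emph{open Nash subset} of $\R^{d_i}$ projecting onto $O_i\subset\R^{k_i}$; it is not a product $O_i\times B_i$, and for a non-proper Nash submersion it generically cannot be refined to a finite cover by products, because the fibers of $W_i$ over $O_i$ can change topology and can shrink to arbitrarily small width near $\partial O_i$ and at infinity. With a \emph{fixed} compactly supported fiber bump $\rho_j$, the candidate $g_j:=f_j\boxtimes\rho_j$ has support $\Supp f_j\times\Supp\rho_j$, which need not lie inside $W_i$; consequently $g_j$ does not vanish to infinite order on the complement of $W_i$, so $g_j\notin\Sc(W_i)$ and Proposition \ref{pOpenSet} does not allow extension by zero. (Incidentally, a ``Nash--Schwartz bump with compact support'' is a contradiction in terms --- nonzero Nash functions are real analytic --- but a plain smooth Schwartz bump is all that is needed.) The repair is to let the fiber bump scale with the base point: pick a positive Nash function $\epsilon$ on $O_i$ small enough that the tube $\{(v,w):|w-\sigma_j(v)|<\epsilon(v)\}$ around the Nash section $\sigma_j$ lies in $W_i$, and set $g_j(v,w):=f_j(v)\,\epsilon(v)^{-m}\rho\bigl((w-\sigma_j(v))/\epsilon(v)\bigr)$ with $m$ the fiber dimension and $\rho$ a fixed smooth bump of integral $1$ supported in the unit ball. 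To see that this lies in $\Sc(W_i)$ one must then invoke that $\epsilon^{-1}$, and all partial derivatives of $\epsilon$ and $\sigma_j$, are Nash (hence tempered) on $O_i$, so multiplying the Schwartz section $f_j$ by the polynomially bounded factors arising from differentiating $g_j$ still produces Schwartz sections. Your write-up is missing exactly this varying-scale construction and the tempered-times-Schwartz estimates that make it legitimate.
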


\begin{lem} \label{lem:NewLeib}[\cite{AGST} ,Lemma B.1.4]
Let $G$ be a connected Nash group and  $\g$ be the Lie algebra of $G$. Let $p:G\times X \to X$ be the projection. Let $G$ act on $\Sc(G \times X,p^!(\cE))$ by acting on the $G$ coordinate. Consider the pushforward $p_*: \Sc(G \times X,p^!(\cE)) \to \Sc(X,\cE)$.
Then
$\g \Sc(G \times X,p^!(\cE)) = \Ker(p_*).$
\end{lem}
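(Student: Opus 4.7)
The plan is to establish the two inclusions separately.

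For the inclusion $\g\,\Sc(G\times X, p^!(\cE)) \subset \Ker(p_*)$, I would use integration by parts along the fibres of $p$. Since $G$ acts on $G\times X$ only through the first factor, the induced action of any element $L\in\g$ produces a vector field on $G\times X$ that is tangent to the $G$-fibres of $p$. The pushforward $p_*$ of Lemma \ref{lem:SubPush} is fibrewise integration against the density factor absorbed into $p^!(\cE)$. Because Schwartz sections decay rapidly along the fibre $G$, the integral of any such fibrewise derivative of a Schwartz section vanishes, giving $p_*(Lf)=0$ for every $L\in\g$ and every $f\in\Sc(G\times X, p^!(\cE))$. This part is formal and robust.

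For the reverse inclusion $\Ker(p_*) \subset \g\,\Sc(G\times X, p^!(\cE))$, the cleanest route is Frobenius descent for Schwartz functions (the zero-homology specialisation of Shapiro's Lemma). Applied to the free transitive action of $G$ on itself by left multiplication, carrying $X$ along with the trivial $G$-action, it identifies
\[
H_0\bigl(\g,\Sc(G\times X, p^!(\cE))\bigr) \;\cong\; \Sc\bigl(\{e\}\times X, \cE|_{\{e\}\times X}\bigr) \;=\; \Sc(X,\cE),
\]
with the isomorphism implemented precisely by $p_*$ (the density factor in $p^!(\cE)$ is exactly what makes integration over $G$ identify with the Frobenius descent map). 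Combined with the surjectivity of $p_*$ from Lemma \ref{lem:SubPush}(ii), this says that $p_*$ descends to an isomorphism $\Sc(G\times X, p^!(\cE))/\g\,\Sc(G\times X, p^!(\cE)) \to \Sc(X,\cE)$, and hence its kernel equals $\g\,\Sc(G\times X, p^!(\cE))$.

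The main obstacle is the reverse inclusion, and the substantive input is the Frobenius descent statement just invoked. The version of Shapiro's Lemma proved in Appendix \ref{app:PfShapLem} is more general than we need, but its zero-homology specialisation is exactly this Frobenius descent. To keep the argument free of any circularity with the appendix, one can alternatively argue directly: using local triviality of Nash manifolds (Theorem \ref{loctriv}) and tempered partitions of unity (Theorem \ref{thm:SchwarProp}(\ref{pCosheaf})), one reduces to the case where an open piece of $G$ is diffeomorphic to $\R^n$ acting on itself by translation. One is then left with the classical statement that a Schwartz function on $\R^n\times X$ whose fibrewise integral vanishes can be written as $\sum_{i=1}^n \partial_{t_i} g_i$ with each $g_i$ Schwartz. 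The delicate step in this direct route is globalising the local antiderivatives across charts while keeping the correction terms Schwartz on all of $G\times X$, which is why routing through Frobenius descent is preferable.
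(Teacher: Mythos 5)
The paper states this lemma as a citation to [AGST, Lemma B.1.4] and gives no proof, so there is nothing in-paper to compare against; what follows assesses your proposal on its own merits.

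Your easy inclusion is correct, and your circularity worry about Route A is genuine within this paper's framework: the proof of Theorem~\ref{ShapLem} passes through Corollary~\ref{H2G}, which rests on Corollary~\ref{DRgroupCor}, which in turn is a restatement of [AGST, Corollary B.1.8] --- and that is exactly the assertion at issue combined with surjectivity of $p_*$. So the appendix's Shapiro lemma cannot be invoked here without a circle.

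Route B, however, does not close the gap. Beyond the globalization problem you flag, there is a more basic one: after passing to a chart $U_i\cong\R^n$ via Theorem~\ref{loctriv}, the $\g$-vector fields are \emph{not} the coordinate vector fields $\partial_{t_j}$ --- $G$ is not $\R^n$ acting on itself by translation, and a Nash chart will not rectify the left-invariant frame. Re-expressing the coordinate fields through the $\g$-frame (or conversely) introduces lower-order terms that reproduce the original problem, and multiplying by a tempered cut-off destroys the vanishing-fibrewise-integral hypothesis on the chart, so the classical $\R^n$ antiderivative lemma cannot be applied chart by chart as stated. The non-circular argument that fits this paper's framework is the one behind Corollary~\ref{Lokatavnu}: use Lemma~\ref{DRgroup} to identify the relative de Rham complex of $G\times X\to X$ with the Koszul complex of $\g$ acting on $\Sc(G\times X,p^!(\cE))$, and compute its top-degree cohomology via Theorem~\ref{RelDeRham}, which gives $\oH_0\bigl(\g,\Sc(G\times X,p^!(\cE))\bigr)\cong\Sc\bigl(X,H^{\dim G}_c(G)\otimes\cE\bigr)=\Sc(X,\cE)$ for connected $G$, with the isomorphism realized by $p_*$. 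Both of these ingredients come from \cite{AGRhamShap} and are logically independent of the lemma at hand, so there is no circle, and Theorem~\ref{RelDeRham} handles the globalization that Route B leaves open. Your Route B is, in effect, an attempt to reprove the top-degree case of Theorem~\ref{RelDeRham} by hand, which is more work than warranted and precisely where the argument stalls.
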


\begin{lem}\label{lem:DivSc}
Let $f:X \to \R$ be a Nash function such that $0$ is a regular value of $f$. Then $f\Sc(X) = \{ \phi \in \Sc(X) \, :\, \phi({f^{-1}(0)})=0\}$.
\end{lem}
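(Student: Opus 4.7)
The inclusion $f\Sc(X) \subseteq \{\phi \in \Sc(X) \colon \phi|_{f^{-1}(0)} = 0\}$ is immediate. For the converse, given $\phi \in \Sc(X)$ vanishing on $Z := f^{-1}(0)$, the goal is to construct $\psi \in \Sc(X)$ with $f\psi = \phi$. Since $0$ is a regular value of $f$, the set $Z$ is a closed Nash submanifold of codimension one. By Theorem \ref{thm:NashSubMan}, pick finitely many open $U_1, \dots, U_m \subset X$ covering $Z$ together with Nash diffeomorphisms $\alpha_i \colon \R^n \IsoTo U_i$ under which $Z \cap U_i$ corresponds, after a linear change of coordinates, to the hyperplane $\{y_1 = 0\}$. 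Combined with the Nash open set $W := f^{-1}(\R \setminus \{0\})$, this yields a finite Nash open cover $\{W, U_1, \dots, U_m\}$ of $X$.

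The geometric heart of the argument is the factorization $\alpha_i^* f = y_1 G_i$ on each chart, where $G_i$ is a nowhere-vanishing Nash function on $\R^n$. Hadamard's lemma with integral remainder produces
$$G_i(y) := \int_0^1 (\partial_1 \alpha_i^* f)(ty_1, y_2, \dots, y_n)\, dt,$$
which is smooth, equals $\alpha_i^* f / y_1$ on $\{y_1 \neq 0\}$, and equals $\partial_1 \alpha_i^* f$ on $\{y_1 = 0\}$. The former is nonzero because $\alpha_i^{-1}(Z) = \{y_1 = 0\}$, and the latter because $0$ is a regular value of $f$. The graph of $G_i$ is the topological closure in $\R^n \times \R$ of the graph of the semialgebraic function $\alpha_i^* f / y_1$ over $\{y_1 \neq 0\}$, hence semialgebraic; together with smoothness this gives $G_i \in \Nash(\R^n)$, so $1/G_i \in \Nash(U_i)$. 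Similarly $1/f \in \Nash(W)$.

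By Theorem \ref{thm:SchwarProp}\eqref{pCosheaf} choose a tempered partition of unity $1 = \mu + \sum_{i=1}^m \lambda_i$ subordinate to $\{W, U_1, \dots, U_m\}$ such that $\mu \phi \in \Sc(W)$ and $\lambda_i \phi \in \Sc(U_i)$. Define $\psi$ to be the sum (extended by zero from each chart) of $\mu\phi/f$ on $W$ and $(\lambda_i \phi)/(y_1 G_i)$ on $U_i \cong \R^n$. The first piece lies in $\Sc(W)$ because $1/f$ is Nash on $W$, and extends by zero to $\Sc(X)$ via Proposition \ref{pOpenSet}. For each chart piece, $\alpha_i^*(\lambda_i \phi)$ is a Schwartz function on $\R^n$ vanishing on the entire hyperplane $\{y_1 = 0\}$ (since $\phi$ vanishes on $Z$), so the classical Hadamard lemma on $\R^n$ places $\alpha_i^*(\lambda_i\phi)/y_1$ in $\Sc(\R^n)$; multiplying by the Nash function $1/G_i$ preserves this, and transport back followed by extension by zero again uses Proposition \ref{pOpenSet}. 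Summing yields $\psi \in \Sc(X)$, and a direct check gives $f\psi = (\mu + \sum_i \lambda_i)\phi = \phi$. The main technical point is the Nash-ness of $G_i$, which reduces to the semialgebraicity of the closure of a semialgebraic set; everything else is routine bookkeeping with the cosheaf property of Schwartz sections together with the classical Hadamard lemma on $\R^n$.
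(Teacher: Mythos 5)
Your proof is correct and uses the same overall strategy as the paper: reduce via Theorem~\ref{thm:NashSubMan} and the tempered partition of unity to the local model $\R^n$ with $Z$ a hyperplane, and factor $f$ locally as $y_1$ times a nowhere-vanishing Nash function. The one genuine difference is in how the division by the coordinate is carried out: the paper, after replacing $f$ by $x = y_1$, invokes the Schwartz Kernel Theorem (Theorem~\ref{thm:SchwarProp}\eqref{SchKer}) to write $\Sc(\R^n) \cong \Sc(\R^{n-1})\ctp\Sc(\R)$ and then uses the exactness of $\ctp$ (Proposition~\ref{prop:ctp_ext}) to reduce to $n=1$, where the statement is elementary; you instead run the Hadamard integral-remainder argument directly on $\R^n$. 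Your route is more self-contained in that it avoids the nuclear-space machinery, at the cost of having to check the Schwartz decay of the $n$-dimensional quotient by hand (this is true but is a bit more than ``the classical Hadamard lemma,'' which as usually stated is a statement about smoothness, not about Schwartz seminorms -- you should say a word about uniform rapid decay of $\int_0^1 \partial_1\phi(ty_1,y')\,dt$ and its derivatives). Your careful justification that $G_i$ is Nash (smooth by the integral formula, semialgebraic because its graph is the closure of the semialgebraic graph of $\alpha_i^*f/y_1$) is also a nice expansion of the paper's terse assertion that $f/x$ is Nash.
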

\begin{proof}
By Theorem \ref{thm:NashSubMan} and partition of unity (Theorem \ref{thm:SchwarProp}\eqref{pCosheaf}), we can assume that  $X = \R^n$ and $f^{-1}(0)= \R^{n-1} \subset \R^n$. Let $x : \R^n \to \R$ be the last coordinate function. Since $0$ is a regular value of $f$, $f/x$ is a smooth invertible function. Since $f$ is a Nash function, $f/x$ is also a Nash function. Thus, we can assume $f=x$.
We have to show that the following sequence is exact.

$$0 \to \Sc(\R^{n}) \overset{x}{\to} \Sc(\R^{n})  \overset{Res}{\to} \Sc(\R^{n-1}) \to 0,$$
where $Res: \Sc(\R^{n}) \to \Sc(\R^{n-1})$ is the restriction map.

By the Schwartz Kernel Theorem (Proposition \ref{SchKer}) this sequence is isomorphic to

$$0 \to  \Sc(\R^{n-1})\ctp \Sc(\R)  \overset{Id \ctp x}{\to} \Sc(\R^{n-1})\ctp \Sc(\R)  \overset{Id \ctp Res}{\to} \Sc(\R^{n-1}) \to 0,$$
where $Res: \Sc(\R) \to \C$ is evaluation at $0$.

By Proposition \ref{prop:ctp_ext} it is enough to prove the exactness in the case $n=1$, which is obvious.
\end{proof}

 We will use the following version of Borel's lemma.
\begin{lem} \label{lem:Borel}
Let $Z \subset X$ be a Nash submanifold.

Then $\Sc_X(Z,\cE)$ has a canonical countable decreasing filtration by closed subspaces $(\Sc_X(Z,\cE))^i$ satisfying
\begin{enumerate}
\item
$\bigcap (\Sc_X(Z,\cE))^i=0$

\item $gr_i(\Sc_X(Z,\cE)) \cong \Sc(Z,\Sym^i(CN_Z^X)\otimes \cE)$, where $CN_Z^X$ denotes the conormal bundle to $Z$ in $X$.

\item the natural map  $$\Sc_X(Z,\cE) \to \lim_\ot(\Sc_X(Z,\cE))/\Sc_X(Z,\cE))^i)$$
is an isomorpihsm.
\end{enumerate}

\end{lem}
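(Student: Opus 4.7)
The plan is to take the natural ``order of vanishing'' filtration. After replacing $X$ by $X \setminus (\overline{Z}\setminus Z)$ we may assume $Z$ is closed in $X$, so that $\Sc_X(Z,\cE) = \Sc(X,\cE)/\Sc(X\setminus Z,\cE)$. Let $\cI_Z \subset \mathcal{N}(X)$ denote the ideal of Nash functions on $X$ vanishing on $Z$, and define $(\Sc_X(Z,\cE))^i$ to be the image of $\cI_Z^i \cdot \Sc(X,\cE)$ in this quotient. All three statements are local on $X$ and compatible with tempered partitions of unity, so by Theorem \ref{thm:NashSubMan}, Theorem \ref{thm:SchwarProp}(iv), and local triviality of $\cE$ (cf.\ Lemma \ref{lem:FiltBun}), it suffices to treat the model case $X = \mathbb{R}^k \times \mathbb{R}^{n-k}$, $Z = \mathbb{R}^k \times \{0\}$, with $\cE$ a trivial Nash bundle.

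In the model, the Schwartz kernel theorem (Theorem \ref{thm:SchwarProp}(vi)) gives $\Sc(X) \cong \Sc(\mathbb{R}^k) \ctp \Sc(\mathbb{R}^{n-k})$, and Proposition \ref{prop:ctp_ext} applied to the closed exact sequence defining $Q := \Sc(\mathbb{R}^{n-k})/\Sc(\mathbb{R}^{n-k}\setminus\{0\})$ identifies $\Sc_X(Z)$ with $\Sc(\mathbb{R}^k) \ctp Q$. Iterating Lemma \ref{lem:DivSc} along the normal coordinates shows that $\cI_Z^i \cdot \Sc(\mathbb{R}^{n-k})$ equals the closed subspace of Schwartz functions vanishing to order $\geq i$ at the origin, and its image $Q^i$ in $Q$ is closed because its preimage in $\Sc(\mathbb{R}^{n-k})$ under the quotient map is closed. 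Hence each $(\Sc_X(Z))^i \cong \Sc(\mathbb{R}^k) \ctp Q^i$ is closed. The Taylor-coefficient map canonically identifies $Q^i/Q^{i+1}$ with $\Sym^i((\mathbb{R}^{n-k})^*) = \Sym^i(CN_Z^X|_0)$, which upon tensoring with $\Sc(\mathbb{R}^k)$ gives statement (2). Statement (1) follows at once from Proposition \ref{pOpenSet}, since a smooth function vanishing to infinite order on $Z$ extends by zero to a Schwartz function on $X\setminus Z$.

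Statement (3) is the Borel-lemma content. Injectivity is just (1). For surjectivity, given a compatible sequence of classes represented by $\bar{s}_i \in \Sc(Z, \Sym^i(CN_Z^X)\otimes\cE)$, one lifts each $\bar{s}_i$ to a Schwartz section $\tilde{s}_i$ on $X$ using the extension surjection of Theorem \ref{thm:SchwarProp}(iii), pairs it with a degree-$i$ Nash monomial in the normal direction (defining elements of $\cI_Z$), multiplies by a cutoff $\chi(\lambda_i \cdot)$ supported in a small neighborhood of $Z$, and assembles $\sum_i \chi(\lambda_i\cdot)\cdot(\text{normal monomial of order } i)\cdot\tilde{s}_i$. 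The main obstacle is the summation: the parameters $\lambda_i \to \infty$ must be chosen by a diagonal procedure, depending on the Schwartz seminorms of $\tilde{s}_j$ for $j \leq i$, so that every fixed Schwartz seminorm of the tail $\sum_{i\geq N}$ is controlled by $2^{-N}$. This is the Schwartz-class version of the classical Borel lemma. Finally, the construction is transported from the model back to the global situation via the tempered partition of unity from the first paragraph, completing the proof.
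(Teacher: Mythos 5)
The paper itself does not prove Lemma \ref{lem:Borel}; it simply cites \cite[Lemmas B.0.8 and B.0.9]{AGST}, so a direct comparison of proofs is not possible. Your overall strategy --- reduce to a tubular-neighborhood model via Theorem \ref{thm:NashSubMan} and tempered partitions of unity, split off the tangential directions with the Schwartz kernel theorem, and then run a Schwartz-class Borel lemma in the normal directions --- is the right shape of argument and is, as far as I can tell, what the cited reference does.

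However, there is one genuinely false intermediate claim. You assert that $\cI_Z^i \cdot \Sc(\mathbb{R}^{n-k})$ \emph{equals} the closed subspace of Schwartz functions vanishing to order $\geq i$ at the origin, ``by iterating Lemma \ref{lem:DivSc}''. Lemma \ref{lem:DivSc} only treats a single Nash function with $0$ as a regular value, i.e.\ division by one coordinate; it does not iterate to a Hadamard-type decomposition inside the Schwartz class. Indeed, already for $i=1$ and $n-k=2$ the claim fails: if $\phi \in \Sc(\mathbb{R}^2)$ has $\phi(0)=0$ and one writes $\phi = y_1 g_1 + y_2 g_2$ via $g_j(y) = \int_0^1 \partial_j\phi(ty)\,dt$, then for $|y| \to \infty$ one has $g_j(y) \sim C(\hat y)/|y|$, which decays only like $|y|^{-1}$, so $g_j$ is in general \emph{not} Schwartz. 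Thus $\cI_Z\,\Sc(\mathbb{R}^2)$ is a priori a proper subspace of $\{\phi : \phi(0)=0\}$, and the closedness of $Q^i$ cannot be deduced the way you state it. The argument is salvageable: what is true --- and what you actually need --- is that the \emph{preimage in} $\Sc(\mathbb{R}^{n-k})$ of $Q^i$, namely $\cI_Z^i\,\Sc(\mathbb{R}^{n-k}) + \Sc(\mathbb{R}^{n-k}\setminus\{0\})$, equals the order-$\geq i$ vanishing subspace (multiply the Hadamard coefficients by a compactly supported cutoff $\chi$ that is $1$ near $0$; the error $(1-\chi^2)\phi$ lies in $\Sc(\mathbb{R}^{n-k}\setminus\{0\})$, and the cut-off coefficients are honestly Schwartz). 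That set is closed, being cut out by finitely many point evaluations of derivatives, and then $Q^i$ is closed as claimed. Equivalently --- and more cleanly --- define the filtration on $\Sc_X(Z,\cE)$ directly as the image of the kernel of the $(i-1)$-jet map along $Z$ rather than as the image of $\cI_Z^i \Sc(X,\cE)$; this jet filtration is manifestly closed and agrees with yours inside the quotient. The rest of your outline (the Taylor-jet identification of the graded pieces, the $\ctp$-exactness from Proposition \ref{prop:ctp_ext}, the diagonal procedure for the inverse-limit surjectivity, and the compatibility with multiplication by tempered partitions of unity) is correct in spirit, though it is stated at a sketch level.
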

For proof see \cite[Lemmas B.0.8 and B.0.9]{AGST}.

\subsection{ Good representations of geometric origin (Proof of Lemma \ref{lem:GenGeoGood})}  \label{subsec:PfGenGeoGood}

For the proof we will fix $T,R \text{ and }R'$. We will need the following statements.

\begin{lem}\label{lem:TranBun}
Let $X$ be an $R$-transitive Nash manifold. Let $\cE$ be an $R$-tempered bundle on $X$. Suppose that for each $x \in X$, the $V_n$-stabilizer of $x$ acts trivially on the fiber $\cE_x$. Note that by Corollary \ref{cor:GeoQuo} there exists  geometric quotient $V_n \backslash X$. Let $p:X \to V_n \backslash X$ denote the projection. Then

\begin{enumerate}

\item  \label{it:star}
there exists an $R'$-tempered bundle $\cE'$ on the geometric quotient $V_n \backslash X$ and a (tempered) isomorphism of $R$-tempered bundles $\cE \IsoTo p^*(\cE')$.

\item \label{it:!} there exists an $R'$-tempered bundle $\cE''$ on the geometric quotient $V_n \backslash  X$ and a (tempered) isomorphism of $R$-tempered bundles $\cE \IsoTo p^!(\cE'')$.
\end{enumerate}
\end{lem}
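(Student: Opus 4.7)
The plan is to exploit that $V_n$ is a normal Nash subgroup of $R$ (being the unipotent radical of $P_n$, while $T$ commutes with $V_n$), with quotient $R/V_n \cong G_{n-1}\times T = R'$, and to descend $\cE$ along $p$ using the hypothesis of fiberwise triviality of the $V_n$-stabilizers.

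For part \ref{it:star}, fix $x_0\in X$ and set $Q_0:=\stab_R(x_0)$, so that $X=R/Q_0$. By normality of $V_n$, the subset $V_nQ_0=Q_0V_n$ is a Nash subgroup of $R$, and
$$V_n\backslash X \;=\; R/(V_nQ_0) \;=\; R'/Q_0',\qquad Q_0' \;:=\; V_nQ_0/V_n \;\cong\; Q_0/(Q_0\cap V_n).$$
The $R$-tempered bundle $\cE$ is determined by the tempered representation of $Q_0$ on the fiber $\cE_{x_0}$, and the hypothesis says exactly that $Q_0\cap V_n = \stab_{V_n}(x_0)$ acts trivially on $\cE_{x_0}$; thus this representation factors through $Q_0'$. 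I would let $\cE'$ be the $R'$-tempered bundle on $R'/Q_0'=V_n\backslash X$ associated to the resulting $Q_0'$-representation. The tautological identification of fibers at $x_0$ propagates $R$-equivariantly to a bundle isomorphism $\alpha\colon\cE\IsoTo p^*(\cE')$; that $\alpha$ and the $R'$-equivariant structure of $\cE'$ are genuinely tempered is verified by pulling back along local Nash sections of $p$ (whose existence is guaranteed by Theorem \ref{SurSubSec}) together with Lemma \ref{lem:TempTemp}.

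For part \ref{it:!}, I would apply part \ref{it:star} to the $R$-tempered bundle $\cE\otimes (D_{V_n\backslash X}^X)^{-1}$: the resulting isomorphism $\cE\otimes (D_{V_n\backslash X}^X)^{-1}\cong p^*(\cE'')$ rearranges to $\cE\cong p^!(\cE'')$. The only thing to check is that for every $x\in X$ the stabilizer $\stab_{V_n}(x)$ acts trivially on $(D_{V_n\backslash X}^X)_x$, which is canonically the line of densities on $T_x(V_n\cdot x)\cong \fv_n/\Lie(\stab_{V_n}(x))$. For $v\in\stab_{V_n}(x)$ the induced action on this tangent space is the adjoint action of $v$, which is trivial because $V_n$ is abelian; hence the action on the density line is trivial. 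Combined with the hypothesis on $\cE_x$, the triviality condition holds for $\cE\otimes (D_{V_n\backslash X}^X)^{-1}$, and part \ref{it:star} produces $\cE''$.

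The main obstacle I anticipate is not the algebraic descent itself, which is formal, but rather verifying that the descended structures live in the \emph{tempered} category: the fiberwise $Q_0$-representation on $\cE_{x_0}$ may involve non-algebraic (e.g.\ unitary) characters (cf.\ the discussion in \S\ref{subsec:TempFun}), so the temperedness of $\cE'$, of its $R'$-equivariant structure, and of $\alpha$ all have to be argued directly. This is precisely the reason the paper works in the $G$-tempered framework; the check reduces, via local Nash sections of $p$ and Lemma \ref{lem:TempTemp}, to tempered statements on $X$, where the $R$-tempered structure of $\cE$ is given by hypothesis.
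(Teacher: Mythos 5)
Your proposal is correct and follows essentially the same strategy as the paper's, but with a slightly different packaging of part (1): the paper fixes a point $x$, restricts $\cE$ to the $R'$-orbit $Y:=R'x\subset X$ (viewed as a copy of $V_n\backslash X$), and builds the isomorphism $\cE\IsoTo p^*(\cE')$ by hand, sending a point $x$ to $p(x)$ via a $V_n$-translate and using the fiberwise-triviality hypothesis to show the result is independent of the choice of translate; temperedness is checked exactly as you indicate, by pulling back along the submersion $a:V_n\times Y\to X$ and invoking Lemma~\ref{lem:TempTemp}. You instead phrase the descent through the stabilizer representation on $\cE_{x_0}$ and the associated bundle on $R'/Q_0'$; this is a reasonable alternative, although it transfers the burden to verifying that the associated-bundle correspondence is compatible with the tempered structure, which you rightly flag and handle via the same local-section argument. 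For part (2), the paper simply applies part (1) to $D_Y^X$ without comment; your version makes explicit the hypothesis that must be checked — that $(V_n)_x$ acts trivially on the fiber $(D_{V_n\backslash X}^X)_x$ — and your justification (the action on $T_x(V_n\cdot x)\cong\fv_n/\Lie((V_n)_x)$ is the adjoint action, trivial since $V_n$ is abelian) is the correct reason and a worthwhile addition. Both approaches rely on the same inputs (the semidirect decomposition $R=R'\ltimes V_n$, Theorem~\ref{SurSubSec}, Lemma~\ref{lem:TempTemp}) and yield the same $\cE''$.
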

\begin{proof}

(\ref{it:star})
Let $x \in X$ and let $R_x$ be the stabilizer of $x$.
%Let $Q:=G_{n-1} \times T$
Identify $V_n \backslash X$ with $Y:=R' x$. Let $\cE' := \cE|_{Y}$.

We will construct a (tempered) isomorphism of $R$-tempered bundles $\phi:\cE \IsoTo p^*(\cE')$. In order to do that we have to construct, for any $x \in X$, an element $\phi_x\in \Hom(\cE_x, p^*(\cE')_x)$. Note that $\Hom(\cE_x, p^*(\cE')_x) = \Hom(\cE_x, \cE_{p(x)}).$ We know that there exists $g \in V_n$ such that $gx = p(x)$. This $g$ gives an element in $\phi_{x}\in \Hom(\cE_x, p^*(\cE')_x)$. The element $g$ is not unique, but the assumptions of the lemma imply that $\phi_{x}$ does not depend on the choice of $g$. Now we have to show that the constructed $\phi \in \Gamma(X,\Hom(\cE, p^*(\cE')))$ is a tempered section. Consider the action map $a:V_n \times Y \to X$. Clearly, it is a surjective submersion. It is easy to see that $a^*(\phi) \in \Gamma(V_n \times Y,\Hom(a^*\cE, a^* p^*(\cE')))$ is a tempered section. Thus, Lemma \ref{lem:TempTemp} implies that $\phi \in \Gamma(X,\Hom(\cE, p^*(\cE')))$ is a tempered section.

(\ref{it:!}) By part (\ref{it:star}) there exists a bundle $(D_Y^X)'$ on $V_n \backslash X$ such that $p^*((D_Y^X)') \simeq D_Y^X$. We define $\cE'':=\cE' \otimes ((D_Y^X)')^*$.
\end{proof}

\begin{rem}\label{rem:TranBun}
An analog of this lemma holds for Nash equivariant bundles, and for multiplicative equivariant bundles, with an analogous proof. In particular, the isomorphism $p^*((D_Y^X)') \simeq D_Y^X$ is an isomorphism of Nash bundles.
\end{rem}

\begin{cor}\label{cor:TranBun}
Let $X$ be a transitive $R$-Nash manifold. Let $\cE$ be a $R$-multiplicative bundle on $X$ (see Definition \ref{def:MultBun}). Then there exists a filtration $F^i$ of $\cE$ by $R$-multiplicative bundles such that for every $i$ there exists an $R'$-multiplicative
bundle $\cE_i'$ on the geometric quotient $V_n \backslash X$ such that $F^i(\cE)/F^{i-1}(\cE)=p^!(\cE_i')$.
 \end{cor}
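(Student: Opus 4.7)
The plan is to reduce the assertion to a statement about a single fiber, using the correspondence between $R$-multiplicative bundles on the transitive space $X = R/R_{x_0}$ and multiplicative representations of the stabilizer $R_{x_0}$ of a base point $x_0 \in X$. Under this correspondence, an $R$-invariant filtration of $\cE$ by $R$-multiplicative sub-bundles matches an $R_{x_0}$-invariant filtration of the fiber $\cE_{x_0}$ by multiplicative subrepresentations.

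Write $\cE_{x_0} = W \otimes \chi$, where $W$ is a finite-dimensional Nash representation of $R_{x_0}$ and $\chi = \mu \circ \overline{\chi}$ is a multiplicative character with $\overline{\chi}: R_{x_0} \to F^\times$ a Nash (hence real-algebraic) homomorphism. The key observation is that $(V_n)_{x_0} := V_n \cap R_{x_0}$ is a unipotent algebraic subgroup of $R_{x_0}$, so the algebraic character $\overline{\chi}$ must be trivial on it; consequently $\chi|_{(V_n)_{x_0}} = 1$, and the action of $(V_n)_{x_0}$ on $\cE_{x_0}$ factors through its Nash action on $W$, which is therefore unipotent.

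Next I will produce an $R_{x_0}$-invariant filtration of $W$ on whose graded pieces $(V_n)_{x_0}$ acts trivially. Since $V_n$ is normal in $R = P_n \times T$, the subgroup $(V_n)_{x_0}$ is normal in $R_{x_0}$, so the augmentation ideal $J$ of the universal enveloping algebra of $\Lie((V_n)_{x_0}) \otimes \C$ is stable under the adjoint action of $R_{x_0}$. By unipotency the descending filtration $W \supset JW \supset J^2 W \supset \cdots$ reaches zero in finitely many steps, and each successive quotient $J^i W / J^{i+1} W$ is a Nash $R_{x_0}$-subquotient of $W$ that is annihilated by $J$, hence carries a trivial action of $(V_n)_{x_0}$. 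Tensoring with $\chi$ preserves the multiplicative structure of each piece.

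Finally I globalize this fiberwise filtration, obtaining a finite filtration of $\cE$ by $R$-multiplicative sub-bundles whose graded pieces are $R$-multiplicative bundles on which the $V_n$-stabilizer of every point acts trivially on the fiber. Applying Lemma \ref{lem:TranBun}(\ref{it:!}) in the multiplicative version stated in Remark \ref{rem:TranBun} to each graded piece then yields $R'$-multiplicative bundles $\cE_i'$ on $V_n \backslash X$ with $F^i(\cE)/F^{i-1}(\cE) \cong p^!(\cE_i')$, as desired. The main point to check carefully is the compatibility of the fiberwise construction with the tempered/multiplicative equivariant structure when passing from $\cE_{x_0}$ back to $\cE$; this is precisely where Remark \ref{rem:TranBun} is used, and it is the only nontrivial ingredient beyond the unipotence argument above.
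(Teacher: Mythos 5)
Your proof is correct and follows essentially the same approach as the paper's: both rest on the observation that the stabilizer $(V_n)_x$ acts nilpotently on the fiber $\cE_x$ (the paper isolates this as Lemma \ref{lem:FinDimNilp} and declares it ``obvious''; you spell it out, splitting the fiber as $W\otimes\chi$ and noting $\chi$ is trivial on the unipotent subgroup $(V_n)_{x_0}$), then build an $R$-stable ``nilpotency filtration'' whose graded pieces carry trivial $(\fv_n)_x$-action, and finally apply Lemma \ref{lem:TranBun}(\ref{it:!}) via Remark \ref{rem:TranBun}.

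The one structural difference is worth flagging. The paper defines the filtration globally in one shot, setting $F^i(\cE):=\{(x,e)\in\cE : ((\fv_n)_x)^{\otimes i}e=0\}$ (an ascending kernel filtration), and gets Nash-ness of $F^i(\cE)$ directly from the fact that the $\fr$-action on $\cE$ is Nash, plus $R$-equivariance (from normality of $V_n$ and transitivity of the $R$-action) to see the ranks are constant. You instead define a descending filtration $J^iW$ at the base fiber and then ``globalize,'' and you yourself flag this globalization as the main point to check. That step is not covered by Remark \ref{rem:TranBun}, which concerns descent to $V_n\backslash X$, not spreading a fiberwise filtration over $X$; to justify it cleanly you should either invoke an equivalence between $R$-multiplicative bundles on $X$ and multiplicative $R_{x_0}$-representations (which the paper does not state), or simply note that your $J^iW$-filtration at $x_0$ transported by the equivariant structure coincides with the intrinsic pointwise filtration $\ker(J_x^{\,\cdot})$ (equivalently, image of $J_x^{\,\cdot}$) because $(V_n)_x = r(V_n)_{x_0}r^{-1}$, so it is really the global kernel/image filtration and inherits Nash-ness as in the paper. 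With that small clarification the argument is complete and matches the paper's.
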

To deduce this corollary we will use the following obvious lemma.
\begin{lem}\label{lem:FinDimNilp} Let $\pi$ be a multiplicative (finite dimensional) representation of a subgroup $L$ of $V_n$. Then the Lie algebra of $L$ acts nilpotently on $\pi$.
\end{lem}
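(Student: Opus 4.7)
The plan is to reduce to the identity component $L^0$ of $L$. Since $L$ is a Nash subgroup of the vector group $V_n$, its identity component $L^0$ is itself a vector subgroup of $V_n$, and $\fl := \Lie(L) = \Lie(L^0)$. Thus it suffices to show that $\fl$ acts nilpotently on $\pi$ viewed as a representation of $L^0$. By Definition \ref{def:MultBun}, I can write $\pi = \sigma \otimes \chi$, where $\sigma$ is a finite-dimensional Nash representation of $L$ and $\chi = \mu \circ \overline{\chi}$ is a multiplicative character with $\overline{\chi}: L \to F^\times$ a Nash group homomorphism.

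The main input will be the following rigidity statement: if $\rho : (F^k,+) \to \GL_N(F)$ is a Nash group homomorphism, then the image of $d\rho : F^k \to \gl_N(F)$ consists of nilpotent matrices, and in particular the image of $\rho$ is unipotent. Indeed, for each $v \in F^k$ the restriction $t \mapsto \rho(tv) = \exp(t\,d\rho(v))$ is a semialgebraic one-parameter subgroup of $\GL_N(F)$, and such a one-parameter subgroup is semialgebraic in $t$ only if $d\rho(v)$ is nilpotent, because the exponential of any nonzero semisimple matrix produces genuine exponential or trigonometric entries which are transcendental over $F[t]$.

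Applying this rigidity to the Nash homomorphism $\sigma|_{L^0} : L^0 \to \GL(\sigma)$, I conclude that $\fl$ acts by nilpotent operators on $\sigma$. Applying it to $\overline{\chi}|_{L^0} : L^0 \to F^\times = \GL_1(F)$, the image must be unipotent inside $\GL_1(F)$ and therefore trivial, so $\chi|_{L^0} = 1$ and $\fl$ acts by zero on the character line of $\chi$. Since the tensor of a nilpotent action with a zero action is nilpotent, $\fl$ acts nilpotently on $\pi = \sigma \otimes \chi$, as desired. The only point that requires care is the rigidity statement about one-parameter Nash subgroups of $\GL_N$; everything else is a direct unwinding of Definition \ref{def:MultBun}.
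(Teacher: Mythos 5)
The paper declares this lemma ``obvious'' and supplies no proof, so there is nothing in the text to compare against; your argument gives the natural justification. The reduction to $L^0$, the decomposition $\pi = \sigma \otimes \chi$ from Definition~\ref{def:MultBun}, and the rigidity of Nash one-parameter subgroups of $\GL_N$ (which does indeed force $d\sigma$ to have image consisting of nilpotent operators and forces $\overline{\chi}|_{L^0}$, landing in the torus $\GL_1(F)$, to be trivial, hence $\chi|_{L^0}=1$) are all correct and apply in both cases $F=\R$ and $F=\C$ once $L^0$ is identified with a real vector subgroup of $V_n$.

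One point is worth making explicit, because it is exactly what the application in Corollary~\ref{cor:TranBun} uses: rigidity gives that each individual $d\sigma(X)$, $X\in\fl$, is a nilpotent operator, whereas the corollary needs the stronger statement that $\fl^{\otimes N}$ annihilates $\pi$ for some $N$ (so that the filtration $F^i(\cE)$ defined there is exhaustive). For a general Lie algebra, ``every element acts by a nilpotent operator'' does not by itself imply ``the algebra acts nilpotently''; here it does because $\fl$ is abelian, so the operators $d\sigma(X)$ commute and admit a common strictly upper-triangular form (equivalently, apply Engel's theorem to the Lie subalgebra $d\sigma(\fl)\subset\gl(\sigma)$). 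With that one sentence inserted before the final ``tensor of a nilpotent action with a zero action'' step, the proof is complete and is, as far as one can tell, the argument the authors had in mind when calling the lemma obvious.
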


\begin{proof}[Proof of Corollary \ref{cor:TranBun}]
Let $F^i(\cE) \subset \cE$ be defined by $F^i(\cE):=\{(x,e)\in \cE|  ((\fv_n)_x)^{\otimes i} e=0\}$.
By the last lemma (Lemma \ref{lem:FinDimNilp}) the filtration $F^i(\cE)$ is exhaustive. Since $\cE$ is a tempered bundle, the action of the Lie algebra $\fr$ is Nash and hence $F^i(\cE)$ are Nash. Clearly the action $(\fv_n)_x$ on  $F^i(\cE)/F^{i-1}(\cE)$ is trivial. Thus, by Lemma \ref{lem:TranBun}, we are done.
\end{proof}

\begin{proof}[Proof of Lemma \ref{lem:GenGeoGood}]

The case $Y=X$ and $X$ is $R$-transitive follows from Corollary \ref{cor:TranBun} and Lemma \ref{lem:FiltBun}. By the Borel lemma (Lemma \ref{lem:Borel}), this implies the case when $X$ is $R$-transitive and $Y$ is arbitrary. This in turn implies the general case by Corollary \ref{cor:Sc_seq}.
\end{proof}
\subsection{Induced representations as sections of multiplicative bundles
(Proof of Lemma \ref{lem:PrinGood})}\label{subsec:PfPrinGood}

%Let $\pi$ be a principal series representation. By definition, it is induced from a finite dimensional representation of the torus.
%Without loss of generality, by the definition of goodness, we can assume that the inducing representation is a character. We have shown that $\cE$ is a $G$-tempered bundle and thus, by Lemma \ref{lem:GenGeoGood}, $\pi = \Sc(X_n,\cE)$ is good.

%Let $\chi$ be a character of the torus $T_n <B_n<G_n$, continued trivially to $B_n$. Let $\pi = Ind_{B_n}^{G_n}(\chi)$.
Identify the variety $X:=G/H$ with $K/(K \cap H)$. %Clearly, the restriction of $\chi$ to $K_n \cap B_n$ is an algebraic character.
Note that under this identification $\cE\cong K\times_{K \cap H}\chi$. This is a Nash bundle.

Let us show that  the action of $G$ on $\cE$ is multiplicative.
The only non-trivial part is that any $\alpha \in \g_n$ preserves the space of Nash sections of $\cE$ on any open (Nash) subset $U\subset X$.
For this we note that
%the space of Nash sections $\Nash(U,\cE)$ on an open set $U \subset X $  is identified with
$$\Nash(U,\cE) \cong \{f \in C^\infty(p^{-1}(U))|f|_K \text{ is Nash and } f(gh)=\chi(h)f(g) \text{ for any } h \in H\},$$ where $p:G\to X$ is the quotient map.
Under this identification, an element $\alpha \in \g$ acts on $\Nash(U,\cE)$ via a Nash vector field $\beta$ on  $p^{-1}(U).$ This field can be interpreted as a Nash map $\beta:p^{-1}(U)\to \g.$ Here we identify the Lie algebra $\g$ with the tangent space at a point $g \in G$ using the differential of the left translation by $g$. Restrict $\beta$ to $K\cap p^{-1}(U)$ and choose a decomposition $\beta=\beta_1+\beta_2$ with $\beta_1:K\cap p^{-1}(U) \to  \fk$ and $\beta_2:K\cap p^{-1}(U) \to  \fh$, where $\fh$ and $\fk$ denote the Lie algebras of $H$ and $K$ respectively. Let $f \in C^\infty(p^{-1}(U))$ such that $f|_K \text{ is Nash and } f(gh)=\chi(h)f(g)$ for any $h \in H.$ Then $(\beta f)|_K=\beta_1 f|_K+d \chi\circ \beta_2 \cdot f$ which is clearly a Nash function. \proofend

\subsection[Proofs of Lemmas \ref{lem:ProdDer0} and \ref{lem:ProdLastDer}]{Derivatives of quasi-regular representations on $P_n$-orbits on flag varieties (Proofs of Lemmas \ref{lem:ProdDer0} and \ref{lem:ProdLastDer})}\label{subsec:PfLemProd}

\begin{proof}[Proof of Lemma \ref{lem:ProdDer0}]
The proof is by induction on $n$. In step 1 we reduce to the case when $Y=X=P_n / Q_{\lambda}^i$.
Let $Z$ and $Z_0$ to  be as in Lemma \ref{lem:ProdGeo} and $p:X \to Z$ denote the natural projection.
In step 2 we reduce to the case $\cE=p^!(\cE')$, where $\cE'$ is a $G_{n-1}$-multiplicative bundle on $Z$. In step 3 we prove the lemma for this case.

\begin{enumerate}[Step 1]
\item Reduction to the case $Y=X=P_n / Q_{\lambda}^i$.\\
By Lemma \ref{lem:Borel}, $\Sc_Y(X,\cE)$ is a good extension of $\pi_j:=\Sc(X,\cE|_X \otimes Sym^j(CN_X^Y))$. By Lemma \ref{lem:GenGeoGood}, $\pi_j$ are good. Thus, by
Corollary \ref{cor:ExactPf}, $E^{i+1}(\pi)$ is a good extension of $E^{i+1}(\pi _{i})$.
Thus it is enough to show that $E^{i+1}(\pi_j)=0$.

\item Reduction to the case $\cE=p^!(\cE')$, where $\cE'$ is a $G_{n-1}$-multiplicative bundle on $Z$.\\
By Corollary \ref{cor:TranBun}, we have a finite filtration on $\cE$ and $G_{n-1}$-multiplicative bundles $\cE'_i$ on $Z$ such that $p^!(\cE_i)\simeq Gr^i(\cE)$. By Lemma \ref{lem:FiltBun}, this gives a filtration on $\Sc(X,\cE)$ such that $Gr^i(\Sc(X,\cE))= \Sc(X,p^!(\cE_i))$. As before, this means that it is enough to show that $E^{i+1}(\Sc(X,p^!(\cE_i)))=0$.

\item Proof for the case $\cE=p^!(\cE')$.\\
 By the  key lemma (Lemma \ref{lem:GeoGood}), $\DimaA{\oPhi}(\pi)=\Sc(Z_0,\cE')$. Let $\lambda':=\lambda_{k-i+1}^-$, as in Lemma \ref{lem:ProdGeo}. By Corollary \ref{cor:SysRep} and Lemma \ref{lem:ProdGeo}, $Z_0=\bigcup_{j=1}^{i-1} \cO_{\lambda'}^{ij}$. By Corollary \ref{cor:Sc_seq}, there is a filtration $F^j$ on $ \Sc(Z_0,\cE')$ such that $Gr^j(\Sc(Z_0,\cE'))= \Sc_{Z_0}(\cO_{\lambda'}^{ij},\cE')$. Thus it is enough to show that $E^i(\Sc_{Z_0}(\cO_{\lambda'}^{ij},\cE'))=0$ for any $j$. By Corollary \ref{cor:SysRep}, $\cO_{\lambda'}^{ij} \cong P_{n-1}/Q_{\lam '}^j$. Thus, by the induction hypothesis, $E^i(\Sc_{Z_0}(\cO_{\lambda'}^{ij},\cE'))=0$.
\end{enumerate}
\end{proof}

For the proof of Lemma \ref{lem:ProdLastDer}, consider the embedding of $G_{n-1}$ to $G_n$ obtained by conjugating the standard embedding by the permutation matrix corresponding to the permutation $(m_{\lambda}^i, m_{\lambda}^i+1 , \dots ,  n)$. Under this embedding, $P_{\lambda^-_i}$ embeds into $P_{\lambda}$. Denote $\beta_{k}^i:=(|\det|^{-1/2} , \dots ,  |\det|^{-1/2},1,|\det|^{1/2} , \dots ,  |\det|^{1/2}) \in \Chi^k$, where $1$ stands on place number $i$.
We will need the following straightforward computation.
\begin{lem}%\label{lem:ProdLastDer?}
Let $\alp \in \Chi^k$.
 Then $$\chi_{\lam,\alp}|_{P_{\lambda^-_i}} = \chi_{\lam^-_i,\alp \cdot \beta_{k}^i},$$
 where $\alp \cdot \beta_{k}^i \in \Chi^k$ denotes the coordinate-wise product.
\end{lem}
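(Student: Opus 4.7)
The plan is to verify the identity on the Levi $L_{\lam^-_i}$. Since $\chi_{\lam,\alp}$ and $\chi_{\lam^-_i,\alp\cdot\bet_k^i}$ are by definition trivial on the unipotent radicals of $P_{\lam}$ and $P_{\lam^-_i}$, and the specified embedding $G_{n-1}\hookrightarrow G_n$ carries $P_{\lam^-_i}$ into $P_{\lam}$ with unipotent radical into unipotent radical (a direct check from the block structure), it suffices to check the equality on Levi elements.

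First, I will make the embedding explicit on $L_{\lam^-_i}$. Conjugating the standard embedding by the cycle $(m_{\lam}^i,m_{\lam}^i+1,\dots,n)$ places the extra $1$ at position $m_{\lam}^i$, and therefore sends $(g_1,\dots,g_k)\in L_{\lam^-_i}=G_{n_1}\times\cdots\times G_{n_{i-1}}\times G_{n_i-1}\times G_{n_{i+1}}\times\cdots\times G_{n_k}$ to the block-diagonal element of $L_{\lam}$ whose $j$-th block (of size $n_j$) is $g_j$ for $j\ne i$ and $\diag(g_i,1)\in G_{n_i}$ for $j=i$. In particular $\det\diag(g_i,1)=\det g_i$, so the determinant of each Levi block is preserved.

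Second, split each side as the product of the algebraic character $\overline{\xi}$ and the modulus half-character $\Delta_P^{1/2}|_L$. The determinant identities just noted give $\overline{\xi}_{\lam,\alp}|_{L_{\lam^-_i}}=\overline{\xi}_{\lam^-_i,\alp}$ immediately, so the whole content of the lemma reduces to the identity of modulus characters
\[
\Delta_{P_{\lam}}^{1/2}\big|_{L_{\lam^-_i}}\;=\;\bet_k^i\cdot\Delta_{P_{\lam^-_i}}^{1/2},
\]
where $\bet_k^i$ is viewed as a character of $L_{\lam^-_i}$ via $(g_1,\dots,g_k)\mapsto\prod_j(\bet_k^i)_j(\det g_j)$. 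Using the explicit formula $\Delta_{P_\mu}^{1/2}(h_1,\dots,h_k)=\prod_j|\det h_j|^{(|\mu|-m_\mu^j-m_\mu^{j-1})/2}$ (with $|\mu|=m_\mu^k$) together with the relations $m_{\lam^-_i}^j=m_{\lam}^j$ for $j<i$, $m_{\lam^-_i}^i=m_{\lam}^i-1$, and $m_{\lam^-_i}^j=m_{\lam}^j-1$ for $j>i$, the exponent of $|\det g_j|$ in the ratio works out to a half-integer shift of size $1/2$ for $j\ne i$ and $0$ for $j=i$, with opposite signs on the two sides of $i$---precisely the data defining $\bet_k^i$.

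The whole argument is a direct unfolding of definitions; the only care needed is careful bookkeeping of the partial sums $m_{\lam}^j$ and $m_{\lam^-_i}^j$ and of the signs of the half-integer shifts appearing in $\bet_k^i$. No serious obstacle is expected.
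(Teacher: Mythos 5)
Your overall plan (make the embedding explicit on the Levi, observe that determinants of the Levi blocks are preserved, split $\chi_{\lam,\alp}$ into the algebraic part $\overline{\xi}$ and the half-modulus $\Delta_P^{1/2}$, and match exponents) is the right one and is presumably exactly what the authors mean by ``straightforward computation.'' The first two steps are fine; in particular the description of the conjugated embedding sending $(g_1,\dots,g_k)\in L_{\lam^-_i}$ to the block-diagonal element with $j$-th block $g_j$ for $j\ne i$ and $\diag(g_i,1)$ for $j=i$ is correct, and so is the reduction to the modulus identity.

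The gap is in the last step, where you assert that the exponents give ``precisely the data defining $\beta_k^i$'' without actually writing down the signs. If you carry the bookkeeping to the end, using $m_{\lam^-_i}^j=m_\lam^j$ for $j<i$ and $m_{\lam^-_i}^j=m_\lam^j-1$ for $j\ge i$, the exponent of $|\det g_j|$ in $\Delta_{P_\lam}^{1/2}|_{L_{\lam^-_i}}\big/\Delta_{P_{\lam^-_i}}^{1/2}$ is
\[
\tfrac12\Bigl[(n-m_\lam^j-m_\lam^{j-1})-\bigl((n-1)-m_{\lam^-_i}^j-m_{\lam^-_i}^{j-1}\bigr)\Bigr]
=\begin{cases}+\tfrac12,& j<i,\\[2pt] 0,& j=i,\\[2pt] -\tfrac12,& j>i,\end{cases}
\]
i.e.\ $\Delta_{P_\lam}^{1/2}|_{L_{\lam^-_i}}=\bigl(|\det|^{1/2},\dots,|\det|^{1/2},1,|\det|^{-1/2},\dots,|\det|^{-1/2}\bigr)\cdot\Delta_{P_{\lam^-_i}}^{1/2}$, which is the \emph{inverse} of the $\beta_k^i$ defined in Notation \ref{not:Orbits2}. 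A conceptual check gives the same answer: $\fn_\lam/\fn_{\lam^-_i}\cong\bigoplus_{a<i}M_{n_a\times 1}\oplus\bigoplus_{b>i}M_{1\times n_b}$ as an $L_{\lam^-_i}$-module (the extra column/row corresponding to the inserted index $m_\lam^i$), on which $\det\Ad$ equals $\prod_{a<i}\det g_a\cdot\prod_{b>i}(\det g_b)^{-1}$, again giving exponents $+\tfrac12$ before $i$ and $-\tfrac12$ after. So either the paper's $\beta_k^i$ carries a sign typo, or the stated lemma is wrong as written; in either case, asserting that your (unwritten) signs ``match $\beta_k^i$'' papers over the discrepancy. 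You should write out the signs explicitly and flag the mismatch rather than declare agreement.
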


\begin{proof}[Proof of Lemma \ref{lem:ProdLastDer}]
The proof is by induction. Let us first prove the base $i=1$.
Note that $X=P_n/Q_{\lambda}^1\cong G_{n-1}/P_{\lambda_k^-}$. Thus
$\pi|_{G_{n-1}}=\Sc(G_{n-1}/P_{\lambda_k^-},\cE)$. Consider the fiber $\cE_{x_0}$  as a character of $P_{\lambda_k^-}$. We have $\cE_{x_0}=\chi_{\lam,\alp}|_{P_{\lambda_k^-}}=\chi_{\lam^-_k,\alp \cdot \beta_{k}^k}$. Thus $$\pi|_{G_{n-1}}=  \chi_{(n_1),(\alp_1 |\det|^{-1/2})} \times  \RamiA{\cdots}  \times\chi_{(n_{k-1}),(\alp_{k-1}|\det|^{-1/2})} \times \chi_{(n_{k}-1),(\alp_{k})} .$$

For the induction step, we assume $i>0$ and let $Z$, $Z_0$ and $\lam '$ be as in Lemma \ref{lem:ProdGeo} and $p:X \to Z$ denote the natural projection. By Lemma \ref{lem:TranBun} (and Remark \ref{rem:TranBun}) there exists a $G_{n-1}$-multiplicative equivariant bundle $\cE'$ on $Z$ such that $\cE=p^!(\cE')$. By the
 key lemma (Lemma \ref{lem:GeoGood}), $\Phi(\pi)=\Sc(Z_0,\cE')\otimes |\det|^{-1/2}$. By Corollary \ref{cor:SysRep} and Lemma \ref{lem:ProdGeo}, $Z_0=\bigcup_{j=1}^{i-1} \cO_{\lambda'}^{ij}$. By Corollary \ref{cor:Sc_seq}, there is a filtration $F^j$ on  $\Sc(Z_0,\cE')$ such that $Gr^j(\Sc(Z_0,\cE'))= \Sc_{Z_0}(\cO_{\lambda'}^{ij},\cE')$. By Corollary \ref{cor:SysRep}, $\cO_{\lambda'}^{ij} \cong P_{n-1}/Q_{\lam '}^j$. Thus, by Lemma \ref{lem:ProdDer0}, $\Phi^{i-2}(\Sc_{Z_0}(\cO_{\lambda'}^{ij},\cE'))=0$ for $j<i-1$.
Thus
\begin{multline*}
E^i(\pi)=\Phi^{i-1}(\pi) = \Phi^{i-2}(\Phi(\pi)) = \Phi^{i-2}(\Sc_{Z_0}(\cO_{\lambda'}^{i,i-1},\cE')\otimes |\det|^{-1/2})=\\ \Phi^{i-2}(\Sc(\cO_{\lambda'}^{i,i-1},\cE'\otimes |\det|^{-1/2}))=E^{i-1}(\Sc(\cO_{\lambda'}^{i,i-1},\cE'\otimes |\det|^{-1/2})).
\end{multline*}
In the last expression we consider the character $|\det|^{-1/2}$ of $P_{n-1}$ as a constant $P_{n-1}$-equivariant bundle on $Z_0$.

Consider $\nu_1:=\cE'|_{x_{\lambda}^{i,i-1}}$ and $\nu_2:=\cE|_{x_{\lambda}^{i,i-1}}$ as characters of $(P_{n-1})_{x_{\lambda'}^{i,i-1}} = Q_{\lam '}^{i-1}$.
Note that $$\cE'|_{x_{\lambda'}^{i,i-1}} = \cE|_{x_{\lambda'}^{i,i-1}} \otimes D^*_{X}|_{x_{\lambda'}^{i,i-1}} \otimes D_Z|_{x_{\lambda'}^{i,i-1}} $$
and thus $\nu_1 = \nu_2 \otimes \overline{\chi}_{\lam ',\beta}^{i-1}|_{Q_{\lam '}^{i-1}},$
where $\beta=(|\det| , \dots ,  |\det|,1 , \dots ,  1)$, where the last appearance of $|\det|$ is in the place $k-i+1$.
%?? wishful thinking (\beta_k^{k-i+1})^{-1} (|\det|^{1/2} , \dots ,  |\det|^{1/2})=
%(|\det| , \dots ,  |\det|,|\det|^{1/2},1 , \dots ,  1)$
Thus
\begin{multline*}(\cE'\otimes |\det|^{-1/2})|_{x_{\lambda}^{i,i-1}}= \nu_1 \otimes |\det|^{-1/2} = \nu_2  \cdot  \overline{\chi}_{\lam ',\beta}^{i-1}|_{Q_{\lam '}^{i-1}} \cdot  |\det|^{-1/2}  =\\
(\chi_{\lam,\alp}^i|_{Q_{\lam '}^{i-1}} \cdot  \overline{\chi}_{\lam ',\beta}^{i-1}|_{Q_{\lam '}^{i-1}} \cdot  |\det|^{-1/2})| =
((\chi_{\lam,\alp}^i|_{P_{\lam'}^{i-1}})\cdot  \overline{\chi}_{\lam ',\beta}^{i-1} \cdot  |\det|^{-1/2})|_{Q_{\lam '}^{i-1}}= \\
(\chi_{\lam ',\alp \cdot \beta_{k}^{k-i+1}}^{i-1}  \cdot  \overline{\chi}_{\lam ',\beta}^{i-1} \cdot  |\det|^{-1/2})|_{Q_{\lam '}^{i-1}} = \chi_{\lam ',\alp'}^{i-1}|_{Q_{\lam '}^{i-1}},
\end{multline*}
where $\alp'=(\alp_1 , \dots ,  \alp_{k-i},\alp_{k-i+1}|\det|^{1/2},\alp_{k-i+2} , \dots ,  \alp_k)$.

Thus, by the induction hypothesis,
\begin{multline*}E^{i}(\pi)|_{G_{n-i}} = E^{i-1}(\Sc(\cO_{\lambda}^{i-1,j},\cE'\otimes |\det|^{-1/2})) = \\
\chi_{(n_1),(\alp_1 |\det|^{-1/2})} \times  \RamiA{\cdots}  \times\chi_{(n_{k-i}),(\alp_{k-i}|\det|^{-1/2})} \times \chi_{(n_{k-i+1}-1),(\alp_{k-i+1})} \times  \RamiA{\cdots}  \times \chi_{(n_k-1),(\alp_k)}
\end{multline*}
\end{proof}

\section{Homology of geometric representations and the proof of the  key lemma (Lemma \ref{lem:GeoGood})} \label{sec:PfGeoGood}
\subsection{
Sketch of the proof}
We have to compute $\fv_n$-homology of the representation
$\Sc(X,\cE)\otimes (-\psi)$.
We first note that this task is local on $X'=V_n\backslash X$, i.e. if we cover $X'$ by subsets $U_i$ it is enough to compute the homology of $\Sc(p^{-1}(U_i),\cE)\otimes (-\psi)$. Now we can cover $X'$ by refined enough cover s.t. for each $U_i$ , the space $p^{-1}(U_i)$ will look like a product $U_i \times W$ where $W$ is  a $V_n$-orbit and the bundle $\cE|_{U_i}$ is trivial. Thus we reduced to computation of homology of  $\Sc(U_i \times W)\otimes (-\psi)$.

Note that the action of $V_n$ on $U_i \times W$ is not the usual product action but rather a twisted product, i.e. the action $V_n$ on its orbit $\{x\} \times W$ depends on the point $x \in U_i$. We can untwist this product (see Proposition \ref{prop:Actions}) but this will cause a twist in the character $\psi$. Namely, it will replace it by a line bundle $\cE$ where the action of $V_n$ on $\cE_{\{x\} \times W}$ depends on the point $x \in U_i$. Thus we reduce to computation of homology of $\Sc(U_i \times W,\cE)$.

Now we use a relative version of the Shapiro Lemma (see Theorem \ref{ShapLem}) in order to reduce to the computation of the homology of $\Sc(U_i,\cE)$ as a representation of the stabilizer $(V_n)_0$ in $V_n$ of a point  $0 \in W$.  Note that the action of $(V_n)_0$ on $U_i$ is trivial and thus we can view $\Sc(U_i,\cE) $  as a family of characters, i.e. it is defined by a map $\phi$ from $U_i$  to the space $(V_n)_0^*$ of characters of $(V_n)_0$. In Lemma \ref{lem:SimpHo} we compute homology of such families under the assumption that $\phi$ is submersive at the trivial character. This assumption is satisfied in our case due to the action of $P_n.$

\begin{remark}
The proof is based on series of reductions. If we were interested only in acyclicity then one could give a relatively simple proof in which each of those reductions is given by general statements. However we are also interested in computation of $\oH_0$ and this makes those reductions more complicated. For example in the first step when we say that the computation is local one should explain what does this mean. We do it by constructing an explicit morphism $I:\oH_0(\fv_n,\Sc(X,\cE)\otimes (-\psi)) \to \Sc(X_0',\cE|_{X_0'})$ and proving that this morphism is an isomorphism rather than proving that there exists some isomorphism. This forces us to make each reduction more explicit. This is sometimes unpleasant since some of the reductions e.g. the Shapiro lemma are using the general machinery of homological algebra which usually is not so explicit.

Therefore  if the reader is not interested in all the details we recommend him to skip all the parts that regard $\oH_0$ and concentrate only on the acyclicity. The computation of $\oH_0$ is essentially the same but its exposition is much longer.
\end{remark}

%==========================================\\
%We will need the following lemmas.
\subsection{Ingredients of the proof}
We will need the following version of Shapiro lemma.

\begin{theorem}[Relative Shapiro lemma]\label{ShapLem}
Let $G$ be an affine Nash group and $X$ be a transitive Nash $G$-manifold.
Let $Y$ be a Nash manifold.
Let $x \in X$ and denote $H:=G_x$. Let $\cE\rightarrow X \times Y$ be a
$G$ equivariant Nash bundle.
Suppose that $G$ and $H$ are homologically trivial (i.e. all their homology except $\oH_0$
vanish and $\oH_0=\R$).
Let $\Chi:G \times Y \to \R$ be a Nash map such that for any $y\in Y$, the map $\Chi|_{G \times \{y\}}$ is a group homomorphism.
Let $\Chi'$ be  1-dimensional $G$-equivariant bundle on $Y$, with action of $G$ given by $g(y,v)=(y,\theta(\Chi(g,y))v)$.
Let $\cE' := \cE \otimes (\C \boxtimes \Chi')$, where $\C$ denotes the trivial bundle on $X$ and $\boxtimes$ denotes exterior tensor product.
%
%Let $\cE'$ be a $G$-equivariant bundle on $ M \times N$ defined in the following way: as a vector bundle $\cE'$ is the same as $\cE$, and the $G$ action is %given by $g(m,n,v)$
Then
$$\oH_i(\mathfrak{g},\Sc(X\times Y,\cE'))\cong \oH_i(\mathfrak{h},\Sc(\{x\}\times Y,\cE'_{\{x\}\times Y}\otimes \Delta_{H} \cdot (\Delta_G^{-1})|_H)),$$
where $\Delta_{H}$ and $\Delta_G$ denote the modular characters of the groups $H$ and $G$.
\end{theorem}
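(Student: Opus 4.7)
The plan is to establish the relative Shapiro lemma via a bi-fibration argument on $G\times Y$, exploiting two commuting homological computations and then invoking the composition-of-derived-functors principle (Lemma \ref{HomLeib}(ii)) in two orders.

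First I would introduce the Nash submersion $\tilde p\colon G\times Y\to X\times Y$, $(g,y)\mapsto (gx,y)$, which exhibits $X\times Y$ as the geometric quotient of $G\times Y$ by the free right-$H$-action on the first factor (using Proposition \ref{prop:GeoQuo} and Corollary \ref{cor:GeoQuo}). Set $\tilde\cE := \tilde p^!(\cE')$. Then $\Sc(G\times Y,\tilde\cE)$ carries two commuting actions: a $\g$-action coming from left translation by $G$ on the first factor twisted by the $G$-equivariant structure of $\cE'$, and an $\fh$-action coming from right translation of $H$ on the first factor, twisted by the principal-bundle structure. Both are differentiations of Nash group actions, so the relevant Lie-algebra homologies are well-defined in $\cM(\g\oplus\fh)$.

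Second, I would analyse the $\fh$-homology of $\Sc(G\times Y,\tilde\cE)$. By Theorem \ref{SurSubSec} we may trivialise $G\to G/H$ locally on $X$; in each trivialisation $\Sc(G\times Y,\tilde\cE)$ becomes a completed tensor product $\Sc(H)\ctp \Sc(U\times Y,\cE')$ (after absorbing the density twist $D^{X\times Y}_{G\times Y}$, which restricts on fibres to $D_H$). Hence the version of Lemma \ref{lem:NewLeib} appropriate to the $H$-principal bundle identifies $\Ker(\tilde p_*)$ with $\fh\cdot\Sc(G\times Y,\tilde\cE)$, giving $\oH_0(\fh,\Sc(G\times Y,\tilde\cE))\cong \Sc(X\times Y,\cE')$; and the homological triviality of $H$ (combined with the Künneth-style decomposition above and the exactness of $\ctp$ with nuclear Fr\'echet spaces from Proposition \ref{prop:ctp_ext}) yields $\oH_i(\fh,\Sc(G\times Y,\tilde\cE))=0$ for $i>0$. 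Symmetrically, the $G$-equivariant structure of $\cE'$ trivialises $\tilde\cE$ along $G$-orbits: the assignment $(g,y)\mapsto g^{-1}\cdot$ pulls back sections to $q$-equivariant sections along $q\colon G\times Y\to Y$, converting the $\g$-action by left translation into the $\g$-action defined purely by the fibre representation $\cE'_{\{x\}\times Y}$. A second application of Lemma \ref{lem:NewLeib} and the homological triviality of $G$ then give $\oH_0(\g,\Sc(G\times Y,\tilde\cE))\cong \Sc(\{x\}\times Y, \cE'|_{\{x\}\times Y}\otimes \Delta_H\cdot\Delta_G^{-1}|_H)$ and $\oH_i(\g,\Sc(G\times Y,\tilde\cE))=0$ for $i>0$; the modular character twist arises exactly from comparing $D_G$ with left-invariant measures and restricting to $H$ via $D_{G/H}^G$.

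Third, I would assemble the two computations via Lemma \ref{HomLeib}(ii). Since $\Sc(G\times Y,\tilde\cE)$ is $\fh$-acyclic and $\oH_0(\fh,-)$ is $\Sc(X\times Y,\cE')$, the total derived functor of $\oH_*(\g,\oH_0(\fh,-))$ computes $\oH_*(\g,\Sc(X\times Y,\cE'))$; but the vanishing of higher $\fh$-homology also identifies this iterated derived functor with the homology of the total complex computing $\oH_*(\g\oplus\fh,\Sc(G\times Y,\tilde\cE))$. Running the same argument in the opposite order (using $\g$-acyclicity) yields $\oH_*(\fh,\Sc(\{x\}\times Y,\cE'|_{\{x\}\times Y}\otimes \Delta_H\cdot\Delta_G^{-1}|_H))$. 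These two evaluations of the same bi-functor give the claimed isomorphism.

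The main obstacle will be the careful bookkeeping of the modular-character twists: tracking $D_G$, $D_H$ and $D_{G/H}^G$ through the two pushforward identifications and verifying they combine to precisely $\Delta_H\cdot\Delta_G^{-1}|_H$ on the right-hand side. A secondary technical point is the Schwartz-topological formulation of the local trivialisation, which has to be done compatibly with the Fr\'echet structure; this is essentially handled by Theorem \ref{thm:SchwarProp}(\ref{pCosheaf}) and (\ref{SchKer}) together with Proposition \ref{prop:ctp_ext}. The character family $\Chi$ and its associated bundle $\Chi'$ are comparatively harmless: since $\Chi'$ is constant along the $X$-direction, the twist $\C\boxtimes\Chi'$ commutes with $\tilde p_*$ and $q_*$ and is simply carried along untouched throughout the argument, contributing the same factor on both sides of the final isomorphism.
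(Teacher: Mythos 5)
Your proposal is correct in outline, and it is a genuinely different organization of the argument than the one in the paper, though both ultimately rest on the same package of foundational results from \cite{AGRhamShap} and \cite{AGST}. The paper does not introduce the module $\Sc(G\times Y,\tilde p^{!}\cE')$ or the commuting $\g\oplus\fh$-action at all. Instead, it uses Theorem \ref{recipe} to replace $\oH_*(\g,\Sc(X\times Y,\cE'))$ by the $\g$-coinvariants of the relative de Rham complex $DR^{\cE'}_{\Sc}(X\times Y\times G\to X\times Y)$, whose terms are $\g$-acyclic; then Corollary \ref{H2G} identifies the $\g$-coinvariants of each term with the $\fh$-coinvariants of the corresponding term of $DR^{\cdots}_{\Sc}(\{x\}\times Y\times G\to \{x\}\times Y)$, compatibly with the de Rham differentials; and a second application of Theorem \ref{recipe} reinterprets the resulting complex as computing $\oH_*(\fh,\cdots)$. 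Your approach replaces the graded de Rham resolution by a single bi-acyclic object $M:=\Sc(G\times Y,\tilde p^{!}\cE')$ carrying commuting $\g$- and $\fh$-actions, establishes acyclicity and the degree-zero identification in both directions (essentially Corollary \ref{Lokatavnu} and Corollary \ref{DRgroupCor}, applied once to the right-$H$ action and once to the left-$G$ action after untwisting), and then compares the two via Lemma \ref{HomLeib}(ii) applied in both orders to $\cC(\g\oplus\fh)=\cC(\g)\circ\cC(\fh)=\cC(\fh)\circ\cC(\g)$. This is conceptually tidier in that the ``two-sided'' nature of the Shapiro principle is made explicit at the level of a single module rather than a complex; the paper's route is more mechanical but was the natural one given that the relative de Rham machinery was already in place.

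Two points are worth tightening. First, the claim that the character twist $\C\boxtimes\Chi'$ is ``carried along untouched'' is not quite accurate: on the $\fh$ side nothing needs to be done (the $H$-equivariant structure on $\tilde p^{!}\cE'$ coming from the $H$-invariance of $\tilde p$ is Nash), but on the $\g$ side the $G$-equivariant structure on $\cE'$ involves $\theta\circ\Chi$ and is therefore only tempered, not Nash, so Corollary \ref{Lokatavnu} does not apply directly; one must first invoke the untwisting isomorphism of Lemma \ref{uf} (or its obvious analogue) to pass to a Nash $G$-equivariant bundle. Second, rather than locally trivializing $G\to G/H$ by Theorem \ref{SurSubSec} and then appealing to a K\"unneth decomposition plus an ad hoc ``version'' of Lemma \ref{lem:NewLeib}, it is cleaner and shorter to cite Corollary \ref{Lokatavnu} and Corollary \ref{DRgroupCor} outright; those are precisely the global statements your local argument is re-deriving. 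Finally, the modular character bookkeeping you flag as the main obstacle is indeed where care is needed: one must track that $D^{G\times Y}_{X\times Y}$ restricts on $H$-orbits to $D_H$, that $D^{G\times Y}_{Y}$ is $D_G\boxtimes\C$, and that the leftover factor on $\{x\}\times Y$ is $D_X^{\pm 1}|_x$, which as an $H$-module is $\Delta_H\cdot\Delta_G^{-1}|_H$ in one of the two directions; getting the exponent right requires keeping consistent orientation conventions for $D_{\phi}=\phi^{*}(D_Y^{*})\otimes D_X$ throughout.
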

The proof of this theorem is along the lines of the proof of \cite[Theorem 4.0.9]{AGRhamShap}.
We give it in Appendix \ref{app:PfShapLem}.

%\begin{lem}[Relative Shapiro Lemma]\label{lem:FrobShap}
%Let $G$ be a Nash group.
%Let $X$ and $Y$ be $G$-Nash manifolds. Suppose that $Y$ is $G$-transitive and  that $G$ and $G_y$ are unimodular. Suppose that $Y$, $G$ and $G_y$ are contractible. Let $\cE$ be a $G$-equivariant tempered bundle on $X$. Then $\oH_*(\g, \Sc(X,\cE))=\oH_*(\g_y, \Sc(\cE_{X_y})).$
%\end{lem}
%

\begin{lem}\label{lem:SimpHo}
Let $X$ be a Nash manifold and $V$ be a real vector space. Let $\phi:X \to V^*$ be a Nash map. Suppose that $0 \in V^*$ is a regular value of $\phi$. It gives a map $\chi: V \to \cT(X)$ given by $\chi(v)(x)= \theta(\phi(x)(v))$ (recall that $\cT(X)$ denotes the space of tempered functions). This gives an action of $V$ on $S(X)$ by $\pi(v)(f):= \chi(v) \cdot f$. Then
\begin{enumerate}[(i)]
\item \label{it:SimpAcyc}
$\oH_i(\fv,\Sc(X))=0$ for $i>0$.

\item \label{it:SimpH0} Let $X_0:=\phi^{-1}(0)$. Note that it is smooth. Let $r$ denote the restriction map $r:\Sc(X) \to \Sc(X_0)$. Then $r$ gives an isomorphism $\oH_0(\fv,\Sc(X)) \IsoTo \Sc(X_0)$.
\end{enumerate}
\end{lem}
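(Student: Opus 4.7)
The $\fv$-action on $\Sc(X)$ is by multiplication: differentiating $\pi(tv)f = \theta(t\phi(\cdot)(v))f$ at $t=0$ gives $v\cdot f = c\,\phi(\cdot)(v)\cdot f$ with $c = \sqrt{-1}\pi$. Fix a basis $v_1,\ldots,v_n$ of $V$ and set $f_i(x) := c\phi(x)(v_i)$. Since $\fv$ is abelian, $\oH_\ast(\fv,\Sc(X))$ is computed by the Koszul complex $K_\bullet = K_\bullet(f_1,\ldots,f_n;\Sc(X))$. Each $f_i$ vanishes on $X_0 = \phi^{-1}(0)$, so $\sum_i f_i\Sc(X) \subset \ker r$, and Theorem \ref{thm:SchwarProp}(\ref{prop:ExtClose}) gives that the induced map $\bar r : \oH_0(\fv,\Sc(X)) \to \Sc(X_0)$ is surjective. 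Both (i) and the injectivity of $\bar r$ will follow from exactness of the augmented complex $\cdots \to K_1 \to K_0 \to \Sc(X_0) \to 0$, which is what I will prove.

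To do so I reduce to a local problem. The Koszul differential commutes with multiplication by any tempered function, in particular with the pieces $\lambda_\alpha$ of a tempered partition of unity (Theorem \ref{thm:SchwarProp}(\ref{pCosheaf})) subordinate to a finite open Nash cover $\{U_\alpha\}$. Thus every Koszul cycle $z \in K_k$ decomposes as $z = \sum_\alpha \lambda_\alpha z$ with $\lambda_\alpha z \in K_k(U_\alpha)$ still a cycle, and if each $\lambda_\alpha z$ is a boundary on $U_\alpha$, so is $z$; the analogous bookkeeping for $K_0$ aligns $\ker r$ with the sum of the local kernels. Using Theorem \ref{loctriv} together with the Nash submersion theorem applied at each $x \in X_0$ (where the regular-value hypothesis straightens $\phi$ to a linear projection), and using the semi-algebraic open sets $W_i := \{f_i \neq 0\}$ which cover $X \setminus X_0$, I obtain a finite open Nash cover of $X$ consisting of two types of sets: (a) open $W \subset X \setminus X_0$ on which some fixed $f_i$ is nowhere zero, and (b) charts $U \cong \R^n \times \R^m$ in which $\phi$ is the projection $(y,z)\mapsto y$ and $X_0 \cap U$ is $\{0\}\times\R^m$.

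On a set of type (a), Lemma \ref{lem:DivSc} applied to $f_i$ (whose zero set in $W$ is empty, so the regularity hypothesis is vacuous) yields $f_i \Sc(W) = \Sc(W)$; multiplication by $f_i$ is also injective, hence by the open mapping theorem a topological isomorphism of nuclear Fr\'echet spaces. The Koszul complex on $\Sc(W)$ therefore factors as the contractible two-term complex for $f_i$ tensored with the remaining Koszul complex and is itself contractible; this matches $\Sc(W \cap X_0) = 0$. On a set of type (b), the Schwartz kernel theorem (Theorem \ref{thm:SchwarProp}(\ref{SchKer})) gives $\Sc(U) = \Sc(\R^n) \hot \Sc(\R^m)$, and the Koszul complex factors as $K_\bullet(y_1,\ldots,y_n;\Sc(\R^n)) \hot \Sc(\R^m)$; by Proposition \ref{prop:ctp_ext} it suffices to resolve by $\Sc(\{0\})=\C$ the model Koszul complex on $\Sc(\R^n)$. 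I prove the latter by induction on $n$: the base $n=1$ is Lemma \ref{lem:DivSc} combined with injectivity of multiplication by the coordinate, and the step uses $\Sc(\R^{n+1}) = \Sc(\R) \hot \Sc(\R^n)$ with Proposition \ref{prop:ctp_ext} to tensor the one-variable resolution with the inductive one. The main bookkeeping to verify is that in case (b) the local isomorphism $\oH_0 \cong \Sc(X_0 \cap U)$ is literally induced by restriction to $\{0\}\times\R^m$, so that the partition-of-unity sum recovers $\bar r$ on the nose; once this compatibility is established, exactness of the augmented complex and hence both claims of the lemma follow.
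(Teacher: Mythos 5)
Your proof is correct in spirit but takes a genuinely different route from the paper. The paper proves the lemma by induction on $\dim V$: it peels off a one-dimensional subspace $L < V$, applies the $\dim V=1$ base case to identify $\oH_*(\fl,\Sc(X))$ with $\Sc(X_{L,0})$ concentrated in degree $0$, observes that the residual $V/L$-action on $\Sc(X_{L,0})$ is again of the form covered by the lemma, and finishes with Hochschild--Serre (Lemma \ref{lem:AcycComp}); the geometric localization is hidden entirely inside the black box Lemma \ref{lem:DivSc}. You instead localize $X$ at the outset into two kinds of opens and compute the Koszul complex explicitly on each. That gives a more transparent geometric picture (and in particular makes the identification $\oH_0 \cong \Sc(X_0)$ via restriction visibly local), but it pushes the real work into the model case, where you have to pay for it.

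There are two places where your sketch glosses over nontrivial bookkeeping. First, the claim that exactness of $K_\bullet(y_1,\dots,y_n;\Sc(\R^n))\to\C\to 0$ tensored with $\Sc(\R^m)$ follows ``by Proposition \ref{prop:ctp_ext}'' needs unpacking: that proposition handles only three-term short exact sequences with a closed embedding, so to apply it to a length-$(n+1)$ complex you must first verify that each differential has closed image (so the cycle spaces are nuclear Fr\'echet) and then splice the complex into short exact sequences. The same applies to your inductive step ``tensor the one-variable resolution with the inductive one'': a K\"unneth statement for completed tensor products of complexes of nuclear Fr\'echet spaces is not off the shelf here. The cleanest repair is to drop the K\"unneth framing and instead mimic the paper's step internally: apply the one-variable result to $y_1$ (giving $\oH_0(\fl_1,\Sc(\R^{n+m}))\cong\Sc(\R^{n-1+m})$, acyclic above degree $0$ with closed image), observe the residual $(y_2,\dots,y_n)$-action is again of the required shape, and invoke Lemma \ref{lem:AcycComp}. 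At that point your type-(b) local computation has become the paper's induction on $\dim V$ restricted to the straightened model, which is perhaps a sign that the localization is not actually buying you work at that stage. Second, Corollary \ref{NashEquivSub} produces charts Nash-diffeomorphic to open semialgebraic subsets $W\subset\R^n\times\R^m$ on which $\phi$ becomes a coordinate projection, not charts equal to all of $\R^n\times\R^m$; you either need to argue that the Koszul complex for the coordinate functions $y_i$ is already exact on $\Sc(W)$ for such $W$ (for which the natural tool is again Lemma \ref{lem:DivSc} applied one variable at a time), or do extra work to refine the cover. Neither gap is fatal, but both are exactly the technicalities that the paper's induction on $\dim V$ is designed to sidestep, which is why the published proof is shorter than your sketch once all details are filled in.

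Two minor remarks: in your type-(a) analysis you do not need Lemma \ref{lem:DivSc} at all --- a nowhere-vanishing Nash function on $W$ has a Nash reciprocal, so multiplication is already a topological isomorphism on $\Sc(W)$; and one should work throughout with the real-valued functions $\bar\phi(\cdot)(v_i)$ rather than $f_i=\sqrt{-1}\pi\,\bar\phi(\cdot)(v_i)$ when invoking Lemma \ref{lem:DivSc}, as the paper does.
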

We will prove this lemma in \S \ref{subsec:PfSimpHo}

\begin{notation}\label{not:tau}
In the situation of the Lemma we will denote the action of $\fv$ on $\Sc(X)$ by $\tau_{X,\phi}$.
\end{notation}

In order to check the conditions of Lemma \ref{lem:SimpHo} we will need the following lemma that we will prove in \S \ref{subsec:Pf0Reg}.

\begin{lem}\label{lem:0Reg}
Let a Nash group $G$ act linearly on a finite-dimensional vector space $V$ over $F$, such that the action on $V^* \setminus 0$ is transitive. Let $Q$ be a closed Nash subgroup of $G$ and $L$ be a vector subspace of $V$ stabilized by $Q$. Let $\varphi \in V^*$ be a functional. Consider the map $a:G \to L^*$ defined by $a(g):=g\varphi|_L$. Let $U \subset G/Q$ be an open (Nash) subset and $s : U \to G$ be a local Nash section of the canonical projection $p:G \to G/Q$. Then $0$ is a regular value of $\mu := a \circ s$.
\end{lem}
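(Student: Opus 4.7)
The plan is to factor the problem into two independent facts: first that $a : G \to L^*$ itself is a submersion at every point (under the implicit assumption $\varphi \neq 0$, since otherwise $a \equiv 0$ and the conclusion forces $L=0$), and second that the local section $s$ picks out a complement of $\ker dp$ on which $da$ remains surjective when $a=0$. Concretely, I would first write $a = r \circ f$ where $f:G\to V^*$ is the orbit map $g\mapsto g\varphi$ and $r:V^*\to L^*$ is the (surjective linear) restriction. Since $G$ acts transitively on $V^*\setminus 0$, the orbit $G\cdot\varphi$ equals $V^*\setminus 0$ and in particular is open in $V^*$; therefore at every $g\in G$ the tangent image $\mathfrak{g}\cdot(g\varphi)$ coincides with $T_{g\varphi}V^*=V^*$, so $df_g$ is surjective, and composing with the surjective $r$ shows $da_g$ is surjective for every $g$.

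The key observation for the section step is that $a$ is right $Q$-equivariant for the natural $Q$-action on $L^*$ (well-defined because $Q$ stabilizes $L$). A direct check gives $a(gq) = q\cdot a(g)$, and hence the zero fiber $a^{-1}(0)$ is right $Q$-invariant: if $a(g_0)=0$ then $a|_{g_0 Q} \equiv 0$. Differentiating, $T_{g_0}(g_0 Q) \subset \ker da_{g_0}$, i.e.\ $\ker dp_{g_0}\subset \ker da_{g_0}$ at any point $g_0$ of the zero fiber.

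To conclude, fix $x\in U$ with $\mu(x)=0$ and set $g_0:=s(x)$. Because $s$ is a Nash section of the submersion $p$, $ds_x$ is injective with image a vector-space complement to $\ker dp_{g_0}$ inside $T_{g_0}G$. Combining with the previous paragraph, $T_{g_0}G = \ker dp_{g_0}\oplus \mathrm{Im}(ds_x)\subset \ker da_{g_0}+\mathrm{Im}(ds_x)$; since $da_{g_0}$ is surjective onto $L^*$ and vanishes on the first summand, its restriction to $\mathrm{Im}(ds_x)$ must already hit all of $L^*$. Hence $d\mu_x = da_{g_0}\circ ds_x$ is surjective, proving that $0$ is a regular value of $\mu$. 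The only mildly delicate point is the $Q$-equivariance computation (the rest is essentially transport of a submersion along a section), so that is the one place I would check the sign conventions for the contragredient action carefully.
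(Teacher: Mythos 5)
Your proof is correct and takes essentially the same route as the paper: establish that $da$ is onto at the relevant point (from openness of the orbit $G\varphi = V^*\setminus 0$), establish $\ker dp_{g_0}\subset\ker da_{g_0}$ on the zero fiber, and conclude by the elementary linear-algebra fact that a section of $dp$ composed with $da$ then remains onto (the paper extracts this as Lemma~\ref{lem:LinOnto}; incidentally, that lemma's hypothesis should read $\Ker T\subset\Ker A$ rather than $\Ker T\supset\Ker A$, which is the form both you and the paper's proof of Lemma~\ref{lem:0Reg} actually use). The one place you diverge is in how the kernel inclusion is obtained: the paper normalizes $g_0=s(x)=1$ ``using the left action of $G$'' and then computes at the Lie algebra level, whereas you instead observe right $Q$-equivariance of $a$ (so the zero fiber is $Q$-saturated), which gives the inclusion at an arbitrary zero-fiber point with no normalization. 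This cleanly sidesteps the WLOG, which if unpacked requires translating $s$, $U$, and $\varphi$ simultaneously; the price is the equivariance computation, which as you correctly flag depends on the convention for the $G$-action on $V^*$ --- it works for $(g\varphi)(v)=\varphi(gv)$, the convention under which the lemma is later invoked, but not for the contragredient $(g\varphi)(v)=\varphi(g^{-1}v)$. Your remark that $\varphi\neq 0$ must be assumed is also right; the paper's appeal to transitivity on $V^*\setminus 0$ makes the same tacit assumption.
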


\begin{proposition}\label{prop:Actions}
Let a Nash group $G$ act transitively on a Nash manifold  $Y$ and let $X$ be a Nash manifold.

Let $X$ ``act" on $G$ i.e. let $G'$ be a Nash group acting on $G$ by automorphisms and $a:X \to G'$ be a Nash map.
This defines a twisted action of $G$ on $X \times Y$. More precisely $\rho_1(g)(x,y)=(x,a(x)(g)(y))$. Let $\chi$ be a fixed tempered character of $G$.
Let $\rho_2$ denote the non-twisted action of $G$ on $X \times Y$, i.e. $\rho_2(g)(x,y)= (x,gy)$.

Define function $\Chi(g,x,y) = \chi((a(x))^{-1}(g))$. Note that it does not depend on $y$. It defines a tempered $(G,\rho_2)-$equvariant structure on the trivial line bundle on $X \times Y$. We denote the resulting bundle by $\cE$.
Let $\pi_1$ denote the representation of $\g$ on $\Sc(X \times Y)\otimes \chi$ given by the action $\rho_1$ and $\pi_2$ denote the representation of $\g$ on $\Sc(X \times Y,\cE)$ given by the action $\rho_2$.

Then $\oH_*(\g,\pi_1)= \oH_*(\g,\pi_2)$.
\end{proposition}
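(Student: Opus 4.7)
The plan is to identify the underlying topological vector spaces of $\pi_1$ and $\pi_2$ with $\Sc(X\times Y)$, compute both induced $\g$-actions explicitly, and then exhibit an explicit isomorphism of Koszul complexes between them, obtained by applying the fiberwise Lie algebra automorphism $\alpha(x):=d(a(x)):\g\to\g$ (the derivative at the identity of the group automorphism $g\mapsto a(x)(g)$) to each factor of $\Lambda^\bullet\g$.

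First I would write down both Lie algebra actions explicitly. Set $\beta(x):=\alpha(x)^{-1}$ and let $L^{\#}_Y$ denote the vector field on $Y$ induced by $L\in\g$ via the standard $G$-action. Differentiating $\rho_1$ (tensored with $\chi$), and the $\rho_2$-action on $\Sc(X\times Y,\cE)$ equipped with the equivariance cocycle $\Chi(g,x,y)=\chi(\beta(x)(g))$, at the identity of $G$ gives
\[
\pi_1(L)f(x,y)=d\chi(L)f(x,y)-\alpha(x)(L)^{\#}_Y f(x,y),
\]
\[
\pi_2(L)f(x,y)=d\chi(\beta(x)(L))f(x,y)-L^{\#}_Y f(x,y).
\]
Neither $\pi_i(L)$ differentiates in the $x$-variable, so each is $C^\infty(X)$-linear (it commutes with multiplication by Nash functions of $x$). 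Substituting $L\mapsto\alpha(x)(L)$ in the formula for $\pi_2$ and using $\beta(x)\alpha(x)=\mathrm{id}_{\g}$ gives the pointwise identity $\pi_2(\alpha(x)(L))f=\pi_1(L)f$, which is the algebraic heart of the argument.

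Next, I consider the Koszul complexes $K^{(i)}_\bullet=\Lambda^\bullet\g\otimes\Sc(X\times Y)$ computing $\oH_*(\g,\pi_i)$ for $i=1,2$ and define a candidate chain map
\[
\Phi_k\bigl(L_1\wedge\cdots\wedge L_k\otimes f\bigr)(x,y):=\alpha(x)(L_1)\wedge\cdots\wedge\alpha(x)(L_k)\otimes f(x,y),\qquad\Phi_0:=\id.
\]
Since $a:X\to G'$ is Nash and $G'$ acts algebraically on $\g$, the matrix entries of $\alpha(x)$ and $\beta(x)$ in any fixed basis of $\g$ are Nash, hence tempered, functions on $X$; multiplication by such functions preserves $\Sc(X\times Y)$, so each $\Phi_k$ is a well-defined continuous operator which is fiberwise invertible, hence a topological isomorphism in every degree.

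Finally, I would check the chain-map identity $d_2\Phi_k=\Phi_{k-1}d_1$ by expanding each $\alpha(x)(L_j)$ in a basis of $\g$. The Koszul boundary has two kinds of terms. For the derivation terms, the $C^\infty(X)$-linearity of $\pi_2(L_i)$ together with the pointwise identity $\pi_2(\alpha(x)(L_j))=\pi_1(L_j)$ collapses the relevant $\pi_2$-sum into $\pi_1(L_j)$. For the bracket terms, the essential input is that $\alpha(x)$ is a \emph{Lie algebra automorphism}, so $\alpha(x)([L_i,L_j])=[\alpha(x)(L_i),\alpha(x)(L_j)]$; this is exactly what is needed to make the two bracket sums agree. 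Hence $\Phi$ is an isomorphism of topological chain complexes, inducing the desired isomorphism $\oH_*(\g,\pi_1)\cong\oH_*(\g,\pi_2)$. I expect the only real obstacle to be bookkeeping in the higher-$k$ verification (expanding wedge products in a basis, tracking Koszul signs, reassembling the derivation and bracket pieces); no new ingredient beyond the automorphism property of $\alpha(x)$ and the $C^\infty(X)$-linearity of $\pi_i(L)$ is required.
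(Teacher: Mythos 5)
Your proof is correct, and it follows a genuinely different route from the paper's. The paper \emph{enlarges the Lie algebra}: it extends $\pi_1,\pi_2$ to modules $\Pi_1,\Pi_2$ over the Lie algebra $\T(X)\otimes\g\cong\T(X,\g)$, invokes a separate lemma (proved via Lemma \ref{HomLeib} on derived functors of compositions) stating that $\oH_*(\g,M)=\oH_*(\g\otimes A,M)$ for any commutative algebra $A$ and any $(U(\g)\otimes A)$-module $M$, and then observes that the fiberwise automorphism $b(f)(x)=a(x)(f(x))$ of $\T(X,\g)$ intertwines $\Pi_1$ with $\Pi_2$, from which the homology isomorphism is automatic. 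You instead stay with $\g$ itself and realize the fiberwise automorphism $\alpha(x)$ directly as an explicit isomorphism $\Phi_\bullet=\Lambda^\bullet\alpha(x)\otimes\id$ of the two Koszul complexes, verifying the chain-map identity by hand using exactly the two ingredients you identify: the pointwise relation $\pi_2(\alpha(x)(L))f=\pi_1(L)f$ (which needs the $C^\infty(X)$-linearity of $\pi_i(L)$) and the Lie-automorphism property $\alpha(x)([L_i,L_j])=[\alpha(x)(L_i),\alpha(x)(L_j)]$. Both arguments hinge on the same fiberwise automorphism; the trade-off is that your version avoids the extension-of-Lie-algebra lemma and its derived-functor machinery at the cost of an explicit (though routine) Koszul bookkeeping, and has the small side benefit that the isomorphism on $\oH_0$ is visibly the identity (since $\Phi_0=\id$), a fact the paper later uses when applying this proposition. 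Your remark that the matrix entries of $\alpha(x)^{\pm 1}$ are Nash, hence tempered, hence act on $\Sc(X\times Y)$ by multiplication is precisely the point needed to make $\Phi_\bullet$ land in the Schwartz Koszul complex rather than the smooth one.
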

We will prove this proposition in \S \ref{subsec:PfActions}.

\begin{notation}\label{not:pi}
In the situation of the proposition we will denote $\rho_1$ by $\rho_{a,X,Y}$ and $\pi_1$ by $\pi_{a,X,Y}$.
\end{notation}

\subsection{Proof of the  key lemma}
First, let us prove the following version of the lemma.
\begin{lem}\label{lem:TwistProd}
Let $X'$ be a Nash manifold. Let $0 \to L \to V \overset{p}{\to} W \to 0$ be an exact sequence of finite dimensional vector spaces over $F$. Let $\nu: X'\to V^*$ be a Nash map, such that $0$ is a regular value of the composition $X'\to V^* \to L^*$. Let $X'_0$ be the preimage of $0$ under this composition. Note that it is smooth.
Fix a Haar measure on $W$.
%Let $V$ act on $O \times W$ through the second factor.
Let $\pi_{\nu}$ be the representation of $V$ on $\Sc(W \times X')$ given by $$\pi_{\nu}(v)(f)(z,w):= \theta(\langle \nu(z),v\rangle)f(z,w+p(v)).$$ Then
\begin{enumerate}[(i)]
\item \label{it:Acyc} $\pi_{\nu}$ is acyclic
\item \label{it:H0} Note that $\nu(X'_0) \subset W^*\subset V^*$. Let $\nu_0:X'_0 \to W^*$ be the map given by restriction of $\nu$. Let $F \in \T(X'_0 \times W)$ be given by $F(z,w):=\langle \nu_0(z) , w\rangle$. Consider the map $I_{\nu}:\Sc(W \times X') \to \Sc(X'_0)$ given by $I_{\nu}(f):=(pr_{X'})_*(f|_{X'_0 \times W} \cdot F)$, where $pr_{X'}: W \times X' \to W$ denotes the projection.
    Then $I_{\nu}$ defines an isomorphism $\oH_0(\fv,\pi_{\nu}) \IsoTo \Sc(X'_0)$.
\end{enumerate}
\end{lem}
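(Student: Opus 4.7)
The plan is to exploit the abelianness of $\fv$: every subspace is an ideal, so after fixing a linear splitting $V = L\oplus W$ I can compute $\oH_*(\fv, \pi_\nu)$ in two stages via Lemma \ref{lem:AcycComp}, handling $L$ first and then $\fw \cong \fv/L$. Both stages will be reduced to Lemma \ref{lem:SimpHo}, with a partial Fourier transform interposed between them to convert the ``twisted translation'' action of $\fw$ into a pure multiplication action.

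Stage one. For $v\in L$ we have $p(v)=0$, so $\pi_\nu(v)$ is pure multiplication by $\theta(\langle \nu(z)|_L, v\rangle)$. This is the representation $\tau_{W\times X', \phi_L}$ of Notation \ref{not:tau}, where $\phi_L(w,z) = \nu(z)|_L$ is the pullback of $\nu|_L\colon X'\to L^*$ via the projection $W\times X'\to X'$. By hypothesis, $0$ is a regular value of $\nu|_L$ with preimage $X'_0$, hence also of $\phi_L$ with preimage $W\times X'_0$. Lemma \ref{lem:SimpHo} then gives $\oH_i(L,\pi_\nu)=0$ for $i>0$ and an isomorphism $\oH_0(L,\pi_\nu)\IsoTo \Sc(W\times X'_0)$ induced by restriction.

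Stage two. Since $\nu(z)|_L=0$ for $z\in X'_0$, the induced $\fw$-action on $\Sc(W\times X'_0)$ reads
\[
\bar\pi(w_0)f(z,w) \;=\; \theta(\langle \nu_0(z), w_0\rangle)\,f(z,w+w_0).
\]
Apply the partial Fourier transform in $W$, $\cF\colon \Sc(W\times X'_0) \IsoTo \Sc(W^*\times X'_0)$ (for the fixed Haar measure on $W$); a direct substitution shows $\cF\,\bar\pi(w_0)\,\cF^{-1}$ is multiplication by $\theta(\langle \nu_0(z)+\hat w, w_0\rangle)$, i.e. the action $\tau_{X'_0\times W^*, \tilde\phi}$ with $\tilde\phi(z,\hat w) = \nu_0(z) + \hat w$. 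Since $\tilde\phi$ is a global submersion (its differential in $\hat w$ is the identity on $W^*$), Lemma \ref{lem:SimpHo} yields acyclicity of this stage and an isomorphism $\oH_0(\fw,\Sc(W^*\times X'_0))\IsoTo \Sc(X'_0)$ given by restriction to the graph $\Gamma = \{(z,-\nu_0(z)) : z\in X'_0\}$.

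Combining the two stages via Lemma \ref{lem:AcycComp} gives (i). For (ii), the composite isomorphism---restriction to $W\times X'_0$, then partial Fourier, then restriction to $\Gamma$---sends $f$ to $z\mapsto \int_W f(z,w)\,\theta(\langle \nu_0(z),w\rangle)\,dw$, which coincides with $I_\nu(f)$. The main technical obstacle is verifying that the partial Fourier transform really is a topological isomorphism of Fr\'echet spaces intertwining the stated $\fw$-actions; this is routine given the Schwartz Kernel Theorem (Theorem \ref{thm:SchwarProp}(\ref{SchKer})), which gives $\Sc(W\times X'_0)\cong \Sc(W)\ctp\Sc(X'_0)$, together with the classical fact that the Fourier transform is a topological automorphism of $\Sc(W)$.
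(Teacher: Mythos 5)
Your proof is correct and takes a genuinely different route from the paper. For acyclicity (i) the paper invokes the relative Shapiro lemma (Theorem \ref{ShapLem}, the content of Appendix \ref{app:PfShapLem}), reducing to $\tau_{X',\nu}$ and then applying Lemma \ref{lem:SimpHo}; for the $\oH_0$-identification (ii) it runs a separate $3\times3$ diagram chase over the auxiliary space $\Sc(X'\times V)$, using the pushforward Lemmas \ref{lem:SubPush} and \ref{lem:NewLeib}. You instead use the commutativity of $\fv$ to split the computation: peel off $\fl$ by Lemma \ref{lem:SimpHo} directly (the regular-value hypothesis on $\nu|_L$ enters exactly here), then conjugate the induced twisted-translation action of $\fv/\fl\cong\mathfrak{w}$ on $\Sc(W\times X'_0)$ by a partial Fourier transform into a pure multiplication action with submersive datum $\tilde\phi(z,\hat w)=\nu_0(z)\pm\hat w$, to which Lemma \ref{lem:SimpHo} applies once more; Lemma \ref{lem:AcycComp} assembles (i), and (ii) follows by tracing the composite of the two restrictions and the Fourier transform. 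The gain is that you avoid the relative Shapiro lemma, whose only application in the body of the paper is precisely this lemma; the cost is the Schwartz-kernel/Fourier bookkeeping, which is mild and which you correctly flag and address via Theorem \ref{thm:SchwarProp}(\ref{SchKer}). Two small remarks: the sign in $\nu_0(z)\pm\hat w$ and in the graph $\{(z,\mp\nu_0(z))\}$ depends on the Fourier convention chosen, but the resulting composite formula $f\mapsto\int_W f(z,w)\,\theta(\langle\nu_0(z),w\rangle)\,dw$ is convention-independent, as it must be; and the statement's $F(z,w)=\langle\nu_0(z),w\rangle$ should surely be read as $\theta(\langle\nu_0(z),w\rangle)$ (compare $\Xi=1\boxtimes\psi$ in Lemma \ref{lem:VeryGenGeoGood}), and with that reading your composite is exactly $I_\nu$.
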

\begin{proof}
(\ref{it:Acyc}) By the relative Shapiro Lemma (Lemma \ref{ShapLem}) it is enough to show that $\tau_{X',\nu}$ (see Notation \ref{not:tau}) is acyclic. This follows from Lemma \ref{lem:SimpHo}.

(\ref{it:H0}) Fix a Haar measure on $L$. Since we fixed a Haar measure on $W$, this defines a Haar measure on $V$ as well. Define $\bar{F} \in \T(X' \times V)$ by $\bar{F}(z,v):=\langle \nu(z) , v\rangle$.
Define $\bar{I}:\Sc(X' \times V) \to \Sc(X')$ by $\bar{I}(f)=(pr_{X'})_*(f) \cdot \bar{F}.$ Let $r:\Sc(X') \to \Sc(X'_0)$ denote the restriction. Let $\mu : X' \to L^*$ be the composition of $\nu$ with $V^* \onto L^*$. Define an action $\tilde{\pi}_{\nu}$ of $V$ on $\Sc(X' \times V)$ given by $$\pi_{\nu}(v)(f)(z,v'):= \theta(\langle \nu(z),v\rangle)f(z,v'+v).$$
Define an action $\sigma$ of $L$ on $\Sc(X' \times V)$ by $\sigma(l)(f)(z,v'):= f(z,v'+l).$

Note that the following diagram is commutative

 \xymatrix{ & \parbox{60pt}{\center{(1)}} & \parbox{100pt}{\center{(2)}} & \parbox{100pt}{\center{(3)}} & \\
(1)&   & \parbox{60pt}{$\fv\otimes \Sc(X' \times V)$}\ar@{->}[r]^{Id_{\fv}\otimes(Id_{X'} \times p)_*}\ar@{->}^{d \tilde{\pi}_\nu}[d] & \parbox{60pt}{$\fv \otimes \Sc(W \times X')$\ar@{->}^{d\pi_\nu}[d]} & \\
(2)& \parbox{60pt}{$\fl \otimes \Sc(X' \times V)$}\ar@{->}^{d\sigma}[r]\ar@{->}^{Id_\fl\otimes\bar{I}}[d]&\parbox{45pt}{$\Sc(X' \times V)$}\ar@{->}^{(Id_{X'} \times p)_*}[r]\ar@{->}^{\bar{I}}[d]
& \parbox{50pt}{$\Sc(W \times X')$} \ar@{->}[r]\ar@{->}^{I}[d]
& 0 \\
(3)& \parbox{40pt}{$\fl \otimes \Sc(X')$}\ar@{->}^{\tau_{X',\mu}}[r]\ar@{->}[d]& \parbox{25pt}{$\Sc(X')$}\ar@{->}^{r}[r]\ar@{->}[d] & \parbox{30pt}{$\Sc(X'_0)$} \ar@{->}[r]\ar@{->}[d] & 0 \\
 &                   0                                       &           0                                   &                   0                          &
}
Recall that $\tau_{X',\mu}$, used in the diagram, is defined in Notation \ref{not:tau}.
%Note that the map $\bar{I}$ intertwines the action $\sigma$ and $\tau_\mu$ (see Notation \ref{not:tau}), the map $(Id \times p)_*$ intertwines $\tilde{pi}_\nu$ to $\pi_\nu$.
It is enough to show that column (3) is exact.
First, let us show that column (2) is exact. The map $\bar{I}$ is onto by Lemma \ref{lem:SubPush} and the exactness in the place of $\Sc(X' \times V)$ follows from \ref{lem:NewLeib}. The exactness of row (2) is proven in the same way. The exactness of row (3) follows from Lemma \ref{lem:SimpHo}. The exactness of column (2) implies the exactness or column (1). Let us prove that column (3) is exact. First, note that $I$ is onto by Lemma \ref{lem:SubPush} and Theorem \ref{thm:SchwarProp}\eqref{prop:ExtClose}. Since $I \circ d\pi_{\nu}=0$, we have $\Im(d\pi_{\nu})\subset \Ker I$, and it is left to show the other inclusion.
%\ref{lem:ExtClosed}.

Let $f \in \Ker(I) \subset \Sc(W \times X').$ Let $\tilde{f} \in \Sc(X' \times V)$ be its preimage under $(Id \times p)_*$. Then $\bar{I}(\tilde{f}) \in \Ker(r) = \Im (\tau_{\mu})$. Let $h\in \fl \otimes \Sc(X')$ be a preimage of $\bar{I}(\tilde{f})$ and let $\tilde{h}$ be a preimage of $h$ in $\fl \otimes \Sc(X' \times V)$. Then $d\sigma(\tilde{h}) - \tilde{f} \in \Ker(\bar{I}) = \Im d \tilde{\pi}_{\nu}$. Now,
$$ f = (Id \times p)_*(\tilde{f}) = (Id \times p)_*(\tilde{f} -d\sigma(\tilde{h}) +  d\sigma(\tilde{h})) = (Id \times p)_*(\tilde{f} -d\sigma(\tilde{h})) \in (Id \times p)_*(\Im d \tilde{\pi}_{\nu}) \subset \Im(d\pi_{\nu}) $$
\end{proof}

We will prove the following Lemma which clearly implies the  key lemma.
\begin{lem}\label{lem:VeryGenGeoGood}
Let $T$ be a Nash linear group and let $R:=P_n \times T$ and $R':=G_{n-1}\times T$.
Let $Q < R$ be a Nash subgroup and let $X \subset R/Q$ be a $V_n$-invariant open Nash subset and $X':=V_n\backslash X$.
Note that $X'$ is an  open Nash subset of $R'/Q',$ where $Q'=Q/(Q \cap V_n)$.
Let $X_0= \{ x \in X \, : \, \psi|_{(V_n)_x}=1.\}$ Let $X_0'$ be the image of $X_0$ in $X'$.  Let $\cE'$ be a $R'$-equivariant tempered bundle on $R'/Q'$ and $\cE:=p_{X'}^!(\cE'|_{X'})$.
Then
\begin{enumerate}[(i)]
\item $\oH_i(\fv_n,\Sc(X,\cE)\otimes (-\psi))=0$ for any $i>0$.
\item $X_0'$ is smooth
\item
Since $X' \subset R'/Q'$ and $X\subset R/Q$ we have a canonical section to $p_{Z}$ and will consider $X'$ as a closed submanifold of $X$.
Consider the action map $a: X_0' \times V_n \to X_0$. Let $\DimaA{\Xi} = 1 \boxtimes \psi \in \T(X_0' \times V_n)$. Note that $\DimaA{\Xi}_0$ is constant along the fibers of $a$. Define $\DimaA{\Xi} \in \T(X_0)$ such that $a^*(\DimaA{\Xi})=\DimaA{\Xi}_0$. Consider the map $I:\Sc(X, p^{!}\cE) \to \Sc(X_0',\cE|_{X_0'})$ given by $I(f):=(p_{X'}|_{X_0})_*(f|_{X_0'} \cdot \DimaA{\Xi})$.
Then $I$ gives an isomorphism $\oH_0(\fv_n,S(X,p^{!}\cE)) \IsoTo \Sc(X_0',\cE|_{X_0'})$,
 as representations of $\fp_{n-1}$.
\end{enumerate}
\end{lem}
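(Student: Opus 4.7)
The plan is to reduce the global assertion to a local statement on $X'$ and then apply Lemma \ref{lem:TwistProd} fiberwise, as indicated in the sketch. Cover $X'$ by a finite open Nash cover $\{U_i\}$ refined so that (a) the submersion $p_{X'}: p_{X'}^{-1}(U_i) \to U_i$ is Nash-trivial, i.e.\ $p_{X'}^{-1}(U_i) \cong U_i \times W_i$ for some $V_n$-orbit $W_i$ (this is possible by Theorem \ref{SurSubSec}), and (b) the bundle $\cE'|_{U_i}$ is Nash-trivial. By Theorem \ref{thm:SchwarProp}\eqref{pCosheaf}, Schwartz sections form a cosheaf in the restricted topology, so it suffices to establish (i)--(iii) locally, and to check that the explicit morphism $I$ is compatible with restrictions to $U_i$.

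Fix a single $U_i$ and write $U := U_i$, $W := W_i$. Under the trivializations of (a) and (b), the $V_n$-action on $\Sc(U \times W) \otimes (-\psi)$ is a \emph{twisted} product: the action on $\{x\} \times W$ depends on $x \in U$ (since the stabilizer $(V_n)_x$ varies with $x$). Apply Proposition \ref{prop:Actions} with $Y := W$ and $a: U \to \Aut(V_n)$ recording the twist to pass from this twisted picture to the untwisted action on $\Sc(U \times W, \cE_0)$, where $\cE_0$ is the pullback to $U \times W$ of a family of characters of $V_n$ parametrized by $U$. After this untwisting, the setup of Lemma \ref{lem:TwistProd} applies directly: the sequence $0 \to L \to V_n \to W^\vee \to 0$ (with $L$ the stabilizer Lie algebra at a generic point) is given by the orbit structure, and the map $\nu: U \to V_n^*$ is the product of the original $\psi$ with the twist coming from the family of characters.

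The crucial hypothesis of Lemma \ref{lem:TwistProd} --- that $0$ is a regular value of the composition $U \to V_n^* \to L^*$ --- is exactly what Lemma \ref{lem:0Reg} supplies: the required transitivity on $V_n^* \setminus \{0\}$ holds because $P_n$ acts on the Pontryagin dual $V_n^*$ with exactly two orbits (the zero character and its complement), and the local section of $p_{X'}$ provides the map to which Lemma \ref{lem:0Reg} applies. This regularity immediately yields (ii): $X_0' \cap U$ is the preimage of $0$ under a submersion, so it is smooth, and smoothness being local implies (ii). Lemma \ref{lem:TwistProd}\eqref{it:Acyc} then gives (i) on $U$, and \eqref{it:H0} gives the explicit local isomorphism $I_\nu: \oH_0(\fv_n, \Sc(U \times W, \cE_0)) \xrightarrow{\sim} \Sc(X_0' \cap U, \cE|_{X_0' \cap U})$ via the pushforward $(p_{X'})_*(f \cdot F)$ against the explicit tempered function $F$.

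The main obstacle I anticipate is verifying that these local isomorphisms $I_\nu$ assemble into the global map $I$ of the statement. Two matchings must be checked: first, that the tempered function $\Xi \in \T(X_0)$ of the statement (characterized by $a^*\Xi = 1 \boxtimes \psi$) restricts on each local piece to the function $F$ appearing in Lemma \ref{lem:TwistProd}\eqref{it:H0}, once the trivializations and the untwisting of Proposition \ref{prop:Actions} are taken into account; and second, that the untwisting isomorphism of Proposition \ref{prop:Actions} intertwines the two descriptions of $\oH_0$ so that the pushforward $(p_{X'}|_{X_0})_*$ in the global $I$ corresponds to $(\operatorname{pr}_{X'})_*$ locally. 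Once this bookkeeping is done, the cosheaf property of Schwartz functions together with the Mayer--Vietoris type patching for vanishing of higher homologies (which reduces, after refining the cover, to the vanishing established on each piece) completes the proof of (i) and (iii).
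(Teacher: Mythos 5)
Your proposal matches the paper's proof in all essential respects: you correctly identify the chain of reductions through Theorem \ref{SurSubSec} (local sections), Proposition \ref{prop:Actions} (untwisting the product), Lemma \ref{lem:TwistProd} (the core computation via the relative Shapiro lemma), and Lemma \ref{lem:0Reg} (verifying the regular-value hypothesis), and you correctly flag the bookkeeping needed to assemble the local isomorphisms into the global map $I$. The paper organizes the argument slightly differently --- it first proves Lemma \ref{lem:SpecGenGeoGood}, the special case where a \emph{global} Nash section $a:X'\to R'$ of $R'\to R'/Q'$ exists (in which case $\overline{\phi}(z,v)=(a(z)v)z$ gives a single global diffeomorphism $X'\times W\cong X$ intertwining $\rho_{a,X',W}$ with the $V_n$-action, and the section itself trivializes $\cE'|_{X'}$), and then deduces the general case by covering $X'$ with finitely many $U_i$ that admit sections $s_i:U_i\to R'$. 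One technical point worth being more explicit about than ``Mayer--Vietoris type patching'': the paper's gluing of the acyclicity statements (i) and (iii) is done at the level of the extended Koszul complex, using a partition of unity $\{e_i\}$ on $X'$ and the observation that the \emph{pullbacks} $p^*(e_i)$ to $X$ are $V_n$-invariant, hence multiplication by $p^*(e_i)$ commutes with the Koszul differentials; this is what lets one write $\alpha=\sum p^*(e_i)\alpha$ with each summand lying in the kernel of $d_l$ restricted to $\Sc(p^{-1}(U_i),\cE)$ and then apply the local result. Your proposal gestures at this but should state the $V_n$-invariance of the pulled-back partition of unity explicitly, as it is the reason the cosheaf argument interacts correctly with Lie-algebra homology.
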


\begin{rem}
The open subset $X \subset R/Q$ is not necessary $P_{n-1}$-invariant, but $\fp_{n-1}$ will still act on the homology.
\end{rem}

First let us prove the following special case.
\begin{lem}\label{lem:SpecGenGeoGood}
Lemma \ref{lem:VeryGenGeoGood} holds under the assumption that there exists a Nash section $a:X' \to R'$ of the quotient map $R' \to R'/Q'$.
\end{lem}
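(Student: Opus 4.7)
The plan is to use the Nash section $a$ to trivialize the fibration $p_{X'} \colon X \to X'$ and then reduce the entire statement to Lemma~\ref{lem:TwistProd}, whose regularity hypothesis will be checked via Lemma~\ref{lem:0Reg}. Since $R = V_n \rtimes R'$, the section $a$ lifts to a Nash map $X' \to R$ and, composed with $R \twoheadrightarrow R/Q$, yields a Nash section $s \colon X' \to X$ of $p_{X'}$. Set $L := Q \cap V_n$ with Lie algebra $\fl$; because $V_n$ is abelian and normal in $R$, the $V_n$-stabilizer of $s(x')$ is exactly $L_{x'} := a(x') L a(x')^{-1}$. Conjugation by $a(x')$ therefore produces a Nash diffeomorphism
\[
\Phi \colon V_n/L \times X' \IsoTo X, \qquad (vL, x') \mapsto (a(x') v a(x')^{-1}) \cdot s(x'),
\]
under which the action of $V_n$ on $X$ becomes the twisted translation $w \cdot (vL, x') = (\Ad(a(x')^{-1})(w) \cdot vL,\, x')$. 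Because $\cE = p_{X'}^!(\cE'|_{X'})$, after pullback through $\Phi$ the bundle is the external tensor product of $D_{V_n/L}$ (trivialized by a Haar measure) with $\cE'|_{X'}$, so by Theorem~\ref{thm:SchwarProp} the Schwartz space $\Sc(X, \cE)$ is identified with $\Sc(V_n/L) \ctp \Sc(X', \cE'|_{X'})$.

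Next, applying Proposition~\ref{prop:Actions} with $G = V_n$, $Y = V_n/L$, and the map $a$ viewed as a map from $X'$ into the automorphism group of $V_n$ by conjugation, we exchange the twisted translation for ordinary left translation at the cost of a tempered character of $V_n$ depending on $x'$. Combined with the original $-\psi$ twist this is precisely the representation $\pi_\nu$ of Lemma~\ref{lem:TwistProd} for $V := \fv_n$, $W := \fv_n/\fl$, and the Nash map $\nu \colon X' \to \fv_n^*$ which sends $x'$ to the coadjoint translate of $\psi$ by $a(x')$. Lemma~\ref{lem:TwistProd} then supplies acyclicity of the $\fv_n$-action and an explicit isomorphism $\oH_0 \cong \Sc(X'_0, \cE'|_{X'_0})$, subject to $0$ being a regular value of the composition $X' \xrightarrow{\nu} \fv_n^* \twoheadrightarrow \fl^*$.

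This regularity is exactly the content of Lemma~\ref{lem:0Reg}, applied with $G = P_n$ (which acts transitively on $\fv_n^* \setminus \{0\}$ through its quotient $G_{n-1}$), the vector space $V = \fv_n$, the subspace $\fl$, the functional $\varphi = \psi$, and the subgroup $Q$; the section $a$ plays the role of the required local Nash section. The same identification shows that $X'_0$ coincides with the preimage of $0$ under $\nu$ composed with restriction to $\fl^*$, which proves the smoothness statement (ii). For (iii) one unwinds the chain of identifications: the integral $(p_{X'}|_{X_0})_* (f|_{X_0'} \cdot \Xi)$ becomes, in the coordinates $V_n/L \times X'$, exactly the integration of $f$ against the tempered weight $\bar F(z, w) = \langle \nu(z), w\rangle$ appearing in Lemma~\ref{lem:TwistProd}(ii), so the map $I$ is precisely the isomorphism furnished by that lemma. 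Finally, $\fp_{n-1}$-equivariance follows from the naturality of every step with respect to the residual $R'$-action. The main technical obstacle is the careful bookkeeping of the several tempered twists (the factor $-\psi$, the relative density $D^X_{X'}$, and the character produced by Proposition~\ref{prop:Actions}) so that the family $\nu$ and the output bundle on $X'_0$ are correctly identified; once this is done the argument reduces cleanly to the three general-purpose lemmas~\ref{prop:Actions}, \ref{lem:TwistProd}, and~\ref{lem:0Reg}.
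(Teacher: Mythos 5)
Your proposal follows essentially the same route as the paper's proof: use the section $a$ to build a Nash diffeomorphism $X \cong V_n/L \times X'$ carrying the $V_n$-action to the twisted translation, identify $\Sc(X,\cE)$ accordingly, apply Proposition~\ref{prop:Actions} to untwist (producing exactly the representation $\pi_\nu$ of Lemma~\ref{lem:TwistProd}), invoke Lemma~\ref{lem:TwistProd} for acyclicity and the explicit $\oH_0$-isomorphism, and finally verify the regularity hypothesis by Lemma~\ref{lem:0Reg}. The only substantive slip is in your parameter list for Lemma~\ref{lem:0Reg}: $X'$ is an open subset of $R'/Q'$ (not of $P_n/Q$), and the section $a$ maps into $R'$, so one should take $G = R'$ and the subgroup $Q'$ there (with $T$ acting trivially on $\fv_n$), not $G = P_n$ with the subgroup $Q$; likewise the functional should be the $F$-valued $\bar\psi$. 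This is a bookkeeping issue rather than a gap in the argument — the transitivity on $\fv_n^*\setminus 0$ still comes from $G_{n-1}$ exactly as you note.
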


\begin{proof}
Consider $R'/Q'$ as a subset of $R/Q$.
Without loss of generality we assume that the class of the unity element lies in $X'$. We will denote it by $z_0$. Without loss of generality we assume $a(z_0)=1$.
Let $W:=p_{X'}^{-1}(z_0)$. Let $L:=(V_n)_{i(z_0)}=Q \cap V_n$. It is a Nash subgroup of $V_n$ and hence is a real vector space.
Note that $W \cong V_n/L$.
%Define $a:{X'} \to G_{n-1}=GL(V)$ by $a(z):=s(z)$.
Define $\phi:{X'} \times V_n \to X$ by
$\phi(z,v):= (a(z)v)z$, where $a(z)v$ denotes the action of $a(z)\in G_{n-1}$ on $v\in V_n$. Note that $\phi$ factors through ${X'} \times W$ and let $\overline{\phi}$ be the corresponding map ${X'} \times W \to X$.
It is easy to see
%We will show
that $\overline{\phi}$ is a Nash diffeomorphism and intertwines $\rho_{a,{X'},W}$ with the action of $V_n$ on $X$ (see Notation \ref{not:pi}).

Note that the section $a$ gives a tempered isomorphism between $\cE'|_{X'}$ and the trivial bundle with fiber $\cE'_{z_0}$. Thus $\overline{\phi}$ gives an $V_n$ -isomorphism $\overline{\phi}^*:\Sc(X,\cE)\otimes (-\psi) \IsoTo \pi_{a,{X'},W} \otimes \cE'_{z_0}.$
Define $\nu:{X'} \to V_n^*$ by $\nu(z)(v):=\bar{\psi}(a(z)v)$.
Note that $X_0'$ coincides with $X'_0$ from Lemma \ref{lem:TwistProd}.
Note that the underlying vector space of $\pi_{a,X',W}$ coincides with the underlying vector space of $\pi_{\nu}$ from Lemma \ref{lem:TwistProd}. Thus we can consider $I_{\nu}\otimes Id$ as a map
$\pi_{a,X',W} \otimes \cE'_{z_0} \to \Sc(X_0')\otimes \cE'_{z_0} \cong \Sc(X_0',\cE'|_{X_0'})$. Note that $\overline{\phi}^*$ intertwines the map $I$ to $I_{\nu}\otimes Id$.

By Proposition \ref{prop:Actions}, $\oH_*(\fv_n,\pi_{a,X',W}) \cong \oH_*(\fv_n,\pi_\nu)$ and on $\oH_0$ the isomorphism is just equality of quotient spaces. Thus it is enough to show that $\pi_\nu$ is acyclic and $I_\nu$ defines an isomorphism $\oH_0(\fv_n,\pi_\nu) \IsoTo \Sc(X'_0)$. To show this, by Lemma \ref{lem:TwistProd} it is enough to show that $0$ is a regular value of the composition $\nu_L: X'\overset{\nu}{\to} V_n^* \to L^*$.
This follows from Lemma \ref{lem:0Reg}.

\end{proof}

\begin{proof}[Proof of Lemma \ref{lem:VeryGenGeoGood}]

%Let $r:\Sc(X,\cE) \to \Sc(X_0,\cE|_{X_0})$ denote the restriction.
%Since $X'=G_{n-1}Q'$ and $X=P/Q$ we have a canonical section to $p_X'$ and will consider $X'$ as a closed submanifold of $X$.
%Consider the action map $a: X_0' \times V \to X_0$. Let $\DimaA{\Xi} = 1 \boxtimes \psi \in \T(X_0' \times V)$. Note that $\DimaA{\Xi}_0$ is constant along the fibers of $a$. Define $\DimaA{\Xi} \in \T(X_0)$ such that $a^*(\DimaA{\Xi})=\DimaA{\Xi}_0$. Consider the map $I:\Sc(X, p^{!}\cE) \to \Sc(X_0',\cE|_{X_0'})$ given by $I(f):=(p_{X'}|_{X_0})_*(f|_{X_0'} \cdot \DimaA{\Xi})$. Note that $I$ is a morphism of $P_{n-1}$-representations.
%We will show that $S(X,p^{?}\cE)$ is acyclic and that $I$ gives an isomorphism %$\oH_0(S(X,p^{!}\cE)) \IsoTo \Sc(X_0',\cE|_{X_0'})$.

Let $q:R' \to R'/Q'$ be the quotient map. By Theorem \ref{SurSubSec}, there exists a finite Nash cover $\{U_i\}$ of $X' \subset R'/Q'$ and sections $s_i:U_i \to R'$ of $q$. By Lemma \ref{lem:SpecGenGeoGood}, $\Sc(p_{X'}^{-1}(U_i),\cE|_{p_{X'}^{-1}(U_i)})$ is acyclic and $I|_{\Sc(p_{X'}^{-1}(U_i),\cE|_{p_{X'}^{-1}(U_i)})}$ gives an isomorphism $$\oH_0(\fv_n,\Sc(p_{X'}^{-1}(U_i),\cE|_{p_{X'}^{-1}(U_i)})) \IsoTo \Sc(U_i \cap X_0',\cE'|_{U_i \cap X_0'}).$$
Consider the extended Kozsul complex of $\Sc(X,\cE)\otimes (-\psi)$:
\begin{equation}
0\to \Lambda^n(\fv_n)\otimes \Sc(X) \otimes (-\psi) \overset{d_{n}}{\to}  \RamiA{\cdots}  \overset{d_{2}}{\to} \fv_n \otimes \Sc(X) \otimes (-\psi) \overset{d_{1}}{\to} \Sc(X) \otimes (-\psi) \overset{d_{0}:=I}{\to} \Sc(X_0',\cE'|_{X_0'}) \to 0
\end{equation}

We have to show that it is exact.
The fact that $I$ is onto follows from Theorem \ref{thm:SchwarProp}\eqref{prop:ExtClose} and Lemma \ref{lem:SubPush}.
Let us show that it is exact in place $l \geq 0$, i.e. at the object $\Lambda^l(\fv_n)\otimes \Sc(X)$.
%using the fact that $\oH_l(\Sc(U_i)\otimes (-\psi))=0$.
Choose a partition of unity $e_i$ corresponding to the cover $X' = \bigcup U_i$.
Clearly $\Im d_{l+1}\subset \Ker d_l$. To show the other inclusion, let $\alpha \in \Ker d_l \subset \Lambda^l(\fv_n) \otimes \Sc(X) \otimes (-\psi)$. Consider $$p^*(e_i)\alpha \in\Ker d_l|_{\Lambda^l(\fv_n) \otimes \Sc(p^{-1}(U_i) \otimes (-\psi))}.$$
%
%Since  $\oH_l(\Sc(U_i)\otimes \psi)=0$
By Lemma \ref{lem:SpecGenGeoGood},
 we have $$p^*(e_i)\alpha \in \Im(d_{l+1}|_{\Lambda^l(\fv_n) \otimes \Sc(p^{-1}(U_i) \otimes (-\psi))}) \text{ and thus }\alpha = \sum p^*(e_i)\alpha \in \Im(d_{l+1}).$$
 \end{proof}

\subsection{Untwisting a product (Proof of Proposition \ref{prop:Actions})}\label{subsec:PfActions}

%{prop:Actions}
\begin{lemma}
Let $\g$ be a Lie algebra and $A$ be a commutative algebra with $1$. Let $M$ be a $(U(\g)\otimes A)$-module.
Then $\oH_*(\g,M)= \oH_*(\g \otimes A,M)$.
\end{lemma}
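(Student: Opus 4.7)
The plan is to construct a single resolution that computes both sides simultaneously. I would start with the standard Koszul resolution $F_\bullet = U(\g)\otimes\Lambda^\bullet\g \twoheadrightarrow \C$ of the trivial left $U(\g)$-module and tensor it with $A$ over $\C$. Since $A$ is flat as a $\C$-module, the resulting complex $F_\bullet \otimes A \twoheadrightarrow A$ is exact, and each term $F_n\otimes A = (U(\g)\otimes A)\otimes \Lambda^n\g$ is free as a $U(\g)\otimes A$-module. Thus $F_\bullet\otimes A$ is a free $U(\g)\otimes A$-resolution of $A$ (with $\g$ acting trivially and $A$ acting by multiplication), and in particular is adapted to computing $\oH_*(\g\otimes A, M)$.

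Next I would establish the canonical isomorphism
$$(F_n\otimes A)\otimes_{U(\g)\otimes A} M \;\overset{\sim}{\longrightarrow}\; F_n\otimes_{U(\g)} M, \qquad f\otimes a\otimes m \mapsto f\otimes (am),$$
obtained by absorbing the middle factor into $M$ via its $A$-module structure; the inverse sends $f\otimes m$ to $f\otimes 1\otimes m$. Checking the defining relations of $\otimes_{U(\g)\otimes A}$ (generated by moves of the form $fr\otimes ab\otimes m - f\otimes a\otimes rbm$) shows the map is well-defined, and the two relation sets match up exactly. Applied degreewise this produces an isomorphism of complexes
$$(F_\bullet\otimes A)\otimes_{U(\g)\otimes A} M \;\cong\; F_\bullet\otimes_{U(\g)} M,$$
and passing to homology yields $\oH_*(\g\otimes A, M) \cong \oH_*(\g, M)$.

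The main subtlety—and really the only conceptual obstacle—is the convention being used for $\oH_*(\g\otimes A, M)$. Because $A$ is commutative, $\g\otimes A$ is naturally a Lie algebra \emph{over} $A$, and $M$ is an $A$-linear module for it, so the homology on the right of the statement must be read as the $A$-linear Chevalley--Eilenberg homology, equivalently $\Tor_*^{U(\g)\otimes A}(A, M)$; this is exactly what the resolution $F_\bullet\otimes A$ computes. Under the naive $\C$-linear reading the equality would fail (e.g.\ with $\g=\C$ acting trivially on $M=A$ one gets $\oH_1^{\C} = A\otimes_{\C} A$, not $A$), so the first step in a rigorous writeup is to fix this convention; once that is done the computation above is essentially immediate from flatness of $A$ over $\C$ and absorption of the $A$-factor into $M$.
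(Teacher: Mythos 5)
Your proof is correct, and it takes a more explicit route than the paper does. The paper's proof is a one-liner invoking the abstract composition-of-derived-functors lemma (Lemma \ref{HomLeib}(i)): with $\cF$ the forgetful functor from $U(\g)\otimes A$-modules to $\g$-modules and $\cG$ the $\g$-coinvariants functor, one checks that $\cG\circ\cF$ is the $\g\otimes A$-coinvariants functor, that $\cF$ is exact, and (implicitly) that free $U(\g)\otimes A$-modules restrict to free, hence $\cG$-acyclic, $U(\g)$-modules. You instead build the resolution $F_\bullet\otimes A$ by hand and verify directly that applying $-\otimes_{U(\g)\otimes A}M$ to it reproduces the Koszul complex $F_\bullet\otimes_{U(\g)}M$ degreewise. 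Both arguments hinge on the identical observation — that a free $U(\g)\otimes A$-module is free over $U(\g)$ via the flatness of $A$ over $\C$ — so the substance is the same; the paper's version is shorter once Lemma \ref{HomLeib} is in place, while yours is more self-contained and produces an explicit isomorphism of complexes rather than an abstract identification of derived functors. Your discussion of the convention is also on point and worth making explicit: the paper never says so, but since $L^i(\cG\circ\cF)$ is computed as a derived functor on the category of $U(\g)\otimes A$-modules, $\oH_*(\g\otimes A,M)$ does indeed mean $\Tor_*^{U(\g)\otimes A}(A,M)$, exactly the $A$-linear Chevalley--Eilenberg homology; under the naive $\C$-linear reading the statement fails, as your one-dimensional example shows, so this clarification is a genuine improvement on the paper's exposition.
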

\begin{proof}
Let $\cA$ be the category of $(U(\g)\otimes A)$-modules and $\cB$  be the category of $\g$-modules. Let $\cF:\cA \to \cB$ be the forgetful functor and $\cG:\cB \to Vect$ be the functor of $\g$ co-invariants. Note that $\cG \circ \cF$ is the functor of coinvariants with respect to the Lie algebra $\g \otimes A$. By Lemma \ref{HomLeib} we have $$L^i(\cG \circ \cF)=L^i(\cG) \circ \cF.$$
This proves the assertion.
\end{proof}

\begin{proof} [Proof of Proposition \ref{prop:Actions}]
%1.We replace $G$ with $\g$ (probably in the formulation too).
Extend the representations $\pi_i$ to representations $\Pi_i$ of $\T(X) \otimes \g \cong \T(X,\g).$

Let $b:  \T(X,\g) \to  \T(X,\g)$ be given by $b(f)(x)=a(x)(f(x)).$ Note that $b$ is invertible.
Let us show that $\Pi_1 \cong \Pi_2 \circ b.$

First, note that as linear spaces both can be identified with $\Sc(X\times Y)$.
Now, denote by $d\rho_i$ the corresponding maps from $\g$ to the space of vector fields on $X \times Y$ and by $d\Chi : \g \times X \times Y$  the differential of $\Chi$ in the first variable. Let $f \in \T(X,\g), \, h \in  \Sc(X\times Y), \text{ and }(x,y)\in X\times Y$. Then
\begin{multline*}
(\Pi_1(f)h)(x,y) = (d\rho_1(f(x))h)(x,y) + \chi(f(x))h(x,y)= (d\rho_2(a(x)(f(x)))h)(x,y)+\chi(f(x))h(x,y)= \\
(d\rho_2(a(x)(f(x)))h)(x,y)+\chi(a(x)^{-1}(a(x)(f(x))))\cdot h(x,y)= \\
(d\rho_2(a(x)(f(x)))h)(x,y)+d\Chi(a(x)(f(x)),x,y) \cdot h(x,y)= \\
(d\rho_2((b(f)(x))h)(x,y)+d\Chi(b(f)(x),x,y) \cdot h(x,y)= (\Pi_2(b(f))(h))(x,y)
\end{multline*}
Now
$$\oH_*(\g,\pi_2)= \oH_*(\T(X) \otimes \g,\Pi_2) \cong \oH_*(\T(X) \otimes \g,\Pi_1)= \oH_*(\g,\pi_1).$$

%2. We replace $\g$ with $\g \otimes \Sc(X)$.
%3. We make an automorphism of  $\g \otimes \Sc(X)$ which untwists the action.
%4.  We replace back $\g \otimes \Sc(X)$ with $\g$, but now the action is not twisted.
%5. We deduce the assertion from the usual Shapiro lemma using properties of completed tensor product of Nuclear Frechet spaces as in the appendix of \cite{CHM}.
\end{proof}

\subsection{Homology of families of characters (Proof of Lemma \ref{lem:SimpHo})} \label{subsec:PfSimpHo}

We prove the lemma by induction on $\dim V$.

Base: $\dim V =1$. In this case we have to show that the following extended Koszul complex is exact:

%$$0 \to V \otimes \Sc(X) \to \Sc(X) \overset{r}{\to} \Sc(X_0)\to 0.$$

$$0 \ot \Sc(X_0) \overset{r}{\ot} \Sc(X)  \ot V \otimes \Sc(X)    \ot 0.$$

Let $v \in V$ be a generator. Then the complex is isomorphic to
%$$0 \to \Sc(X) \overset{m_h}{\to} \Sc(X) \overset{r}{\to} \Sc(X_0)\to 0,$$
$$0 \ot \Sc(X_0)\overset{r}{\ot}  \Sc(X) \overset{m_h}{\ot} \Sc(X)  \ot 0,$$

where $m_h(f)=hf$, and $h(x) = d\theta(\langle v, \phi(x)\rangle)$.
Clearly, $m_h$ is injective. The map $r$ is onto by Theorem \ref{thm:SchwarProp}\eqref{prop:ExtClose}
%Lemma \ref{lem:ExtClose}.
Note that $ih$ is a real-valued Nash function and 0 is its regular value. The exactness in the middle follows now from Lemma \ref{lem:DivSc}.

Induction step. Let us first prove (\ref{it:SimpAcyc}). Let $L <V$ be a one-dimensional subspace. Let $\phi_L$ denote the composition $X \to V^* \to L^*$. Note that 0 is a regular value of $\phi_L$ and let $X_{L,0}:=\phi_L^{-1}(0)$.
By the induction base, $\Sc(X)$ is $\fl$-acyclic and $r_L:\oH_0(\fl,\Sc(X)) \IsoTo \Sc(X_{L,0})$. Note that this is an isomorphism of representations of $V/L$.
Note also that $\phi(X_{L,0}) \subset (V/L)^* = L^{\bot} \subset V^*$ and let $\phi'$ denote $\phi|_{X_{L,0}}: X_{L,0} \to (V/L)^*$. Note that 0 is a regular value of $\phi'$.
Finally, note that the action of $V/L$ on $\Sc(X_{L,0})$ is $\tau_{X_L,\phi'}$.
Thus, by the induction hypothesis, $\oH_0(\fl,\Sc(X))$ is $V/L$-acyclic and Lemma \ref{lem:AcycComp} implies that $\Sc(X)$ is $V$-acyclic.

To prove (\ref{it:SimpH0}) note first that $r$ is onto by
%Lemma \ref{lem:ExtClose}.
Theorem \ref{thm:SchwarProp}\eqref{prop:ExtClose}.
Now we have to show that $\fv \Sc(X) = \Ker r$. The inclusion $\subset$ is obvious. Let $f \in \Ker r$. Consider $f|_{X_{L,0}}$. As before, the induction hypothesis implies $f|_{X_{L,0}} \in (\fv/\fl)\Sc(X_{L,0})$. Let $h \in (\fv/\fl)\otimes \Sc(X_{L,0})=\tau_{X_{L,0},\phi'}((\fv/\fl)\otimes \Sc(X_{L,0}))$ such that $f|_{X_{L,0}} = \tau_{X_{L,0},\phi'}(h)$. By Theorem \ref{thm:SchwarProp}\eqref{prop:ExtClose},%\ref{lem:ExtClose},
we may extend $h$ to $\widetilde{h} \in \fv\otimes \Sc(X)$. Now, $(\tau_{X,\phi}(\widetilde{h})-f)_{X_L} = 0$ and hence by the induction base $\tau_{X,\phi}(\widetilde{h})-f \in \fl \Sc(X)$. This implies that $f \in \fv \Sc(X)$. \proofend

\subsection{Proof of Lemma \ref{lem:0Reg}}\label{subsec:Pf0Reg}

For the proof we will need the following straightforward lemma from linear algebra.

\begin{lem}\label{lem:LinOnto}
Let $W_1,W_2$ and $W$ be linear spaces. Let $T:W_1 \onto W_2$ and $A:W_1 \onto W$ be epimorphisms.
Suppose that $\Ker T \supset \Ker A$. Then $A \circ S$ is onto, for any section $S$ of $T$.
\end{lem}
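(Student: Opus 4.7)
The proof is a short exercise in the universal property of quotients; the plan is to factor $A$ through $T$ and then cancel with the section $S$.

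First, I would use the kernel hypothesis to obtain a factorization $A = \bar{A} \circ T$ for a unique linear map $\bar{A}\colon W_2 \to W$. This is the universal property of $T$ viewed as the quotient map $W_1 \twoheadrightarrow W_1/\Ker T \cong W_2$: the relevant inclusion of kernels ensures that $A$ descends along $T$ to a well-defined linear map on $W_2$.

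Next, I would note that $\bar{A}$ inherits surjectivity from $A$ automatically. Indeed, since $T$ is onto we have $T(W_1) = W_2$, hence
\[
\bar{A}(W_2) \;=\; \bar{A}(T(W_1)) \;=\; A(W_1) \;=\; W,
\]
where the last equality uses that $A$ is an epimorphism.

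Finally, for any section $S$ of $T$ — that is, any linear map satisfying $T \circ S = \id_{W_2}$ — the associativity of composition gives
\[
A \circ S \;=\; (\bar{A} \circ T) \circ S \;=\; \bar{A} \circ (T \circ S) \;=\; \bar{A} \circ \id_{W_2} \;=\; \bar{A},
\]
so $A \circ S$ coincides with the already-surjective map $\bar{A}$, and in particular is onto. This conclusion is independent of the choice of section $S$.

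There is no genuine obstacle in the argument; the whole proof is a formal cancellation, and the substantive content lies entirely in the fact that the kernel containment permits the factorization $A = \bar{A} \circ T$ in the first place.
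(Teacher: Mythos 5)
Your argument is correct, but you should notice that it silently reverses the stated kernel hypothesis. To factor $A$ through $T$ by the universal property of the quotient $W_1 \twoheadrightarrow W_1/\Ker T$, you need $\Ker T \subset \Ker A$; the lemma as written asserts $\Ker T \supset \Ker A$, which is the opposite inclusion. Under the hypothesis as literally stated the conclusion fails: take $W_1 = \R^3$, $T(x,y,z)=x$, $A(x,y,z)=(x,y)$, so that $\Ker T = \{0\}\times\R^2 \supset \Ker A = \{0\}\times\{0\}\times\R$, yet the section $S(x)=(x,0,0)$ gives $A\circ S(x) = (x,0)$, which is not onto $\R^2$.

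The stated inclusion is in fact a typo in the paper. In the sole application of this lemma (the proof of Lemma~\ref{lem:0Reg}) the authors explicitly verify ``$\Ker d_1p \subset \Ker d_1a$'', i.e.\ $\Ker T \subset \Ker A$, precisely the inclusion your factorization needs. The paper gives no proof of this lemma, labelling it a ``straightforward lemma from linear algebra''; your proof is the natural one (factor through the quotient, then cancel $T\circ S = \mathrm{id}$), and it is valid once the hypothesis is corrected to $\Ker T \subset \Ker A$. You should make that correction explicit rather than writing the noncommittal ``the relevant inclusion of kernels'', since as written your opening sentence does not follow from what the lemma says.
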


\begin{proof}[Proof of Lemma \ref{lem:0Reg}]
Let $x \in U$ such that $\mu(x)=0$. Without loss of generality, using the left action of $G$ we can suppose $s(x)=1$. Then $\phi|_L=0$ and we have to show $d_{1}a \circ d_{x}s$ is onto $L^*$.  The map $d_1a$ is onto by transitivity of the action of $G$ on $V^* \setminus 0$, and $\Ker d_1p \subset \Ker d_1a$ since $Q$ stabilizes $L$. Thus, by the previous lemma, $d_{1}a \circ d_{x}s$ is onto.
\end{proof}

\appendix

\section{Proof of the relative Shapiro lemma}\label{app:PfShapLem}
\setcounter{lemma}{0}

We prove here Theorem \ref{ShapLem} which is a version of the Shapiro lemma. A similar version was proven in \cite{AGRhamShap} and our proof follows the lines of the proof there.

\begin{proposition}[\cite{AGRhamShap}, Proposition 4.0.6]\label{QuotFib}
Let $G$ be a Nash group and $X$ be a Nash $G$-manifold. Suppose
that the action is strictly simple (see Definition \ref{def:StrSim}). Then the projection $\pi:X
\rightarrow G \setminus X$ is a Nash locally trivial fibration.
\end{proposition}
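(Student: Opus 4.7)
My plan is to show that the projection $\pi: X \to Y := G \setminus X$ is a Nash submersion with fiber $G$, then use local Nash sections (provided by Theorem \ref{SurSubSec}) together with the free action of $G$ to build local trivializations. The main obstacle will be the last step: showing that the natural map $G \times U_i \to \pi^{-1}(U_i)$ built from a local section is not just a bijection but a Nash diffeomorphism, i.e.\ that its set-theoretic inverse is Nash. The rest is essentially a dimension count plus the universal property of the geometric quotient.

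First, I would check that $\pi$ is a Nash submersion. Since the action is free, for each $x \in X$ the orbit map $G \to Gx$, $g \mapsto gx$, is an injective Nash immersion; in fact it is a Nash isomorphism onto the orbit because the stabilizer is trivial and $Y$ is separated, so each orbit is a closed Nash submanifold of its saturation of dimension $\dim G$. The fibers of $\pi$ are exactly these orbits, so $\ker d\pi_x \supseteq T_x(Gx)$ is at least $\dim G$-dimensional. Because $Y = G \setminus X$ is a geometric Nash quotient, $\dim Y = \dim X - \dim G$, and therefore $d\pi_x$ is surjective of rank $\dim Y$ at every point. Hence $\pi$ is a Nash submersion.

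By Theorem \ref{SurSubSec} there exists a finite open (semi-algebraic) cover $Y = \bigcup_i U_i$ and Nash sections $s_i : U_i \to X$ of $\pi$. For each $i$ I define
\[
\phi_i : G \times U_i \longrightarrow \pi^{-1}(U_i), \qquad \phi_i(g, u) = g \cdot s_i(u).
\]
This map is Nash, and it is a set-theoretic bijection: surjectivity follows because every $x \in \pi^{-1}(U_i)$ lies in the same $G$-orbit as $s_i(\pi(x))$, and injectivity is freeness of the action. A direct computation shows that $d\phi_i$ is injective at every point (if $(\delta, \eta) \in \ker d\phi_i$ then $d\pi \circ d\phi_i$ kills $\eta$ so $\eta = 0$, and then freeness of the action forces $\delta = 0$), and since $\dim G + \dim U_i = \dim X = \dim \pi^{-1}(U_i)$, it is a bijective differential. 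So $\phi_i$ is a Nash bijection which is a local Nash diffeomorphism at every point.

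It remains to see that the set-theoretic inverse $\phi_i^{-1}$ is Nash, not just smooth. Since $\phi_i$ is an \'etale Nash map (injective differential with matching dimensions), by Corollary \ref{EtLocIsNash} it is, locally on $G \times U_i$, a Nash isomorphism onto an open subset of $\pi^{-1}(U_i)$. Combined with global bijectivity this upgrades $\phi_i$ to a global Nash diffeomorphism $G \times U_i \IsoTo \pi^{-1}(U_i)$ intertwining the left $G$-actions and compatible with $\pi$, which is exactly the required local Nash triviality of $\pi$.
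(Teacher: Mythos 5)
The paper does not prove this statement; it is imported verbatim from \cite{AGRhamShap}, Proposition 4.0.6, so there is no in-text proof to compare against. Your argument is correct and is essentially the standard one: obtain local Nash sections via Theorem \ref{SurSubSec}, build the candidate trivializations $\phi_i(g,u)=g\cdot s_i(u)$, observe these are Nash, bijective, and \'etale, and conclude via Corollary \ref{EtLocIsNash} that they are Nash diffeomorphisms. One small point worth tightening: your argument that $\pi$ is a submersion does not quite close as written. Knowing $\ker d\pi_x\supseteq T_x(Gx)$ and $\dim Y=\dim X-\dim G$ only gives the \emph{upper} bound $\operatorname{rank}(d\pi_x)\leq\dim Y$, not surjectivity of $d\pi_x$. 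In practice this is harmless because the definition of a geometric Nash quotient in \cite[Definition 4.0.4]{AGRhamShap} already requires $\pi$ to be a surjective Nash submersion, so the first paragraph can simply appeal to the definition rather than re-derive it; alternatively, surjectivity of $d\pi_x$ at one point together with $G$-equivariance and openness of $\pi$ yields it everywhere, but that requires a further step you did not spell out.
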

%
%\begin{corollary}[\cite{AGRhamShap}, Corollary 4.0.7]
%Let $G$ be a Nash group and $M$ be a Nash $G$ manifold with
%strictly simple action. Let $N$ be any $G$ manifold. Then the
%diagonal action on $M \times N$ is strictly simple.
%\end{corollary}

\begin{corollary}%[\cite{AGRhamShap}, Corollary 4.0.7]
Let $G$ be a Nash group and $X$ be a Nash $G$-manifold with
strictly simple action. Let $F \to X$ be a Nash  $G$-equivariant locally-trivial fibration. Then the action of $G$ on the total space $F$ is strictly simple.
\end{corollary}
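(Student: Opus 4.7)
The plan is to verify the two defining conditions of strict simplicity separately: freeness of the $G$-action on $F$ and the existence of a separated geometric Nash quotient $G \backslash F$. Freeness is immediate. Writing $p: F \to X$ for the structural projection of the fibration, any $g \in G$ fixing a point $f \in F$ must fix $p(f) \in X$ by $G$-equivariance of $p$; strict simplicity of the action on $X$ then forces $g = 1$, so all $G$-stabilizers on $F$ are trivial.

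For the quotient, I would apply Proposition \ref{QuotFib} to conclude that $\pi : X \to G \backslash X$ is a Nash locally trivial fibration, and choose a finite open Nash cover $\{U_i\}$ of $G \backslash X$, refined enough so that $\pi$ admits Nash sections $s_i : U_i \to X$ and that the fibration $F \to X$ is Nash-trivial over each $s_i(U_i)$. For each $i$ the map $(g,u) \mapsto g \cdot s_i(u)$ is a $G$-equivariant Nash diffeomorphism $G \times U_i \IsoTo \pi^{-1}(U_i)$; combining this with the chosen trivialization of $F$ over $s_i(U_i)$, $G$-equivariance produces a $G$-equivariant Nash isomorphism $p^{-1}(\pi^{-1}(U_i)) \cong G \times W_i$, where $W_i := p^{-1}(s_i(U_i))$, with $G$ acting only on the first factor. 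The $W_i$ are thus the natural candidates for the local models of $G \backslash F$ over $U_i$.

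On double overlaps $U_{ij} := U_i \cap U_j$, freeness of the $G$-action on $X$ yields a unique Nash cocycle $g_{ij} : U_{ij} \to G$ with $g_{ij}(u) \cdot s_i(u) = s_j(u)$; this cocycle is Nash by the semi-algebraicity of that defining equation together with local triviality of $\pi$. Via the trivializations of $F$, the $g_{ij}$ induce Nash transition isomorphisms between $W_i$ and $W_j$ over $U_{ij}$, which I would glue to obtain a Nash manifold $Y$ equipped with a Nash map $\bar{p}: Y \to G \backslash X$ and a Nash surjection $F \to Y$ whose fibers are precisely the $G$-orbits. Separatedness of $Y$ follows from that of $G \backslash X$: two distinct $G$-orbits in $F$ either map to distinct orbits in $X$, in which case they are separated by preimages under $\bar{p}$ of separating opens in $G \backslash X$, or else lie over the same orbit and are therefore separated inside a common local chart $W_i$. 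The main obstacle I anticipate is the last paragraph: checking that the $g_{ij}$ and induced gluings are genuinely Nash (not just smooth and semialgebraic) and that the resulting glued object is a bona fide Nash manifold rather than a semialgebraic set with Nash charts; this should reduce, exactly as in the proof of Proposition \ref{QuotFib}, to the Nash character of the $G$-action combined with Theorem \ref{SurSubSec}.
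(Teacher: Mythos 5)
Your proof is correct and follows essentially the same route as the paper's: both rest on Proposition \ref{QuotFib} and the observation that $G\backslash F$ can be identified with the restriction of $F$ to a section of $\pi:X\to G\backslash X$. The paper compresses the local-to-global step into a ``without loss of generality $X\to G\backslash X$ is a trivial fibration'' and reads off $G\backslash F\cong F|_{(G\backslash X)\times\{1\}}$, whereas you carry out the gluing of the local models $W_i$ via the Nash cocycle $g_{ij}$ explicitly; your version spells out what the paper's ``WLOG'' leaves implicit, but the underlying argument is the same.
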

\begin{proof}
By the previous proposition we may assume without loss of generality that the map $X \to G \setminus X$ is a trivial fibration. Hence we can identify $X \cong (G\setminus X) \times G$. Now it is easy to see that $G \setminus F \cong F|_{(G \setminus X) \times \{1\}}$.
\end{proof}

\begin{cor}\label{pulled}
Let $G$ be a Nash group and $X$ be a Nash $G$-manifold. Suppose
that the action is strictly simple. Let $\cE$ be a Nash $G$-equivariant bundle on $X$. Then there exists a Nash bundle $\cE'$ on $G \setminus X$ such that $\cE=\pi^*\cE'$, where $\pi:X \to G \setminus X$ is the standard projection.
\end{cor}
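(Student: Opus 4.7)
The plan is to construct $\cE'$ as the geometric Nash quotient $G\setminus \cE$. By the preceding corollary, since the action of $G$ on $X$ is strictly simple and $\cE \to X$ is a $G$-equivariant locally trivial fibration, the action of $G$ on the total space of $\cE$ is also strictly simple. Hence by Proposition \ref{QuotFib} the geometric quotient $\cE' := G\setminus \cE$ exists as a Nash manifold and the projection $\cE \to \cE'$ is a Nash locally trivial fibration. Moreover the bundle projection $\cE \to X$ is $G$-equivariant and therefore descends to a Nash map $q : \cE' \to G\setminus X$.

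It remains to show that $q$ is a Nash vector bundle and that there is a canonical isomorphism of Nash $G$-equivariant bundles $\cE \cong \pi^* \cE'$. Both assertions are local on $G\setminus X$, so I would use Proposition \ref{QuotFib} to choose a finite open Nash cover $\{U_i\}$ of $G\setminus X$ over which $\pi$ trivializes, with trivializations induced by Nash sections $s_i : U_i \to X$ of $\pi$. Since the action of $G$ on $X$ is strictly simple, the action map $G \times s_i(U_i) \to \pi^{-1}(U_i)$ is a Nash diffeomorphism, and the Nash $G$-equivariance structure on $\cE$ then provides a canonical Nash isomorphism $\cE|_{\pi^{-1}(U_i)} \cong \pi^*(s_i^*\cE)$. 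In particular $G\setminus \cE|_{\pi^{-1}(U_i)}$ inherits the structure of the Nash vector bundle $s_i^* \cE$ over $U_i$, and the pullback along $\pi$ recovers $\cE|_{\pi^{-1}(U_i)}$.

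The only point that needs a brief check is that these local identifications patch to give a well-defined Nash vector bundle structure on $\cE'$, i.e.\ that the transition maps are Nash. On an overlap $U_i \cap U_j$, strict simplicity of the action of $G$ on $X$ produces a unique function $g_{ij} : U_i \cap U_j \to G$ with $s_j(u) = g_{ij}(u) \cdot s_i(u)$, and $g_{ij}$ is Nash because it is the composition of $(s_i, s_j)$ with the inverse of the Nash diffeomorphism $G \times s_i(U_i \cap U_j) \to \pi^{-1}(U_i \cap U_j)$. The induced transition isomorphisms $s_i^*\cE \to s_j^*\cE$ are then given fibrewise by the Nash $G$-equivariance of $\cE$, hence are Nash. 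The main (minor) obstacle is therefore a careful bookkeeping of the equivariance data on overlaps; once that is in place the gluing produces the required Nash bundle $\cE'$, and the pullback identification $\cE \cong \pi^*\cE'$ is immediate from the construction.
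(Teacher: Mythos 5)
Your proposal is correct and follows the same approach as the paper: take $\cE' := G\setminus\cE$, using the preceding corollary to guarantee strict simplicity on the total space so that the quotient exists as a Nash manifold, and then identify $\cE \cong \pi^*\cE'$. The paper dismisses the local bookkeeping with ``it is easy to see''; your version spells out exactly those details (local sections from Proposition~\ref{QuotFib}, Nash-ness of the transition functions via strict simplicity), so it is a faithful elaboration rather than a different route.
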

\begin{proof}
Consider the action of $G$ on the total space of $\cE$. Denote $\cE':=G \setminus \cE$ and consider it as a Nash bundle over $G \setminus X$. It is easy to see that $\cE=\pi^*\cE'$.
\end{proof}

\begin{defn}
Let $X$ be a Nash manifold and $F$ be a Nash locally trivial fibration over $X$. Denote by $H^i_c(F \to X)$ the natural Nash bundle on $X$ such that for any $x \in X$ we have  $H^i_c(F \to X)_x=H^i_c(F_x).$ For its precise definition see \cite[Notation 2.4.11]{AGRhamShap}.
Note that if $F$ is a trivial fibration then $H^i_c(F \to X)$ are trivial bundles.
\end{defn}

\begin{definition} %\label{}
Let $F \overset{\pi}{\rightarrow} X$ be a locally trivial
fibration. Let $ \cE \rightarrow X$ be a Nash bundle. We define
$T_{F \rightarrow X} \subset T_F$ by $T_{F \rightarrow X}=\Ker(d
\pi)$. We denote %
$$\Omega^{i,\cE}_{F \rightarrow X}:=((T_{F \rightarrow X})^*)^{\wedge i}\otimes \pi^*\cE  $$
Now we define \textbf{the relative de-Rham complexes}
$DR^{\cE}_{{\Sc}}(F \rightarrow X)$ and $DR^{\cE}_{{C^\infty}}(F \rightarrow X)$
by $$DR^{\cE}_{{\Sc}}(F \rightarrow X)^i:=\Sc(F, \Omega^{i,\cE}_{F \rightarrow X}) \text{ and }
DR^{\cE}_{{C^\infty}}(F \rightarrow X)^i:=C^\infty(F, \Omega^{i,\cE}_{F \rightarrow X}).$$
The differentials  in those complexes are defined as the differential in the classical de-Rham complex.
If $\cE$ is trivial we will omit it.
\end{definition}

\begin{theorem}[see \cite{AGRhamShap}, Theorem 3.2.3] \label{RelDeRham}
Let $\pi:F \rightarrow X$ be an affine Nash locally trivial fibration.
%Suppose that $F$ and $X$ are affine
Then
$$H^k(DR_{\Sc}^{\cE}(F\rightarrow X)) \cong {\Sc}(X,H^k_c(F\rightarrow
X)\otimes {\cE}).$$

\end{theorem}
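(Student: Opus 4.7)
The plan is to proceed in two stages: handle a trivial fibration via the Schwartz kernel theorem combined with a Schwartz Poincar\'{e} lemma, and then globalize by a \v{C}ech--de Rham argument exploiting the cosheaf property of Schwartz sections.

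For the trivial case, I would take $F=X\times W$ with $W$ an affine Nash manifold (so Nash-diffeomorphic to $\mathbb{R}^d$ for $d=\dim W$) and $\cE$ a Nash bundle on $X$. The relative tangent bundle is $T_{F\to X}=p_W^*TW$, so $\Omega^{k,\cE}_{F\to X}=p_X^*\cE\otimes p_W^*\Lambda^kT^*W$. The Schwartz kernel theorem (Theorem \ref{thm:SchwarProp}(\ref{SchKer})) then gives
\[
DR^{\cE}_{\Sc}(F\to X)^k\cong\Sc(X,\cE)\ctp\Sc(W,\Lambda^kT^*W),
\]
with differential $\mathrm{id}\ctp d_W$. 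A Schwartz Poincar\'{e} lemma for $W\cong\mathbb{R}^d$---namely, that $(\Sc(W,\Lambda^*T^*W),d_W)$ is admissible with cohomology concentrated in top degree and equal to $H^d_c(W)\cong\mathbb{R}$---combined with Proposition \ref{prop:ctp_ext} applied repeatedly (legitimate because $\Sc(X,\cE)$ is nuclear Fr\'{e}chet by Theorem \ref{thm:SchwarProp}(\ref{p:SchFre})) lets one pull cohomology through the completed tensor product. This yields
\[
H^k\bigl(DR^{\cE}_{\Sc}(F\to X)\bigr)\cong\Sc(X,\cE)\ctp H^k_c(W)\cong\Sc\bigl(X,H^k_c(F\to X)\otimes\cE\bigr),
\]
the last identification holding because $H^k_c(F\to X)$ is the trivial bundle with fiber $H^k_c(W)$ in this case.

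For the globalization, I would choose a finite Nash open cover $\{U_i\}_{i=1}^n$ of $X$ trivializing $F$, together with a subordinate tempered partition of unity $\{\lambda_i\}$ from Theorem \ref{thm:SchwarProp}(\ref{pCosheaf}). The \v{C}ech--de Rham double complex
\[
C^{p,q}=\bigoplus_{i_0<\cdots<i_p}DR^{\cE}_{\Sc}\bigl(F|_{U_{i_0\cdots i_p}}\to U_{i_0\cdots i_p}\bigr)^q
\]
carries two spectral sequences. The cosheaf property of $\Sc$ together with the partition $\{\lambda_i\}$ yields an explicit contracting homotopy in the \v{C}ech direction, so one spectral sequence collapses to $H^*\bigl(DR^{\cE}_{\Sc}(F\to X)\bigr)$. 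By the trivial case applied on each intersection, the other spectral sequence has $E_1$-page equal to the \v{C}ech complex of the cosheaf $U\mapsto\Sc(U,H^q_c(F\to X)\otimes\cE)$, whose $E_2$-page, again by the cosheaf property, is concentrated at $p=0$ and equals $\Sc(X,H^q_c(F\to X)\otimes\cE)$. Comparing the two abutments proves the theorem.

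The main obstacle will be maintaining admissibility (closedness of the images of differentials) throughout, both in the underlying Schwartz Poincar\'{e} lemma for $\mathbb{R}^d$ and after the \v{C}ech assembly, so that Proposition \ref{prop:ctp_ext} legitimately applies at every step and both spectral sequences behave well in the nuclear Fr\'{e}chet category. A secondary subtlety is checking that the fiberwise Poincar\'{e} isomorphisms are Nash and compatible with the transition functions of $F\to X$ on overlaps, so that the local tensor factors $\Sc(U_i,\cE)\ctp H^k_c(W)$ glue to genuine Nash sections of $H^k_c(F\to X)\otimes\cE$; this compatibility is essentially built into the definition of the Nash bundle $H^k_c(F\to X)$ as given in \cite{AGRhamShap}.
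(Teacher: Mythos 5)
This theorem is not proved in the present paper; it is quoted from \cite{AGRhamShap}, Theorem 3.2.3. Your plan---reduce to a trivial fibration via the Schwartz kernel theorem together with a Schwartz Poincar\'{e} lemma for $\R^d$, then globalize through tempered partitions of unity and the cosheaf property of Schwartz sections---is correct in outline and uses the same essential ingredients as the proof given in that reference.
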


This theorem gives us the following recipe for computing Lie algebra homology.

\begin{theorem}\label{recipe}
Let $G$ be an affine Nash group.  Let $K$ be a Nash $G$-manifold and $L$ be a Nash manifold. Let $X:=K \times L$. Let ${\cE}
\rightarrow X$ be a Nash $G$-equivariant bundle.
Let $\Chi:G \times L \to \R$ be a Nash map such that for any $l\in L$, the map $\Chi|_{G\times \{l\}}$ is a group homomorphism.
Let $\Chi'$ be the 1-dimensional $G$-equivariant bundle on $L$, with action of $G$ given by $g(l,v)=(l,\theta(\Chi(g,l))v)$.
Let $\cE' := \cE \otimes (\C \boxtimes \Chi')$. Note that $\cE'$ is isomorphic to $\cE$ as a Nash bundle, though
the $G$-equivariant structure on $\cE'$ is not necessarily Nash.
%$\cE'$ is not a Nash $G$-bundle.

Let $N$ be a strictly simple Nash $G$-manifold. Suppose that $N$ and $G$ are
homologically trivial %(i.e. all their homology except $\oH_0$ vanish and $\oH_0=\R$)
and affine. Denote $F=X\times N$ . Note that
the bundle $\cE' \boxtimes \Omega_N^i$ has $G$-equivariant
structure given by diagonal action. Hence the relative de-Rham
complex $DR_{C^\infty}^{\cE'}(F \rightarrow X)$ is a complex of
representations of $G$. Consider  the relative de-Rham
complex $DR_{\Sc}^{\cE}(F \rightarrow X)$ without the action of $G$ as a subcomplex of $DR_{C^\infty}^{\cE'}(F \rightarrow X)$. Note that it is $G$-invariant.
We denote by $DR_{\Sc}^{\cE'}(F \rightarrow X)$ the complex $DR_{\Sc}^{\cE}(F \rightarrow X)$  with the action of $G$ induced from $DR_{C^\infty}^{\cE'}(F \rightarrow X)$.

Then $$\oH_i(\mathfrak{g},
\Sc(X,\cE')) = H^{n-i}((DR_{\Sc}^{\cE'}(F \rightarrow X))_{\mathfrak{g}}),$$
where $n$ is the dimension of $X$.
\end{theorem}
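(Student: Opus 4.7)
The plan is to exhibit $DR_{\Sc}^{\cE'}(F \to X)$ as a resolution of $\Sc(X,\cE')$, concentrated in the top degree, by $\fg$-acyclic objects, and then read off the Lie algebra homology as the cohomology of the $\fg$-coinvariants of the complex.

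First I would apply Theorem \ref{RelDeRham} to the affine locally trivial fibration $F = X \times N \to X$ with fiber $N$. Since $N$ is homologically trivial and affine, $H^k_c(N) = 0$ for $k < \dim N$ and $H^{\dim N}_c(N) \cong \R$, so the $k$-th cohomology of $DR_{\Sc}^{\cE}(F \to X)$ vanishes for $k < \dim N$ and equals $\Sc(X,\cE)$ for $k = \dim N$. Crucially, the de~Rham differentials are defined from the bundle structure alone and do not depend on the $G$-equivariant structure, so the \emph{underlying complex} is the same for $\cE$ and $\cE'$. Consequently $DR_{\Sc}^{\cE'}(F \to X)$ is a resolution of $\Sc(X,\cE')$ concentrated in degree $\dim N$, now interpreted as a complex of $G$-representations via the twisted equivariant structure on $\cE'$.

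Next I would show that each term $\Sc(F, \Omega^{i,\cE'}_{F \to X})$ is $\fg$-acyclic. The stabilizer of a point $(k,l,n) \in K \times L \times N$ under the diagonal $G$-action is $G_k \cap G_n$, which is trivial because $G$ acts strictly simply on $N$; together with the existence of a geometric quotient (induced fiberwise from $G \backslash N$), this makes the $G$-action on $F$ strictly simple. By Proposition \ref{QuotFib} and Corollary \ref{pulled}, the bundle $\Omega^{i,\cE'}_{F \to X}$ descends to a Nash bundle on $G \backslash F$. One then invokes the Shapiro-type acyclicity statement from \cite{AGRhamShap} underlying the untwisted recipe there: for a homologically trivial affine Nash group $G$ acting strictly simply on a Nash manifold with a $G$-equivariant Nash bundle, the corresponding space of Schwartz sections is $\fg$-acyclic. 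The twist by $\Chi'$ modifies the $G$-action only on the $L$-factor, where $G$ would otherwise act trivially; this refined action can be intertwined with the untwisted one by a multiplier construction in the spirit of Proposition \ref{prop:Actions}, so that the twisted $\fg$-acyclicity reduces to the untwisted case.

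Finally I would assemble the pieces by the standard homological-algebra recipe: if a cochain complex $C^\bullet$ in degrees $0,\dots,m$ is a resolution of a module $M$ placed in degree $m$ and each $C^j$ is $T$-acyclic for a right exact functor $T$, then $L_i T(M) \cong H^{m-i}(T(C^\bullet))$. Applying this with $T$ the functor of $\fg$-coinvariants, $C^\bullet = DR_{\Sc}^{\cE'}(F \to X)$, $m = \dim N$, and $M = \Sc(X,\cE')$ yields the claimed identification of $\oH_i(\fg, \Sc(X,\cE'))$ with $H^{\dim N - i}((DR_{\Sc}^{\cE'}(F \to X))_\fg)$. The main obstacle will be Step 2: rigorously handling the twist by $\Chi'$ so as to reduce the twisted Shapiro-type acyclicity to the untwisted statement from \cite{AGRhamShap}, since $\Chi'$ is a genuine family of characters parametrized by $L$ and its interaction with the $\fg$-action on $\Sc(F,\Omega^{i,\cE'}_{F\to X})$ must be untwisted before the general acyclicity recipe can be applied.
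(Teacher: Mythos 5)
Your proposal follows the same skeleton as the paper's proof: Theorem \ref{RelDeRham} yields that $DR_{\Sc}^{\cE'}(F \to X)$ is a resolution of $\Sc(X,\cE')$ concentrated in degree $\dim N$ (you correctly use $\dim N$, which silently corrects a slip in the theorem statement where $n$ is called the dimension of $X$), and the remaining work is to show each term is $\fg$-acyclic, after which the standard ``resolution by acyclics'' device gives the result. The one place where you diverge — and where you candidly flag the remaining work — is the untwisting step.

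Your suggestion to handle the twist ``in the spirit of Proposition \ref{prop:Actions}'' is not quite the right tool: that proposition concerns a twist of the $G$-action on a transitive factor $Y$ through automorphisms of $G$ parametrized by $X$, and it only yields an isomorphism of $\fg$-homologies. Here the twist is of a different shape (multiplication by the fiberwise character $\theta\circ\Chi$ pulled back from $G\times L$, with the underlying $G$-action on $F$ unchanged), and the paper instead uses the stronger and simpler Lemma \ref{uf}: since $G$ acts strictly simply on $N$, by Proposition \ref{QuotFib} and a partition of unity (Theorem \ref{thm:SchwarProp}\eqref{pCosheaf}) one may assume $N = G\times N'$ and hence $N = G$; then the underlying Schwartz spaces for $\cE$ and $\cE'$ are literally identical, and the map $\alpha \mapsto (1\boxtimes(\theta\circ\Chi))\alpha$ is an explicit $\fg$-equivariant isomorphism $\Sc(F,\cE\boxtimes\Omega_N^i) \to \Sc(F,\cE'\boxtimes\Omega_N^i)$. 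With that untwisting in hand, one applies Corollary \ref{Lokatavnu} (Schwartz sections of a Nash equivariant bundle on a strictly simple homologically-trivial-group manifold are $\fg$-acyclic) to the untwisted bundle $\cE\boxtimes\Omega_N^i$, exactly as you intended — but the order of operations in your sketch (first descend $\Omega^{i,\cE'}$, then untwist) should be reversed, since descent via Corollary \ref{pulled} requires the \emph{Nash} equivariant structure, which $\cE'$ lacks. Once the untwisting step is replaced by the concrete Lemma \ref{uf} argument, your proof is correct and agrees with the paper's.
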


To prove this theorem we will need the following statements.
\begin{lemma} \label{uf}
Let $G$, $L$, $N$, $\Chi$, $\Chi'$ be as in Theorem \ref{recipe}. Let $X$ be a Nash manifold. Let $\cE$ be a Nash bundle over $X \times L\times N$. Let $\cE'= \cE \otimes (\C\boxtimes \Chi' \boxtimes \C)$. Then $\Sc(X \times L\times N,\cE) \simeq \Sc(X \times L\times N,\cE')$ as representations of $G$.
\end{lemma}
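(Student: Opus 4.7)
The plan is to build an explicit $G$-equivariant topological linear isomorphism realizing $\Sc(X \times L \times N, \cE) \simeq \Sc(X \times L \times N, \cE')$. The starting observation is that the twist bundle $\C \boxtimes \Chi' \boxtimes \C$ is Nash-trivial: only its $G$-equivariant structure carries the twist. Via its canonical nowhere-vanishing Nash section, the underlying Nash bundles $\cE$ and $\cE'$ are canonically identified, and so the two Schwartz section spaces are identified as topological vector spaces. Under this identification, the $G$-action transported from $\cE'$ differs from the native $G$-action on $\Sc(X \times L \times N, \cE)$ by the multiplicative twist $(g, (x,l,n)) \mapsto \theta(\Chi(g, l))$.

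To absorb this twist, I would look for a nowhere-vanishing tempered function $\phi : X \times L \times N \to \C$ of modulus one satisfying the cocycle identity
\[
\phi(x, l, g n) \;=\; \theta(\Chi(g, l))\, \phi(x, l, n), \qquad g \in G,\ n \in N.
\]
Then multiplication by $\phi$ intertwines the two actions. By Proposition \ref{QuotFib}, the projection $\pi : N \to G \backslash N$ is a Nash locally trivial principal $G$-bundle; since $G$ is affine and homologically trivial (hence contractible), this bundle admits a global Nash section $s: G\backslash N \to N$. Setting $h(n)$ to be the unique element of $G$ with $n = h(n) \cdot s(\pi(n))$, one has $h(gn) = g\,h(n)$, and so $\phi(x, l, n) := \theta(\Chi(h(n), l))$ satisfies the required identity thanks to the additivity $\Chi(g h, l) = \Chi(g, l) + \Chi(h, l)$ of $\Chi$ in its first argument. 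This $\phi$ is Nash and unimodular, so multiplication by $\phi$ and by $\overline{\phi}$ are mutually inverse continuous automorphisms of $\Sc(X \times L \times N, \cE)$, giving the desired isomorphism.

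The main obstacle is producing the global section $s$. Local sections always exist by Proposition \ref{QuotFib}, but partition-of-unity gluing of the corresponding local intertwiners $\phi_i$ fails: on overlaps the ratios $\phi_i/\phi_j$ equal $\theta\circ\Chi$ applied to the transition functions of $\pi$, which are generically not trivial, so there is a genuine cocycle obstruction. It is precisely at this step that one must exploit the homological triviality of the affine Nash group $G$ (through contractibility) to ensure global triviality of the principal $G$-bundle $\pi : N \to G\backslash N$, and thereby the existence of a global section $s$ that kills this obstruction.
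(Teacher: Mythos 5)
Your approach --- building a unitary tempered multiplier $\phi$ satisfying $\phi(x,l,gn)=\theta(\Chi(g,l))\,\phi(x,l,n)$ --- is the same explicit intertwiner the paper uses. The gap lies in how you produce $\phi$ globally. You invoke a \emph{global Nash} section $s:G\backslash N\to N$ of the principal $G$-bundle $\pi:N\to G\backslash N$, deducing its existence from the contractibility of $G$. But contractibility of $G$ yields only topological (or smooth) triviality of the bundle; it does not obviously give Nash triviality. The tools available in the paper (Proposition~\ref{QuotFib}, Theorem~\ref{SurSubSec}) furnish merely \emph{local} Nash sections over a finite Nash cover, and global Nash triviality of a locally trivial Nash principal bundle with contractible structure group over an arbitrary affine Nash manifold is a nontrivial assertion that you do not justify. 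Without it, $h:N\to G$ and hence $\phi$ are only known to be smooth rather than Nash/tempered, so multiplication by $\phi$ is not known to preserve Schwartz sections.

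Moreover, the ``genuine cocycle obstruction'' you cite as the reason to abandon the partition-of-unity route (the route the paper actually takes, reducing to $N=G\times N'$ via Theorem~\ref{thm:SchwarProp}\eqref{pCosheaf} and Proposition~\ref{QuotFib}) is not in fact an obstruction. The local intertwiners $\phi_i=\theta(\Chi(f_i(\cdot),\cdot))$ disagree on overlaps by $\theta(\Chi(\tau_{ij},\cdot))$ where $\tau_{ij}$ are the transition functions, but since $\Chi(\cdot,l):G\to\R$ is a \emph{homomorphism into $\R$}, this multiplicative cocycle lifts canonically to the additive cocycle $\Chi(\tau_{ij},l)$ valued in $\R$-valued functions on the base, which is killed by a tempered partition of unity $\{e_i\}$ on $G\backslash N$: setting $\psi:=\sum_i e_i\,\Chi(f_i,\cdot)$ (with $f_i$ the fiberwise $G$-coordinate on $U_i$) one gets $\psi(x,l,gn)=\Chi(g,l)+\psi(x,l,n)$, and $\phi:=\theta\circ\psi$ is the desired global intertwiner. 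So the additivity of $\Chi$, not contractibility of $G$, is what makes the gluing work; your proposal trades this elementary averaging for an unproven global-triviality claim in the Nash category.
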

\begin{proof}
By Theorem \ref{thm:SchwarProp}\eqref{pCosheaf} and Proposition \ref{QuotFib} we can assume that $N=G \times N'$, for some Nash manifold $N'$. Hence we can assume $N=G$. Note that
$\Sc(X \times L\times G,\cE)$ and $\Sc(X \times L\times G,\cE')$ are identical as linear spaces. Now, the required isomorphism between them is given by $\alp \mapsto (1 \boxtimes (\theta \circ \Chi)) \alp$.
\end{proof}

\begin{lemma} \label{DRgroup}
Let $G$ be an affine Nash group. Let $F$ be a strictly simple Nash
$G$-manifold. Denote $X:=G \setminus F$. Let $\cE \rightarrow X$ be a
Nash bundle. Then the relative de-Rham complex
$DR_{\Sc}^\cE(F\rightarrow X)^i$ is isomorphic to the complex
$\widetilde{C}(\mathfrak{g},\Sc(F,\pi^*\cE))^{\dim \g-i}$, where $\pi:F\rightarrow X$ is the
standard projection, %$d$ is the dimension of fibers of $F$
and $\widetilde{C}(\mathfrak{g},W)$ denotes the Koszul complex of a representation $W$.
%standard complex that computes homology of a representation $V$ of $\g$.
\end{lemma}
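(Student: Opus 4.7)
The plan is to exploit the strict simplicity of the action to trivialize the relative tangent bundle by $\g$, reducing relative differential forms to $\Lambda^{\ast}(\g^*)$-valued Schwartz functions, and then to invoke Poincar\'e duality on $\g$ (via a chosen top form) to match the resulting complex with the Koszul complex.

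First, since the action is strictly simple and $X=G\setminus F$, the fibers of $\pi$ are precisely the $G$-orbits, so for each $f\in F$ the infinitesimal action $\g\to T_fF$, $X\mapsto d_e(g\mapsto g\cdot f)(X)$, is injective with image $T_{F\to X,f}$. Globally this yields a Nash trivialization $\alpha: F\times\g \IsoTo T_{F\to X}$. Dualizing and taking exterior powers gives a canonical isomorphism of Nash bundles $\Omega^{i,\cE}_{F\to X}\cong F\times \Lambda^i(\g^*)\otimes \pi^{*}\cE$, and since $\Lambda^i(\g^*)$ is finite-dimensional, passing to Schwartz sections gives an isomorphism of vector spaces
\[
DR^{\cE}_{\Sc}(F\to X)^i \;\cong\; \Lambda^i(\g^*)\otimes \Sc(F,\pi^{*}\cE).
\]

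Second, I would fix a nonzero top form $\omega_0\in\Lambda^n(\g^*)$ with $n=\dim\g$ and use the contraction isomorphism $\Lambda^{n-i}(\g)\to\Lambda^i(\g^*)$, $\xi\mapsto\iota_\xi\omega_0$, to produce, together with the first step, isomorphisms of vector spaces
\[
\Phi_i:\widetilde{C}(\g,\Sc(F,\pi^{*}\cE))^{n-i}=\Lambda^{n-i}(\g)\otimes\Sc(F,\pi^{*}\cE)\IsoTo DR^{\cE}_{\Sc}(F\to X)^i.
\]
The $\g$-module structure on $\Sc(F,\pi^{*}\cE)$ required for the Koszul differential comes from the canonical $G$-equivariant structure on $\pi^{*}\cE$: since $\cE$ descends from $X=G\setminus F$, Corollary \ref{pulled} (applied in reverse) shows that $\pi^{*}\cE$ is canonically $G$-equivariant, with $G$ acting trivially along the fibers.

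Third, and this is the main step, I would verify that the collection $\{\Phi_i\}$ intertwines the Koszul chain differential (going from homological degree $n-i$ to $n-i-1$) with the relative de Rham differential (going from $i$ to $i+1$). To do this, I would use Proposition \ref{QuotFib} to reduce locally to the trivial case $F|_U\cong U\times G\to U$, where relative forms on $F|_U$ are parametrized families of left-invariant forms on $G$. In this local picture: (a) the fundamental vector field associated to $X\in\g$ acts on sections of $\pi^{*}\cE$ by Lie differentiation, matching the built-in $\g$-module structure; (b) the pullback $\alpha^{*}(\omega^i)$ of a dual basis $\{\omega^i\}\subset\g^*$ coincides with the left-invariant coframe on $G$, which satisfies the Maurer-Cartan identity $d\omega^k=-\tfrac12\sum c^k_{ij}\omega^i\wedge\omega^j$. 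Together (a) and (b) show that $d$ acts on $\Lambda^{\ast}(\g^*)\otimes\Sc(F,\pi^{*}\cE)$ as the standard Chevalley-Eilenberg cohomology differential, which the Poincar\'e duality $\iota_\bullet\omega_0$ converts into the Koszul chain differential on $\Lambda^{\ast}(\g)\otimes\Sc(F,\pi^{*}\cE)$. Globalization across local charts of the cover given by Proposition \ref{QuotFib} follows from the cosheaf property of Schwartz sections (Theorem \ref{thm:SchwarProp}\eqref{pCosheaf}).

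The main obstacle is the matching of differentials in the third step; the first two steps are essentially linear algebra. The matching reduces to the classical Chevalley-Eilenberg identification for left-invariant forms on a Lie group, together with the usual Poincar\'e duality on $\Lambda^{\ast}(\g)$ determined by $\omega_0$. Some care is needed with sign conventions and (potentially) with the non-unimodular case, where a twist by $\Lambda^n(\g^*)$ may appear and must be reconciled with the precise convention for $\widetilde{C}$ used in the paper; but since the grading shift $i\mapsto n-i$ is exactly what Poincar\'e duality provides, the identification goes through.
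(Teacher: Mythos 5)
Your overall strategy — trivialize $T_{F\to X}$ by the infinitesimal action, identify relative Schwartz forms with $\Lambda^{\bullet}\g^*$-valued Schwartz sections of $\pi^*\cE$, and contract with a top form to land in $\Lambda^{n-\bullet}\g\otimes\Sc(F,\pi^*\cE)$ — is the natural one, and it is essentially the route taken in the reference the paper actually cites for this lemma, namely \cite[Lemma 4.0.10]{AGRhamShap} (the present paper gives no proof beyond that citation). Your first two steps are fine.

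The gap is precisely at what you call "the main obstacle": matching the differentials. You flag the right subtleties but then dismiss them with ``the identification goes through,'' which is not a verification, and in fact the details do not work out as glibly as that. Two specific issues. First, a factual slip: the fundamental vector fields of a \emph{left} $G$-action on $F=G$ are the \emph{right}-invariant fields, so the coframe $\alpha^*(\omega^i)$ in your local model is the right-invariant one, whose Maurer--Cartan equation has the opposite sign to the left-invariant one; this sign must be tracked consistently against the sign in the derived $\g$-action on $\Sc(F,\pi^*\cE)$ (which is $-\mathcal{L}_{\tilde X}$, not $+\mathcal{L}_{\tilde X}$), and for left actions $X\mapsto\mathcal{L}_{\tilde X}$ is an anti-homomorphism, so the relative de Rham differential is not literally a Chevalley--Eilenberg differential for a $\g$-module until these signs are sorted out. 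Second, and more seriously: contraction with a fixed $\omega_0\in\Lambda^n\g^*$ intertwines the Chevalley--Eilenberg differential for a module $M$ with the Koszul differential for $M\otimes\Lambda^n\g$, not for $M$; the discrepancy is exactly the modular character $\mathrm{tr}\,\mathrm{ad}$. This is not a matter of convention that the degree shift $i\mapsto n-i$ automatically fixes, and it cannot be absorbed by rescaling $\Phi_i$ by degree-dependent scalars — a direct computation in the two-dimensional non-unimodular Lie algebra $[X,Y]=Y$ shows the defect $\iota\circ\partial - (\pm)d\circ\iota$ is the modular term and is nonzero. Since the paper applies this lemma with $G$ a non-unimodular group (for instance $P_n$), you cannot ignore it. To close the gap you should either (i) show that for the specific module $M=\Sc(F,\pi^*\cE)$ the twist by $\Lambda^n\g$ is trivialized by multiplication with a tempered function $\delta\in\T(F)$ satisfying $\delta(g\cdot f)=\Delta_G(g)\,\delta(f)$ — such $\delta$ exists because the associated real line bundle on $G\backslash F$ is orientable, and $\Delta_G$ is Nash — and check that the resulting map intertwines the two complexes on the nose, or (ii) simply carry out the explicit sign-and-twist bookkeeping as in \cite[Lemma 4.0.10]{AGRhamShap} and verify that your $\Phi_i$, suitably normalized, is a chain map. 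As written, the chain-map verification is asserted rather than proved, and that is exactly the content of the lemma.
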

The proof of this lemma is the same as the proof of \cite[Lemma 4.0.10]{AGRhamShap}.

\begin{cor}\label{Lokatavnu}
Let an affine Nash group $G$ act strictly simply on a Nash manifold $X$. Let $\cE$ be a Nash $G$-equivariant bundle on $X$. Suppose that $G$ is homologically trivial.
Then $\Sc(X,\cE)$ is an acyclic representation of $\g$.
\end{cor}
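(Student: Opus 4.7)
The plan is to reduce the computation of $\oH_*(\g,\Sc(X,\cE))$ to a relative de-Rham calculation and then apply Theorem \ref{RelDeRham}. First, by Corollary \ref{pulled}, the strict simplicity of the $G$-action lets me write $\cE \cong \pi^*\cE'$ for a Nash bundle $\cE'$ on the quotient $Y := G\backslash X$, where $\pi: X \to Y$ is the quotient map; by Proposition \ref{QuotFib} this $\pi$ is a Nash locally trivial fibration whose fibers are Nash diffeomorphic to $G$.

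Next, I apply Lemma \ref{DRgroup} to identify the relative de-Rham complex $DR_\Sc^{\cE'}(X \to Y)^i$ with the Koszul complex $\widetilde{C}(\g,\Sc(X,\pi^*\cE'))^{\dim\g - i} = \widetilde{C}(\g,\Sc(X,\cE))^{\dim\g-i}$. Consequently, the Lie algebra homology $\oH_j(\g,\Sc(X,\cE))$ is isomorphic to $H^{\dim\g-j}(DR_\Sc^{\cE'}(X\to Y))$.

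Then Theorem \ref{RelDeRham} computes this cohomology as $\Sc(Y,H^{\dim\g-j}_c(X\to Y)\otimes \cE')$. The fibers of $\pi$ are diffeomorphic to $G$, which is assumed homologically trivial; by Poincar\'e duality (applicable since affine Nash groups are orientable) this forces the fiber-wise compactly supported cohomology bundle $H^k_c(X\to Y)$ to be the trivial line bundle when $k = \dim\g$ and to vanish otherwise. Plugging in, I get $\oH_j(\g,\Sc(X,\cE)) = 0$ for all $j > 0$, which is precisely the claimed acyclicity (and incidentally identifies $\oH_0(\g,\Sc(X,\cE))$ with $\Sc(Y,\cE')$, i.e.\ with Schwartz sections on the quotient).

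The substantive content has already been packaged into Corollary \ref{pulled}, Lemma \ref{DRgroup}, and Theorem \ref{RelDeRham}; the only mild subtlety is invoking the homological triviality of $G$ on the level of compactly supported cohomology of the fiber rather than ordinary homology, which is the step I would double-check carefully. Everything else is bookkeeping of the degree shift $i \leftrightarrow \dim\g - i$ between the Koszul and de-Rham indexings.
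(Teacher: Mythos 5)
Your proof is correct and follows exactly the same route as the paper: the paper's proof is the one-line citation ``Follows from Lemma \ref{DRgroup}, Theorem \ref{RelDeRham} and Corollary \ref{pulled},'' and you have simply unwound those citations, correctly tracking the degree shift $j \leftrightarrow \dim\g - j$ and correctly translating homological triviality of $G$ into vanishing of $H^k_c$ of the fiber in all degrees below $\dim\g$ via Poincar\'e duality (legitimate, since Nash/Lie groups are parallelizable, hence orientable).
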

\begin{proof}
Follows from Lemma \ref{DRgroup}, Theorem \ref{RelDeRham} and Corollary \ref{pulled}.
\end{proof}

\begin{proof}[Proof of Theorem \ref{recipe}] From Theorem \ref{RelDeRham} we know that the complex $DR_{\Sc}^\cE(F
\rightarrow X)$, as a complex of vector spaces, is a resolution of the vector space $\Sc(X,\cE)$.
Hence the complex $DR_{\Sc}^{\cE'}(F\rightarrow X)$ is a resolution of $\Sc(X,\cE')$ as a representation of $\g$.

So it is enough to prove that the representations $\Sc(F,\cE'
\boxtimes \Omega_N^i)$ are $\mathfrak{g}$- acyclic. By Lemma \ref{uf}, $$\Sc(F,\cE
\boxtimes \Omega_N^i) \cong \Sc(F,\cE'
\boxtimes \Omega_N^i)$$ as $\g$-representations (though those isomorphisms do not commute with differentials).
Hence it is enough to show that $\Sc(F,\cE
\boxtimes \Omega_N^i)$ is acyclic. This follows from Corollary \ref{Lokatavnu}.
\end{proof}
\begin{lem}[\cite{AGST}, Corollary B.1.8]
Let $G$ be a connected affine Nash group, $X$ be a Nash manifold and $\cE$ be a Nash bundle over $X$. Let $\g$ be the Lie algebra of $G$. Let $p:G\times X \to X$ be the projection. Let $G$ act on $\Sc(G \times X,p^!(\cE))$ by acting on the $G$ coordinate. Then
%$\g \Sc(G \times X,p^?(\cE)) = \Sc(G \times X,p^?(\cE))_{0,X}.$
the map $p_*:\Sc(G \times X,p^!(\cE)) \to \Sc(X,\cE)$ induces an isomorphism $\Sc(G \times X,p^!(\cE))_{\g} \cong \Sc(X,\cE)$.
\end{lem}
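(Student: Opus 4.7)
The plan is to deduce this statement as an immediate combination of two ingredients already established in the excerpt: the surjectivity of the Schwartz pushforward along a surjective submersion (Lemma \ref{lem:SubPush}(ii)) and the identification of the kernel of such a pushforward with the image of the Lie algebra action (Lemma \ref{lem:NewLeib}). By definition, the $\g$-coinvariants of $\Sc(G \times X, p^!(\cE))$ are the quotient of this space by $\g \cdot \Sc(G \times X, p^!(\cE))$, so matching ``kernel of $p_*$'' with ``image of the $\g$-action'' will exhibit $p_*$ as the required isomorphism.

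More concretely, I would first observe that $p : G \times X \to X$ is a Nash submersion that is evidently surjective (since $G$ is nonempty). Lemma \ref{lem:SubPush}(ii) then gives that $p_* : \Sc(G \times X, p^!(\cE)) \to \Sc(X, \cE)$ is surjective. Next, since $G$ is a connected Nash group acting on $G \times X$ via left translation on the first factor, Lemma \ref{lem:NewLeib} applies directly and yields
\[
\g \cdot \Sc(G \times X, p^!(\cE)) \;=\; \Ker(p_*).
\]

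Combining these two facts, $p_*$ factors through the quotient $\Sc(G \times X, p^!(\cE))/\g \cdot \Sc(G \times X, p^!(\cE))$ as a surjection, and the factored map is injective because we have identified its kernel as zero. Since by definition $\Sc(G \times X, p^!(\cE))_\g = \Sc(G \times X, p^!(\cE))/\g \cdot \Sc(G \times X, p^!(\cE))$, we conclude that $p_*$ induces the desired isomorphism $\Sc(G \times X, p^!(\cE))_\g \cong \Sc(X, \cE)$.

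Since both Lemma \ref{lem:SubPush} and Lemma \ref{lem:NewLeib} are quoted from earlier work and already stated in the excerpt, there is no substantial obstacle in the argument; the content of the lemma is essentially a repackaging of those two results. The only thing one might wish to double-check is that the $G$-equivariant structure used in Lemma \ref{lem:NewLeib} matches the one in the present statement (left translation on the $G$-factor of $G \times X$, acting on $p^!(\cE) = p^*(\cE) \otimes D_{G\times X}^X$), but this is immediate from the conventions set up in the preliminaries on Nash manifolds.
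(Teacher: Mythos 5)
Your argument is correct and is exactly the intended one. The paper states this lemma as a quotation from \cite{AGST}, Corollary B.1.8, with no proof given here; but since it is labeled there as a corollary of \cite{AGST}, Lemma B.1.4 --- which is precisely Lemma \ref{lem:NewLeib} in this paper --- the derivation you give (surjectivity of $p_*$ from Lemma \ref{lem:SubPush}(ii) applied to the surjective Nash submersion $p$, plus the identification $\Ker(p_*)=\g\,\Sc(G\times X,p^!(\cE))$ from Lemma \ref{lem:NewLeib}) is exactly what is meant, and you have filled in the trivial gluing step correctly.
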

\begin{cor} \label{DRgroupCor}
Let $G$ be a connected affine Nash group. Let $F$ be a strictly simple Nash
$G$-manifold. Denote $X:=G \setminus F$.
Let $\cE \rightarrow F$ be a $G$-equivariant
Nash bundle.
%Fix a Haar measure on $G$ and a Nash measure on $X$. These give rise to a Nash measure on $F$.
Let $\pi$ denote the projection map $\pi:F \to X$. Let $B$ be a Nash bundle on $X$ such that $\cE = \pi^*B$ ($B$ exists by Corollary \ref{pulled}).

Then the map $\pi_*:\Sc(F,\cE \otimes D^F_X ) \to \Sc(X,B)$ induces an isomorphism $\Sc(F,\cE\otimes D^F_X )_{\g} \cong \Sc(X,B)$.
\end{cor}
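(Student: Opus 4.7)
The plan is to reduce to the preceding lemma by localizing on $X$, using the local triviality of strictly simple Nash actions, and then gluing via a partition of unity. First I would invoke Proposition \ref{QuotFib} to obtain a finite open cover $\{U_i\}_{i=1}^k$ of $X=G\backslash F$ together with $G$-equivariant Nash trivializations $\phi_i\colon G\times U_i \xrightarrow{\sim} \pi^{-1}(U_i)$ intertwining $\pi$ with the second projection $p_i\colon G\times U_i\to U_i$. On such a trivial piece a direct unwinding of Notation \ref{not:!} identifies $(\cE\otimes D_X^F)|_{\pi^{-1}(U_i)}$ with $p_i^{!}(B|_{U_i})$, since $D_X^F$ restricts to the pullback of $D_G$ along the first projection. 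Applying the previous lemma (with $X$ replaced by $U_i$ and $\cE$ by $B|_{U_i}$) yields the local isomorphism
\[
\Sc\bigl(\pi^{-1}(U_i),(\cE\otimes D_X^F)|_{\pi^{-1}(U_i)}\bigr)_{\fg}\;\cong\;\Sc(U_i,B|_{U_i}),
\]
induced by the restriction of $\pi_*$.

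For the global statement, surjectivity of $\pi_*$ is exactly Lemma \ref{lem:SubPush}(ii), and the inclusion $\fg\cdot\Sc(F,\cE\otimes D_X^F)\subseteq \Ker(\pi_*)$ follows from the defining integration formula for $\pi_*$, since the vector fields induced by $\fg$ are tangent to the fibers of $\pi$. The substantive content is therefore the reverse inclusion. For this I would choose a Nash tempered partition of unity $1=\sum \mu_i$ on $X$ subordinate to $\{U_i\}$, and pull it back to $F$ through the submersion $\pi$. Each $\pi^*\mu_i$ is tempered on $F$ and vanishes to all orders outside $\pi^{-1}(U_i)$, so by Proposition \ref{pOpenSet} multiplication by $\pi^*\mu_i$ sends $\Sc(F,\cE\otimes D_X^F)$ into $\Sc(\pi^{-1}(U_i),(\cE\otimes D_X^F)|_{\pi^{-1}(U_i)})$ (extended by zero). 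Given $f\in\Ker(\pi_*)$, the projection formula gives $\pi_*(\pi^*\mu_i\cdot f)=\mu_i\cdot\pi_*(f)=0$, so each $\pi^*\mu_i\cdot f$ is killed by the local pushforward; the local isomorphism then puts it in $\fg\cdot\Sc(\pi^{-1}(U_i),\cdot)\subseteq \fg\cdot\Sc(F,\cE\otimes D_X^F)$, and summing over $i$ yields $f\in\fg\cdot\Sc(F,\cE\otimes D_X^F)$.

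The main technical obstacle is the bundle identification in the first step: one must verify that the trivialization $\phi_i$ actually carries the $G$-equivariant bundle $(\cE\otimes D_X^F)|_{\pi^{-1}(U_i)}$ to $p_i^{!}(B|_{U_i})$ equipped with its standard $G$-equivariant structure (the one coming from $G$ acting on the first factor of $G\times U_i$), so that the previous lemma literally applies. This reduces to the observation that $D_G$ is canonically $G$-equivariantly trivial on the left (up to the modular character, which is absorbed into the $G$-action on $\cE$), together with the fact that $\cE=\pi^*B$ already has its $G$-equivariant structure coming from the first factor. Once this identification is made carefully, the remaining arguments are formal manipulations with the partition of unity.
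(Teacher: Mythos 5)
The paper states Corollary \ref{DRgroupCor} without a written proof, evidently treating it as a routine globalization of the preceding lemma (the $F = G\times X$ case from \cite{AGST}). Your argument supplies exactly that globalization — local $G$-equivariant trivialization of $\pi$, identification of $(\cE\otimes D_X^F)|_{\pi^{-1}(U_i)}$ with $p_i^!(B|_{U_i})$, and a tempered partition of unity to glue — and I believe it is correct. A few small clarifications are worth making, though none is a gap.

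First, Proposition \ref{QuotFib} by itself only asserts that $\pi$ is a Nash locally trivial fibration; it does not hand you $G$-equivariant trivializations. The clean route is Theorem \ref{SurSubSec}: it gives a finite cover $\{U_i\}$ of $X$ with Nash local sections $s_i\colon U_i\to F$, and then $\phi_i(g,u):=g\cdot s_i(u)$ is the desired $G$-equivariant Nash diffeomorphism $G\times U_i\IsoTo\pi^{-1}(U_i)$. Second, your ``up to the modular character'' aside is unnecessary and a little misleading: since the lemma you cite uses the left action of $G$ on the $G$-factor, $D_G$ carries the \emph{trivial} $G$-equivariant structure (trivialized by a left-invariant Haar density), and no modular twist appears; the required matching of equivariant structures is a direct bookkeeping check given that $\cE=\pi^*B$ already has the pulled-back equivariant structure. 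Third, in the partition-of-unity step you rely on two facts that deserve one sentence each: that $\pi^*\mu_i$ is tempered on $F$ (pullback of a tempered function under a Nash map of affine Nash manifolds), and that $\pi^*\mu_i$ vanishes to infinite order along $\pi^{-1}(X\setminus U_i)$ (all derivatives of $\pi^*\mu_i$ are pullbacks of derivatives of $\mu_i$ contracted with derivatives of $\pi$, so they vanish on the fibers over $X\setminus U_i$). With these in hand, Proposition \ref{pOpenSet} gives $\pi^*\mu_i\cdot f\in\Sc(\pi^{-1}(U_i),\cdot)$, and the projection formula together with the local isomorphism from the lemma yields $\Ker(\pi_*)\subseteq\fg\cdot\Sc(F,\cE\otimes D_X^F)$ as you write.

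So the argument is sound. An alternative, slightly more in the spirit of the surrounding text, would be to invoke Lemma \ref{DRgroup} and Theorem \ref{RelDeRham}: the Koszul complex in degree $0$ is identified with the top relative de Rham cohomology, which by Theorem \ref{RelDeRham} is $\Sc(X, H^{\dim G}_c(F\to X)\otimes B')$ for the appropriate $B'$, and for connected $G$ the bundle $H^{\dim G}_c(F\to X)$ is trivial. That route computes the same answer and avoids the partition of unity, at the cost of tracking more bundle normalizations. Both are acceptable proofs of the stated corollary.
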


\begin{cor}
Let $X$ be a Nash manifold and $Y$ be a closed Nash submanifold. Let an affine Nash group $G$ act on $X$ strictly simply and $H$ be a subgroup of $G$ that acts on $Y$ strictly simply. Suppose that the natural map $H \setminus Y \to G \setminus X$ is a Nash diffeomorphism.
Suppose that $G$ and $H$ are homologically trivial.
Let $\cE$ be a Nash $G$-equivariant bundle on $X$.
Then
$$\Sc(X,\cE\otimes D_X)_{\g} \cong \Sc(Y,\cE|_Y \otimes D_Y)_{\h}.$$
Moreover, let $a:G \times Y \to X$ denote the action map and $p_2:G \times Y \to Y$ denote the projection on the second coordinate.
Then\\
%\begin{multline}
(i) The map $a_*$ gives an isomorphism
$$a_*:\Sc(G \times Y, (\C \boxtimes \cE|_Y) \otimes D_{G \times Y} \otimes a^*(D_X^{-1}))_{\g \times \h} \IsoTo \Sc(X,\cE)_{\g}$$
(ii) The map $(p_2)_*$ gives an isomorphism
$$(p_2)_*:\Sc(G \times Y, (\C \boxtimes \cE|_Y) \otimes D_{G \times Y} \otimes a^*(D_X^{-1}))_{\g \times \h} \IsoTo \Sc(Y,\cE|_Y \otimes D_Y \otimes ((D_X)|_Y)^{-1})_{\h}$$
%\end{multline}
\end{cor}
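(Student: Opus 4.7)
The plan is to prove parts (i) and (ii) using $F := G \times Y$ as an intermediate space carrying two commuting strictly simple actions; the main assertion $\Sc(X,\cE\otimes D_X)_\g \cong \Sc(Y,\cE|_Y\otimes D_Y)_\h$ then follows by combining (i) and (ii) after replacing $\cE$ by $\cE \otimes D_X$ (the factors $D_X|_Y$ and $(D_X|_Y)^{-1}$ cancel). So I would focus on (i) and (ii).

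Define two commuting actions on $F := G \times Y$: the group $G$ acts by $g'\cdot(g,y)=(g'g,y)$, with quotient map $p_2:F\to Y$; the group $H$ acts diagonally by $h\cdot(g,y)=(gh^{-1},hy)$, with quotient map $a:F\to X$ (the action map). The $G$-action is strictly simple because it restricts to left translation on $G$; the $H$-action is strictly simple since $H$ acts strictly simply on $Y$ and freely by right translation on $G$, and the hypothesis that $H\backslash Y \to G\backslash X$ is a Nash diffeomorphism identifies its geometric quotient with $X$. Both $a$ and $p_2$ are submersions, and the actions commute, so $\g$ and $\h$ act commutingly on Schwartz sections of any $(G\times H)$-equivariant bundle, whence $(\cdot)_\g$ and $(\cdot)_\h$ commute (both being right exact and compatible).

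For (i), invoke Corollary \ref{DRgroupCor} for the $H$-action on $F$ with quotient $X$: using the $G$-equivariant structure on $\cE$, the pullback $a^*\cE$ is canonically isomorphic as a Nash bundle to $\C\boxtimes \cE|_Y$ (via $(g,y)\mapsto g^{-1}$ on fibers), so the bundle in the statement equals $a^*\cE \otimes D_X^F$. The corollary then yields that $a_*$ induces $\Sc(F,a^*\cE\otimes D_X^F)_\h \IsoTo \Sc(X,\cE)$; since $a$ is $G$-equivariant, this isomorphism is $\g$-equivariant, and taking further $\g$-coinvariants gives (i). For (ii), apply Corollary \ref{DRgroupCor} to the $G$-action on $F$ with quotient $Y$: the bundle $a^*\cE\otimes D_X^F$ must be expressed as $p_2^* B\otimes D_Y^F$ for a unique bundle $B$ on $Y$. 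Using $D_X^F = D_Y^F\otimes p_2^*(D_Y)\otimes a^*(D_X^{-1})$ and the fact that the descents under the $G$-quotient satisfy $G\backslash a^*\cE \cong \cE|_Y$ and $G\backslash a^*(D_X^{-1})\cong (D_X|_Y)^{-1}$ (computed by restricting to a section $y\mapsto(1,y)$), we identify $B = \cE|_Y\otimes D_Y\otimes(D_X|_Y)^{-1}$. Then $(p_2)_*$ induces $\Sc(F,a^*\cE\otimes D_X^F)_\g \IsoTo \Sc(Y,B)$, and since $p_2$ is $H$-equivariant, passing to further $\h$-coinvariants gives (ii).

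The main obstacle is the density bookkeeping in (ii): correctly showing that $a^*\cE\otimes D_X^F$, which a priori is only $H$-equivariantly $a^*(-)$ and only $G$-equivariantly $p_2^*(-)$ modulo a density twist, actually descends under $G$ to the claimed bundle $\cE|_Y\otimes D_Y\otimes (D_X|_Y)^{-1}$. This amounts to carefully verifying the two identities $D_X^F\cong D_Y^F\otimes p_2^*(D_Y)\otimes a^*(D_X^{-1})$ (immediate from the definition of $D_\bullet^\bullet$ in Notation \ref{not:!}) and $G\backslash(a^*\mathcal{G})\cong \mathcal{G}|_Y$ for any $G$-equivariant Nash bundle $\mathcal{G}$ on $X$ (which follows from restricting to the section $y\mapsto (1,y)\in F$ and using that fibers of $p_2$ are $G$-orbits with chosen representative $(1,y)$). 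Once these identifications are made, the remainder is a direct application of Corollary \ref{DRgroupCor}.
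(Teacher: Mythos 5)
Your proof is correct and takes exactly the approach the paper evidently intends (it states this corollary without proof immediately after Corollary \ref{DRgroupCor}): set up the correspondence space $F = G \times Y$ with its two commuting free actions and apply Corollary \ref{DRgroupCor} once for each, with your density bookkeeping and $(G\times H)$-equivariance checks for the bundle identifications $a^*\cE \cong \C\boxtimes \cE|_Y$ and $a^*(D_X^{-1}) \cong p_2^*((D_X|_Y)^{-1})$ being the essential content. The one point passed over quickly is that $a:G\times Y \to X$ is indeed a geometric quotient for the $H$-action; this does follow, since freeness together with the hypothesis that $H\backslash Y \to G\backslash X$ is a Nash diffeomorphism forces the fibers of $a$ to be exactly the $H$-orbits, and the submersivity of $Y \to G\backslash X$ gives $T_y Y + T_y(Gy) = T_y X$, so $a$ is a submersion and the Nash quotient structure on $X$ is the right one.
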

%\begin{proof}
%The lemma follows from Lemma \ref{DRgroupCor}.
%\end{proof}

\begin{cor}\label{H2G}
Let $G$ be an affine Nash group and $X$ be a transitive Nash $G$-manifold.
Let $N$ be a Nash manifold.
Let $x \in X$ and denote $H:=G_x$. Suppose that $G$ and $H$ are homologically trivial.
Let $X = X \times N \times G$ and $Y=\{x\} \times N \times G$. Let $\cE$ be a bundle over $X$.
Let $\Chi:G \times N \to \R$ be a Nash map such that for any $n\in N$, the map $\Chi|_{G\times \{n\}}$ is a group homomorphism.
Let $\Chi'$ be the 1-dimensional $G$-equivariant bundle on $N$, with the action of $G$ given by $g(n,v)=(n,\theta(\Chi(g,l))v)$.
Let $\cE' := \cE \otimes (\C \boxtimes \Chi' \boxtimes \C)$.
%Then
%(i) The map $a_*$ gives an isomorphism $a_*:\Sc(G \times Y, \C \boxtimes \cE')_{\g \times \h} \cong \Sc(X,\cE')_{\g}$\\
%(ii) projection on  the second coordinate  gives an isomorphism
%$\Sc(G \times Y, \C \boxtimes \cE')_{\g \times \h} \cong \Sc(Y,\cE'|_Y)_{\h}$.
Then\\
%\begin{multline}
(i) The map $a_*$ gives an isomorphism
$$a_*:\Sc(G \times Y, (\C \boxtimes \cE'|_Y) \otimes D_{G \times Y} \otimes a^*(D_X^{-1}))_{\g \times \h} \cong \Sc(X,\cE')_{\g}$$
(ii) The map $(p_2)_*$ gives an isomorphism
$$(p_2)_*:\Sc(G \times Y, (\C \boxtimes \cE'|_Y) \otimes D_{G \times Y} \otimes a^*(D_X^{-1}))_{\g \times \h} \cong \Sc(Y,\cE'|_Y \otimes D_Y \otimes ((D_X)|_Y)^{-1})_{\h}$$
\end{cor}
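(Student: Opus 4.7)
The plan is to deduce Corollary \ref{H2G} from the preceding corollary by introducing a free $G$-factor and then absorbing the non-Nash twist by $\Chi'$. Write $X_0$ for the transitive $G$-manifold given at the start of the statement, so that $X = X_0 \times N \times G$ and $Y = \{x\} \times N \times G$ as in the corollary. I would have $G$ act on $X$ diagonally by $g \cdot (x_0, n, h) := (g x_0, n, g h)$ and let $H$ act on $Y$ by restriction. Because the third factor carries a free left-translation action, both actions are strictly simple; a direct computation identifies $G \setminus X \cong X_0 \times N$ via $(x_0, n, h) \mapsto (h^{-1} x_0, n)$, and likewise $H \setminus Y \cong (H \setminus G) \times N \cong X_0 \times N$, under which the induced map $H \setminus Y \to G \setminus X$ is a Nash diffeomorphism. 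Since $G$ and $H$ are assumed homologically trivial, all the hypotheses of the preceding corollary are satisfied for any Nash $G$-equivariant bundle on this $X$.

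The remaining issue is that $\cE'$ coincides with $\cE$ as a Nash bundle but carries a $G$-equivariant structure twisted by the (possibly non-Nash) character $(g, n) \mapsto \theta(\Chi(g, n))$ along the $N$-factor. To reduce to the Nash-equivariant case, I would invoke the trick of Lemma \ref{uf}: the map $\alpha \mapsto [(x_0, n, h) \mapsto \theta(\Chi(h, n))\alpha(x_0, n, h)]$ defines a $\g$-equivariant isomorphism $\Sc(X, \cE') \IsoTo \Sc(X, \cE)$, because $\Chi(-, n)$ is a homomorphism in its first argument and $G$ acts on the third coordinate by left translation, so the twist becomes a coboundary. The analogous formulas give an $\h$-equivariant isomorphism on $Y$ and a $\g \times \h$-equivariant isomorphism on $G \times Y$, and by construction these intertwine the pushforwards $a_*$ and $(p_2)_*$.

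With the trivialization in hand, I would apply the preceding corollary to the Nash bundle $\cE$ to obtain the claimed isomorphisms with $\cE$ in place of $\cE'$, and then transport along the above identifications to conclude (i) and (ii) as stated. The main obstacle I expect is the careful bookkeeping required to check that the multiplicative trivialization by $\theta(\Chi(h, n))$ intertwines the pushforwards $a_*$ and $(p_2)_*$ relative to the density factors $D_{G \times Y} \otimes a^*(D_X^{-1})$ and $D_Y \otimes ((D_X)|_Y)^{-1}$ appearing in the statement. Since these density factors are pullbacks from the Nash quotients (which are untouched by the unitary twist), the intertwining should reduce to the defining property that $\Chi(-, n)$ is a group homomorphism for each fixed $n$, combined with the cocycle identity underlying Lemma \ref{uf}.
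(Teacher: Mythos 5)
Your proposal is correct and coincides with the paper's intended (but unwritten) argument: apply the unnamed corollary immediately preceding \ref{H2G} to the Nash $G$-equivariant bundle $\cE$ — with the strictly simple actions coming from the free $G$-factor and $H\backslash Y\to G\backslash X$ being a diffeomorphism exactly as you check — and then transport along the Lemma \ref{uf}-style multiplicative untwisting by $\theta(\Chi(\cdot,n))$ evaluated on the last $G$-coordinate. The intertwining of $a_*$ does require noting that the identification $\C\boxtimes\cE'|_Y\cong a^*\cE'$ carries an extra factor $\theta(\Chi(g_0,n))$ relative to the Nash case, which combines with the pulled-back twist $\theta(\mp\Chi(g_0h,n))$ to reproduce the fibrewise untwisting $\theta(\mp\Chi(h,n))$ via the additivity of $\Chi(\cdot,n)$; this is precisely the ``bookkeeping'' you flag, and it closes as you predict.
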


Now we are ready to prove the relative Shapiro Lemma.

\begin{proof} [Proof of Theorem \ref{ShapLem}.]
From Theorem \ref{recipe}  we see that
$$\oH_i(\mathfrak{g},\Sc(X \times N,\cE'))\cong H^{\dim \g-i}((DR^{\cE'}_{\Sc}(X\times N \times
G\rightarrow X \times N))_{\mathfrak{g}}).$$

Now, by Corollary \ref{H2G},
$$\Sc(X\times N \times G, \Omega^{i,\cE'}_{X \times  N \times G \rightarrow X \times N })_\mathfrak{g} \cong
\Sc(\{x\} \times N \times G, \Omega^{i,\cE'|_{\{x\}\times N} \otimes D_{\{x\}\times N} \otimes D_{X\times N}^{-1}|_{\{x\}\times N}}_{\{x\} \times N \times G\rightarrow \{x\} \times N}
)_{\mathfrak{h}}.$$
and this isomorphism commutes with de-Rham differential. Therefore%
$$(DR^{\cE'}_{\Sc}(X\times N \times G\rightarrow X \times N))_\mathfrak{g} \cong
(DR^{\cE'|_{\{x\}\times N} \otimes D_{\{x\}\times N} \otimes D_{X\times N}^{-1}|_{\{x\}\times N}}_{\Sc}(\{x\} \times N \times  G\rightarrow \{x\}\times N))_{\mathfrak{h}}$$
and hence
$$H^{\dim \g-i}((DR^{\cE'}_{\Sc}(X\times N \times G\rightarrow X \times N))_\mathfrak{g}) \cong
H^{\dim \g-i}((DR^{\cE'|_{\{x\}\times N}\otimes (D_{X}^{-1}|_{\{x\}}\boxtimes \C)}_{\Sc}(\{x\} \times N \times  G\rightarrow \{x\}\times N))_{\mathfrak{h}})$$
and again by Theorem \ref{recipe},
\begin{multline*}
H^{\dim \g-i}((DR^{\cE'|_{\{x\}\times N} \otimes (D_{X}^{-1}|_{\{x\}}\boxtimes \C)}_{\Sc}(\{x\} \times N \times  G\rightarrow \{x\}\times N))_{\mathfrak{h}}) \cong \\
\cong \oH_i(\mathfrak{h},\Sc(\{x\}\times N,\cE'|_{\{x\}\times N}
\otimes (D_{X}^{-1}|_{\{x\}}\boxtimes \C))) \cong \\ \cong  \oH_i(\mathfrak{h},\Sc(\{x\}\times N,\cE'|_{\{x\}\times N} \otimes \Delta_{H} \cdot (\Delta_G^{-1})|_H)).
\end{multline*}
\end{proof}

\section{Examples for the notation of \S \ref{subsec:ProdGeo}}\label{app:ProdEx}
In order to help the reader to read Notations \ref{not:Orbits1} and \ref{not:Orbits2},  Lemma \ref{lem:SysRep} and Corollary \ref{cor:SysRep}, we describe explicitly the objects discussed in them on one example.
Let $n=6$ and $\lambda = (1,3,2)$. Then $m_{\lambda}^1 = 1$, $m_{\lambda}^2 = 4$, $m_{\lambda}^3 = 6$. Also,
$c$ is the permutation $(234561), \, w^1_{\lambda}=c^0=Id_6, \, w^2_{\lambda}=c^2=(345612),\, w^3_{\lambda}=c^5=(612345)$.
We have $$\overline{P_n w^1_{\lambda} P_{\lam}}=P_n w^1_{\lambda} P_{\lam}, \overline{P_n w^2_{\lambda} P_{\lam}}=P_n w^2_{\lambda} P_{\lam} \cup P_n w^1_{\lambda} P_{\lam}, \, \overline{P_n w^3_{\lambda} P_{\lam}}= P_n w^3_{\lambda} P_{\lam} \cup P_n w^2_{\lambda} P_{\lam} \cup P_n w^1_{\lambda} P_{\lam} =G_6, \, P_{\lambda}^1 = P_{\lam}$$
Typical matrices in $P_{\lam},\, \overline{P_n w^1_{\lambda} P_{\lam}},\, \overline{P_n w^2_{\lambda} P_{\lam}},\, P_{\lambda}^2 $ are of the forms
$$\left(
  \begin{array}{cccccc}
    * & * & * & * & * & * \\
    0 & * & * & * & * & * \\
    0 & * & * & * & * & * \\
    0 & * & * & * & * & * \\
    0 & 0 & 0 & 0 & * & * \\
    0 & 0 & 0 & 0 & * & * \\
  \end{array}
\right), \,
%
% c = \left(
%   \begin{array}{cccccc}
%     0 & 0 & 0 & 0 & 0 & 1 \\
%     1 & 0 & 0 & 0 & 0 & 0 \\
%     0 & 1 & 0 & 0 & 0 & 0 \\
%     0 & 0 & 1 & 0 & 0 & 0 \\
%     0 & 0 & 0 & 1 & 0 & 0 \\
%     0 & 0 & 0 & 0 & 1 & 0 \\
%   \end{array}
% \right)$\newline
% $ $\newline\\
% %
% $w_1 =Id_6, \quad
% %
% w_2=\left(
%   \begin{array}{cccccc}
%     0 & 0 & 0 & 0 & 1 & 0 \\
%     0 & 0 & 0 & 0 & 0 & 1 \\
%     1 & 0 & 0 & 0 & 0 & 0 \\
%     0 & 1 & 0 & 0 & 0 & 0 \\
%     0 & 0 & 1 & 0 & 0 & 0 \\
%     0 & 0 & 0 & 1 & 0 & 0 \\
%   \end{array}
% \right), \quad
% w_3=\left(
%   \begin{array}{cccccc}
%     0 & 1 & 0 & 0 & 0 & 1 \\
%     0 & 0 & 1 & 0 & 0 & 0 \\
%     0 & 0 & 0 & 1 & 0 & 0 \\
%     0 & 0 & 0 & 0 & 1 & 0 \\
%     0 & 0 & 0 & 0 & 0 & 1 \\
%     1 & 0 & 0 & 0 & 0 & 0 \\
%   \end{array}
% \right)$\\\\
%$ $\\
\left(
  \begin{array}{cccccc}
    * & * & * & * & * & * \\
    * & * & * & * & * & * \\
    * & * & * & * & * & * \\
    * & * & * & * & * & * \\
    * & * & * & * & * & * \\
    0 & 0 & 0 & 0 & * & * \\
  \end{array}
\right), \,
\left(
  \begin{array}{cccccc}
    * & * & * & * & * & * \\
    * & * & * & * & * & * \\
    * & * & * & * & * & * \\
    * & * & * & * & * & * \\
    * & * & * & * & * & * \\
    0 & * & * & * & * & * \\
  \end{array}
\right),\, \left(
  \begin{array}{cccccc}
    * & * & 0 & 0 & 0 & 0 \\
    * & * & 0 & 0 & 0 & 0 \\
    * & * & * & * & * & * \\
    * & * & 0 & * & * & * \\
    * & * & 0 & * & * & * \\
    * & * & 0 & * & * & * \\
  \end{array}
\right)$$
%\begin{multline}
%

%
Typical matrices in $P^{3}_{\lam},\, Q_{\lam}^1,Q^2_{\lam},\, Q_{\lambda}^3 $ are of the forms
$$\left(
  \begin{array}{cccccc}
    * & * & * & * & * & 0 \\
    * & * & * & * & * & 0 \\
    * & * & * & * & * & 0 \\
    0 & 0 & 0 & * & * & 0 \\
    0 & 0 & 0 & * & * & 0 \\
    * & * & * & * & * & * \\
  \end{array}
\right),\,
\left(
  \begin{array}{cccccc}
    * & * & * & * & * & * \\
    0 & * & * & * & * & * \\
    0 & * & * & * & * & * \\
    0 & * & * & * & * & * \\
    0 & 0 & 0 & 0 & * & * \\
    0 & 0 & 0 & 0 & 0 & 1 \\
  \end{array}
\right) ,\,
\left(
  \begin{array}{cccccc}
    * & * & 0 & 0 & 0 & 0 \\
    * & * & 0 & 0 & 0 & 0 \\
    * & * & * & * & * & * \\
    * & * & 0 & * & * & * \\
    * & * & 0 & * & * & * \\
    0 & 0 & 0 & 0 & 0 & 1 \\
  \end{array}
\right),\, \left(
  \begin{array}{cccccc}
    * & * & * & * & * & 0 \\
    * & * & * & * & * & 0 \\
    * & * & * & * & * & 0 \\
    0 & 0 & 0 & * & * & 0 \\
    0 & 0 & 0 & * & * & 0 \\
    0 & 0 & 0 & 0 & 0 & 1 \\
  \end{array}
\right) $$
%\end{multline}

In order to help the reader to read Lemma \ref{lem:ProdGeo}, we describe explicitly the objects discussed there for $i=2$.
Let $\lambda'=\lambda_{3-2+1}^-=\lambda_2^-= (1, 2,2)$. Then
$w_{\lambda'}^{21}$ is the permutation $(45123),w_{\lambda'}^{22}=Id,
w_{\lambda'}^{23}=(34512)$

\end{document}